\patchcmd\label@noarg{\edef\@tempb}{\protected@edef\@tempb}{}{}}	
\theoremstyle{plain}	
\newtheorem{thm}{Theorem}[section]	
\newtheorem{lem}[thm]{Lemma}	
\newtheorem{prop}[thm]{Proposition}	
\newtheorem{cor}[thm]{Corollary}	
\theoremstyle{definition}	
\newtheorem{dfn}[thm]{Definition}	
\theoremstyle{remark}	
\newtheorem{rem}[thm]{Remark}	
\theoremstyle{plain}	
\newtheorem*{thm*}{Theorem}	
\newtheorem*{lem*}{Lemma}	
\newtheorem*{prop*}{Proposition}	
\newtheorem*{cor*}{Corollary}	
\theoremstyle{definition}	
\newtheorem*{dfn*}{Definition}	
\newtheorem*{eg*}{Example}	
\newtheorem*{not*}{Notations}	
\theoremstyle{remark}	
\newtheorem*{rem*}{Remark}	
\newtheorem*{que*}{Question}	
\theoremstyle{plain}	
\newtheorem{letteredtheorem}{Theorem}	
\newtheorem{lemabc}[letteredtheorem]{Lemma}	
\numberwithin{equation}{section}	
\def \C{{\mathbb{C}}}				
\def \D{{\mathbb{D}}}		\def \R{{\mathbb{R}}}		
			\def \Z{{\mathbb{Z}}}	
	\def \N{{\mathbb{N}}}								
\def \cA{{\mathcal{A}}}			\def \cV{{\mathcal{V}}}	
			\def \cW{{\mathcal{W}}}	
		\def \cQ{{\mathcal{Q}}}		
\def \cD{{\mathcal{D}}}	\def \cK{{\mathcal{K}}}	\def \cR{{\mathcal{R}}}		
\def \cE{{\mathcal{E}}}				
\def \cF{{\mathcal{F}}}	\def \cM{{\mathcal{M}}}	\def \cT{{\mathcal{T}}}							
\def \cG{{\mathcal{G}}}		\def \cU{{\mathcal{U}}}							
\NewCommandCopy{\oldin}{\in}	
\DeclareRobustCommand{\in}{\oldin\nolinebreak[4]}												
\def\hrulefill{\noindent\leavevmode\leaders\hrule\hfill\kern\z@}								
\def \var{\,\cdot\,}	
\def \such{\,:\,}	
\DeclareMathOperator{\tr}{tr}	
\DeclareMathOperator{\dist}{dist}	
\DeclareMathOperator{\diam}{diam}	
\DeclareMathOperator*{\esssup}{ess\,sup}	
\newcommand{\1}{\mathbbm{1}}	
\newcommand{\dd}{\,\mathrm{d}}	
\DeclareMathOperator{\id}{Id}	
\newcommand{\inline}[1]{\quad\text{#1}\quad}	
\newcommand{\afterline}[1]{\qquad\text{#1~}}	
\newcommand{\inmath}[1]{\text{ #1~}}			
\def\Xint#1{\mathchoice	
   {\XXint\displaystyle\textstyle{#1}}
   {\XXint\textstyle\scriptstyle{#1}}
   {\XXint\scriptstyle\scriptscriptstyle{#1}}
   {\XXint\scriptscriptstyle\scriptscriptstyle{#1}}
   \!\int}	
\def\XXint#1#2#3{{\setbox0=\hbox{$#1{#2#3}{\int}$}	
     \vcenter{\hbox{$#2#3$}}\kern-.5\wd0}}	
\def\dashint{\Xint-}	
\def\XLp#1{\mathchoice	
   {\XXLp\displaystyle\textstyle{#1}}
   {\XXLp\textstyle\scriptstyle{#1}}
   {\XXLp\scriptstyle\scriptscriptstyle{#1}}
   {\XXLp\scriptscriptstyle\scriptscriptstyle{#1}}
   \!L}	
\def\XXLp#1#2#3{{\setbox0=\hbox{$#1{#2#3}{L}$}	
     \vcenter{\hbox{$#2#3$}}\kern-.3\wd0}}	
\renewcommand{\L}{\XLp-}	
\DeclarePairedDelimiterX{\inner}[2]{\langle}{\rangle}{#1,#2}	
\DeclarePairedDelimiterX{\norm}[1]{\lVert}{\rVert}{#1}	
\DeclarePairedDelimiterX{\aver}[1]{\langle}{\rangle}{#1}	
\DeclareFontFamily{OMX}{MnSymbolE}{}
\DeclareSymbolFont{MnLargeSymbols}{OMX}{MnSymbolE}{m}{n}
	\DeclareFontShape{OMX}{MnSymbolE}{m}{n}{
    <-6>  MnSymbolE5
   <6-7>  MnSymbolE6
   <7-8>  MnSymbolE7
   <8-9>  MnSymbolE8
   <9-10> MnSymbolE9
  <10-12> MnSymbolE10
  <12->   MnSymbolE12
}{}	
\DeclareFontShape{OMX}{MnSymbolE}{b}{n}{
    <-6>  MnSymbolE-Bold5
   <6-7>  MnSymbolE-Bold6
   <7-8>  MnSymbolE-Bold7
   <8-9>  MnSymbolE-Bold8
   <9-10> MnSymbolE-Bold9
  <10-12> MnSymbolE-Bold10
  <12->   MnSymbolE-Bold12
}{}	
\let\llangle\@undefined	
\let\rrangle\@undefined	
\DeclareMathDelimiter{\llangle}{\mathopen}%
                     {MnLargeSymbols}{'164}{MnLargeSymbols}{'164}
\DeclareMathDelimiter{\rrangle}{\mathclose}%
                     {MnLargeSymbols}{'171}{MnLargeSymbols}{'171}
\DeclarePairedDelimiterX{\convex}[1]{\llangle}{\rrangle}{#1}	
\DeclareRobustCommand{\SkipTocEntry}[5]{}	
\newcommand{\note}[1]{\textcolor{red}{\textbf{#1}}}	
\newcommand{\bnote}[1]{\textcolor{blue}{\textbf{#1}}}	
\newcommand{\cald}{\mathscr{D}}	
\newcommand{\cals}{\mathscr{S}}	
\newcommand{\calu}{\mathcal{U}}	
\newcommand{\calm}{\mathcal{M}}	
\newcommand{\loc}{\textnormal{loc}}	
\newcommand{\bcs}{\textnormal{bcs}}	
\newcommand{\cs}{\textnormal{cs}}	
\DeclareMathOperator{\ch}{ch}	
\DeclareMathOperator{\RH}{RH}	
\newcommand{\dgrid}{\mathscr{D}}	
\title{Matrix-weighted estimates beyond Calder{\'o}n--Zygmund theory}	
\author	
	[S.~Kakaroumpas \and T. H. Nguyen \and D.~Vardakis]											
	{Spyridon Kakaroumpas \and Thu Hien Nguyen \and Dimitris Vardakis}	
\address{Julius-Maximilians-Universit{\"a}t W{\"u}rzburg}	
\email{spyridon.kakaroumpas@uni-wuerzburg.de}	
\email{thu-hien.nguyen@uni-wuerzburg.de}	
\email{dimitrios.vardakis@uni-wuerzburg.de}	
\begin{document}

\begin{abstract}
	We investigate matrix-weighted bounds for the sublinear non-kernel operators considered by F.~Bernicot, D.~Frey, and S.~Petermichl. We extend their result to sublinear operators acting upon vector-valued functions. First, we dominate these operators by bilinear convex body sparse forms, adapting a recent general principle due to T.~Hyt{\"o}nen. Then we use this domination to derive matrix-weighted bounds, adapting arguments of F.~Nazarov, S.~Petermichl, S.~Treil, and A.~Volberg. Our requirements on the weight are formulated in terms of two-exponent matrix Muckenhoupt conditions, which surprisingly exhibit a rich structure that is absent in the scalar case. Consequently, we deduce that our matrix-weighted bounds improve the ones that were recently obtained by A.~Laukkarinen. 
	
	The methods we use are flexible, which allows us to complement our results with a limited range extrapolation theorem for matrix weights, extending the results of P.~Auscher and J.~M.~Martell, as well as M.~Bownik and D.~Cruz-Uribe.
\end{abstract}

\maketitle

\tableofcontents

\section*{Notation}

For the reader's convenience, we provide a list of notations.

\begin{table}[h]
\begin{tabularx}{\textwidth}{p{0.1\textwidth}X}	
\toprule
	
	$p'$ & H{\"o}lder conjugate to an exponent $p> 1$, $1/p + 1/p' = 1$ (also $1':=\infty$ and $\infty':=1$);\\
	
	$\1_{E}$ & characteristic function of a set $E$;\\
	
	$B(x,r)$	&	a ball centered at $x$ with radius $r$;	\\
	
	$r(B)$	&	the radius of the ball $B$;	\\
	
	$\cald$ & a dyadic system;\\
	
	$\dgrid(Q)$ & the dyadic subcubes of $Q\subseteq\dgrid$, $\dgrid(Q):=\{P\in\dgrid\such P\subseteq Q\}$;	\\
	
	$\ell(Q)$	&	the radius of the ball containing the cube $Q\subseteq\dgrid$;	\\
	
	$|v|$ & Euclidean norm of a vector $v\in\C^d$;\\
	
	$\norm{\cdot}_{X}$ & norm in a normed space $X$;\\
	
	$L^p(X,\mu)$	&	the Lebesgue space with exponent $p$ and measure $\mu$ on $X$;	\\
	
	$\L^p(X)$	&	the Lebesgue space with exponent $p$ and normalised measure $\frac{\mu}{\mu(X)}$ on $X$;	\\

	$L^{p}_W(X,\mu)$	&	the Lebesgue space of exponent $p$ with respect to a scalar or matrix weight $W$ and the measure~$\mu$ equipped with the norm $\norm{\cdot}_{L^p_W(X,\mu)}$;	\\
	
	$\norm{f}_{L^{p}_W(X,\mu)}$	&	the norm $\norm{f}_{L^{p}_W(X,\mu)}:=\left(\int_X|W(x)^{1/p}f(x)|^{p}\dd\mu(x)\right)^{1/p}$ for $1<p<\infty$;	\\
	
	$\dashint_{E}f\dd\mu$   & average of the function $f$ over a set $E$, defined as $\dashint_{E}f\dd\mu:=\mu(E)^{-1} \int_E f(x) \dd\mu(x)$; \\
	
	$\aver{f}_{E}$   & average of the function $f$ over a set $E$, defined as $\aver{f}_{E}:=\mu(E)^{-1} \int_E f(x) \dd\mu(x)$; \\
	
	$\convex{f}_{X}$	&	convex body ``average'' of $f$ with respect to a normed space $X$;\\
	
	$\diam(E)$	&	diameter of a set $E$;\\
	
	$\mathrm{dist}(E_1,E_2)$	&	the distance between the subsets $E_1$ and $E_2$ of a metric space $(X,d)$: $\mathrm{dist}(E_1,E_2):=\inf\{d(e_1,e_2)\such e_1\in E_1,\ e_2\in E_2\}$, sometimes denoted by just $d(E_1,E_2)$;	\\
	
	$|E|$	&	the supremum of the norms of the elements of $E\subseteq\C^d$: $|E|:=\sup\{|e|\such e\in E\}$;	\\
	
	$\mathrm{M}_{d}(\C)$  &  $d\times d$ matrices with complex entries;\\
	
	$\mathrm{P}_{d}(\C)$   &   $d\times d$ positive-definite matrices with complex entries;\\
	
	$\tr(A)$ & trace of a matrix $A\in M_{d}(\C)$;\\
	
	$|A|$ & usual spectral norm (largest singular value) of a matrix $A\in M_{d}(\C)$;\\

    $\D$ & open unit disk in $\C$ centered at $0$, $\D := \{\lambda\in\C\such |\lambda|<1\}$;\\

    $\overline{\D}$ & closed unit disk in $\C$ centered at $0$, $\overline\D := \{\lambda\in\C\such |\lambda|\leq 1\}$;\\

    $\overline{D}(a,r)$ & closed disk in $\C$ of center $a\in\C$ and radius $r>0$, $\overline{D}(a,r) := \{z\in\C\such |z-a|\leq r\}$;\\

    $\mathrm{clos}(A)$ & closure of a set $A$ in the corresponding topological space;\\

    $\overline{\mathbf{B}}$ & standard (Euclidean) closed unit ball in $\C^d$;\\

    $\cK_{\mathrm{cs}}(\C^d)$ & set of closed, convex and complex-symmetric subsets of $\C^d$;\\

    $\cK_{\bcs}(\C^d)$ & set of closed, bounded, convex, and complex-symmetric subsets of $\C^d$.\\
\bottomrule
\end{tabularx}
\end{table}

Additionally, the notation $u \lesssim v$ means that there exists a constant $C>0$ such that $u \leq Cv$. This constant $C$ is just an absolute constant or one depending on parameters that are specified each time or that are understood by the context. Finally, the notation $u \simeq v$ means $u \lesssim v$ and $v \lesssim u$.

\section{Introduction}
\label{section:introduction}

A \emph{scalar weight} $w$ on $\R^n$ is a locally integrable function $w:\R^n\to\R$ with $w(x)>0$ for a.e.~$x\in\R^n$. For any weight $w$ on $\R^n$ and $1<p<\infty$ we denote $L^{p}(w):=L^{p}(w(x)\dd x)$. The question of establishing bounds of the form
\begin{equation*}
    \norm{Tf}_{L^{p}(w)}\leq C\norm{f}_{L^{p}(w)},
\end{equation*}
for $1<p<\infty$ and classical operators $T$ like Calder{\'o}n--Zygmund operators and the Hardy--Littlewood maximal function (where $C$ is some constant depending on $p$, the weight $w$, the parameters of the ambient space and the operator but \emph{not} the function $f$) has a long history. The field began with the seminal paper of Helson--Szeg{\"o} \cite{Helson_Szego_1960} on stationary stochastic processes. Subsequent works \cite{Muckenhoupt_1972, Muckenhoupt_Wheeden_1976, Hunt_Muckenhoupt_Wheeden_1973, Coifman_Fefferman_1974} showed that the correct condition on the weight for such bounds to hold is
\begin{equation*}
    [w]_{A_p}:=\sup_{Q}\left(\dashint_{Q}w(x)\dd  x\right)\left(\dashint_{Q}w(x)^{-1/(p-1)}\dd x\right)^{p-1}<\infty,
\end{equation*}
where the supremum is taken over all cubes $Q\subseteq\R^n$ (with faces parallel to the coordinate hyperplanes). The exact dependence of the constant $C$ on $[w]_{A_p}$ turned out to be significantly harder and was much later completely described \cite{Wittwer_2000, Petermichl_H2007, Petermichl_R2007, Hytonen_2012}. A desire to give a simple proof of the exact dependence led to the invention of the method of \emph{sparse domination} by L.~Lerner \cite{Lerner_2013} which was soon further developed and refined \cite{Lerner_2016, Lacey_2017} and also applied to rough singular integral operators \cite{Conde_Alonso_2017, Di_Plinio_2021}. Of particular importance for the present article are the results by F.~Bernicot, D.~Frey, and S.~Petermichl \cite{BFP2016}, which give a bilinear sparse domination principle for very general non-kernel operators under mild regularity conditions. This, in turn, was used to derive sharp weighted bounds for such operators and doubling measure metric spaces in general. Here, we state the two main results from \cite{BFP2016}, and refer the reader to \Cref{section:preliminaries} below for the notation and precise definitions.

\begin{letteredtheorem}[\protect{\cite[Theorem 5.7]{BFP2016}}]
\label{BFP57}
    Let $(M,\mu)$ be a doubling measure metric space. Let $1\leq p_0<2<q_0\leq\infty$, and take $p\in (p_0,q_0)$. Also, let $T:L^{p}(M,\mu)\to L^{p}(M,\mu)$ be a bounded sublinear operator satisfying the assumptions in \Cref{section:preliminaries} with respect to $p_0$ and $q_0$. Let $\cald$ be a dyadic system in $M$ as in Subsection~\ref{subsec:dyadic}. Let also $0<\varepsilon<1$. Then, there exists a constant $C=C(T,\mu,p_0,q_0,\varepsilon)>0$ such that for all $f\in L^p(M,\mu)$ and $g\in L^{p'}(M,\mu)$ that are supported on $\lambda Q_0$ for some cube $Q_0\in \cald$ and sufficiently large (depending on $\dgrid$) $\lambda \geq 2$, there exists an $\varepsilon$-sparse collection $\cals\subseteq \cald$ (depending on $T,p_0,q_0,f$ and~$g$) with
	\[\left|\int_{Q_0} T f \cdot g \dd\mu\right|\leq C\sum_{P\in \cals}μ(P)\left(\dashint_{\lambda P}|f|^{p_0}\dd\mu\right)^{1/p_0}\left(\dashint_{\lambda P}|g|^{q_0'}\dd\mu\right)^{1/q_0'}.\]
\end{letteredtheorem}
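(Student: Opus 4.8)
The plan is to run the recursive stopping-time construction underlying modern proofs of sparse domination (Lerner's method, in the streamlined form of Lerner--Ombrosi and Lacey), with the approximating operators $A_{r}$ of \Cref{section:preliminaries} playing the role that truncation of the kernel away from a cube plays in the classical Calder\'on--Zygmund argument. After passing to the dyadic model, I note that by sublinearity it suffices to bound $\int_{Q_{0}}|Tf|\,|g|\dd\mu$, and that the inequality $\big||Tf_{1}|-|Tf_{2}|\big|\leq|T(f_{1}-f_{2})|$ will let me cut $f$ freely at every scale. For a dyadic $Q\subseteq Q_{0}$ write
\[
  a(Q):=\Big(\dashint_{\lambda Q}|f|^{p_{0}}\dd\mu\Big)^{1/p_{0}},\qquad b(Q):=\Big(\dashint_{\lambda Q}|g|^{q_{0}'}\dd\mu\Big)^{1/q_{0}'}.
\]
The whole statement will be obtained by iterating a single \emph{local} estimate: for every dyadic $Q\subseteq Q_{0}$ and every $h$ with $\supp h\subseteq\lambda Q$,
\[
  \int_{Q}|Th|\,|g|\dd\mu\ \lesssim\ \mu(Q)\,\Big(\dashint_{\lambda Q}|h|^{p_{0}}\dd\mu\Big)^{1/p_{0}}\,b(Q).
\]

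To prove this local estimate I would split $Th=TA_{\ell(Q)}h+T(I-A_{\ell(Q)})h$ and bound each summand in $L^{q_{0}}(Q)$, pairing it with $g\in L^{q_{0}'}(Q)$ by H\"older's inequality. Since $\supp h$ lies in the bounded dilate $\lambda Q$ of $Q$ and only the output over $Q$ is seen, the leading (close-range) term of the off-diagonal $L^{p_{0}}$--$L^{q_{0}}$ estimates assumed in \Cref{section:preliminaries} for the families $\{A_{r}\}$, $\{TA_{r}\}$ and $\{T(I-A_{r})\}$ gives
\[
  \|TA_{\ell(Q)}h\|_{L^{q_{0}}(Q)}+\|T(I-A_{\ell(Q)})h\|_{L^{q_{0}}(Q)}\ \lesssim\ \mu(Q)^{1/q_{0}-1/p_{0}}\,\|h\|_{L^{p_{0}}(\lambda Q)}\ \simeq\ \mu(Q)^{1/q_{0}}\Big(\dashint_{\lambda Q}|h|^{p_{0}}\dd\mu\Big)^{1/p_{0}},
\]
and then $\|g\|_{L^{q_{0}'}(Q)}\leq\mu(Q)^{1/q_{0}'}\big(\dashint_{Q}|g|^{q_{0}'}\dd\mu\big)^{1/q_{0}'}\lesssim\mu(Q)^{1/q_{0}'}b(Q)$ by doubling of $\mu$. (Only $p_{0}\leq q_{0}$ is used, so that the $L^{p_{0}}$--$L^{q_{0}}$ smoothing of $A_{r}$ at scale $\ell(Q)$ is meaningful; the case $q_{0}=\infty$ is the corresponding $L^{p_{0}}$--$L^{\infty}$ bound.)

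For the recursion, fix a large $K=K(\mu,p_{0},q_{0},\varepsilon)$ and, inside a cube $Q$ already placed in $\cals$, let $\{Q_{j}\}$ be the maximal dyadic $Q'\subsetneq Q$ with $a(Q')>K\,a(Q)$. Once $\lambda$ is chosen large enough in terms of the dyadic constants of Subsection~\ref{subsec:dyadic} one has $a(Q')\lesssim M(|f|^{p_{0}}\1_{\lambda Q})(x)^{1/p_{0}}$ for every $x\in Q'$, so the weak-$(1,1)$ bound for the Hardy--Littlewood maximal operator $M$ yields $\sum_{j}\mu(Q_{j})\leq(1-\varepsilon)\mu(Q)$ provided $K$ is large; iterating downward from $Q_{0}$ produces an $\varepsilon$-sparse family $\cals$ with pairwise disjoint $E_{Q}=Q\setminus\bigcup_{j}Q_{j}$ satisfying $\mu(E_{Q})\geq\varepsilon\mu(Q)$. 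Applying the local estimate with $h=f\1_{\lambda Q}$ (legitimate since $\supp f\subseteq\lambda Q_{0}$), then splitting $f=f\1_{\lambda Q_{j}}+f\1_{(\lambda Q_{j})^{c}}$ on each $Q_{j}$ and using sublinearity, one obtains
\[
  \int_{Q}|T(f\1_{\lambda Q})|\,|g|\dd\mu\ \lesssim\ \mu(Q)\,a(Q)\,b(Q)\ +\ \sum_{j}\int_{Q_{j}}|T(f\1_{\lambda Q_{j}})|\,|g|\dd\mu\ +\ \sum_{j}\int_{Q_{j}}\big|T\big(f\1_{\lambda Q\setminus\lambda Q_{j}}\big)\big|\,|g|\dd\mu.
\]
The middle sum feeds the recursion, and the first terms, summed over all $Q\in\cals$ as the recursion unrolls, produce $\sum_{P\in\cals}\mu(P)a(P)b(P)$; this quantity is finite --- so the recursion converges --- because $M$ is bounded on $L^{p/p_{0}}$ and on $L^{p'/q_{0}'}$, which is exactly where $p_{0}<p<q_{0}$ is used.

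The crux --- and the step I expect to be the main obstacle --- is absorbing the last (tail) sums into $\sum_{P\in\cals}\mu(P)a(P)b(P)$. Decomposing $\lambda Q\setminus\lambda Q_{j}$ into annuli $\approx 2^{k}\lambda Q_{j}\setminus 2^{k-1}\lambda Q_{j}$ and applying the off-diagonal $L^{p_{0}}$--$L^{q_{0}}$ estimates for $TA_{\ell(Q_{j})}$ and $T(I-A_{\ell(Q_{j})})$ with decay coefficients $J_{k}$, the $k$-th contribution to $\int_{Q_{j}}|T(f\1_{\lambda Q\setminus\lambda Q_{j}})|\,|g|\dd\mu$ is $\lesssim J_{k}\,\mu(2^{k}\lambda Q_{j})^{1/q_{0}}\,\langle|f|^{p_{0}}\rangle_{2^{k}\lambda Q_{j}}^{1/p_{0}}\,\mu(Q_{j})^{1/q_{0}'}b(Q_{j})$. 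Since $2^{k}\lambda Q_{j}$ is comparable to $\lambda R_{k}$ for the dyadic ancestor $R_{k}$ of $Q_{j}$ at scale $\approx 2^{k}\ell(Q_{j})$, and $R_{k}$ lies strictly between $Q_{j}$ and its parent in $\cals$, the stopping rule forces $\langle|f|^{p_{0}}\rangle_{2^{k}\lambda Q_{j}}^{1/p_{0}}\lesssim a(R_{k})\lesssim K\,a(Q)$, while a doubling factor $\mu(2^{k}\lambda Q_{j})/\mu(Q_{j})\lesssim 2^{k\nu}$ (with $\nu$ an upper doubling exponent of $\mu$) appears. Hence the tail is controlled precisely when the decay order of the off-diagonal estimates exceeds $\nu/q_{0}$ --- a hypothesis of the kind available in \Cref{section:preliminaries} --- so that the series in $k$ converges geometrically; summing then over $j$ through the bounded overlap of the dilates $\{\lambda Q_{j}\}$ of the maximal disjoint cubes $Q_{j}$ (which gives $\sum_{j}\mu(Q_{j})b(Q_{j})\lesssim\mu(Q)b(Q)$) and finally over $Q\in\cals$ absorbs everything into $CK\sum_{P\in\cals}\mu(P)a(P)b(P)$, which finishes the proof. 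The delicate points are calibrating this decay order against the doubling dimension so that no logarithmic loss occurs, and choosing $\lambda$ and $K$ compatibly with the Christ-type dyadic structure so that the sparseness constant is the prescribed $\varepsilon$; throughout, the non-kernel nature of $T$ is felt in that every localization must be manufactured out of the $A_{r}$'s and their off-diagonal bounds rather than read off a kernel.
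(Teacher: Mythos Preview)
Your proposed local estimate
\[
  \int_{Q}|Th|\,|g|\dd\mu\ \lesssim\ \mu(Q)\Big(\dashint_{\lambda Q}|h|^{p_{0}}\dd\mu\Big)^{1/p_{0}}b(Q),\qquad \supp h\subseteq\lambda Q,
\]
is the central gap. It amounts to a uniform local $L^{p_0}\to L^{q_0}$ bound for $T$ itself, obtained by splitting $Th=TA_{\ell(Q)}h+T(I-A_{\ell(Q)})h$ and invoking ``off-diagonal $L^{p_0}$--$L^{q_0}$ estimates for $\{TA_r\}$ and $\{T(I-A_r)\}$''. No such estimates are assumed in \Cref{section:preliminaries}. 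Assumption (T2) gives off-diagonal decay only for $T(tL)^{N}e^{-tL}=c_N\,T\cQ_t^{(N)}$; trying to recover $T(I-P_{\ell(Q)^2})h=\int_0^{\ell(Q)^2}T\cQ_t^{(N)}h\,\frac{\mathrm{d}t}{t}$ from this fails at close range because the $L^{p_0}\to L^{q_0}$ bound carries no smallness in $t$ and the integral $\int_0^{\ell(Q)^2}\frac{\mathrm{d}t}{t}$ diverges. Assumption (T3) is a Cotlar inequality for $Te^{-r^2L}$ with $\cM_{p_1}(Tf)$ on the right-hand side, so it does not yield a self-contained $L^{p_0}\to L^{q_0}$ bound for $TA_{\ell(Q)}$ either. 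Consequently your recursion never gets off the ground.

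The device that replaces your local estimate (and which the paper, following \cite{BFP2016}, sketches in the proof of \Cref{thm:bilinear_convex_body_sparse_domination}) is the sharp maximal operator
\[
  T^{\#}_{Q_0}f(x)=\sup_{\substack{P\in\dgrid(Q_0)\\ P\ni x}}\Big(\dashint_{P}\Big|T\!\int_{\ell(P)^2}^{\infty}\!\cQ_t^{(N)}f\,\frac{\mathrm{d}t}{t}\Big|^{q_0}\dd\mu\Big)^{1/q_0},
\]
which satisfies $|Tf|\leq T^{\#}_{Q_0}f$ $\mu$-a.e.\ and is weak-$(p_0,p_0)$ (these facts use all of (T1)--(T3)). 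The stopping set is then
\[
  E=\bigl\{x\in Q_0:\max\{\cM^*_{Q_0,p_0}f(x),\,T^{\#}_{Q_0}f(x)\}>\eta\,\|f\|_{\L^{p_0}(\lambda Q_0)}\bigr\},
\]
so one stops on both averages of $f$ \emph{and} on $T^{\#}_{Q_0}f$; stopping only on $a(Q')$, as you do, gives no pointwise control of $Tf$ on $Q_0\setminus\bigcup_j B_j$. On that good set one simply uses $|Tf|\leq T^{\#}_{Q_0}f\leq\eta\,\|f\|_{\L^{p_0}(\lambda Q_0)}$ and H\"older. Inside each selected cube the ``difference'' is $T(f-\Lambda_{P}f)$ with $\Lambda_{P}f=\int_0^{\ell(P)^2}\cQ_t^{(N)}(f\1_{\lambda P})\,\frac{\mathrm{d}t}{t}$, and this is estimated by splitting into a far piece $\cT_1$ (handled via (T2) and annular decomposition, where your tail analysis is in the right spirit) and a near piece $\cT_2$ (handled via \eqref{eq:non_sharp_T} and again (T2)). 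In short, the missing idea is the auxiliary operator $T^{\#}$; once it is in place, the remaining geometry and off-diagonal bookkeeping are close to what you outlined.
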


\begin{letteredtheorem}[\protect{\cite[Theorem 6.4]{BFP2016}}]
\label{BFP64}
    Let $(M,\mu)$ be a doubling measure metric space. Let $1\leq p_0<q_0\leq\infty$ and $p\in (p_0, q_0)$. Let $\cald$ be a dyadic system in $M$ as in Subsection~\ref{subsec:dyadic} and let $\lambda \geq 2$. For $\varepsilon \in (0,1)$, let $\cals\subseteq\cald$ be an $\varepsilon$-sparse collection. Let $w$ be a weight on $M$. Then, there exists a constant $C=C(\mu,p,p_0,q_0,\varepsilon)$ such that the following estimate holds:
    \begin{align*}
        \sum_{P \in \cals} \mu(P)\left( \dashint_{\lambda P}|f|^{p_0} \dd\mu \right)^{\frac{1}{p_0}}\left(\dashint_{\lambda P}|g|^{q_0'} \dd\mu \right)^{\frac{1}{q_0'}}\leq C([w]_{A_{p/p_0}} [w]_{\RH_{(q_0/p)'}})^{\alpha}\norm{f}_{L^{p}_{w}(M,\mu)}\norm{g}_{L^{p'}_{w'}(M,\mu)},
    \end{align*}
    for all $f\in L^{p}_{w}(M,\mu)$ and $g\in L^{p'}_{w'}(M,\mu)$, where $w':=w^{-1/(p-1)}$ and
    \begin{align}
        \alpha := \max \left\lbrace\frac{1}{p-p_0}, \frac{q_0 - 1}{q_0 - p} \right\rbrace.
    \end{align}
\end{letteredtheorem}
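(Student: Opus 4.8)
The plan is to deduce the statement — which is a weighted bound for a bilinear sparse form — from the weighted dyadic Carleson embedding theorem after two applications of Hölder's inequality, having first passed to the dual weight. I would begin by reducing to the dyadic situation $\lambda=1$: by the doubling property of $\mu$, each dilate $\lambda P$ of a cube $P\in\cald$ is covered by a bounded number (depending only on $\lambda$ and the doubling constant) of dyadic cubes of measure comparable to $\mu(P)$, drawn from finitely many adjacent dyadic systems, so it suffices to bound $\sum_{P\in\cals}\mu(P)\aver{|f|^{p_0}}_P^{1/p_0}\aver{|g|^{q_0'}}_P^{1/q_0'}$ for an $\varepsilon$-sparse family $\cals$ (replacing $\cals$ by a bounded union of such, and letting the constant depend on $\varepsilon$). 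Writing $\sigma:=w'=w^{1-p'}$, so that $w^{1/p}\sigma^{1/p'}\equiv1$, and $K:=[w]_{A_{p/p_0}}[w]_{\RH_{(q_0/p)'}}$, I would then reduce by density to $f=\sigma\phi$, $g=w\psi$ with $\phi,\psi$ bounded of compact support (so that $\norm{f}_{L^{p}_w(M,\mu)}=\norm{\phi}_{L^p(\sigma\dd\mu)}$ and $\norm{g}_{L^{p'}_{w'}(M,\mu)}=\norm{\psi}_{L^{p'}(w\dd\mu)}$), and fix disjoint $E_P\subseteq P$ with $\mu(E_P)\geq\varepsilon\mu(P)$.

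The core is a pointwise Hölder estimate inside each local average. Inside $\aver{|f|^{p_0}}_P^{1/p_0}$ I would apply Hölder with respect to $\sigma\dd\mu$ to extract, on the one hand, a weighted-maximal-type average $(\aver{|\phi|^{t}}^{\sigma}_P)^{1/t}$ with $p_0<t<p$ ($\aver{\cdot}^{\sigma}_P$ denoting the average against $\sigma\dd\mu$), and on the other hand a power of the $A_{p/p_0}$-dual-weight average $\aver{w^{1-(p/p_0)'}}_P$; after a self-improvement (Gehring) of the $A_{p/p_0}$ condition to justify the slightly off exponent, $w\in A_{p/p_0}$ and the identity $(\tfrac1{p_0}-\tfrac1p)\big/(\tfrac p{p_0}-1)=\tfrac1p$ bound this second factor by $[w]_{A_{p/p_0}}^{1/p}\aver{\sigma}_P^{1/p}\aver{w}_P^{-1/p}$. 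Symmetrically — and here $p<q_0$ enters — inside $\aver{|g|^{q_0'}}_P^{1/q_0'}$ I would apply Hölder with respect to $w\dd\mu$, extracting $(\aver{|\psi|^{s}}^{w}_P)^{1/s}$ with $q_0'<s<p'$ and a power of $\aver{w^{(q_0/p)'}}_P$, which $w\in\RH_{(q_0/p)'}$ and the identity $(q_0/p)'(\tfrac1{q_0'}-\tfrac1{p'})=\tfrac1p$ bound by $[w]_{\RH_{(q_0/p)'}}^{1/p}\aver{w}_P^{1/p}$. Multiplying, the factors $\aver{w}_P^{\pm1/p}$ cancel, and absorbing $\mu(P)\aver{\sigma}_P^{1/p}\aver{w}_P^{1/p'}=\sigma(P)^{1/p}w(P)^{1/p'}$ one is reduced to
\[
K^{1/p}\sum_{P\in\cals}\sigma(P)^{1/p}\,w(P)^{1/p'}\bigl(\aver{|\phi|^{t}}^{\sigma}_P\bigr)^{1/t}\bigl(\aver{|\psi|^{s}}^{w}_P\bigr)^{1/s}\ \lesssim\ CK^{\alpha}\,\norm{\phi}_{L^p(\sigma\dd\mu)}\norm{\psi}_{L^{p'}(w\dd\mu)}.
\]

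To close this I would apply Hölder's inequality in $P$ with exponents $p,p'$, splitting the left side as $\bigl(\sum_P\sigma(P)(\aver{|\phi|^{t}}^{\sigma}_P)^{p/t}\bigr)^{1/p}\bigl(\sum_P w(P)(\aver{|\psi|^{s}}^{w}_P)^{p'/s}\bigr)^{1/p'}$ and invoke the weighted dyadic Carleson embedding theorem for $\sigma\dd\mu$ and $w\dd\mu$ (legitimate since $p/t,p'/s>1$): as $\cals$ is $\mu$-sparse and $w,\sigma\in A_\infty$ (because $w\in A_{p/p_0}\subseteq A_p$), the sequences $\{\sigma(P)\}$ and $\{w(P)\}$ are Carleson relative to $\sigma\dd\mu$, $w\dd\mu$ with constants $\lesssim[\sigma]_{A_\infty}$, $[w]_{A_\infty}$, which yields $\lesssim[\sigma]_{A_\infty}^{1/p}\norm{\phi}_{L^p(\sigma\dd\mu)}$ and $\lesssim[w]_{A_\infty}^{1/p'}\norm{\psi}_{L^{p'}(w\dd\mu)}$. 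It then remains to bound $K^{1/p}[\sigma]_{A_\infty}^{1/p}[w]_{A_\infty}^{1/p'}$ by $CK^\alpha$: using $[w]_{A_\infty}\lesssim[w]_{A_{p/p_0}}$, $[\sigma]_{A_\infty}\lesssim[\sigma]_{A_{p'}}=[w]_{A_p}^{1/(p-1)}\leq[w]_{A_{p/p_0}}^{1/(p-1)}$, and — in order to obtain the \emph{maximum}, not the \emph{sum}, of the two exponents — allocating the $A_{p/p_0}$-budget to one Carleson factor and the $\RH_{(q_0/p)'}$-budget to the other, exactly as in the classical endpoint case $p_0=1$, $q_0=\infty$, where the argument collapses to the sharp sparse-operator bound $[w]_{A_p}^{\max\{1,\,1/(p-1)\}}$. (One could alternatively short-circuit the whole computation by the Johnson--Neugebauer-type identification of $A_{p/p_0}\cap\RH_{(q_0/p)'}$ with $\{w:w^{(q_0/p)'}\in A_\rho\}$ for the appropriate $\rho$ and reduce directly to the classical weighted sparse bound in a rescaled Lebesgue space.)

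I expect the main obstacle to be the final exponent bookkeeping — extracting precisely $\alpha=\max\{1/(p-p_0),\,(q_0-1)/(q_0-p)\}$ rather than a larger power, which the naive route of dominating the sparse form by $\int M_{p_0}f\cdot M_{q_0'}g\,\dd\mu$ and using two independent Buckley-type weighted maximal inequalities fails to give — together with the self-improvement of the $A_{p/p_0}$ and $\RH_{(q_0/p)'}$ conditions needed to make the Carleson step applicable with $t<p$, $s<p'$ without spoiling the power of $K$. The endpoint cases $p_0=1$ (argue from the $A_1$ definition) and $q_0=\infty$ ($\RH_{(q_0/p)'}=\RH_1$ is vacuous, so only $A_{p/p_0}$ enters) then follow by strictly simpler variants of the same computation, and the doubling hypothesis on $(M,\mu)$ is used only in the first reduction.
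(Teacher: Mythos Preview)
The paper does not contain a proof of this statement: Theorem~\ref{BFP64} is one of the ``lettered'' theorems quoted verbatim from \cite[Theorem~6.4]{BFP2016} in the introduction, and is never reproved here. What the present paper does prove is the matrix-weighted analogue, Theorem~\ref{thm:matrix_weighted_bound}, so any comparison must be against that argument (Section~\ref{subsec:proof_matrix_weighted_estimate}).

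Relative to that proof, your route is recognisably in the same family but organised differently. The paper first inserts reducing operators $\cU_{2B(P),a}$ and $\cV_{2B(P),b}$ into the convex-body averages, applies H{\"o}lder in $P$ with exponents $p,p'$ to split into two factors $A_1,A_2$, and then --- for each factor separately --- uses an \emph{inner} H{\"o}lder with an exponent $\theta=t'(1+\delta)$ perturbed by the weak reverse-H{\"o}lder parameter $\delta\sim[U]_{A_{b',a}}^{-1}$ (Lemma~\ref{lemma:weak_reverse_Holder_for_scalar_Muckenhoupt_weights}); the resulting averages are dominated pointwise by the Hardy--Littlewood maximal function on the sparseness sets $F_P$, and the sharp bound $(q')^q\lesssim\delta^{-1}$ (Lemma~\ref{lemma:final_estimate}) converts the $\delta$-loss into the claimed power of the weight characteristic. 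There is no Carleson-embedding step, no passage to adjacent dyadic grids, and no explicit $A_\infty$ appearance: the self-improvement is handled entirely through the quantitative reverse-H{\"o}lder lemma applied to the scalar auxiliary weights $\omega_P=|\cU_{2B(P),a}^{-1}U^{1/a}|^a$.

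Your proposal instead front-loads the weight algebra (extracting $\aver{\sigma}_P^{1/p}\aver{w}_P^{-1/p}$ and $\aver{w}_P^{1/p}$ so that they cancel) and back-loads the analysis into a weighted Carleson embedding with $A_\infty$ constants. This is closer in spirit to the original \cite{BFP2016} proof and to the two-weight sparse literature; it is a perfectly viable alternative in the scalar setting, and the Johnson--Neugebauer shortcut you mention is in fact the identity used throughout Section~\ref{section:matrix-weight}. The trade-off is that the paper's approach localises the self-improvement to one controlled place and yields the exponent directly, whereas your route needs the separate allocation argument you flag at the end to recover the $\max$ rather than the sum --- that step is the one you would have to write out carefully to match the stated $\alpha$.
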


A major tool for extending given or already proven $L^{p_0}$-weighted bounds for some fixed $p_0$ to $L^{p}$-weighted bounds for all $p$ in some appropriate range is the method of \emph{extrapolation}, introduced by Jos{\'e} L.~Rubio de Francia \cite{Rubio_de_Francia_1984}. Several works followed further developing this method, until J.~Duoandikoextea \cite{Duoandikoetxea_2011} proved a sharp version of it. We refer to the book \cite{Book_Extrapolation} for a history of the subject, as well as detailed proofs of various forms of extrapolation and related results. The original Rubio de Francia Extrapolation assumes an already known $L^{p_0}(w)$-bound, for all $A_{p_0}$ weights $w$, for some fixed $1<p_0<\infty$, and yields $L^{p}(w)$-bounds, for all $A_{p}$ weights $w$, for all $1<p<\infty$. Of particular importance for the present article is a form of \emph{limited range extrapolation} established by P.~Auscher and J.~M.~Martell \cite[Theorem 4.9]{Auscher_Martell_I_2007} (we refer to \Cref{section:matrix-weight} for the notation):

\begin{letteredtheorem}[\protect{\cite[Theorem 4.9]{Auscher_Martell_I_2007}}]
\label{thm:original_limited_range_extrapolation}
Let $\cF$ be a collection of pairs $(f,g)$ of measurable functions on $\R^n$ such that neither $f$ nor $g$ is a.e.~equal to $0$. Let $0<p_0<q_0\leq\infty$. Assume that there exists $p$ with $p_0\leq p\leq q_0$, and $p<\infty$ if $q_0=\infty$, such that for all $(f,g)\in\cF$ and for all weights $w\in A_{p/p_0}\cap RH_{(q_0/p)'}$ with $\norm{f}_{L^{p}(w)}<\infty$,
\[\norm{f}_{L^{p}(w)}\leq C\norm{g}_{L^{p}(w)}\] holds
for some constant $C$ depending on $n,p,p_0,q_0$ and $w$.

Then, for all $p_0<q<q_0$, for all $(f,g)\in\cF$ and for all weights $w\in A_{q/p_0}\cap RH_{(q_0/q)'}$ with $\norm{f}_{L^{q}(w)}<\infty$,
\[\norm{f}_{L^{q}(w)}\leq C\norm{g}_{L^{q}(w)}\] holds
for some constant $C$ depending on $n,p,q,p_0,q_0$ and $w$.
\end{letteredtheorem}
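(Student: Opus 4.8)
The plan is to prove \Cref{thm:original_limited_range_extrapolation} by adapting Rubio de Francia's extrapolation algorithm to the limited range $(p_0,q_0)$. First I would reduce to the case $p_0=1$: given $\cF$ with exponents $p_0<q_0$, pass to the family $\cF_{p_0}:=\{(|f|^{p_0},|g|^{p_0})\such (f,g)\in\cF\}$, replacing the known exponent $p$ by $p/p_0$, the target exponents $q$ by $q/p_0$, and $q_0$ by $q_0/p_0$. The identities $\norm{h}_{L^r(w)}^{p_0}=\norm{|h|^{p_0}}_{L^{r/p_0}(w)}$ and $A_{r/p_0}\cap\RH_{(q_0/r)'}=A_{(r/p_0)/1}\cap\RH_{((q_0/p_0)/(r/p_0))'}$ show that the hypothesis for $\cF$ at $p$ with range $(p_0,q_0)$ is verbatim the hypothesis for $\cF_{p_0}$ at $p/p_0$ with range $(1,q_0/p_0)$, and the conclusion for $\cF_{p_0}$ transfers back to the desired conclusion for $\cF$, the finiteness qualifier included. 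If moreover $q_0=\infty$, then $\RH_{(q_0/r)'}=\RH_1$ is the class of all weights and the statement is exactly the classical Rubio de Francia extrapolation theorem; so one may also assume $1<q_0<\infty$, with known exponent $p\in[1,q_0]$ and target $q\in(1,q_0)$.

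The algebraic input I would rely on is the Johnson--Neugebauer-type identity $w\in A_s\cap\RH_t\iff w^t\in A_{t(s-1)+1}$ (for $1\le s<\infty$ and $1\le t\le\infty$, with the endpoint reverse-H\"older index $t=\infty$ treated by the analogous duality), together with the resulting limited-range Jones factorization, which expresses every weight in $A_s\cap\RH_t$ as a product of explicit powers of $A_1$ weights, the exponents of those powers depending only on $s$ and $t$. Alongside this I would use the Hardy--Littlewood maximal operator $\mathrm M$ to handle the Muckenhoupt part, and a second maximal operator adapted to the reverse-H\"older index, acting on a power of $w^{-1}$ (equivalently of the $A_q$-dual weight $\sigma:=w^{1-q'}$), to handle the reverse-H\"older part. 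For any such operator $\mathrm S$, bounded on the relevant weighted Lebesgue space with operator norm $N$, the Rubio de Francia iterate $\cR h:=\sum_{k\ge 0}(2N)^{-k}\mathrm S^{k}h$ satisfies $\cR h\ge h$ pointwise, $\norm{\cR h}_{L^r(w)}\le 2\norm{h}_{L^r(w)}$, and a one-sided $A_1$-type bound for $\cR h$ with constant depending only on $N$.

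For the main argument, fix $q\in(1,q_0)$, a weight $w\in A_q\cap\RH_{(q_0/q)'}$, and a pair $(f,g)\in\cF$ with $\norm{f}_{L^q(w)}<\infty$; one may assume $\norm{g}_{L^q(w)}<\infty$. The cases $q>p$, $q=p$ and $q<p$ each follow from a Rubio de Francia construction, with the raising-to-a-power step and the roles of $w$ and $\sigma$ adjusted; I describe $q>p$. Raising to the power $p$ and using $q/p>1$, duality gives
\[\norm{f}_{L^q(w)}^{p}=\norm{|f|^{p}}_{L^{q/p}(w)}=\sup\Bigl\{\int_{\R^n}|f|^{p}\,h\,w\dd x\such 0\le h,\ \norm{h}_{L^{(q/p)'}(w)}\le 1\Bigr\}.\]
For each admissible $h$, I would build a weight $W=W_h$ as a product of explicit powers of Rubio de Francia iterates --- of the operators above, applied to $h$, to $w$, and to $\sigma$ --- with the exponents dictated by the target-side pair $(p,(q_0/p)')$ rather than by $(q,(q_0/q)')$, arranged so that: (i) $W\ge h\,w$ pointwise; (ii) $W\in A_p\cap\RH_{(q_0/p)'}$ with characteristic controlled by that of $w$; and (iii) $\int_{\R^n}W^{(q/p)'}w^{1-(q/p)'}\dd x\lesssim 1$, which follows from the $L^{(q/p)'}(w)$-norm bounds on the iterates via H\"older's inequality. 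Granting this, (i) gives $\int|f|^{p}h\,w\le\int|f|^{p}W=\norm{f}_{L^p(W)}^{p}$; (iii) and H\"older give $\norm{f}_{L^p(W)}<\infty$, so the hypothesis at $p$ applies and yields $\norm{f}_{L^p(W)}^{p}\le C\norm{g}_{L^p(W)}^{p}=C\int|g|^{p}W$ by (ii); and (iii) and H\"older again give $\int|g|^{p}W\lesssim\bigl(\int_{\R^n}|g|^{q}w\dd x\bigr)^{p/q}=\norm{g}_{L^q(w)}^{p}$. Chaining these estimates and taking the supremum over $h$ produces $\norm{f}_{L^q(w)}\le C\norm{g}_{L^q(w)}$ with $C$ of the asserted form.

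The main obstacle is the construction of $W$, that is, the simultaneous control of the $A_p$ and $\RH_{(q_0/p)'}$ characteristics in item (ii). In the classical case ($q_0=\infty$) only the $A_p$-condition is present, a single iteration of $\mathrm M$ suffices, and the weight $(\cR h)\,w$ is automatically an $A_p$ weight; but in the limited range one must fabricate a weight that simultaneously obeys the $A_p$-bound and the reverse-H\"older bound $\RH_{(q_0/p)'}$, quantitatively. This forces the second iteration --- on a power of $w^{-1}$ --- and checking that the recombined weight lands in exactly $A_p\cap\RH_{(q_0/p)'}$ rests on a careful calculation relating $1<q<q_0$ to $p$ through the Johnson--Neugebauer identity and the factorization of the previous paragraph. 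In particular one must verify that both iterations are bounded on the correct weighted spaces --- which is exactly where the full hypothesis $w\in A_q\cap\RH_{(q_0/q)'}$ is used, and not merely $w\in A_q$ --- and that the power arising in the reverse-H\"older reparametrization matches the one produced by the iteration, so that (i), (ii) and (iii) hold simultaneously. The degenerate cases $p=1$ and $p=q_0$, in which $(q_0/p)'$ equals $\infty$, need only minor changes and I would dispose of them first.
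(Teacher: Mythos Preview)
The paper does not give its own proof of this statement: \Cref{thm:original_limited_range_extrapolation} is quoted from Auscher--Martell as background in the introduction, so there is no paper proof to compare against directly. The closest thing is the proof of \Cref{thm:limited_range_extrapolation_matrix_weights}, the matrix-weighted generalization, which the paper remarks specializes to \Cref{thm:original_limited_range_extrapolation} when $d=1$.

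Your outline is a faithful summary of the Auscher--Martell strategy and is broadly compatible with what the paper does for \Cref{thm:limited_range_extrapolation_matrix_weights}. One structural difference worth noting: you propose running \emph{two} Rubio de Francia iterations (one for the $A_p$ side, one adapted to the reverse-H\"older index) and then multiplying powers of the outputs to manufacture $W$. The paper's proof of \Cref{thm:limited_range_extrapolation_matrix_weights} instead runs a \emph{single} iteration of a modified maximal operator $P_W$ (built from $\cM^{\cK}$ conjugated by explicit powers of $W$), obtains from it a single $A_1$ (or $A_\infty$) ingredient, and then invokes the reverse Jones factorization (\Cref{thm:reverseJonesfactorization}) to land in the correct $A_{r(p)}$ class. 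In the scalar case both routes work; the single-iteration-plus-factorization route is somewhat cleaner because the exponent bookkeeping is packaged into the Jones-type lemma rather than spread across two simultaneous constructions.

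That said, what you have written is a plan, not a proof. You explicitly flag ``the main obstacle is the construction of $W$'' and then leave the exponents and the verification of (i)--(iii) unspecified. That construction \emph{is} the proof: the precise powers, the check that the iterates are bounded on the right weighted spaces (this is where $w\in A_q\cap RH_{(q_0/q)'}$, not just $w\in A_q$, is used), and the algebra matching the Johnson--Neugebauer reparametrization to the Rubio de Francia output are exactly the content of Auscher--Martell's argument. Without those details one cannot judge correctness; the paper's proof of \Cref{thm:limited_range_extrapolation_matrix_weights} shows what the full calculation looks like in the $d=1$ specialization.
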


Questions regarding multivariate stationary stochastic processes lead naturally to the investigation of the boundedness of operators on \emph{matrix-weighted} Lebesgue spaces. More precisely, a $d\times d$ \emph{matrix weight} $W$ on $\R^n$ is a locally integrable function $W:\R^n\to M_d(\C)$ on $\R^n$ taking values in the set of $d\times d$ complex matrices such that $W(x)$ is a positive definite matrix for a.e.~$x\in\R^n$. Suppose $T$ is a \emph{linear} operator, which acts on a space of scalar-valued functions and goes into a space of scalar-valued functions. In that case, we can consider (abusing notation) the operator $T$ acting on a space of $d$-dimensional vector-valued functions by $T(f):=(Tf_1,\ldots, Tf_d)$. If we denote $\norm{f}_{L^{p}(W)}:=\left(\int_{\R^n}|W(x)^{1/p}f(x)|^{p}\dd x\right)^{1/p}$ for $1<p<\infty$, then the question of matrix-weighted boundedness reads as
\begin{equation*}
    \norm{Tf}_{L^{p}(W)}\leq C\norm{f}_{L^{p}(W)},
\end{equation*}
where $T$ is the vector-valued extension of a linear operator. The correct condition on the weight $W$ for the above bound to hold was established by F.~Nazarov, S.~Treil and A.~Volberg over several works \cite{Hunt_Bellman_1996, Angle_Past_Future_1997, Volberg_1997} and is the following:
\begin{equation*}
    [W]_{A_p}:=\sup_{Q}\dashint_{Q}\left(\dashint_{Q}|W(x)^{1/p}W(y)^{-1/p}|^{p'}\dd y\right)^{p/p'}\dd x<\infty,
\end{equation*}
where the supremum is taken over all cubes $Q\subseteq\R^n$ (with faces parallel to the coordinate hyperplanes). The class of such weights class was further studied by M.~Bownik \cite{Bownik_2001}, while M.~Christ and M.~Golderg \cite{Christ_Goldberg_2001, Goldberg_2003} studied a Hardy--Littlewood type maximal function for vector-valued functions in the presence of matrix weights.

The question of quantifying matrix-weighted bounds is much harder and deeper. A major breakthrough was achieved by F.~Nazarov, S.~Petermichl, S.~Treil and A.~Volberg \cite{NPTV2017}; they proved a \emph{pointwise convex body sparse domination} of Calder{\'o}n--Zygmund operators and established the bound
\begin{equation*}
    \norm{Tf}_{L^{2}(W)}\leq C[W]_{A_2}^{3/2}\norm{f}_{L^2(W)},
\end{equation*}
where the constant $C$ does not depend on the weight. These bounds were a bit later generalized to any exponent $1<p<\infty$ \cite{Cruz_Uribe_OFS_2018}. Very recently, K.~Domelevo, S.~Petermichl, S.~Treil, and A.~Volberg \cite{Matrix_A2} proved that when $T$ is the Hilbert transform on $\R$, the exponent $3/2$ in the above estimate cannot be improved.

Matrix-weight bounds for rough singular integral operators were established in \cite{Di_Plinio_2021}, while \cite{Muller_Rivera_Rios_2022} established a \emph{bilinear convex body sparse domination} for such operators. More recently, a quite general principle for deriving a bilinear convex body sparse domination from \emph{scalar} sparse domination was provided by Hyt{\"o}nen \cite{Hyt2024}. Matrix-weighted bounds for such bilinear convex body sparse forms were already present in special cases in \cite{Di_Plinio_2021, Muller_Rivera_Rios_2022} and were obtained in full generality very recently by Laukkarinen \cite{Laukkarinen_2023}.

An analog of the Rubio de Francia extrapolation for matrix-weighted estimates seemed out of reach for many years, until the breakthrough by M.~Bownik and D.~Cruz-Uribe \cite{Cruz_Uribe_Bownik_Extrapolation}, which developed a whole theory of \emph{convex body valued} operators. See further \cite{Cruz_Uribe_Extrapolation} for an exposition of the results in \cite{Cruz_Uribe_Bownik_Extrapolation}.

\subsection{Main results}

The main goal of this paper is to prove matrix-weighted bounds for the sublinear operators considered in \cite{BFP2016}. Our main result is the following:

\begin{thm}
\label{thm:main_theorem}
    Let $(M,\mu)$ be a doubling measure metric space. $1\leq p_0< 2< q_0\leq \infty$ and take $p\in(p_0,q_0)$. Let $\mathbf{T}:L^{p}(M,\mu;\C^d)\to L^{p}_{\cK}(M,\mu)$ be a sublinear operator compatible with a bounded sublinear operator $T:L^{p}(M,\mu)\to L^{p}(M,\mu)$ satisfying the assumptions in \Cref{section:preliminaries}  with respect to $p_0$ and $q_0$ (see Definitions \ref{dfn:sublinear_convex_set} and \ref{dfn:compatibility_scalar_convex_body}). Set
    \begin{equation*}
        t:=\frac{p}{p_0},\quad s:=\frac{q_0}{p},\quad a:=\frac{t'p}{t},\quad b:=s'p.
    \end{equation*}
    Let $W$ be a $d\times d$ matrix weight on $M$ such that
    \begin{equation*}
        [W^{s'}]_{A_{a',b}}:=\sup_{B}\dashint_{B}\left(\dashint_{B}|W(x)^{1/p}W(x)^{-1/p}|^{a}\dd\mu(y)\right)^{b/a'}\dd\mu(x) < \infty,
    \end{equation*}
    where the supremum ranges over all open balls $B\subseteq M$. Then, there exists a constant $K>0$ depending only on $T,\mu,p_0,q_0,p$ and $d$ such that
    \begin{equation*}
        \norm{\mathbf{T}f}_{L^{p}_{\cK,W}(M,\mu)}\leq K\cdot ([W^{s'}]_{A_{a',b}})^{\alpha}\norm{f}_{L^{p}_{W}(M,\mu)},
    \end{equation*}
    for all compactly supported functions $f\in L^{p}(M,\mu;\C^d)$, where $W':=W^{-1/(p-1)}$ and
    \begin{equation*}
        \alpha:=\frac{1}{s'(p-p_0)}+\frac{1}{p'}.
    \end{equation*}   
\end{thm}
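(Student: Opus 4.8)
The argument proceeds in two largely independent stages, as indicated in the abstract: a bilinear convex body sparse domination of $\mathbf{T}$, and a quantitative matrix-weighted estimate for the resulting sparse form.

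\medskip
\noindent\textit{Step 1 (convex body sparse domination).} The plan is first to transfer the scalar bilinear sparse bound of \Cref{BFP57} to the $\C^d$-valued setting by feeding it into the abstract principle of Hyt\"onen \cite{Hyt2024}. The compatibility of $\mathbf{T}$ with the scalar operator $T$ (Definitions \ref{dfn:sublinear_convex_set} and \ref{dfn:compatibility_scalar_convex_body}) provides exactly the hypothesis that principle requires: it encodes that the action of $\mathbf{T}$ on a vector-valued input is controlled, uniformly over unit vectors, by $T$ applied coordinatewise. One thereby obtains, for every compactly supported $f\colon M\to\C^d$ and every $g\colon M\to\C^d$, an $\varepsilon$-sparse collection $\cals=\cals(f,g)\subseteq\cald$ such that the relevant set-valued pairing of $\mathbf{T}f$ with $g$ is contained in
\[
C\sum_{P\in\cals}\mu(P)\,\convex{f}_{L^{p_0}(\lambda P)}\cdot\convex{g}_{L^{q_0'}(\lambda P)} .
\]
Two routine bookkeeping points, both already present in \cite{BFP2016}, have to be handled: removing the requirement in \Cref{BFP57} that $f,g$ be supported in a single dilate $\lambda Q_0$ of a dyadic cube (decompose the support into a bounded-overlap family of such cubes and sum, absorbing the overlap constant into $C$), and carrying the fixed dilation $\lambda\ge 2$ through all the averages.

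\medskip
\noindent\textit{Step 2 (matrix-weighted bound for the sparse form).} By the duality characterisation of the convex body weighted norm $\norm{\cdot}_{L^{p}_{\cK,W}(M,\mu)}$, it then suffices to bound, uniformly over $g$ with $\norm{g}_{L^{p'}_{W'}(M,\mu)}\le 1$ (where $W'=W^{-1/(p-1)}$), the scalar sparse form
\[
\Lambda_{\cals}(f,g):=\sum_{P\in\cals}\mu(P)\,\bigl|\convex{f}_{L^{p_0}(\lambda P)}\bigr|\,\bigl|\convex{g}_{L^{q_0'}(\lambda P)}\bigr| .
\]
Write $f=W^{-1/p}h$ and $g=W^{1/p}k$, so that $\norm{h}_{L^p(M,\mu)}=\norm{f}_{L^p_W(M,\mu)}$ and $\norm{k}_{L^{p'}(M,\mu)}\le 1$. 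For each $P\in\cals$ I would introduce positive-definite reducing matrices $\mathcal{W}_P,\mathcal{W}'_P\in\mathrm{P}_{d}(\C)$ adapted to $W$ and the relevant exponents so that, by John's ellipsoid theorem, the convex body sizes $\bigl|\convex{f}_{L^{p_0}(\lambda P)}\bigr|$ and $\bigl|\convex{g}_{L^{q_0'}(\lambda P)}\bigr|$ are comparable, up to constants depending only on $d$, to scalar averages over $\lambda P$ of $|\mathcal{W}_P h|$- and $|\mathcal{W}'_P k|$-type quantities. The two-exponent matrix Muckenhoupt hypothesis $[W^{s'}]_{A_{a',b}}<\infty$, with $t=p/p_0$, $s=q_0/p$, $a=t'p/t$, $b=s'p$, is precisely the quantity controlling the operator-norm products $|\mathcal{W}_P\mathcal{W}'_P|$ uniformly in $P$ --- equivalently, it turns the matrix-valued sequences produced by the reduction into genuine Carleson sequences with Carleson constant a power of $[W^{s'}]_{A_{a',b}}$. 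Inserting these bounds, absorbing the reducing matrices, and splitting the form leaves two scalar Carleson-type sums --- one built from $h$ with inner exponent $t=p/p_0$, the other from $k$ --- controlled by the Carleson embedding theorem as in the proof of \Cref{BFP64}, following the scheme of Nazarov--Petermichl--Treil--Volberg \cite{NPTV2017} and its recent adaptation to convex body sparse forms by Laukkarinen \cite{Laukkarinen_2023}, but now measured against the sharper two-exponent quantity. Tracking the constants through the resulting stopping-time argument produces the power $\alpha=\frac{1}{s'(p-p_0)}+\frac{1}{p'}$, the first summand being an ``$A_t$-type'' contribution from the $h$-side (compare the term $\frac{1}{p-p_0}$ in \Cref{BFP64}) and the second a ``reverse-H\"older-type'' contribution from the $k$-side; unlike in \Cref{BFP64}, where the analogous powers combine through a maximum, here they add, because the matrix weight does not self-improve and the two sides must be handled separately.

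\medskip
\noindent\textit{Main obstacle.} The genuine difficulty is Step 2, and in particular the appearance of the two-exponent matrix condition $[W^{s'}]_{A_{a',b}}$: one has to identify the exact combination of the four exponents $p_0<p<q_0$ --- packaged as $t,s,a,b$ --- that simultaneously (i) dominates all the reducing-operator products arising along the sparse collection and (ii) yields the sharp power $\alpha$ after the stopping-time/Carleson-embedding argument. This two-parameter structure has no scalar counterpart, and I expect it to be exactly the source of the claimed improvement over Laukkarinen's bounds. Carrying out the convex body version of the Nazarov--Petermichl--Treil--Volberg scheme while keeping the dependence on the weight constant explicit and polynomial, and verifying that the John-ellipsoid reduction costs only dimensional constants, is the technical heart of the argument; Step 1, by comparison, is bookkeeping layered on top of \cite{Hyt2024}.
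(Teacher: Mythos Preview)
Your two-step plan matches the paper's exactly: first a bilinear convex body sparse domination (the paper's \Cref{thm:bilinear_convex_body_sparse_domination}, obtained by re-running the proof of \Cref{BFP57} and checking the hypotheses of a refined version of Hyt\"onen's bootstrapping principle), then a matrix-weighted bound for the resulting sparse form (the paper's \Cref{thm:matrix_weighted_bound}, adapting \cite[Section~5]{NPTV2017}), glued together by the selection-function duality argument in \S\ref{subsec:estimate_operator}. Your identification of the reducing-operator mechanism and of $[W^{s'}]_{A_{a',b}}$ as the quantity controlling $|\cU_P\cV_P|$ is correct.

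One point in Step 2 is slightly off and worth correcting. You write that the two contributions to $\alpha$ add ``because the matrix weight does not self-improve,'' and you invoke a Carleson embedding/stopping-time scheme. The paper's engine is different and in a sense opposite: it passes from the matrix weight to the \emph{scalar} weights $\omega_P=|\cU_{2B(P),a}^{-1}U^{1/a}|^{a}$ (and the dual ones), observes via \Cref{lemma:properties_of_Apq_weights} that these lie uniformly in scalar $A_{r'}$ (resp.\ $A_r$), and then applies the scalar weak reverse H\"older inequality (\Cref{lemma:weak_reverse_Holder_for_scalar_Muckenhoupt_weights}) to gain the extra $1+\delta$ exponent needed to invoke the $L^p$-boundedness of the Hardy--Littlewood maximal function over the disjoint sparse pieces $F_P$. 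So there \emph{is} self-improvement, but only at the scalar level; the cost $\delta^{-1}\simeq[U]_{A_{b',a}}$ of that reverse H\"older step is precisely what produces the $\frac{1}{p(r-1)}$ part of $\alpha$, while $\frac{1}{b}$ comes from the reducing-operator product bound and $\frac{1}{p'}$ from the symmetric argument on the $g$-side. No Carleson embedding or stopping-time is used. This does not invalidate your outline, but it is the actual technical heart of Step 2.
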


We prove \Cref{thm:main_theorem} using a duality argument (explained in detail in \ref{subsec:estimate_operator}) in two steps. First, in \Cref{section:bilinear_convex_body_sparse_domination} we extend \Cref{BFP57} and prove a bilinear convex body sparse domination for the operators in question:

\begin{thm}
\label{thm:bilinear_convex_body_sparse_domination}
	$1\leq p_0< 2< q_0\leq \infty$, and fix $p\in(p_0,q_0)$. Let $\mathbf{T}:L^{p}(M,\mu;\C^d)\to L^{p}_{\cK}(M,\mu)$ be a sublinear operator compatible with a bounded sublinear operator $T:L^{p}(M,\mu)\to L^{p}(M,\mu)$ satisfying the assumptions in \Cref{section:preliminaries} (see Definitions \ref{dfn:sublinear_convex_set} and \ref{dfn:compatibility_scalar_convex_body}). Let $\cald$ be a dyadic system in $M$ as in \Cref{subsec:dyadic}. Let also $0<\varepsilon<1$. Then, there exists a constant $C=C(T,\mu,p_0,q_0,\varepsilon)>0$ such that for all $f\in L^p(M,\mu;\C^d)$ and $g\in L^{p'}(M,\mu;\C^d)$, whose coordinate functions are all supported on $\lambda Q_0$ for some cube $Q_0\in \cald$ and sufficiently large (depending on $\dgrid$) $\lambda\geq 2$, there exists an $\varepsilon$-sparse collection $\cals\subseteq \cald$ (depending on $T,p_0,q_0,d,f$ and $g$) with
    \[\int_{Q_0} |\mathbf{T}f \cdot g| \dd\mu\leq Cd^{3/2}\sum_{P\in \cals}μ(P)\Big|\convex{f}_{\L^{p_0}(\lambda P;\C^d)}\cdot\convex{g}_{\L^{q_0'}(\lambda P;\C^d)}\Big|.\]
\end{thm}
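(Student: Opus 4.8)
The plan is to deduce the bilinear convex body sparse domination from the scalar sparse domination in \Cref{BFP57} by running the general principle of Hyt\"onen \cite{Hyt2024} — the same strategy that upgrades scalar sparse bounds to convex body sparse bounds for Calder\'on--Zygmund operators — and adapting it to the non-kernel, two-exponent setting of \cite{BFP2016}. The first step is to reduce matters to a single scalar estimate via testing: given vector-valued $f=(f_1,\dots,f_d)$ and $g=(g_1,\dots,g_d)$ with coordinates supported on $\lambda Q_0$, one wants to bound $\int_{Q_0}|\mathbf Tf\cdot g|\dd\mu$ by the convex body sparse form. Since $\mathbf T$ is compatible with the scalar $T$ (Definition~\ref{dfn:compatibility_scalar_convex_body}), for each measurable choice of selectors $\mathbf T f$ reduces, pointwise, to applying $T$ to scalar functions of the form $\inner{f(\cdot)}{e}$ for unit vectors $e\in\C^d$; thus for fixed unit vectors $e,e'\in\C^d$ we have a scalar pairing controlled by \Cref{BFP57}: $|\int_{Q_0}T(\inner{f}{e})\overline{\inner{g}{e'}}\dd\mu|\le C\sum_{P\in\cals_{e,e'}}\mu(P)(\dashint_{\lambda P}|\inner{f}{e}|^{p_0})^{1/p_0}(\dashint_{\lambda P}|\inner{g}{e'}|^{q_0'})^{1/q_0'}$. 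The catch is that the sparse family $\cals_{e,e'}$ produced by \Cref{BFP57} depends on $e,e'$; the standard remedy is to observe that the construction in \cite{BFP2016} is driven by a Calder\'on--Zygmund-type stopping time applied to the scalar averages of $|f|^{p_0}$ and $|g|^{q_0'}$, so one can instead run the stopping time against the \emph{vector} quantities $\dashint_{\lambda P}|f|^{p_0}\dd\mu$ and $\dashint_{\lambda P}|g|^{q_0'}\dd\mu$ (equivalently, against the largest coordinate-direction averages), obtaining a single $\varepsilon'$-sparse family $\cals$ that works simultaneously for all $e,e'$ at the cost of absolute dimensional constants and a mild adjustment of the sparseness parameter.

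The second step converts the scalar right-hand side into the convex body form. Recall that for a normed space $X$ and an integrable $X$-valued $f$, the convex body average $\convex{f}_X$ is (up to the John ellipsoid, hence up to a factor $\sqrt d$) comparable to the set $\{\dashint_{E}\phi f\dd\mu : \|\phi\|_{X'}\le 1\}$; in particular $|\convex{f}_{\L^{p_0}(\lambda P;\C^d)}|\simeq \sup_{|e|\le 1}(\dashint_{\lambda P}|\inner{f}{e}|^{p_0}\dd\mu)^{1/p_0}$ with the implicit constant depending only on $d$ — this is exactly where the $d^{3/2}$ in the statement will come from (a $\sqrt d$ on each of the two factors and another $\sqrt d$ from passing between $|\convex{f}_X\cdot\convex{g}_Y|$ and $\sup_{e,e'}$). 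Concretely: fix a point $x\in Q_0$ and a Borel selection; for each $P$ appearing in the sparse sum the scalar estimate with the optimal $e=e(P)$, $e'=e'(P)$ gives $\mu(P)(\dashint_{\lambda P}|\inner{f}{e}|^{p_0})^{1/p_0}(\dashint_{\lambda P}|\inner{g}{e'}|^{q_0'})^{1/q_0'}\le \mu(P)|\convex{f}_{\L^{p_0}(\lambda P)}|\,|\convex{g}_{\L^{q_0'}(\lambda P)}|$, and since $|\convex{f}_{\L^{p_0}}\cdot\convex{g}_{\L^{q_0'}}|\ge c_d^{-1}|\convex{f}_{\L^{p_0}}|\,|\convex{g}_{\L^{q_0'}}|$ the desired bound follows after summing over $P\in\cals$. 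One must be slightly careful that $\convex{f}_{\L^{p_0}(\lambda P;\C^d)}$ is well-defined (finite-dimensional, $f$ coordinate-wise in $L^{p_0}$ on the bounded set $\lambda P$), which holds under the hypotheses; and that the selection realizing the absolute value $|\mathbf Tf\cdot g|$ can be taken measurable, which is standard for compact-convex-set-valued maps.

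The main obstacle I expect is precisely the uniformization of the sparse family over the continuum of directions $e,e'\in\overline{\mathbf B}$: \Cref{BFP57} only furnishes, for each \emph{fixed} pair of scalar functions, \emph{some} sparse collection, and a priori these collections could be wildly different as $e,e'$ vary. Resolving this requires going into the proof of \Cref{BFP57} in \cite{BFP2016} rather than using it as a black box — specifically, identifying that the cubes selected are the maximal dyadic cubes on which a fixed-multiple threshold is exceeded by $P\mapsto\dashint_{\lambda P}|f|^{p_0}$ or $P\mapsto\dashint_{\lambda P}|g|^{q_0'}$, plus the cubes forced in by the weak-type bounds on the relevant maximal/smoothing operators, and checking that replacing $|\inner f e|^{p_0}$ by $|f|^{p_0}$ in those thresholds changes nothing beyond absolute constants. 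An alternative that sidesteps reopening \cite{BFP2016} is the abstract lemma from \cite{Hyt2024} asserting that if a scalar sparse bound holds for \emph{every} pair in a class with a uniform constant, then a convex body sparse bound holds (with a larger sparse family obtained by a single stopping construction against the union of the relevant vector averages); I would invoke that lemma, verify its hypotheses hold for $\mathbf T$ and $T$ via the compatibility relation and \Cref{BFP57}, and let it supply both the uniform family $\cals$ and the dimensional loss, reducing this theorem to a citation plus the bookkeeping of the two-exponent $(p_0,q_0)$ normalization.
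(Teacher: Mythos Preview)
There is a genuine gap in your second step. The inequality you invoke,
\[
|\convex{f}_{\L^{p_0}(\lambda P)}\cdot\convex{g}_{\L^{q_0'}(\lambda P)}|\ \ge\ c_d^{-1}\,|\convex{f}_{\L^{p_0}(\lambda P)}|\,|\convex{g}_{\L^{q_0'}(\lambda P)}|,
\]
is simply false. Take $f=\phi\,e_1$ and $g=\psi\,e_2$ with $e_1\perp e_2$ and $\phi,\psi$ nonzero scalar functions: then $\convex{f}\subseteq\C e_1$, $\convex{g}\subseteq\C e_2$, so $\convex{f}\cdot\convex{g}=\{0\}$, while $|\convex{f}|,|\convex{g}|>0$. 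In other words, the Minkowski dot product of two convex bodies can vanish while each body has positive radius. Consequently your route ``LHS $\le$ scalar sparse form $\le$ convex body sparse form'' breaks at the second inequality: the scalar sparse form $\sum_P\mu(P)|\convex{f}_P|\,|\convex{g}_P|$ is in general \emph{strictly larger} than the convex body form, and cannot be controlled by it. Relatedly, your reading of compatibility is off: Definition~\ref{dfn:compatibility_scalar_convex_body} only gives $\mathbf T(\phi v)=T\phi\,\overline{\D}\,v$ for scalar $\phi$ and vectors $v$; it does not furnish a direct pointwise reduction of $\mathbf Tf$ to $T(\inner{f}{e})$ for arbitrary $f$.

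What the paper does (following \cite[Lemma~4.1, Proposition~4.6]{Hyt2024}) is avoid the false comparison altogether by choosing, \emph{at each scale $Q$}, an orthonormal basis adapted to the John ellipsoid of $\convex{f}_{\L^{p_0}(\lambda Q)}$. One writes $f=\sum_{i=1}^d f_i\,R^{-1}v_i$ and $g=\sum_{i=1}^d g_i\,R^{*}v_i$ in that basis, uses sublinearity and compatibility to get $|\mathbf Tf\cdot g|\le\sum_i|T(\Lambda_Q f_i)\,g_i|$, applies the \emph{local inductive step} of \cite{BFP2016} (not the finished Theorem~\ref{BFP57}) to each of the $d$ scalar pairs, and then invokes the key John-ellipsoid inequality
\[
\sum_{i=1}^d\|f_i\|_{\L^{p_0}(\lambda Q)}\|g_i\|_{\L^{q_0'}(\lambda Q)}\ \le\ d^{3/2}\,|\convex{f}_{\L^{p_0}(\lambda Q)}\cdot\convex{g}_{\L^{q_0'}(\lambda Q)}|
\]
(\Cref{lemabc:one_scale}). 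This is where the $d^{3/2}$ comes from, and it is exactly the replacement for the false inequality you wrote. The uniformisation of the sparse family is then handled by taking, at each step, the maximal cubes in the union of the $d$ stopping families $\cF_1,\dots,\cF_d$ (one per coordinate), costing only a factor $d$ in the sparseness parameter --- there is no continuum of directions to worry about. Finally, this requires reopening the proof of \cite[Theorem~5.7]{BFP2016} to check that its stopping-time estimates hold in the more flexible form (\ref{outside_estimate})--(\ref{other_statement_which_i_dont_understand_completely}) for \emph{any} covering family $\cG$ of the stopping cubes, not just the stopping cubes themselves; the black-box citation you propose at the end would not suffice.
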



Then, in \Cref{section:matrix_weighted_bounds} we prove matrix-weighted bounds for bilinear convex body sparse forms of the type appearing in \Cref{thm:bilinear_convex_body_sparse_domination}:

\begin{thm}
    \label{thm:matrix_weighted_bound}
    Let $1\leq p_0<q_0\leq\infty$ and $p\in(p_0,q_0)$. Let $\cald$ be a dyadic system in $M$ as in Subsection~\ref{subsec:dyadic} and let $\lambda\geq2$. Let $0<\varepsilon<1$ and let $\cals\subseteq\cald$ be an $\varepsilon$-sparse collection. Set
    \begin{equation*}
        t:=\frac{p}{p_0},\quad s:=\frac{q_0}{p},\quad a:=\frac{p}{t-1},\quad b:=s'p.
    \end{equation*}
    Let $W$ be a $d\times d$ matrix weight on $M$ such that $[W^{s'}]_{A_{a',b}}<\infty$. Then, there exists a constant $C=(\mu,d,p_0,q_0,p)$ such that for all $f\in L^{p}_{W}(M,\mu)$ and all $g\in L^{p'}_{W'}(M,\mu)$ the following estimate holds:
	\begin{equation*}
		\sum_{P\in\cals}\mu(P)\big|\convex{f}_{\L^{p_0}(\lambda P;\C^d)}\cdot\convex{g}_{\L^{q_0'}(\lambda P;\C^d)}\big|\leq\frac{C}{\varepsilon}([W^{s'}]_{A_{a',b}})^{\alpha}\norm{f}_{L^{p}_{W}(M,\mu)}\norm{g}_{L^{p'}_{W'}(M,\mu)},
	\end{equation*}
    where $W':=W^{-1/(p-1)}$ and
    \begin{equation*}
        \alpha:=\frac{1}{s'(p-p_0)}+\frac{1}{p'}.
    \end{equation*}
\end{thm}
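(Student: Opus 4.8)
The plan is to reduce the matrix-weighted convex body sparse bound to a scalar-weighted sparse bound of the form in Theorem~\ref{BFP64}, via the standard Nazarov--Petermichl--Treil--Volberg strategy of "reducing matrices to scalars" by testing against reducing operators. Let me think through the structure.

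We have a bilinear convex body sparse form $\sum_{P\in\cals}\mu(P)|\convex{f}_{\L^{p_0}(\lambda P)}\cdot\convex{g}_{\L^{q_0'}(\lambda P)}|$. The convex body average $\convex{f}_{\L^{p_0}(\lambda P;\C^d)}$ is a convex symmetric set in $\C^d$; the product of two such sets, $K_1\cdot K_2$, should be interpreted so that $|K_1\cdot K_2| = \sup\{|\inner{u}{v}| : u\in K_1, v\in K_2\}$. The key reduction: for each ball/cube $P$, introduce reducing matrices $\cV_P$ (associated to $W^{1/p}$ restricted to $\lambda P$, in the appropriate $L^{p_0}$ sense) and $\cU_P$ (associated to $W'^{1/p'}$, i.e.\ $(W')^{1/p'}$, in the $L^{q_0'}$ sense), such that $\convex{f}_{\L^{p_0}(\lambda P)}$ is comparable (up to a dimensional constant $d$, by the John ellipsoid theorem / the $\cK$-valued averaging lemma) to an ellipsoid described by $\cV_P^{-1}$, and similarly for $g$. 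Then

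the sparse form is dominated by $d^{O(1)}\sum_P \mu(P)\, |\cV_P^{-1} \cU_P^{-1}|\cdot \langle |W^{1/p}f|^{p_0}\rangle_{\lambda P}^{1/p_0}\langle |W'^{1/p'}g|^{q_0'}\rangle_{\lambda P}^{1/q_0'}$ — wait, I need to be careful with where the weights sit.

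**Here is the step-by-step plan.** First, unfold the convex body product: by the definition of $|\cdot|$ on products of convex bodies and duality, $|\convex{f}_{\L^{p_0}(\lambda P)}\cdot\convex{g}_{\L^{q_0'}(\lambda P)}|$ equals $\sup$ over scalar-valued $\phi,\psi$ (living in the right unit balls) of $|\langle \langle f\phi\rangle_{\lambda P}, \langle g\psi\rangle_{\lambda P}\rangle|$; this is the mechanism that lets us replace vector functions by scalar ones at the cost of passing to an auxiliary pair. Second — the heart of the matrix argument — for each $P$ choose reducing operators: a positive-definite $\cA_P$ with $|\cA_P v| \simeq_d (\dashint_{\lambda P}|W(x)^{1/p}v|^{p_0}\dd\mu)^{1/p_0}$ for all $v\in\C^d$, and a positive-definite $\cB_P$ with $|\cB_P v|\simeq_d (\dashint_{\lambda P}|W'(x)^{1/p'}v|^{q_0'}\dd\mu)^{1/q_0'}$; these exist by the John-ellipsoid/reducing-operator lemma for $\L^r$-averages of matrix weights (this should be recalled from \Cref{section:matrix-weight}). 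Then bound the $P$-th term by $d^{C}\,|\cA_P\cB_P|\,\langle |W^{1/p}f|^{p_0}\rangle_{\lambda P}^{1/p_0}\,$ — no: rather, insert $\id = \cA_P^{-1}\cA_P$ appropriately so that one factor absorbs $f$ into $|W^{1/p}f|$ and the other produces the operator norm $|\cA_P\cB_P|$ (using that $|\cA_P^{-1}W(x)^{1/p}v| \le$ the average, by definition of reducing operator, Cauchy--Schwarz in the inner product). The upshot is that the whole sparse form is controlled by

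$$d^{C}\sum_{P\in\cals}\mu(P)\,|\cA_P\cB_P|\,\Big(\dashint_{\lambda P}|W^{1/p}f|^{p_0}\dd\mu\Big)^{1/p_0}\Big(\dashint_{\lambda P}|W'^{1/p'}g|^{q_0'}\dd\mu\Big)^{1/q_0'}.$$

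**Third**, estimate $|\cA_P\cB_P|$ in terms of the two-exponent matrix Muckenhoupt characteristic. By definition of the reducing operators and a suitable matrix H\"older/duality inequality (of the kind that in the scalar case gives $\langle w\rangle\langle w'\rangle\le[w]_{A_p}$), one shows $|\cA_P\cB_P|^{\text{(some power)}} \lesssim_d [W^{s'}]_{A_{a',b}}$ locally on $\lambda P$ — this is where the precise exponents $t=p/p_0$, $s=q_0/p$, $a = p/(t-1)$, $b = s'p$ enter, matching $a',b$ to the averaging exponents $p_0 = p/t$ and $q_0' = (s'p)^{?}$... one has to check $q_0' = b' = (s'p)'$ indeed, i.e.\ the $L^{q_0'}$-average pairs against the $b$-th power in the $A_{a',b}$ condition. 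Granting this, $|\cA_P\cB_P| \lesssim_d [W^{s'}]_{A_{a',b}}^{\beta}$ for the appropriate $\beta$ (independent of $P$), so it pulls out of the sum. **Fourth and finally**, what remains is exactly a \emph{scalar}-weighted sparse form with $f$ replaced by the scalar function $|W^{1/p}f|$ and $g$ by $|W'^{1/p'}g|$, against a scalar weight — but actually the cleanest route is: after pulling out $|\cA_P\cB_P|$, apply \Cref{BFP64} with the \emph{trivial} scalar weight $w\equiv 1$ on the pair $(|W^{1/p}f|, |W'^{1/p'}g|)$, which gives $\sum_P\mu(P)\langle|W^{1/p}f|^{p_0}\rangle^{1/p_0}\langle|W'^{1/p'}g|^{q_0'}\rangle^{1/q_0'} \lesssim \|W^{1/p}f\|_{L^p}\|W'^{1/p'}g\|_{L^{p'}} = \|f\|_{L^p_W}\|g\|_{L^{p'}_{W'}}$; the $1/\varepsilon$ and the exponent $\alpha$ are tracked through \Cref{BFP64}'s constant (its $\alpha$ with $w=1$ is trivial, so the genuine $\alpha = \frac{1}{s'(p-p_0)}+\frac1{p'}$ must come entirely from step three's estimate of $|\cA_P\cB_P|$ — so the bookkeeping of the power $\beta$ must be done carefully to land on exactly this $\alpha$).

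**The main obstacle** I anticipate is Step three: getting the \emph{sharp} power of $[W^{s'}]_{A_{a',b}}$ out of $|\cA_P\cB_P|$, i.e.\ verifying that the two-exponent matrix Muckenhoupt condition is exactly calibrated to the reducing operators for the $(\L^{p_0},\L^{q_0'})$ pair and that the exponent works out to $\alpha = \frac{1}{s'(p-p_0)}+\frac1{p'}$ rather than something slightly off. This requires the "surprisingly rich structure" the abstract alludes to: one cannot just cite the scalar $A_p$–$RH$ bookkeeping, but must combine a matrix reverse-H\"older-type estimate (controlling a high power of an average by the average with a different exponent, losing a factor of $[W^{s'}]_{A_{a',b}}$ to a fractional power depending on $s'$) with the matrix H\"older inequality tying $\cA_P$ to $\cB_P$. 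A secondary technical point is making precise the convex-body product $|\convex{f}\cdot\convex{g}|$ and the John-ellipsoid reduction so that only dimensional constants (absorbed into $C(\mu,d,p_0,q_0,p)$), and not $\log d$ or worse, are lost — this should follow from the Fritz John theorem exactly as in \cite{NPTV2017, Laukkarinen_2023}, so I would state it as a lemma and reference those works.
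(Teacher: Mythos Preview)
Your overall shape is right---insert reducing operators, extract a factor controlled by $[W^{s'}]_{A_{a',b}}$, reduce to a scalar maximal-function estimate---but the central reduction you describe does not go through, and your diagnosis of where the difficulty lies is off.

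The problematic claim is the ``upshot'' of Step~2--3: that the form is controlled by
\[
d^{C}\sum_{P\in\cals}\mu(P)\,|\cA_P\cB_P|\,\Big(\dashint_{\lambda P}|W^{1/p}f|^{p_0}\Big)^{1/p_0}\Big(\dashint_{\lambda P}|W'^{1/p'}g|^{q_0'}\Big)^{1/q_0'}.
\]
This is not true. After inserting a reducing operator (say $\cU_P$ for $U:=W^{-1/(t-1)}$ at exponent $a$, so that $U^{1/a}=W^{-1/p}$) into the convex body product, what one actually obtains is
\[
\aver{|\cU_P^{-1}W^{-1/p}\cdot W^{1/p}f|^{p_0}}_{\lambda P}^{1/p_0}\cdot\aver{|\cU_P W^{1/p}g|^{q_0'}}_{\lambda P}^{1/q_0'},
\]
and the factor $|\cU_P^{-1}W(x)^{-1/p}|$ cannot be separated from $\tilde f:=W^{1/p}f$ to leave the bare average $\aver{|\tilde f|^{p_0}}^{1/p_0}$. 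Any H{\"o}lder splitting raises the exponent on $\tilde f$; if you insist the matrix part land at exponent $a$ (where the reducing operator controls it), then $\tilde f$ lands at exponent exactly $p$, and the resulting scalar sparse sum is at the endpoint where the Hardy--Littlewood maximal function fails to be bounded. So your Step~4, ``apply Theorem~\ref{BFP64} with $w\equiv 1$ at the original exponents $p_0,q_0'$'', is unreachable.

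The paper's proof resolves this by \emph{not} trying to return to exponent $p_0$. Instead it uses H{\"o}lder with $\theta=t'(1+\delta)$ for a small $\delta>0$, so $\tilde f$ sits at exponent $p_0\theta'<p$ and the maximal function applies with exponent $q=t/\theta'>1$. The price is that the matrix factor now sits at exponent $a(1+\delta)>a$, and bounding it requires the \emph{weak reverse H{\"o}lder property} of the scalar weights $\omega_P:=|\cU_P^{-1}U^{1/a}|^{a}$; this in turn rests on the uniform scalar $A_{r'}$ bound $\sup_P[\omega_P]_{A_{r'}}\lesssim [U]_{A_{b',a}}$ coming from \Cref{lemma:properties_of_Apq_weights}. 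The permissible $\delta$ is $\sim [U]_{A_{b',a}}^{-1}$, and tracking $(q')^{q}\lesssim\delta^{-1}$ through the maximal bound is what produces the contribution $[V]_{A_{a',b}}^{1/(p(r-1))}$ to the exponent. The factor $|\cU_P\cV_P|\lesssim[V]_{A_{a',b}}^{1/b}$ that you focused on gives only the $\tfrac{1}{b}$ piece of $\alpha$; the remaining $\tfrac{1}{p(r-1)}+\tfrac{1}{p'}$ comes from the two reverse-H{\"o}lder arguments on the $f$- and $g$-sides, not from $|\cA_P\cB_P|$.

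In short: the missing ingredient is the NPTV-style self-improvement via weak reverse H{\"o}lder (\Cref{lemma:weak_reverse_Holder_for_scalar_Muckenhoupt_weights}) applied to the $P$-dependent auxiliary scalar weights $\omega_P$. Without it, the reducing operators remain stuck inside the averages and cannot be traded for a clean power of the characteristic.
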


In \Cref{section:matrix-weight}, we look more closely at conditions on the weights like the ones imposed in \Cref{thm:matrix_weighted_bound} in the case of $\R^n$. In contrast to the scalar case \cite{Johnson_Neugebauer_1991, Auscher_Martell_I_2007, BFP2016}, the relation between these different conditions in the matrix case turns out to be more complicated.

On the one hand, some of the inclusions that were true in the scalar case continue to hold for matrix weights (we refer to \Cref{section:matrix-weight} for precise definitions):
\begin{prop}
\label{prop:equivalence}
    Let $1<t,s<\infty$. Let $W$ be a $d\times d$ matrix weight on $\R^n$. Then
    \begin{equation*}
        \max([W]_{A_{t}},[W]_{RH_{t,s}})^{s}\lesssim_{d,t,s}[W^{s}]_{A_{t,st}}\lesssim_{d,t,s}([W]_{A_{t}}[W]_{RH_{t,s}})^{s}.
    \end{equation*}
\end{prop}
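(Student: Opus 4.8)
The plan is to reduce \Cref{prop:equivalence} to ball-by-ball comparisons between \emph{reducing operators} and then close them using only Jensen's inequality, H\"older's inequality, and submultiplicativity of the operator norm; throughout, $\simeq$ and $\lesssim$ are understood up to constants depending on $d,t,s$ only. First I would invoke the reducing-operator descriptions of the three characteristics from \Cref{section:matrix-weight}: for a ball $B\subseteq\R^n$, let $\cW_B$ be a reducing operator of $W^{1/t}$ at the exponent $t$ (that is, $|\cW_Be|\simeq(\dashint_B|W(x)^{1/t}e|^{t}\dd x)^{1/t}$ for $e\in\C^d$, obtained from John's ellipsoid theorem applied to the closed convex complex-symmetric unit ball of that norm), let $\cP_B$ be a reducing operator of $W^{1/t}$ at the larger reverse-H\"older exponent entering $[W]_{RH_{t,s}}$ and $[W^{s}]_{A_{t,st}}$, and let $\cV_B$ be a reducing operator of $W^{-1/t}$ at the conjugate exponent $t'$. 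Using the self-adjointness identity $|W(x)^{1/t}W(y)^{-1/t}|=|W(y)^{-1/t}W(x)^{1/t}|$ together with the standard passage between operator norm, Hilbert--Schmidt norm and columns, one gets
\begin{equation*}
    [W]_{A_{t}}\simeq\sup_{B}\norm{\cW_B\cV_B}^{\gamma_1},\qquad
    [W]_{RH_{t,s}}\simeq\sup_{B}\norm{\cP_B\cW_B^{-1}}^{\gamma_2},\qquad
    [W^{s}]_{A_{t,st}}\simeq\sup_{B}\norm{\cP_B\cV_B}^{\gamma_3},
\end{equation*}
for positive exponents $\gamma_1,\gamma_2,\gamma_3$ depending only on $t,s$ with $\gamma_3=s\gamma_1=s\gamma_2$. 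The structural point is that all three are built from the \emph{same} triple $\{\cW_B,\cP_B,\cV_B\}$, positioned so that the $A_{t,st}$-product $\cP_B\cV_B$ is the composite of the $A_t$-product $\cW_B\cV_B$ and the $RH_{t,s}$-product $\cP_B\cW_B^{-1}$; for $d=1$ this is the scalar Johnson--Neugebauer identity $[w^{s}]_{A_{s(t-1)+1}}\simeq_{s}([w]_{A_{t}}[w]_{RH_{s}})^{s}$, which is in fact an equality on each ball.

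Two elementary facts remain. Because $\cP_B$ comes from an exponent larger than $t$ for the same matrix function $W^{1/t}$, Jensen's inequality gives $\cW_B\preceq\cP_B$ up to a dimensional constant, i.e.\ $\norm{\cW_B\cP_B^{-1}}\lesssim 1$. And because $W(x)^{1/t}$ is self-adjoint, $\langle W(x)^{1/t}e,W(x)^{-1/t}f\rangle=\langle e,f\rangle$ for a.e.\ $x$; averaging over $B$ and applying H\"older's inequality yields $|\langle e,f\rangle|\lesssim|\cW_Be|\,|\cV_Bf|$ for all $e,f\in\C^d$, and taking the supremum over $f$ gives $|\cV_B^{-1}e|\lesssim|\cW_Be|$, i.e.\ $\norm{(\cW_B\cV_B)^{-1}}\lesssim 1$ (this is the ``lower half'' of the matrix $A_{t}$ characteristic).

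Granting these, the proposition is a one-line computation on each ball, using $\norm{AB}\le\norm{A}\norm{B}$. For the upper bound, $\cP_B\cV_B=(\cP_B\cW_B^{-1})(\cW_B\cV_B)$ gives $\norm{\cP_B\cV_B}^{\gamma_3}\le(\norm{\cP_B\cW_B^{-1}}^{\gamma_2})^{s}(\norm{\cW_B\cV_B}^{\gamma_1})^{s}$, and taking the supremum over $B$ yields $[W^{s}]_{A_{t,st}}\lesssim([W]_{A_{t}}[W]_{RH_{t,s}})^{s}$. For the two lower bounds, $\cW_B\cV_B=(\cW_B\cP_B^{-1})(\cP_B\cV_B)$ with the Jensen fact gives $\norm{\cW_B\cV_B}\lesssim\norm{\cP_B\cV_B}$, hence (since $(\sup_B a_B)^{s}=\sup_B a_B^{s}$ and $s\gamma_1=\gamma_3$) $[W]_{A_{t}}^{s}\lesssim[W^{s}]_{A_{t,st}}$; and $\cP_B\cW_B^{-1}=(\cP_B\cV_B)(\cW_B\cV_B)^{-1}$ with the H\"older fact gives $\norm{\cP_B\cW_B^{-1}}\lesssim\norm{\cP_B\cV_B}$, hence $[W]_{RH_{t,s}}^{s}\lesssim[W^{s}]_{A_{t,st}}$. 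Together the last two give $\max([W]_{A_{t}},[W]_{RH_{t,s}})^{s}\lesssim[W^{s}]_{A_{t,st}}$, completing the proof.

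The point I expect to require the most care is the first step — establishing the reducing-operator descriptions with exactly the right exponents, in particular for the two-exponent quantity $[W^{s}]_{A_{t,st}}$, and checking $\gamma_3=s\gamma_1=s\gamma_2$. This rests on the ``supremum inside the average'' lemma (for a fixed matrix $A$ one has $\dashint_B|W(x)^{\pm1/t}A|^{r}\dd x\simeq\norm{\mathcal{R}_BA}^{r}$, where $\mathcal{R}_B$ is the reducing operator of $W^{\pm1/t}$ at exponent $r$, proved by interposing the Hilbert--Schmidt norm and testing against the columns of $A$) together with some bookkeeping of the exponents appearing in the definitions of \Cref{section:matrix-weight}. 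Everything after that is Jensen, H\"older and submultiplicativity; if those descriptions are already available from \Cref{section:matrix-weight}, the proof of \Cref{prop:equivalence} is just the middle two paragraphs.
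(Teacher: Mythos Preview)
Your argument is correct and is essentially the same as the paper's proof (given as \Cref{lemma:containmentsofmatrixclasses1}(1)), only organized slightly differently. The paper works with the two reducing operators $\cW_{Q,t}$ and $\cW'_{Q,t}$ (your $\cW_B$ and $\cV_B$) and handles the $st$-averages $\dashint_Q|W(x)^{1/t}\,\cdot\,|^{st}\dd x$ directly rather than through a third reducing operator; your $\cP_B$ packages exactly that average. The three key ingredients are identical: Jensen for $[W]_{A_t}^{s}\lesssim[W^{s}]_{A_{t,st}}$, the inequality $|(\cW'_{Q,t})^{-1}e|\le|\cW_{Q,t}e|$ (your $\norm{(\cW_B\cV_B)^{-1}}\lesssim1$) for the $RH_{t,s}$ lower bound, and the column/Hilbert--Schmidt trick (your ``supremum inside the average'' lemma, the paper's \Cref{lemma:standardlemma} combined with \eqref{matrix norm as sum of norms of columns}) for passing between the integral and reducing-operator descriptions. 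The exponents you anticipate do come out as $\gamma_1=\gamma_2=t$ and $\gamma_3=st$.
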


On the other hand, several two-exponent Muckenhoupt classes, which in the scalar case just coincide, turn out to be quite different for matrix weights:
\begin{prop}
\label{prop:counterexample_statement}
    Let $1<p_i\leq q_i<\infty$, $i=1,2$ with $p_1\leq p_2$ and $\frac{q_1}{p_1'}=\frac{q_2}{p_2'}$. Then, one has that
	\begin{equation*}
        [W]_{A_{p_1,q_1}}\leq[W]_{A_{p_2,q_2}}
    \end{equation*}
    for all $d\times d$ matrix weights $W$ on $\R^n$. In the case when $p_1<p_2$, one can find a $2\times 2$ matrix weight $W$ on $\R$ with $[W]_{A_{p_1,q_1}}<\infty$ and $[W]_{A_{p_2,q_2}}=\infty$.
\end{prop}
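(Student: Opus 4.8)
The plan is to treat the inequality and the counterexample separately; the former is soft, and the latter is the real content.

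\emph{The inequality.} Since each $[W]_{A_{p_i,q_i}}$ is a supremum over the same family of balls, it suffices to fix a ball $B\subseteq\R^n$ and dominate the local quantity attached to $(p_1,q_1)$ by the one attached to $(p_2,q_2)$. The first move is to use the hypothesis $q_1/p_1'=q_2/p_2'=:\theta$: after passing to the auxiliary matrix weight $V:=W^{1/\theta}$ (defined by the functional calculus on $\mathrm P_d(\C)$), both local quantities are expressed through the single parameter $p_i'$, and the normalisation $q_i/p_i'=\theta$ is exactly what makes the fractional powers of $V$ occurring on the two sides line up. Because $p_1\le p_2$ forces $p_1'\ge p_2'$, the assertion becomes a monotonicity statement in that one parameter, which I would deduce from Jensen's inequality in the inner average over $B$ (legitimate, since it is taken against the normalised measure on $B$) together with Hölder's inequality in the outer average; the residual mismatch between the fractional powers of $V$ is handled by the operator monotonicity of $t\mapsto t^{\gamma}$ on $\mathrm P_d(\C)$, equivalently the log-convexity of $\gamma\mapsto|V(x)^{-\gamma}v|$. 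No dimensional constant is lost here.

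\emph{The counterexample.} For the second assertion I would build a $2\times2$ weight on $\R$ with a \emph{rotating eigenbasis}: with $R_{\beta}:=\left(\begin{smallmatrix}\cos\beta&-\sin\beta\\\sin\beta&\cos\beta\end{smallmatrix}\right)$, take
\[
W(x):=R_{\beta(x)}^{*}\begin{pmatrix}\omega(x)&0\\0&\omega(x)^{-1}\end{pmatrix}R_{\beta(x)},
\]
where $\omega,\beta$ depend only on $|x|$ and are trivial ($\omega=1$, $\beta=0$, so $W=\id$) away from $0$, while near $0$ the eigenvalues grow only \emph{logarithmically}, $\omega(x)=\bigl(\log\tfrac1{|x|}\bigr)^{a}$, and the angle is bounded and slowly varying, $\beta(x)=c_0\bigl(1-(\log\tfrac1{|x|})^{-b}\bigr)$. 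On a ball $B$ bounded away from $0$, on which $\omega$ is essentially a constant $\omega_B$, one computes $|W(x)^{1/q}W(y)^{-1/q}|\simeq 1+\omega_B^{2/q}|\beta(x)-\beta(y)|$, the angle mismatch between the eigenbases producing a leakage between the large and small eigendirections that is amplified by the eigenvalue ratio $\omega_B^{2/q}$. Inserting this into the local quantity and using $q_i=\theta p_i'$, the ball $B(c,|c|/2)$ with $|c|\to0$ contributes a positive power of $(\log\tfrac1{|c|})^{2a/q_i-b-1}$, which stays bounded exactly when $2a/q_i\le b+1$. Since $p_1<p_2$ gives $q_1>q_2$, choosing $a$ large and $b$ in the resulting window makes this threshold fall strictly between the two exponent pairs, so that $[W]_{A_{p_2,q_2}}=\infty$ while these balls do not obstruct $[W]_{A_{p_1,q_1}}$.

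The hard part is then to check that \emph{every} ball keeps the $(p_1,q_1)$-quantity finite. The eigenvalues $\omega^{\pm1}$ must themselves be scalar $A_{p_1,q_1}$ weights --- which is precisely why they have to be logarithmic rather than power weights --- and with $\omega=(\log\tfrac1{|x|})^a$ this holds (via $\int_0^r(\log\tfrac1t)^{\gamma}\dd t\simeq r(\log\tfrac1r)^{\gamma}$), as does the boundedness of the ``scalar part'' $\max\bigl((\omega(x)/\omega(y))^{1/q},(\omega(y)/\omega(x))^{1/q}\bigr)$, even over balls through $0$. The genuinely delicate estimate is the cross term on balls that contain, or nearly contain, the origin (and on very large balls), where $\omega(x)^{1/q}\omega(y)^{1/q}$ is unbounded: there one exploits that $\beta$ is bounded --- more precisely that $|\beta(x)-\beta(y)|\lesssim\bigl(\log\tfrac1{\max(|x|,|y|)}\bigr)^{-b}$ on such a ball --- which imposes the further requirement $b\ge 2a/q_1$, compatible with the earlier window once $a>\tfrac{q_1q_2}{2(q_1-q_2)}$, so large $a$ suffices. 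With the profiles tuned accordingly, every ball is under control for $(p_1,q_1)$ while the balls $B(c,|c|/2)$ already force $[W]_{A_{p_2,q_2}}=\infty$; I expect this simultaneous control of all balls to be the main obstacle. (The same mechanism can be realised on a dyadic matrix martingale model, which removes much of the bookkeeping near $0$.)
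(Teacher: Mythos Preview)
Your reduction via $V=W^{1/\theta}$ is correct and convenient, but after that you do not need Jensen or H\"older at all: one has a \emph{pointwise} domination of the integrands. With $\alpha:=p_2'/p_1'\in(0,1]$, the Cordes inequality $|A^{\alpha}B^{\alpha}|\le|AB|^{\alpha}$ for $A,B\in\mathrm P_d(\C)$ gives
\[
|V(x)^{1/p_1'}V(y)^{-1/p_1'}|^{p_1'}\le|V(x)^{1/p_2'}V(y)^{-1/p_2'}|^{p_2'},
\]
and since the outer exponent $\theta=q_i/p_i'$ is the same for both pairs, the local quantities compare immediately. This is exactly what the paper does (Lemma~6.5). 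Your phrase ``log-convexity of $\gamma\mapsto|V(x)^{-\gamma}v|$'' is a one-matrix statement and does not by itself control the two-matrix quantity $|V(x)^{\gamma}V(y)^{-\gamma}|$; the genuine tool is Cordes, so you should invoke it explicitly.

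\textbf{On the counterexample.} Here your route diverges substantially from the paper's. The paper does not use a continuous rotating-frame weight at all; instead it isolates an explicit ``anti-Cordes'' lemma (Lemma~6.8): diagonal $C_n$ and nearly-diagonal $D_n$ with $\tr(C_nD_n)\simeq\tr(C_n^{-1}D_n^{-1})\simeq1$ but $\tr(C_n^{a}D_n^{a})\to\infty$ for every $a>1$. From this it builds a piecewise-constant $2\times2$ weight on $[0,1)$, taking alternately the values $C_n^{-q_1/2}$ and $D_n^{q_1/2}$ on the dyadic halves of $[2^{-n-1},2^{-n})$, checks the $A_{p_1,q_1}$ condition by direct, entirely elementary trace computations (this is where most of the work is), and then extends to $\R$ by even reflection and periodisation. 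The advantage is that everything is explicit and the estimates reduce to comparing $\tr(C_m^{\pm1}C_n^{\pm1})$ and $\tr(C_m^{\pm1}D_n^{\pm1})$, all of which are $\simeq\max(1,n/m)$.

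Your rotating-eigenbasis construction with logarithmic eigenvalues is a legitimate alternative, in the spirit of Bownik's non-openness examples, and your parameter analysis on the balls $B(c,|c|/2)$ is correct (indeed $q_1>q_2$, so the threshold $2a/q=b+1$ separates the two pairs). The paper's approach trades your delicate ``simultaneous control of all balls'' for a finite case analysis on dyadic intervals, at the cost of a somewhat ad hoc matrix lemma; your approach is more conceptual but, as you anticipate, the verification of $[W]_{A_{p_1,q_1}}<\infty$ on balls through the origin will require a careful quantitative argument that you have only outlined.
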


Our results are slightly stronger; see \Cref{prop:weightscounterexample} and \Cref{prop:weights_counterexample_2}. It is worth mentioning that our analysis in \Cref{section:matrix-weight} also covers several endpoint cases.

We note that \Cref{prop:equivalence} and \Cref{prop:counterexample_statement} show that our matrix-weighted bounds in \Cref{thm:matrix_weighted_bound} improve the ones obtained recently in \cite{Laukkarinen_2023}; we explain this in \Cref{subsection:negative_results_on_weights}.

Finally, in \Cref{section:extrapolation} we prove the following limited range extrapolation theorem for matrix weights, where we refer to the notation and precise definitions:

\begin{thm}
\label{thm:limited_range_extrapolation_matrix_weights}
Let $\cF$ be a collection of pairs $(F,G)$ of measurable $\cK_{\bcs}(\C^d)$ valued functions on $\R^n$ such that neither $F$ nor $G$ is a.e.~equal to $\lbrace0\rbrace$. Let $0<p_0<q_0\leq\infty$. Assume that for some fixed $p\in[p_0,q_0]$, there exists an increasing function $K_{p}$ such that one of the following holds:
\begin{enumerate}
	\item If $p<q_0$, then
	\begin{equation*}
		\norm{F}_{L^{p}_{\cK}(W)}\leq K_{p}([W^{s(p)'}]_{A_{r(p)}})\norm{G}_{L^{p}_{\cK}(W)},\quad \text{for all }(F,G)\in\cF\text{ with }\norm{F}_{L^{p}_{\cK}(W)}<\infty,
	\end{equation*}
	for all $d\times d$ matrix weights $W$ on $\R^n$ with $[W^{s(p)'}]_{A_{r(p)}}<\infty$.
	\item If $p=q_0<\infty$, then
	\begin{equation*}
		\norm{F}_{L^{p}_{\cK}(W)}\leq K_{p}([W^{t(p)'/t(p)}]_{A_{\infty}})\norm{G}_{L^{p}_{\cK}(W)},\quad \text{for all }(F,G)\in\cF\text{ with }\norm{F}_{L^{p}_{\cK}(W)}<\infty,
	\end{equation*}
		for all $d\times d$ matrix weights $W$ on $\R^n$ with $[W^{t(p)'/t(p)}]_{A_{\infty}}<\infty$.
		\item If $p=q_0=\infty$, then
	\begin{equation*}
		\norm{F}_{L^{p}_{\cK}(W)}\leq K_{p}([W^{1/p_0}]_{A_{\infty}})\norm{G}_{L^{p}_{\cK}(W)},\quad \text{for all }(F,G)\in\cF\text{ with }\norm{F}_{L^{p}_{\cK}(W)}<\infty,
	\end{equation*}
	for all $d\times d$ matrix weights $W$ on $\R^n$ with $[W^{1/p_0}]_{A_{\infty}}<\infty$.
	\end{enumerate}
	Then, for all $q\in(p_0,q_0)$,
	\begin{equation*}
		\norm{F}_{L^{q}_{\cK}(W)}\leq K_{q}([W^{s(q)'}]_{A_{r(q)}})\norm{G}_{L^{q}_{\cK}(W)},\quad \text{for all }(F,G)\in\cF\text{ with }\norm{F}_{L^{q}_{\cK}(W)}<\infty,
	\end{equation*}
	for all $d\times d$ matrix weights $W$ on $\R^n$ with $[W^{s(q)'}]_{A_{r(q)}}<\infty$, where
	\begin{equation*}
		K_{q}([W^{s(q)'}]_{A_{r(q)}})=c_1K_{p}(c_2[W^{s(q)'}]_{A_{r(q)}}^{\alpha(p,q)}),
	\end{equation*}
	the exponent $\alpha(p,q)$ is given by \eqref{eq:extrapolation_exponent}, and the constants $c_1,c_2>0$ depend only on $n,d,p_0,q_0,p$ and $q$.
\end{thm}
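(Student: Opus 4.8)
The plan is to transplant the proof of the scalar limited range extrapolation theorem (\Cref{thm:original_limited_range_extrapolation}) to the convex-body setting, using as the engine the matrix Rubio de Francia algorithm of Bownik and Cruz-Uribe \cite{Cruz_Uribe_Bownik_Extrapolation}. The first move is to invoke \Cref{prop:equivalence} to convert the two-exponent matrix Muckenhoupt conditions $[W^{s(q)'}]_{A_{r(q)}}$ appearing in the statement into \emph{one-exponent} matrix conditions (a matrix $A_\rho$ condition together with a matrix reverse-Hölder condition) on a fixed power of $W$, with comparable constants. This brings the problem into the form where the classical ingredients — matrix $A_\rho$ classes, the Christ--Goldberg maximal operator, and convex-body duality — apply, while the quantitative exponent $\alpha(p,q)$ is recovered at the end by tracking all constants through the argument.

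\textbf{The two main tools.} The first is \emph{convex-body duality}: for a matrix weight $V$ and $1<q<\infty$, the space $L^{q}_{\cK}(V)$ is, up to constants depending only on $d$, dual to $L^{q'}_{\cK}(V')$ with $V'=V^{-1/(q-1)}$, and $\norm{F}_{L^{q}_{\cK}(V)}$ is comparable to the supremum of $|\int \langle f,g\rangle\dd x|$ over measurable selections $f$ of $F$ and $\cK_{\bcs}(\C^d)$-valued $g$ with $\norm{g}_{L^{q'}_{\cK}(V')}\leq 1$. This turns the convex-body inequality to be proven into a scalar bilinear pairing in which one of the two factors can be modified. The second tool is the \emph{matrix Rubio de Francia algorithm}: given a matrix weight $V$ in a matrix class $A_\rho$, iterating the convex-body Christ--Goldberg maximal operator $M_V$ geometrically — using its quantitative boundedness in terms of a power of $[V]_{A_\rho}$ \cite{Christ_Goldberg_2001, Goldberg_2003} — produces, from any admissible input, a pointwise dominating convex-body weight whose associated matrix is a matrix $A_1$ weight with constant controlled by $[V]_{A_\rho}$. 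This is the device that upgrades a weight adapted to the target exponent $q$ into one compatible with the hypothesis at exponent $p$.

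\textbf{The argument.} Fix $q\in(p_0,q_0)$ and a matrix weight $W$ with $[W^{s(q)'}]_{A_{r(q)}}<\infty$; by \Cref{prop:equivalence} this is equivalent, up to constants, to $W$ lying in the appropriate matrix $A_{t(q)}\cap\RH$ class. If $q<p$: dualize in the $F$-factor as above, apply the algorithm in the exponent conjugate to $p/q$ relative to the relevant weight to build a majorant, use Hölder's inequality to split off the extra exponent, and feed the result into hypothesis~(1) at exponent $p$; then undo the duality. If $p<q<q_0$: run the symmetric argument, applying the algorithm on the $G$-factor with exponent conjugate to $q/p$, again landing in hypothesis~(1) at $p$. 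If instead $p=q_0$ (finite or not), the hypothesis is stated with an $A_\infty$-type constant and direct duality in the relevant factor is unavailable; one replaces the ordinary maximal function in the algorithm by a reverse-Hölder/logarithmic maximal operator and argues as in the top-endpoint case of the scalar limited range theorem, using that matrix $A_\infty$ control still produces the needed $A_1$-type majorant. Throughout, the initial rescaling is chosen so that the powers of $W$ appearing in the estimates are exactly $W^{s(q)'}$, matching the hypothesis; the a priori finiteness $\norm{F}_{L^q_\cK(W)}<\infty$ is bootstrapped by the usual truncation and exhaustion, which is legitimate because $\cF$ consists of bounded-convex-body-valued functions.

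\textbf{Main obstacle.} The crux is quantitative rather than structural. One must follow the matrix Muckenhoupt constant through (i) the substitution $W\mapsto W^{\theta}$ that trades two-exponent for one-exponent conditions, where \Cref{prop:equivalence} is only an equivalence up to an $s$-th power and dimensional factors, so that the exponent genuinely moves; and (ii) the geometric iteration of $M_V$, which requires the sharp (Buckley-type) power of $[V]_{A_\rho}$, not merely a qualitative bound. Reconciling these dependencies, together with the dimensional losses from convex-body duality, is what produces the precise exponent $\alpha(p,q)$ in \eqref{eq:extrapolation_exponent} and the form $K_q(\cdot)=c_1K_p(c_2(\cdot)^{\alpha(p,q)})$. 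A secondary subtlety is the $p=q_0$ endpoint, where the absence of a usable duality forces the reverse-Hölder substitute and a separate, more delicate bookkeeping of constants.
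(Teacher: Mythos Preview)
Your overall framework---case-split on $q<p$ versus $p<q$, build a majorant via a Rubio de Francia iteration of a convex-body maximal operator, feed into the hypothesis at exponent $p$---matches the paper's. But several specific steps you propose are not how the argument runs, and two genuine ingredients are missing.

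First, the opening move of invoking \Cref{prop:equivalence} to convert $[W^{s(q)'}]_{A_{r(q)}}$ into a one-exponent condition is unnecessary: $A_{r(q)}$ \emph{is} a single-exponent matrix class, applied to the matrix $V:=W^{s(q)'}$. The paper works directly with $V\in A_{r(q)}$ throughout; no detour through $A_{t(q)}\cap RH$ is made. Second, the paper does not use convex-body duality in the bilinear-pairing sense you describe. It computes $\norm{F}_{L^q_{\cK}(W)}$ directly as a scalar $L^q$ norm of $|W^{1/q}F|$ and manipulates that scalar integrand with H\"older's inequality.

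The two ingredients you do not mention are the heart of the proof. The Rubio de Francia operator $\cR_W$ is built from $P_WH(x):=|V(x)^{1/r(q)}\cM^{\cK}(\Sigma H)(x)|\,W(x)^{-1/r(q)}\overline{\mathbf B}$, which is \emph{ellipsoid-valued with respect to $W^{-1/r(q)}$}; by \cite[Lemma 8.5]{Cruz_Uribe_Bownik_Extrapolation} the iteration preserves this, so $\cR_W\bar H=(\cR_W\bar h)W^{-1/r(q)}\overline{\mathbf B}$ for a \emph{scalar} function $\cR_W\bar h$. This is what lets you define a scalar $h$ and a new matrix weight $T:=h^{-p/(p/q)'}W^{p/q}$. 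The second missing tool is the reverse Jones factorization \cite[Corollary 8.9]{Cruz_Uribe_Bownik_Extrapolation}: from $V=W^{s(q)'}\in A_{r(q)}$ and the $A_1$ (or $A_\infty$) control on $h^{-q/r(q)}W^{s(q)'/r(q)}$ produced by the iteration, it is this factorization that yields $T^{s(p)'}\in A_{r(p)}$ with the quantitative constant. Verifying that the exponents of $h$ and $W$ match exactly requires the two algebraic identities $s(p)'s(q)'(p-q)=ps(p)'-qs(q)'$ and $r(p)/r(q)=ps(p)'/(qs(q)')$, which the paper proves separately.

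Finally, your endpoint handling is off. For $p=q_0<\infty$ and $p=q_0=\infty$ the paper uses the \emph{same} $\cR_W$ construction as in Case 1 (no logarithmic or reverse-H\"older maximal operator); the difference is only in the algebraic verification that the resulting $T$ satisfies $T^{t(p)'/t(p)}\in A_\infty$ (respectively $T^{1/p_0}\in A_\infty$), which again follows from the $A_1$ output of the iteration. No separate truncation/exhaustion is needed for a priori finiteness: one checks directly that $\norm{F}_{L^p_{\cK}(T)}\le\norm{F}_{L^q_{\cK}(W)}<\infty$ using $h\ge |W^{1/q}F|/\norm{F}_{L^q_{\cK}(W)}$.
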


\subsection*{Acknowledgements} The authors are grateful to Stefanie Petermichl for suggesting the problems considered in this paper.

\section{Set up and context of the problem}
\label{section:preliminaries}

We repeat some notions from \cite{BFP2016} for clarity and the reader's convenience.

\smallskip

Consider a space $M$, and an exponent $p\in[1,\infty)$. Take also a subset $E\subseteq M$, and a Borel measure $m$ on $E$. Then, $L^p(E,m)$ is the Lebesgue space of ($m$-a.e.~equivalence classes of) measurable functions $f:E\to\C$ satisfying $\int_{E}|f(x)|^{p}\dd m(x)<\infty$; this space is equipped with the norm
\[\norm{f}_{L^p(E,m)} = \left(\int_E |f|^p \dd m \right)^{\frac{1}{p}}.\]
For $p=\infty$ and $m(E)\not=0$, $L^\infty(E,m)$ is the Lebesgue space of ($m$-a.e.~equivalence classes of) measurable functions $f:E\to\C$ satisfying $\esssup_{x\in E}|f(x)|<\infty$; this space is equipped with the norm
\[\norm{f}_{L^\infty(E,m)} = \esssup_{x\in E}|f(x)|.\]
Also, $L^{p}_{\loc}(M,m)$ (for $1\leq p<\infty$) is the space consisting of all ($m$-a.e.~equivalence classes of) measurable functions $f:M\to\C$ with $\int_{E}|f(x)|^{p}\dd m(x)<\infty$ for any bounded Borel subset $E$ of $M$.

\smallskip

For the present text, $p_0$, $q_0$, $p$, and $q$ stand for exponents. Also, $p'$ (and similarly for all other exponents that might appear) denotes the H{\"o}lder conjugate exponent of $p$, that is $\frac{1}{p}+\frac{1}{p'}=1$. We also set $1':=\infty$ and $\infty':=1$.

\subsection{Ambient doubling space} \label{subsec:ambient} Let $(M,d,\mu)$ be a locally compact separable metric space equipped with a non-zero Borel measure $\mu$ which is finite on compact sets. We mostly refer to this space as $(M,\mu)$.

For all $x\in M$ and all $r>0$, we denote by $B(x,r)\subseteq M$ the open ball for the metric $d$ with centre~$x$ and radius $r$. Whenever $B$ is an open ball in $M$ we might write $r(B)$ (instead of simply $r$) for its radius. If $B$ is an open ball and $\lambda>0$, we denote by $\lambda B$ the ball concentric with $B$ of radius $\lambda r$. In the sequel, by ``ball'' we always mean \emph{open} ball unless otherwise specified.

We assume that $(M, d, \mu)$ satisfies the \emph{volume-doubling property}, that is there exists  a constant $C>0$ such that
    \[\mu(B(x, 2r)) \leq C\mu(B(x,r)) \afterline{for all} x \in M \inmath{and} r>0.\]
We denote by $C_{\mu}$ the smallest such constant $C$. Obviously, $C_{\mu}\geq1$. An immediate consequence of the doubling property is that non-empty open sets have a non-zero $\mu$-measure.

From this assumption, we also deduce
\begin{equation}\label{prop:nudoubling}
    \mu(B(x,r)) \leq C_{\mu}\left(\frac{r}{s}\right)^{\nu} \mu(B(x,s)) \afterline{for all} x \in M \inmath{and} r \geq s > 0,
\end{equation}
where $\nu:=\log_2(C_{\mu})\geq0$, since $B(x,r)\subseteq B(x,2^{n+1}s)$ for $n:=\lfloor \log_{2}(r/s)\rfloor$. As a consequence, it also holds that
     \[\mu(B(x,r)) \leq C_{\mu}\left(\frac{d(x,y) + r}{s}\right)^{\nu} \mu(B(y,s)), \afterline{for all} x, y \in M \inmath{and} r \geq s > 0.\]
because $B(x,r)\subseteq B(y,d(x,y)+r)$. From the properties above, one can see that balls with a non-empty intersection and comparable radii have comparable measures. In particular, $C_{\mu}>1$, so $\nu>0$, unless $M$ consists of a single point, a case we exclude from our considerations. In the rest of the text, we ignore the dependence on the implied constant $C_{\mu}$, unless specified otherwise.

Another important implication of the volume-doubling properties is that any ball of radius $r$ can be covered by at most a fixed number (which depends only on the measure $\mu$) of balls with radius~$\frac{r}{2}$. This is a consequence of the Vitali covering lemma.

\subsection{Dyadic systems} \label{subsec:dyadic} Here, we set up the basis of the ``dyadic analysis'' which is very important later on. For more details, one can refer to \cite{HytKai2012} where the following construction first appeared. A nice exposition can also be found in \cite{Per2019}. We note that our setting has slightly stronger assumptions on the ambient space $(M,d,\mu)$ than the ones appearing in \cite{HytKai2012}.

\begin{thm} \label{thm:dyadic_system}
	Let $(M,d,μ)$ be as in \Cref{subsec:ambient}. Suppose the constants $0< c_0\leq C_0< \infty$ and $\delta\in (0,1)$ satisfy $12C_0\delta\leq c_0$. Then, for each $k\in \Z$ there exists a countable set of indices $\cA_k$, and for each $i\in \cA_k$ there exists an open set $Q_i^k\subseteq M$, with the following properties:
	\begin{enumerate}
		\item \emph{(nestedness)} If $l\geq k$, then either $Q^l_i\subseteq Q^k_j$ or $Q^l_i\cap Q^k_j= \emptyset$ (for all $i\in \cA_l$ and all $j\in \cA_k$).
		\item \emph{(covering)} For all $k\in \Z$, there exists a zero-measure set $N_k$ (i.e.~$\mu(N_k)=0$) such that
		\[M= \bigsqcup_{i\in \cA_k} Q^k_i\sqcup N_k.\]
		\item \emph{(inner/outer balls)} For every $k\in \Z$ and $i\in \cA_k$ there exist points $z^k_i\in M$ such that
		\[B(z^k_i,{\textstyle \frac{c_0}{3}}\delta^k)\subseteq Q^k_i\subseteq B(z^k_i,2C_0\delta^k).\]
		\item If $l\geq k$ and $Q^l_i\subseteq Q^k_j$ (for $i\in \cA_l$ and $j\in \cA_k$), then $B(z^l_i,2C_0\delta^l)\subseteq B(z^k_j,2C_0\delta^k)$.
	\end{enumerate}
\end{thm}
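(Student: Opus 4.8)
The plan is to follow the now-classical construction of dyadic cubes of Hyt{\"o}nen and Kairema \cite{HytKai2012}, using the slightly stronger standing assumptions (local compactness, separability) to streamline the measurability bookkeeping. \emph{Step 1 (centres).} For each $k\in\Z$ I would fix, by Zorn's lemma, a subset $Z^k=\{z_i^k\such i\in\cA_k\}\subseteq M$ that is maximal with respect to inclusion among the $\delta^k$-separated subsets of $M$, i.e.\ $d(z_i^k,z_j^k)\geq\delta^k$ for $i\neq j$. Maximality forces $Z^k$ to be a $\delta^k$-net: for every $x\in M$ there is $i\in\cA_k$ with $d(x,z_i^k)<\delta^k$ (otherwise $Z^k\cup\{x\}$ would still be $\delta^k$-separated). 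The volume-doubling property \eqref{prop:nudoubling} makes $(M,d)$ \emph{geometrically} doubling --- a ball of radius $r$ contains at most $N_0=N_0(C_\mu)$ pairwise $r/2$-separated points --- so each $Z^k$ is locally finite, and together with the separability of $M$ this makes each $\cA_k$ countable.

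\emph{Step 2 (ancestor tree).} Next I would attach to every $z_i^{k+1}$ a parent $z_{\pi(i)}^k\in Z^k$ realising $\dist(z_i^{k+1},Z^k)$, breaking ties by least index; by the net property $d(z_i^{k+1},z_{\pi(i)}^k)<\delta^k$. Composing parent maps yields, for all $\ell\geq k$, a relation $z_i^\ell\preceq z_j^k$ (``$z_j^k$ is the level-$k$ ancestor of $z_i^\ell$''), each $z_i^\ell$ having exactly one level-$k$ ancestor. Telescoping the geometric series $\sum_{m\geq k}\delta^m$ gives the basic estimates: $d(z_i^\ell,z_j^k)<\delta^k/(1-\delta)$ whenever $z_i^\ell\preceq z_j^k$ (so every descendant of $z_j^k$ lies in $B(z_j^k,\delta^k/(1-\delta))$), while distinct centres at a common level remain $\geq\delta^k$ apart. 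The hypothesis $12C_0\delta\leq c_0$, which forces $\delta\leq 1/12$ and whose numerical constant collects the inner-ball, outer-ball and geometric-sum factors, is exactly what makes these estimates close up; in particular it already delivers property~(4), namely $B(z_i^\ell,2C_0\delta^\ell)\subseteq B(z_j^k,2C_0\delta^k)$ whenever $z_i^\ell\preceq z_j^k$.

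\emph{Step 3 (cubes).} Then I would build the cubes by the relative-depth limiting scheme of \cite{HytKai2012}. For each $k$ and each finite depth $n\geq0$, using only the centres of levels $k,k+1,\dots,k+n$ and the tree $\preceq$, one constructs open sets $\tilde Q_j^{k,n}$ and closed sets $\overline Q_j^{k,n}$ that are nested along $\preceq$, sandwiched between concentric balls of radii comparable to $\delta^k$, with the $\tilde Q_i^{k,n}$ pairwise disjoint and the $\overline Q_i^{k,n}$ covering $M$; then one lets $n\to\infty$. I would set $Q_j^k:=\bigcup_{n}\tilde Q_j^{k,n}$, an increasing union of open sets and hence open, and $N_k:=M\setminus\bigsqcup_i Q_i^k$. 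The uniform sandwiching --- the one place where the quantitative condition $12C_0\delta\leq c_0$ is genuinely needed, and the technical heart of \cite{HytKai2012} --- gives $B(z_j^k,\tfrac{c_0}{3}\delta^k)\subseteq Q_j^k\subseteq B(z_j^k,2C_0\delta^k)$, i.e.\ property~(3), while nestedness of the $Q_j^k$ across levels, i.e.\ property~(1), is inherited from $\preceq$.

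\emph{Main obstacle.} The hard part will be showing $\mu(N_k)=0$, which is the content of property~(2). Here $N_k$ is contained in a countable union, over $\ell\geq k$ and $i\in\cA_\ell$, of thin annular shells $\{x\such\alpha\delta^\ell<d(x,z_i^\ell)<\beta\delta^\ell\}$ whose relative thickness $\beta/\alpha-1$ is forced to be small by the parameter condition; the volume-doubling property of $\mu$ bounds the measure of each such shell by a fraction of the measure of the inscribed ball, a fraction tending to $0$ with the thickness, and summing/iterating over the scales $\ell\geq k$ (localising to bounded sets, which is legitimate since $M$ is $\sigma$-compact) forces $\mu(N_k)=0$. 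This is the unique point where the doubling of the \emph{measure}, as opposed to the purely metric doubling used in Step~1, is indispensable. Once it is in hand, properties~(1), (3) and~(4) have already been recorded in Steps~2--3, and property~(2) is precisely the definition of $N_k$ together with $\mu(N_k)=0$.
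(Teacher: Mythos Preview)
The paper does not give its own proof of this theorem; it simply quotes the result from \cite{HytKai2012} (with \cite{Per2019} mentioned as an alternative exposition). Your proposal is precisely a sketch of the Hyt{\"o}nen--Kairema construction from that reference, so your approach and the paper's coincide.
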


\begin{rem} \label{rem:point_distancing}
	Fix $k\in \Z$. The points $z^k_j$ mentioned in the above \Cref{thm:dyadic_system} satisfy two rather important properties:
	\begin{gather}
		\label{eq:points_are_not_too_close}
		d(z^k_i,z^k_j)\geq c_0\delta^k \afterline{whenever} i\not= j \text{ for } i,j\in \cA_k\\
	\intertext{and also}
		\label{eq:points_are_not_far_from_other_points}
		\min_{i\in \cA_k} d(x,z^k_i)< C_0\delta^k \afterline{for all} x\in M.
	\end{gather}
\end{rem}

The sets $Q^k_i$ are called \emph{dyadic sets} or \emph{dyadic cubes}, and sometimes ---when there is no risk of confusion--- we call them simply \emph{cubes}. For a fixed $k\in \Z$, the family $\dgrid_k:= \bigcup_{i\in \cA_k}\{Q^k_i\}$ is called a \emph{generation} (of dyadic sets), and the collection $\dgrid:= \bigcup_{k\in \Z} \dgrid_k$ of all such families is called a \emph{dyadic system} subject to the parameters $c_0$, $C_0$, and $\delta$.

The nestedness property implies that there is no partial overlap between generations, and as a consequence, each cube has unique ``ancestors'' in earlier (i.e. for smaller $k$) generations. In other words, if $Q\in \dgrid_k$, then there is a unique cube $\hat{Q}\in \dgrid_{k-1}$ such that $Q\subseteq \hat{Q}$; we call $\hat{Q}$ the \emph{father} or \emph{parent} of $Q$, and $Q$ the \emph{child} of $\hat{Q}$.
The doubling properties of the measure imply that each cube uniformly has finitely many children.

\medskip

Moreover, any collection of dyadic sets $\cG \subseteq \dgrid$ is called a \emph{dyadic collection} or a \emph{dyadic family}.
If $\cF$ and $\cG$ are two non-empty dyadic collections, we say that $\cG$ \emph{covers} (or is a \emph{covering} of) $\cF$ whenever $\bigcup_{Q\in\cF} Q \subseteq \bigcup_{P\in\cG}P$ and no dyadic set in $\cG$ is strictly contained in any dyadic set in $\cF$. This definition is a variant by Hyt{\"o}nen (see \cite[Proposition 4.6]{Hyt2024}) of \cite[Definition 3.1]{NPTV2017}.

%
%
%
%

We denote by $\dgrid(Q)$ the collection of all cubes $Q'\in \dgrid$ contained in $Q$, that is
\[\textstyle \dgrid(Q)=\bigcup_{\substack{Q'\in \dgrid\\ Q'\subseteq Q}}\{Q'\}.\]
Also, if $\cals\subseteq \dgrid$ is any dyadic collection, then by $\ch_\cals(Q)$ we denote the \emph{$\cals$-children} of $Q\in \dgrid$, that is the maximal cubes in $\cals$ strictly contained inside $Q$.

\smallskip

Lastly, we need a certain form of ``annuli''. For each $Q\in {\dgrid}$, there is a $k\in\Z$ for which $Q\in\dgrid_k$. Set $\ell(Q):= 2C_0\delta^k$. \Cref{thm:dyadic_system}, then, tells us that
\[2\ell(Q') = 2\cdot 2C_0\delta^{k+1}\leq \frac{c_0}{3}\delta^k < \diam{Q} \leq 2\cdot 2C_0\delta^k = 2\ell(Q)\]
for any $Q'\in\dgrid_{k+1}$. Next, for any $\lambda> 1$, we define the \emph{$\lambda$-neighbourhood} of $Q$ by
\begin{equation} \label{eq:dilation}
	\lambda Q:= \{x\in M \such d(x,Q)\leq (\lambda-1)\ell(Q)\}= \{x\in M \such d(x,Q)\leq 2C_0(\lambda-1)\delta^k\}.
\end{equation}
The annuli $S_k(Q)$ ``centred'' around $Q$ are the sets
\begin{equation} \label{eq:annuli}
	S_0(Q) := 2Q \inline{and} S_k(Q) := 2^{k+1}Q \setminus 2^kQ, \inmath{for} k\in \Z_{\geq 1}.
\end{equation}

Later, during the proof of \Cref{thm:bilinear_convex_body_sparse_domination}, we will see that $\lambda$ has to be at least $1+\frac{2}{\delta}$. However, a more precise bound would be $\lambda \geq 2^{\lceil\log_2(1+\frac{2}{\delta})-1\rceil}$. Whenever $\delta=\frac{2}{2^m-1}$ for some $m\in\Z_{\geq2}$, we get that $\lambda \geq \frac{1}{2} + \frac{1}{\delta}$. This gives a clearer picture of the geometry and the constants that appear later on than the setup in \cite{BFP2016}, where they define $\ell(Q)=\delta^k$ for the integer $k$ which satisfies $\delta^{k+1}\leq\diam Q<\delta^k$. In turn, the factor $\lambda$ is more precise than simply ``$5$'' in Bernicot's et al. Theorem 5.7 and onwards. One can retrieve $5$ for $\lambda=1+\frac{2}{\delta}$ and $\delta=\frac{1}{2}$.

The following short lemma is not hard to prove. Nevertheless, we include a proof here for the sake of completeness.

\begin{lem}
\label{lemma:covering_dilation_of_dyadic_set_with_ball}
	Let $P\in\cald$ be a dyadic set.
	\begin{enumerate}[label=(\roman*)]
		\item There exists an open ball $B\subseteq M$ such that $P\subseteq B$ and $\mu(B)\leq c\mu(P)$, where the constant $c>0$ depends only on the doubling constant of $\mu$ and the parameters of $\cald$.
		
		\item Let $\lambda>1$. Then, there exists an open balls $B,B'\subseteq M$ such that $B'\subseteq P\subseteq \lambda P\subseteq B$ and
		\[
			\mu(B)\leq C_{\mu}\big(\frac{\lambda}{\lambda-1}\big)^{\nu}\mu(\lambda P)
		\inline{and}
			\mu(\lambda P)\leq C_\mu\left(\frac{6C_0}{c_0}\right)^\nu\lambda^\nu\mu(B').
		\]
		In particular, $\mu(\lambda P)\leq c_\lambda \mu(P)$ for some $c_\lambda > 0$. Also, if $\lambda\geq 2$, then $\mu(B)\leq C_{\mu}^2\mu(\lambda P)$.
	\end{enumerate}
\end{lem}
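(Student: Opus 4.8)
The plan is to extract the relevant balls directly from the inner/outer ball property in Theorem \ref{thm:dyadic_system}(3) and then control all the volumes using the doubling inequality \eqref{prop:nudoubling}. Fix $P \in \cald$, say $P \in \dgrid_k$, so that $\ell(P) = 2C_0\delta^k$ and there is a point $z = z_i^k$ with
\[
B\big(z, {\textstyle\frac{c_0}{3}}\delta^k\big) \subseteq P \subseteq B\big(z, 2C_0\delta^k\big).
\]

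For part (i), I would simply take $B := B(z, 2C_0\delta^k)$, which contains $P$ by the above. To bound $\mu(B)$ by a multiple of $\mu(P)$, note that $P$ contains the inner ball $B(z,\frac{c_0}{3}\delta^k)$, so $\mu(P) \geq \mu(B(z,\frac{c_0}{3}\delta^k))$; applying \eqref{prop:nudoubling} with $r = 2C_0\delta^k$ and $s = \frac{c_0}{3}\delta^k$ gives $\mu(B) \leq C_\mu (6C_0/c_0)^\nu \mu(B(z,\frac{c_0}{3}\delta^k)) \leq C_\mu (6C_0/c_0)^\nu \mu(P)$, which is the claim with $c := C_\mu(6C_0/c_0)^\nu$.

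For part (ii), fix $\lambda > 1$ and recall from \eqref{eq:dilation} that $\lambda P = \{x : d(x,P) \leq 2C_0(\lambda-1)\delta^k\}$. Since $P \subseteq B(z, 2C_0\delta^k)$, every point of $\lambda P$ lies within distance $2C_0\delta^k + 2C_0(\lambda-1)\delta^k = 2C_0\lambda\delta^k$ of $z$, so $\lambda P \subseteq B(z, 2C_0\lambda\delta^k) =: B$. On the other hand, $B' := B(z, \frac{c_0}{3}\delta^k) \subseteq P \subseteq \lambda P$, giving the chain $B' \subseteq P \subseteq \lambda P \subseteq B$. For the volume comparisons: the inner ball $B(z, 2C_0\delta^k)$ (of radius $\ell(P)$) is contained in $\lambda P$ since $2C_0\delta^k \le 2C_0\lambda\delta^k$ — more precisely one checks $B(z,2C_0\delta^k)\subseteq\lambda P$ directly from \eqref{eq:dilation} using $P\supseteq B(z,\frac{c_0}{3}\delta^k)$ is not quite enough, so instead I use that $B(z,2C_0\delta^k)\supseteq P$ fails for the lower bound; the clean route is: $\mu(\lambda P) \geq \mu(P) \geq \mu(B')$, and then $\mu(B) \leq C_\mu (\lambda/1)^\nu \cdot$ (something). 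Let me restate this carefully. Apply \eqref{prop:nudoubling} with centre $z$, $r = 2C_0\lambda\delta^k$, $s = 2C_0\delta^k$: since $2C_0\delta^k = (\lambda-1)^{-1}\cdot 2C_0(\lambda-1)\delta^k$ one has $B(z,2C_0\delta^k) \subseteq \lambda P$ whenever the radius of a ball around $z$ inside $P$ reaches $P$'s outer radius — to avoid this subtlety I instead compare $B$ directly with $\lambda P$: cover $B = B(z,2C_0\lambda\delta^k)$ and note $B(z, 2C_0(\lambda-1)\delta^k) \subseteq \lambda P$ (any point within $2C_0(\lambda-1)\delta^k$ of $z \in P$ is within that distance of $P$), hence by \eqref{prop:nudoubling}
\[
\mu(B) \leq C_\mu\Big(\frac{2C_0\lambda\delta^k}{2C_0(\lambda-1)\delta^k}\Big)^\nu \mu\big(B(z,2C_0(\lambda-1)\delta^k)\big) \leq C_\mu\Big(\frac{\lambda}{\lambda-1}\Big)^\nu \mu(\lambda P),
\]
which is the first desired inequality. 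For the second, $B' = B(z,\frac{c_0}{3}\delta^k) \subseteq P \subseteq \lambda P \subseteq B(z, 2C_0\lambda\delta^k)$, so \eqref{prop:nudoubling} with $r = 2C_0\lambda\delta^k$, $s = \frac{c_0}{3}\delta^k$ yields
\[
\mu(\lambda P) \leq \mu\big(B(z,2C_0\lambda\delta^k)\big) \leq C_\mu\Big(\frac{6C_0\lambda}{c_0}\Big)^\nu \mu(B') = C_\mu\Big(\frac{6C_0}{c_0}\Big)^\nu \lambda^\nu \mu(B').
\]
Combining the two displays (and using $\mu(B') \leq \mu(P)$) gives $\mu(\lambda P) \leq c_\lambda \mu(P)$ with $c_\lambda := C_\mu(6C_0/c_0)^\nu\lambda^\nu$. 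Finally, when $\lambda \geq 2$ we have $\lambda/(\lambda-1) \leq 2$, so $(\lambda/(\lambda-1))^\nu \leq 2^\nu = C_\mu$, and the first inequality of (ii) improves to $\mu(B) \leq C_\mu^2\mu(\lambda P)$.

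The only mild obstacle is bookkeeping the radii so that a concentric ball of the right size is genuinely contained in $\lambda P$; the key observation resolving it is that $\lambda P$ contains the full ball $B(z, 2C_0(\lambda-1)\delta^k)$ about the centre $z$ of $P$'s outer ball, since $z \in P$. Everything else is a direct application of the polynomial volume-growth estimate \eqref{prop:nudoubling}, and the constants $c$, $c_\lambda$ depend only on $C_\mu$ (equivalently $\nu$) and the dyadic parameters $c_0, C_0$, as required.
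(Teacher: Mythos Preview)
Your proof is correct and follows essentially the same approach as the paper: you take the inner and outer balls $B'=B(z,\tfrac{c_0}{3}\delta^k)$ and $B=B(z,2C_0\lambda\delta^k)$ from the dyadic construction, observe that $B(z,2C_0(\lambda-1)\delta^k)\subseteq\lambda P$ since $z\in P$, and apply the doubling estimate \eqref{prop:nudoubling} twice. Your justification of the inclusion $B(z,2C_0(\lambda-1)\delta^k)\subseteq\lambda P$ is in fact cleaner than the paper's, which simply asserts the chain of inclusions; the only issue is that the middle of your write-up reads like a live worksheet --- trim the false starts and keep just the two displayed applications of \eqref{prop:nudoubling}.
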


\begin{proof}
	By the properties of $\cald$, we have that there exist $x\in M$ and $k\in\Z$ such that
	\[
		B\Big(x,\frac{c_0}{3}\delta^k\Big)\subseteq P\subseteq B(x,2C_0\delta^k).
	\]
\begin{enumerate}[label=(\roman*)]
\item
	Set $B:=B(x,2C_0\delta^k)$. Then, $P\subseteq B$ and we can directly estimate
	\[
		\mu(P)\geq\mu\Big(B\Big(x,\frac{c_0}{3}\delta^k\Big)\Big)\geq\frac{1}{C_{\mu}}\left(\frac{c_0}{6C_0}\right)^{\nu}\mu(B).
	\]
	
\item
	It is clear that when $\lambda>1$
	\[
		B(x,\frac{c_0}{3}\delta^k)\subseteq P\subseteq B(x,2C_0(\lambda-1)\delta^k)\subseteq \lambda P\subseteq B(x,2C_0\delta^k+2C_0(\lambda-1)\delta^k).
	\]
	Set $B:=B(x,2C_0\lambda\delta^k)$. Then, we have $\lambda P\subseteq B$ and
	\[
		\mu(\lambda P) \geq \mu(B(x,2C_0(\lambda-1)\delta^k)) \geq \frac{1}{C_{\mu}}\left(\frac{\lambda-1}{\lambda}\right)^{\nu}\mu(B).
	\]
	For $\lambda \geq 2$, notice that $\frac{\lambda}{\lambda-1}\leq2$ and recall that $\nu =\log_2(C_\mu)$.
	
	Finally, set $B':=B(x,\frac{c_0}{3}\delta^k)$. Then, we have that $B'\subseteq P$ and
	\[
		\mu(\lambda P)\leq\mu(B(x,2C_0\lambda\delta^k))\leq C_\mu\left(\frac{2C_0\lambda\delta^k}{\frac{c_0}{3}\delta^k}\right)^\nu\mu(B(x,\frac{c_0}{3}\delta^k))=C_\mu\left(\frac{6C_0}{c_0}\right)^\nu\lambda^\nu\mu(B').
	\]
\end{enumerate}
\end{proof}

\addtocontents{toc}{\SkipTocEntry}
\subsection*{The maximal operators \texorpdfstring{$\cM$}{M}}
We denote by $\calm$ the \emph{uncentered Hardy--Littlewood maximal operator}

\begin{equation*}
\calm f(x):=\sup_{B\text{ ball}}\aver{|f|}_{B}\1_{B}(x),\quad x\in M
\end{equation*}
acting on functions of $L^1_\loc(M,\mu)$.
For an exponent $p > 1$, we denote by $\calm_p$ the $p$-maximal operator, $\calm_p(f):= [\calm(|f|^p)]^{\frac{1}{p}}$ for functions $f\in L^p_\loc(M,\mu)$. We recall the well-known bound
    \begin{equation}
    \label{Hardy_Litlewood_bound}
        \norm{\calm}_{L^{q}(M,\mu)\to L^{q}(M,\mu)}\leq (C_{\mu}3^{\nu})^{1/q}q',\quad 1<q<\infty.
    \end{equation}
See, for example, \cite[Exercises 1.3.3 and 2.1.1]{Grafakos_2014} for a sketch of a proof in $\R^n$ that works without any changes in any doubling measure metric space.

We also denote by $\cM^\dgrid$ the corresponding \emph{dyadic maximal operator} by
\[\calm^\dgrid f(x):=\sup_{Q\in\dgrid}\aver{|f|}_{Q}\1_{Q}(x),\quad x\in M\]
acting on $L^1_\loc(M,\mathrm{d}\mu)$, as well as $\cM_p^\dgrid(f):= \cM^\dgrid(|f|^p)^\frac{1}{p}$. It turns out that for $\mu$-almost all $y\in M$
\[\cM^\dgrid f(y) \lesssim \cM f(y) \inline{and} \cM^\dgrid_p f(y) \lesssim \cM_p f(y)\]
where the implicit constants depend only on the doubling measure $\mu$ and the exponent $p$. Naturally, all the properties of $\cM$ are inherited by its dyadic version $\cM^\dgrid$.

Next, for $Q\in \dgrid$ and for $1 \leq p < 2$ we define the operator $\cM_{Q,p}^*$ as follows: when $x\in Q$
\begin{equation} \label{eq:sharp_maximal_operator}
	\cM^*_{Q,p}f(x):= \sup_{\substack{P\in \dgrid(Q) \\ P\ni x}} \inf_{y\in Q} \cM_{p} f(y),
\end{equation}
and zero when $x\not\in Q$. It turns out that $\cM^*_{Q,p}f(x)$ is of weak type $(p,p)$ and of strong type $(q,q)$ for all $q\in (p,\infty]$ (see \cite[Section 5A]{BFP2016}).

\subsection{Sparse collections}

Here, we introduce the notion of ``sparseness''. It must be noted that one can find many different definitions of what a ``sparse collection'' is in the general bibliography. See, for example, \cite{NPTV2017} for an exposition of the alternative definitions in $\R^n$. All such definitions are, in fact, equivalent: one can look into \cite{Hae2018} to see the proof of such equivalences (see also \cite{Lerner_Nazarov_Dyadic_Calculus2019} for the special case of dyadic systems). Our setting is the following:

\begin{dfn}[Sparse Collection] \label{dfn:sparse_collection}
	Let $\cals$ be a collection of Borel subsets of $M$ of positive and finite $\mu$ measure, and consider a number $\varepsilon\in (0,1)$. The collection $\cals$ is called \emph{$\varepsilon$-sparse} when for all $Q\in \cals$ there exist pairwise disjoint measurable sets $F_Q\subseteq Q$ such that
	\[\mu(F_Q)\geq \varepsilon\mu(Q) \afterline{for all} Q\in \cals.\]
\end{dfn}

Later in the text, we work with sparse collections $\cals \subseteq \dgrid$ whose elements are dyadic sets.


\subsection{The operators} \label{sec:operator}
In this section, we introduce the main operators of our study along with some important properties.


\addtocontents{toc}{\SkipTocEntry}
\subsection*{The unbounded operator \texorpdfstring{$L$}{L}} For $\theta \in [0, \pi/2)$, a linear operator $L$ with dense domain $\cD_2(L)$ in $L^2(M, \mu)$ is called \emph{$\theta$-accretive} if the spectrum $\sigma(L)$ of $L$ is contained in the closed sector $S_{\theta+}:= \{\zeta \in \C: |\arg \zeta| \leq \theta\} \cup \{0 \}$, and $Lg \cdot g \in S_{\theta+}$ for all $g \in \cD_2(L)$.

We assume the existence of an unbounded operator $L$ on $L^2(M, \mu)$ which satisfies the following properties: For any $\theta \in [0, \pi/2)$, $L$ is an injective $\theta$-accretive operator with dense domain $\cD_2(L)$ in $L^2(M, \mu)$. Also, suppose there exist two exponents $p_0$ and $q_0$ with $1 \leq p_0 < 2 < q_0 \leq \infty$ such that for all balls $B_1$ and $B_2$ of radius $\sqrt{t}$ ($t > 0$)
\begin{equation}\label{eq:maximal_accretive}
	\norm{e^{-tL}}_{L^{p_0}(B_1) \to L^{q_0}(B_2)} \lesssim |B_1|^{-1/p_0} |B_2|^{1/q_0} e^{-cd(B_1, B_2)^2/t},
\end{equation}
where $d(B_1, B_2)$ is the usual distance between $B_1$ and $B_2$; $d(B_1, B_2) := \inf\{d(x, y)\such {x \in B_1,\ y \in B_2}\}$.

From these assumptions, $L$ is what is called a \emph{maximal accretive operator} on $L^2(M, \mu)$, which implies that $L$ has a bounded $H^\infty$ functional calculus on $L^2(M, \mu)$. Note that if $L$ is unbounded, $e^{-tL}$ cannot be represented in the form of a power series. Since $\theta < \frac{\pi}{2}$, $-L$ is a generator of the analytic semigroup $(e^{-tL})_{t >0}$ in $L^2(M, \mu)$, see \cite{Albrecht_Duong_McIntosh_1996, Kato_1966}. Therefore, this semigroup satisfies the so-called \emph{$L^{p_0}$-$L^{q_0}$ off-diagonal estimates}, a generalization of the $L^2$-$L^2$ Davies--Gaffney estimates. This kind of off-diagonal estimate is useful in the case when pointwise heat kernel bounds fail.

\addtocontents{toc}{\SkipTocEntry}
\subsection*{The sublinear operator \texorpdfstring{$T$}{T}} Throughout this text, we consider an operator $T$ which satisfies the following properties:

\begin{enumerate}[label=(T\arabic*)]
	\item
	\label{def:sublinearity_definition}
	$T$ is a well-defined bounded sublinear operator on $L^2(M,\mu)$. Here, by ``sublinearity'' of $T$ we understand the following conditions:
	\begin{gather*}
		|T(f+g)(x)|\leq |Tf(x)|+|Tg(x)|,\\
		|Tf(x)-Tg(x)|\leq |T(f-g)(x)|,\\
	\intertext{and}
		|T(af)(x)|=|a|\cdot |Tf(x)|,
	\end{gather*}
	for $\mu$-a.e.~$x\in M$, for all $f,g\in L^{2}(M,\mu)$, and for all $a\in\C$.
    The second condition ensures that if $T$ is bounded on the Lebesgue space $L^{p}(M,\mu)$ as an operator $T:L^{p}(M,\mu)\to L^{p}(M,\mu)$ for some $1<p<\infty$, then $T$ is actually \emph{continuous} as a map $T:L^{p}(M,\mu)\to L^{p}(M,\mu)$.
    
	\item There exists $N_0 \in \N$ such that for all integers $N \geq N_0$ and all balls $B_1$ and $B_2$ of radius $\sqrt{t}$
	\[
		\norm{T(tL)^N e^{-tL}}_{L^{p_0}(B_1,\mu) \to L^{q_0}(B_2,\mu)} \lesssim |B_1|^{-1/p_0} |B_2|^{1/q_0} \left(1 + \frac{d(B_1, ,B_2)^2}{t} \right)^{- \frac{\nu + 1}{2}}.
	\]
	\item There exists an exponent $p_1 \in [p_0, 2)$ such that for all $x \in M$ and $r>0$
	\[
		\left( \dashint_{B(x,r)} |Te^{-r^2L}f|^{q_0} \dd\mu \right)^{\frac{1}{q_0}} \lesssim \underset{y \in B(x,r)}{\inf} \cM_{p_1}(Tf)(y) +  \underset{y \in B(x,r)}{\inf} \cM_{p_1}f(y).
	\]
\end{enumerate}

The assumptions (T2) and (T3) serve as a substitute for the notion of Calder{\'o}n--Zygmund operators, see for instance \cite{Auscher_2007} and \cite{Auscher_Pascal_Coulhon_2004}. Namely, condition (T2) provides the cancellation property for the operator $T$ which corresponds to the cancellation of the considered semigroup; this is known as \emph{$L^{p_0}$-$L^{q_0}$ off-diagonal estimates}. Condition (T3) is a Cotlar-type inequality, which allows one to get off-diagonal estimates for the low-frequency part of the operator $T$.

In the sequel, we fix an integer $N > \max\left\{\frac{3}{2} \nu + 1, N_0 \right\}$, where $\nu=\log_2(C_\mu)$, and $N_0$ comes from the assumptions on $T$. Unless specified otherwise, we don't emphasize the dependence on the implicit constant $N$.

\addtocontents{toc}{\SkipTocEntry}
\subsection*{The approximation operators \texorpdfstring{$P_t$}{P\_t} and \texorpdfstring{$Q_t$}{Q\_t}} Further, we introduce two classes of elementary operators;
\begin{itemize}[leftmargin=\parindent]
	\item the family $(P_t)_{t>0}$, which corresponds to an approximation of the identity at scale $\sqrt{t}$, and commutes with the heat semigroup, and
	\item the family $(Q_t)_{t>0}$, which satisfies some extra cancellation condition with respect to $L$.
\end{itemize}

\begin{dfn}
	Set $c_N:= \int_0^{\infty} s^N e^{-s} \dd s/s$. For $t>0$, we define
	\begin{gather*}
		Q_t^{(N)}:= c_N^{-1} (tL)^N e^{-tL}
	\intertext{and}	
		P_t^{(N)}:= \int_1^{\infty} Q_{st}^{(N)} \,\frac{\mathrm{d}s}{s} = \Phi_N(tL),
	\end{gather*}
	where $\Phi_N(x):= c_N^{-1} \int_x^{+\infty} s^{N} e^{-s} \frac{\mathrm{d}s}{s}$ for $x \geq 0$.
\end{dfn}

The operators above are derived from the semigroup $(e^{-tL})_{t>0}$ and substitute the Littlewood--Paley operators which are more typical for our circle of questions. We refer to \cite{Hofmann_Lu_Mitrea_Mitrea_Yan_2011, Auscher_Martell_II_2007} for a detailed discussion of their properties. We mention only some of them below.

	
		
		
		

\begin{rem} \label{rem:operator_estimates}
    Let $p \in (p_0, q_0)$.
    \begin{enumerate}[label=(\roman*)]
        \item We have $Q_t^{(N)} = (-1)^N c_N^{-1} t^N \partial_t^N e^{-tL}$ and $P_t^{(N)} = \psi(tL)e^{-tL}$, where $\psi$ is a polynomial of degree $N-1$ with $\psi(0) = 1$. Moreover, the following identity holds:
		\[t\partial_t P_t^{(N)} = tL {\Phi'_N} (tL) = - Q_t^{(N)}.\]
  
        \item By the $L^p$-analyticity of the semigroup and \eqref{eq:maximal_accretive}, we know that for every $t>0$ the operators $P_t^{(N)}$ and $Q_t^{(N)}$ satisfy off-diagonal estimates (as in assumption (T2) on $T$) at scale~$\sqrt{t}$. For the proof of this result, see, for example, \cite[Proposition 3.1]{Hofmann_Lu_Mitrea_Mitrea_Yan_2011}.

        \item In view of (ii), the operators $P_t^{(N)}$ and $Q_t^{(N)}$ are $L^p(M,\mu)$-bounded \emph{uniformly in $t>0$}, see \cite[Theorem 2.3]{Auscher_Martell_II_2007}.
    \end{enumerate}
\end{rem}

\begin{prop}[Calder{\'o}n Reproducing Formulae] \label{prop:calderon_reproducing_formula}
	Let $p \in (p_0,q_0)$. For every $f \in L^p(M,\mu)$, the following limits hold in the $L^p(M,\mu)$ sense:
	\begin{gather*}
		\lim_{t \to 0+} P_t^{(N)}f = f;	\\
		\lim_{t \to +\infty} P_t^{(N)}f = 0;	\\
	\intertext{and}
		f = \int_0^{+\infty} Q_t^{(N)} f \, \frac{\mathrm{d}t}{t}.
	\end{gather*}
	In particular, one can write
	\begin{equation}
	    \label{tails_Calderon_formula}
            P_t^{(N)} = \id - \int_0^t Q_s^{(N)} \, \frac{\mathrm{d}s}{s},
	\end{equation}
	and as a consequence, we have
	\begin{equation} \label{eq:integration_over_Q_is_identity}
		\id = \int_0^\infty Q_t^{(N)} \, \frac{\mathrm{d}t}{t}.
	\end{equation}
\end{prop}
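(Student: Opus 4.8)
The plan is to reduce everything to the single identity $P_t^{(N)} = \int_t^{\infty} Q_r^{(N)}\,\frac{\mathrm{d}r}{r}$, which is immediate from the Definition of $P_t^{(N)}$ after the substitution $r = st$, together with the convergence of the improper integral $\int_0^{\infty} Q_r^{(N)} f\,\frac{\mathrm{d}r}{r}$ in $L^p(M,\mu)$. Granting these, splitting $\int_0^\infty = \int_0^t + \int_t^\infty$ gives at once $\id = \int_0^t Q_r^{(N)}\,\frac{\mathrm{d}r}{r} + P_t^{(N)}$, i.e.\ \eqref{tails_Calderon_formula}, hence \eqref{eq:integration_over_Q_is_identity} and the third displayed formula; and letting $t\to\infty$ (resp.\ $t\to0$) in $P_t^{(N)}f = \int_t^\infty Q_r^{(N)}f\,\frac{\mathrm{d}r}{r}$ gives $P_t^{(N)}f\to 0$ (resp.\ $P_t^{(N)}f\to \int_0^\infty Q_r^{(N)}f\,\frac{\mathrm{d}r}{r}$). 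Thus the whole proposition follows once we know that this last integral converges in $L^p(M,\mu)$ and equals $f$. (One could alternatively differentiate, using $t\partial_t P_t^{(N)} = -Q_t^{(N)}$ from \Cref{rem:operator_estimates}(i), but the integral splitting is cleaner.)

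First I would work on a convenient dense subclass, say $\cD := L(\id+L)^{-M}\big(L^2(M,\mu)\cap L^p(M,\mu)\big)$ with $M$ large, which is dense in $L^p(M,\mu)$ for $p\in(p_0,q_0)$ and lies in the part of $L^2(M,\mu)$ accessible to the $H^\infty$ calculus of $L$. On $L^2(M,\mu)$, since $L$ is injective and maximal accretive — hence has dense range and a bounded $H^\infty$ calculus on every sector $S_{\theta+}$ — McIntosh's quadratic‑estimate machinery applied to $\psi(\zeta):= c_N^{-1}\zeta^N e^{-\zeta}$, which is holomorphic and bounded on $S_{\theta+}$ with $\int_0^\infty \psi(r)\,\frac{\mathrm{d}r}{r} = 1$, yields $\int_0^\infty Q_r^{(N)} f\,\frac{\mathrm{d}r}{r} = \int_0^\infty \psi(rL) f\,\frac{\mathrm{d}r}{r} = f$ in $L^2$ for every $f\in\cD$. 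Next I would upgrade this to absolute convergence in $L^p$ on $\cD$: writing $f = L(\id+L)^{-M}h$, so that $Q_r^{(N)} f = c_N^{-1} r^N L^{N+1} e^{-rL}(\id+L)^{-M}h$, the $L^p$‑analyticity of the semigroup — which follows from \eqref{eq:maximal_accretive} and \Cref{rem:operator_estimates}(ii),(iii) via the standard bounds $\|(rL)^k e^{-rL}\|_{L^p\to L^p}\lesssim_k 1$ and the $L^p$‑boundedness of $L^{N+1}(\id+L)^{-M}$ for $M$ large — gives $\|Q_r^{(N)} f\|_{L^p}\lesssim_h \min(r^{N},r^{-1})$, whence $\int_0^\infty \|Q_r^{(N)} f\|_{L^p}\,\frac{\mathrm{d}r}{r} < \infty$. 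Consequently $\int_0^\infty Q_r^{(N)} f\,\frac{\mathrm{d}r}{r}$ converges in $L^p$ for $f\in\cD$, and since it already equals $f$ in $L^2$ it equals $f$ in $L^p$ too.

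Finally I would remove the restriction $f\in\cD$ by density. The truncated operators $R_{t_1,t_2} := \int_{t_1}^{t_2} Q_r^{(N)}\,\frac{\mathrm{d}r}{r} = P_{t_1}^{(N)} - P_{t_2}^{(N)}$ are bounded on $L^p(M,\mu)$ uniformly in $0 < t_1 < t_2$, because $\sup_{t>0}\|P_t^{(N)}\|_{L^p\to L^p} < \infty$ by \Cref{rem:operator_estimates}(iii); likewise $P_t^{(N)}$ itself is uniformly $L^p$‑bounded. Hence for $f\in L^p(M,\mu)$ and $g\in\cD$ with $\|f-g\|_{L^p}$ small, $\|R_{t_1,t_2}f - f\|_{L^p} \leq \|R_{t_1,t_2}(f-g)\|_{L^p} + \|R_{t_1,t_2}g - g\|_{L^p} + \|g-f\|_{L^p}$, and the middle term tends to $0$ as $t_1\to 0,\ t_2\to\infty$ by the previous step; a standard $\varepsilon/3$ argument then gives $\int_0^\infty Q_r^{(N)} f\,\frac{\mathrm{d}r}{r} = f$ in $L^p$, and the same scheme applied to $P_t^{(N)}f - f$ and to $P_t^{(N)}f = \int_t^\infty Q_r^{(N)}f\,\frac{\mathrm{d}r}{r}$ yields the two limits. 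I expect the main obstacle to be the second step: producing a subclass that is simultaneously dense in $L^p(M,\mu)$, accessible to the $L^2$ functional calculus, and on which the absolute $L^p$‑convergence can be verified quantitatively. This is precisely where the off-diagonal hypothesis \eqref{eq:maximal_accretive} and the $L^p$‑boundedness statements of \Cref{rem:operator_estimates} are needed, and the relevant semigroup estimates are essentially those of \cite{Hofmann_Lu_Mitrea_Mitrea_Yan_2011, Auscher_Martell_II_2007}, which I would invoke rather than reprove.
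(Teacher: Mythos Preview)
The paper does not actually prove this proposition: it is stated without proof as background material, with the surrounding text pointing to \cite{Hofmann_Lu_Mitrea_Mitrea_Yan_2011, Auscher_Martell_II_2007} for the properties of $P_t^{(N)}$ and $Q_t^{(N)}$. So there is no ``paper's own proof'' to compare against; the authors simply take the Calder\'on reproducing formulae as known input from the literature.

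That said, your sketch is the standard route and is essentially correct. A couple of minor remarks. First, your logical ordering is slightly inverted relative to how such arguments usually run: rather than first establishing $\int_0^\infty Q_r^{(N)}f\,\tfrac{\mathrm{d}r}{r}=f$ and deducing the limits of $P_t^{(N)}f$, one more commonly proves the two limits $P_t^{(N)}f\to f$ (as $t\to0^+$) and $P_t^{(N)}f\to 0$ (as $t\to\infty$) directly on $L^2$ via the functional calculus, uses $t\partial_t P_t^{(N)}=-Q_t^{(N)}$ from \Cref{rem:operator_estimates}(i) to write $P_{t_1}^{(N)}f-P_{t_2}^{(N)}f=\int_{t_1}^{t_2}Q_r^{(N)}f\,\tfrac{\mathrm{d}r}{r}$, and only then passes to the limit. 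Your integral-splitting variant is equivalent, but note that your presentation of \eqref{tails_Calderon_formula} as a consequence of $\int_0^\infty Q_r^{(N)}f\,\tfrac{\mathrm{d}r}{r}=f$ is circular as written unless you have already secured one of the endpoint limits independently. Second, the density of your class $\cD$ in $L^p(M,\mu)$ for $p\in(p_0,q_0)$ is not entirely free: it relies on the $L^p$-theory of the semigroup (consistency of the $L^2$ and $L^p$ semigroups on $L^2\cap L^p$, and injectivity/dense range of $L$ in the $L^p$ setting), which is exactly what the references you cite provide. You correctly flag this as the main technical point and correctly defer it to those sources, which is precisely what the paper itself does.
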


\smallskip

Since $T$ is sublinear, for any $f\in L^p(M,\mu)$ \cref{eq:integration_over_Q_is_identity} gives that
\begin{equation}
\label{eq:decomposition_linearity_operator_inequality}
	|T(f)| \leq \int_0^{\infty} |TQ_t^{(N)}(f)| \, \frac{\mathrm{d}t}{t}\quad\mu\text{-a.e. on }M. 
\end{equation}


\addtocontents{toc}{\SkipTocEntry}
\subsection*{The maximal operators associated to \texorpdfstring{$T$}{T}}

Next, we introduce a certain maximal operator related to $T$ which comes into play later.

\begin{dfn}\label{dfn:maximal_operator_sharp}
    For $x \in M$, the \emph{maximal operator $T^{\#}$ of $T$}, acting on functions $f \in L_{\loc}^{q_0}(M,\mu)$, is given by
    \[
        T^{\#} f(x) := \sup_{B \ni x} \left(\dashint_B \left|T \int_{r(B)^2}^\infty Q_t^{(N)} f \, \frac{\mathrm{d}t}{t}\right|^{q_0} \dd\mu \right)^{\frac{1}{q_0}},
    \]
    where the supremum is taken over all balls $B\subseteq M$ containing $x$.
\end{dfn}
Since $T$ is $L^2$-bounded, the Calder{\'o}n reproducing formulae (\Cref{prop:calderon_reproducing_formula}), imply the following properties for $T^{\#}$:
\begin{enumerate}[label=(\roman*)]
	\item For every function $f \in L^2(M,\mu)$, $|Tf| \leq T^{\#}f$ $\mu$-almost everywhere on $M$.
	\item The sublinear operator $T^{\#}$ is of weak type $(p_0,p_0)$ and is bounded in $L^p(M,\mu)$ for every $p \in (p_0, 2]$. 
\end{enumerate}
Details and proofs of these properties can be found in \cite[Section 4]{BFP2016}.

\medskip

One can also consider a localized dyadic version of $T^{\#}$. For this, start with a cube $Q\in\cald$. For $x\in Q$, we define the operator $T^\#_{Q}$ as follows:
	\begin{equation} \label{eq:sharp_T_operator}
		T^\#_{Q}f(x):= \sup_{\substack{P\in \dgrid(Q) \\ P\ni x}} \Biggl(\dashint_P \Bigg| T\int_{\ell(P)^2}^\infty \cQ_t^{(N)}(f) \,\frac{\mathrm{d}t}{t} \Bigg|^{q_0} \dd \mu \Biggr)^\frac{1}{q_0} \afterline{when}  x\in Q,
	\end{equation}
and zero when $x\not\in Q$. It turns out that for $\mu$-almost all $y\in M$ we have that $T^\#_{Q} f(y) \lesssim T^\# f(y)$, where the implicit constant depends only on the doubling measure $\mu$ and the exponent $q_0$.

Naturally, $|Tf| \leq T^\#_{Q}f$ $\mu$-almost everywhere for every $f\in L^2(M,\mu)$. All the remaining important properties of $T^\#_Q$ which we need, are inherited from $T^\#$. In particular, $T^\#_{Q}$ is of weak type $(p_0,p_0)$ and bounded in $L^p(M,\mu)$ for every $p\in (p_0,2]$ (see \cite[Section 4B]{BFP2016}).

\subsection{Matrix weights}

Let $W:M\to M_{d}(\C)$ be a measurable function with $W(x)\in\mathrm{P}_{d}(\C)$ for $\mu$-a.e.~$x\in M$, and $0<p<\infty$. For any measurable function $f:M\to\C^d$, we denote
\begin{align*}
    \norm{f}_{L^{p}_{W}(M,\mu)} = \left(\int_M |W(x)^{1/p}f(x)|^p\dd\mu(x) \right)^\frac{1}{p}.
\end{align*}
Naturally, we identify all functions for which $\norm{f}_{L^{p}_{W}(M,\mu)}=0$ into an equivalence class. Under this identification, this quantity defines a norm when $p\in[1,\infty)$, and therefore, for such $p$, we obtain the normed complex Banach space $(L^{p}_{W}(M,\mu),\norm{\cdot}_{L^{p}_{W}(M,\mu)})$ consisting of all ($\mu$-a.e.~equivalence classes of) measurable functions $f:M\to\C^d$ with $\norm{f}_{L^{p}_{W}(M,\mu)}<\infty$. We note that the dual $(L^{p}_{W}(M,\mu))^{*}$ of $L^{p}_{W}(M,\mu)$ can be naturally identified with $L^{p'}_{W'}(M,\mu)$ under the usual $L^2(M,\mu;\C^d)$-pairing, where $W':=W^{-1/(p-1)}$.

In general, ``matrix weighted'' spaces as above might behave rather pathologically. However, the class of functions $W$ considered in this paper will prevent such issues.

\begin{dfn}
Let $W:M\to M_{d}(\C)$ be a measurable function with $W(x)\in\mathrm{P}_{d}(\C)$ for $\mu$-a.e.~$x\in M$. We say that $W$ is a \emph{$d\times d$ matrix weight on $M$} if $\int_{E}|W(x)|\dd\mu(x)<\infty$ for any bounded Borel subset $E$ of $M$.
\end{dfn}

Let $W$ be a $d\times d$ matix weight on $M$. Then, it is not hard to see that the class of continuous compactly supported functions on $M$ is actually dense in $L^{p}_{W}(M,\mu)$, for any $1\leq p<\infty$. This observation allows us below to prove estimates only for functions in an appropriate ``nice class'' and then make use of ``standard'' approximation arguments to extend these estimates to more general functions.

\section{Preliminaries from convex-set-valued analysis}
\label{section:convex_set_analysis_preliminaries}

In this section, we review some notions and facts related to measurable convex-set-valued maps. This places the sparse domination of \Cref{section:bilinear_convex_body_sparse_domination}, as well as the limited range extrapolation of \Cref{section:extrapolation} on firm ground.

Our exposition follows closely \cite{Aubin_2009} and \cite{Cruz_Uribe_Bownik_Extrapolation}. We omit most proofs and instead refer the reader to these two references.

\subsection{Operations with subsets of Euclidean spaces}

As we deal with functions whose values are convex subsets of complex Euclidean spaces, we introduce some notation for the elementary operations between such sets.

Let $d$ and $n$ be positive integers, and let $A$, $B$, and $A_1,\ldots,A_n$ be non-empty subsets of $\C^d$. Also, consider arbitrary $\lambda\in\C$, $D\subseteq\C$, $x\in\C^d$ and $C\in\mathrm{M}_{d}(\C)$; we denote by $x^1,\ldots,x^{d}$ the coordinates of $x$ in $\C^d$. Then, we have the following definitions:
\begin{enumerate}[label=(\alph*)]
	\item The \emph{Minkowski sum} of $A_1,\ldots,A_n$:
	\begin{equation}
	\label{Minkowski sum}
		A_1+\ldots+A_n := \{a_1+\ldots+a_n \such a_i\in A_i \inmath{for all} i=1,\ldots,n\}\subseteq\C^d;
	\end{equation}
	
	\item the \emph{Minkowski (dot) product} of $A$ and $B$:
	\begin{equation}
	\label{Minkowski dot product}
	    A\cdot B := \{a\cdot b\such a\in A,~b\in B\}\subseteq\C,
	\end{equation}
	where for any $x,y\in\C^d$ the operation $x\cdot y$ stands for the usual Hermitian product of $x,y$, that is $x\cdot y:=\sum_{i=1}^{d}x^i\overline{y^i}$;
	
	\item \[ A\cdot x := A\cdot\{x\}\subseteq\C^d;\]
	
	\item \[\lambda A := \{\lambda a\such a\in A\}\subseteq\C^d;\]
	
	\item \[DA:=\{da \such d\in D,\ a\in A\}= \textstyle \bigcup_{d\in D}dA\subseteq\C^d;\]
	
	\item \[CA := \{Ca\such a\in A\}\subseteq\C^d;\]
	
	\item \[|A| := \sup\{|a|\such a\in A\}\in[0,\infty].\]
\end{enumerate}

\subsection{Convex subsets of Euclidean spaces}

Before talking about maps taking convex sets as values, we first review a few facts about convex sets themselves.

Let $d$ be a positive integer and consider the Euclidean space $\C^d$. A subset $K$ of $\C^d$ is said to be \emph{convex} if for all $x,y\in K$ and $t\in[0,1]$ it holds that $tx+(1-t)y\in K$. Any non-empty intersection of convex subsets is again convex. For any $A\subseteq\C^d$, the \emph{convex hull} $\mathrm{conv}(A)$ of $A$ is the smallest convex subset of $\C^d$ containing $A$. If $(K_n)^{\infty}_{n=1}$ is an increasing sequence of convex subsets of $\C^d$, then $\bigcup_{n=1}^{\infty}K_n$ is again a convex subset of $\C^d$.

The convex sets we are interested in exhibit some ``symmetry''. Namely, we say that a subset $K$ of $\C^d$ is \emph{complex-symmetric around $0$} (or sometimes in the current text simply \emph{symmetric}) if for any $x\in K$ and any $\lambda\in\C$ with $|\lambda|=1$ we have $\lambda x\in K$. We denote by $\cK_{\mathrm{cs}}(\C^d)$ the set of all non-empty, convex, closed, and complex-symmetric subsets of $\C^d$. Moreover, we denote by $\cK_{\bcs}(\C^d)$ the set of all non-empty, convex, closed, bounded and complex-symmetric subsets of $\C^d$. The elements of $\cK_{\bcs}(\C^d)$ are called \emph{convex bodies (around $0$)}. Observe that if $K$ is a non-empty, convex, closed, and complex-symmetric subset of $\C^d$, then $0\in K$, and $\lambda x\in K$ for any $x\in K$ and any $\lambda\in\overline{\D}$.

A particular type of convex bodies is so important that it deserves special terminology. We say that a subset $K$ of $\C^d$ is a \emph{complex ellipsoid around $0$}, (or sometimes in the current text simply \emph{ellipsoid}) if there exists an invertible matrix $A\in M_{d}(\C)$ with $K=A\overline{\mathbf{B}}$, where $\overline{\mathbf{B}}$ is the standard (Euclidean) closed unit ball in $\C^d$: $\overline{\mathbf{B}}:=\{x\in\C^d \such |x|\leq 1\}$. In this case, the polar decomposition of $A^{*}$ yields $K=C\overline{\mathbf{B}}$, where $C=(AA^{*})^{1/2}\in\mathrm{P}_{d}(\C)$.

In many theoretical and practical applications, one can ``replace'' a convex body with a non-empty interior with an ellipsoid. More precisely, if $K$ is a convex body in $\C^d$ with $0\in\mathrm{Int}(K)$, then there exists a unique ellipsoid $\cE_{K}$ in $\C^d$ satisfying the following two properties:
\begin{enumerate}
    \item $\cE_{K}\subseteq K$

    \item $\cE_{K}$ has maximum volume among all ellipsoids $\cE$ in $\C^d$ with $\cE\subseteq K$.
\end{enumerate}
This unique ellipsoid $\cE_{K}$ is called the \emph{John-ellipsoid of $K$}. It is well-known that
\begin{equation*}
    \cE_{K}\subseteq K\subseteq\sqrt{d}\,\cE_{K}.
\end{equation*}
For proofs and more details on John ellipsoids, we refer to \cite{Goldberg_2003, NPTV2017, Cruz_Uribe_Bownik_Extrapolation, DKPS2024}.

The following lemma follows immediately from the definitions, the fact that continuous maps preserve compactness, and the observation that if $C\subseteq\C$ is non-empty, compact, and $\lambda C\subseteq C$ for all $\lambda\in\overline{\D}$, then either $C=\lbrace0\rbrace$ or there exists $r>0$ with $C=\overline{D}(0,r)$.

\begin{lem}
    \label{lemma:Minkowski_product_of_convex_bodies_is_closed_disk}
    Let $K,L\in\cK_{\bcs}(\C^d)$. Then, $K+L\in\cK_{\bcs}(\C^d)$ and $|K+L|\leq|K|+|L|$. Moreover, either $K\cdot L=\lbrace0\rbrace$ or there exists $r>0$ with $r\leq|K||L|$ and such that $K\cdot L=\overline{D}(0,r)$.
\end{lem}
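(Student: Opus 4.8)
The plan is to verify each claim in \Cref{lemma:Minkowski_product_of_convex_bodies_is_closed_disk} directly from the definitions, using the structural characterization of complex-symmetric compact convex subsets of $\C$ quoted just before the statement. First I would dispatch the assertions about $K+L$. Since $K,L$ are convex, for $x_1+y_1, x_2+y_2\in K+L$ with $x_i\in K$, $y_i\in L$ and $t\in[0,1]$, the point $t(x_1+y_1)+(1-t)(x_2+y_2)=(tx_1+(1-t)x_2)+(ty_1+(1-t)y_2)$ again lies in $K+L$ by convexity of $K$ and $L$ separately, so $K+L$ is convex. Complex-symmetry is equally immediate: $\lambda(x+y)=\lambda x+\lambda y\in K+L$ whenever $|\lambda|=1$. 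Closedness and boundedness follow because $K+L$ is the image of the compact set $K\times L\subseteq\C^d\times\C^d$ under the continuous addition map, hence compact, hence closed and bounded. The norm bound $|K+L|\le|K|+|L|$ is just the triangle inequality applied pointwise: $|x+y|\le|x|+|y|\le|K|+|L|$ for all $x\in K$, $y\in L$, and taking the supremum gives the claim.

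Next I would handle the Minkowski product $K\cdot L\subseteq\C$. The set $K\cdot L$ is the image of $K\times L$ under the continuous Hermitian-product map $(x,y)\mapsto x\cdot y$, so it is compact, in particular closed and bounded; moreover $|x\cdot y|\le|x||y|\le|K||L|$ by Cauchy--Schwarz, so $K\cdot L\subseteq\overline{D}(0,|K||L|)$. The key point is the complex-symmetry/rotation-invariance of $K\cdot L$ as a subset of $\C$: given $x\cdot y\in K\cdot L$ and $\lambda\in\C$ with $|\lambda|=1$, write $\lambda(x\cdot y)=(\lambda x)\cdot y$; since $|\lambda|=1$ and $K$ is complex-symmetric, $\lambda x\in K$, so $\lambda(x\cdot y)\in K\cdot L$. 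Thus $K\cdot L$ is a non-empty compact subset of $\C$ with $\lambda C\subseteq C$ for all $|\lambda|=1$, hence for all $\lambda\in\overline{\D}$ (using that $0=0\cdot y\in K\cdot L$ and scaling down along rays, i.e. $\mu C\subseteq C$ for $\mu\in[0,1]$ because $0\in C$ and $C$... — here I would instead directly invoke that $C$ is also convex, or more simply note $\mu\lambda$ ranges over $\overline\D$). By the dichotomy stated in the excerpt, either $K\cdot L=\lbrace0\rbrace$ or $K\cdot L=\overline{D}(0,r)$ for some $r>0$, and in the latter case $r=|K\cdot L|\le|K||L|$.

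The one point that needs a little care — and the only real ``obstacle'' — is justifying that $K\cdot L$ satisfies the hypothesis of the quoted observation, namely $\lambda C\subseteq C$ for \emph{all} $\lambda\in\overline{\D}$ (not just $|\lambda|=1$). I would argue this by combining rotation-invariance with the fact that $0\in K\cdot L$ (take any $y\in L$ and use $0\in K$, so $0=0\cdot y$) together with convexity of $K\cdot L$: for $z\in K\cdot L$ and $t\in[0,1]$, $tz=tz+(1-t)\cdot 0\in K\cdot L$. Convexity of $K\cdot L$ itself would follow from convexity of $K$ (fix $y\in L$; the map $x\mapsto x\cdot y$ is $\R$-linear, so $K\cdot y$ is convex) — but the Minkowski product of two convex sets need not be convex in general, so instead I would lean on the fact that we already know $K\cdot L$ is rotationally invariant and compact, and that a rotationally invariant compact set $C\subseteq\C$ containing $0$ need not be a disk unless we also know it is ``radially'' connected; the cleanest route is to observe $K\cdot L=\bigcup_{y\in L}(K\cdot y)$ where each $K\cdot y$ is a rotationally invariant (since $K$ is) convex compact subset of $\C$ containing $0$, hence each $K\cdot y=\overline{D}(0,r_y)$, and a union of concentric closed disks whose radii form a compact (hence bounded, and with attained supremum) set is again a closed disk $\overline{D}(0,\sup_y r_y)$ — here using that $y\mapsto r_y=|K\cdot y|$ is continuous on the compact set $L$. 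This gives the claimed dichotomy cleanly, with $r=\sup_{y\in L}r_y\le\sup_{y\in L}|K||y|=|K||L|$.
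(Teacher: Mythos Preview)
Your proof is correct and follows essentially the same route as the paper: verify that $K\cdot L$ is non-empty, compact, and stable under multiplication by $\overline{\D}$, then apply the observation quoted just before the lemma. The only comment is that you manufacture an obstacle that is not there. The paper explicitly notes (two sentences above the lemma) that for any $K\in\cK_{\cs}(\C^d)$ one has $\lambda x\in K$ for all $x\in K$ and all $\lambda\in\overline{\D}$, not just $|\lambda|=1$. With this in hand, $\lambda(x\cdot y)=(\lambda x)\cdot y\in K\cdot L$ for every $\lambda\in\overline{\D}$ is immediate, and your detour through the decomposition $K\cdot L=\bigcup_{y\in L}K\cdot y$ and the continuity of $y\mapsto r_y$—while correct—is unnecessary.
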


\subsection{Measurable convex-set-valued maps}
\label{section:convex_set_preliminaries}

We come to the central notion of this section, the measurability of convex-set-valued maps. Our definitions follow \cite[Chapter 8]{Aubin_2009}.

Recall that a \emph{Polish space} is a complete separable metric space. In the rest of this subsection, $X$ stands for a fixed Polish space, and a $(\Omega,\cA)$ for fixed measurable space. We denote by $\cK(X)$ the set of all closed subsets of $X$ and by $\cK^{*}(X)$ the set of all non-empty closed subsets of $X$.

\begin{dfn}
\label{dfn:measurable_close_set_valued_map}
   A map $F:\Omega\to\cK(X)$ is said to be \emph{weakly $\cA$-measurable} (or simply \emph{$\cA$-measurable} in the current text) if for any open subset $U$ of $X$ we have
    \[ F^{-1}(U) := \{\omega\in\Omega\such F(\omega)\cap U\neq\emptyset\}\in\cA.\]
\end{dfn}

The following lemma will allow us not to worry later about the measurability of maps arising, for example, through pointwise operations between measurable convex body valued maps (such as the Minkowski sum \eqref{Minkowski sum} or the Minkowski dot product) or through pointwise multiplication of measurable convex body valued maps with measurable matrix-valued functions.

\begin{lem}
    \label{lemma:measurability_preserved_through_continuous_operations}
    Let $Y, Z$ be Polish spaces, and let $F$ and $G$ be $\cA$-measurable maps $F,G:\Omega\to\cK^{*}(X)$, which take compact sets as values. If the map $\varphi:X \times Y\to Z$ is continuous, then the map $H:\Omega\to\cK^{*}(Z)$ given by
    \begin{equation*}
        H(\omega) := \varphi(F(\omega)\times G(\omega)) \afterline{for} \omega\in\Omega
    \end{equation*}
    is $\cA$-measurable.
\end{lem}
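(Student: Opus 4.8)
The plan is to reduce the measurability of $H$ to that of the product map $\omega\mapsto F(\omega)\times G(\omega)$, and then to use the standard machinery for measurable set-valued maps: namely, that a closed-valued map into a Polish space is weakly measurable if and only if it admits a Castaing representation, i.e.\ a countable family of measurable selections whose values are dense in $F(\omega)$ for every $\omega$. Since $F$ and $G$ take non-empty compact values, such selections exist: by the Kuratowski--Ryll-Nardzewski selection theorem (see \cite[Chapter 8]{Aubin_2009}) there are measurable functions $f_n:\Omega\to X$ and $g_m:\Omega\to Y$ with $F(\omega)=\mathrm{clos}(\{f_n(\omega):n\in\N\})$ and $G(\omega)=\mathrm{clos}(\{g_m(\omega):m\in\N\})$ for all $\omega\in\Omega$.

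First I would observe that, because $F(\omega)$ and $G(\omega)$ are compact, the product $F(\omega)\times G(\omega)$ is a compact (hence closed) subset of the Polish space $X\times Y$, and
\begin{equation*}
    F(\omega)\times G(\omega)=\mathrm{clos}\big(\{(f_n(\omega),g_m(\omega)):n,m\in\N\}\big),
\end{equation*}
since the closure of a product of bounded sets in a metric space is the product of the closures, and density is preserved under taking finite products of countable dense families. Each map $\omega\mapsto(f_n(\omega),g_m(\omega))$ is $\cA$-measurable as a map into $X\times Y$ (coordinatewise measurability suffices here, $X\times Y$ being a separable metric space). Next, applying the continuous map $\varphi$, the functions $h_{n,m}:=\varphi\circ(f_n,g_m):\Omega\to Z$ are $\cA$-measurable, and since $\varphi$ is continuous it maps a compact set onto a compact set and carries dense subsets to dense subsets; therefore
\begin{equation*}
    H(\omega)=\varphi(F(\omega)\times G(\omega))=\mathrm{clos}\big(\{h_{n,m}(\omega):n,m\in\N\}\big)
\end{equation*}
is a non-empty closed (indeed compact) subset of $Z$. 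This exhibits $\{h_{n,m}\}_{n,m\in\N}$ as a countable family of measurable selections of $H$ with dense range, i.e.\ a Castaing representation of $H$, so $H$ is $\cA$-measurable: for any open $U\subseteq Z$ one has $H^{-1}(U)=\bigcup_{n,m}h_{n,m}^{-1}(U)\in\cA$, because $H(\omega)\cap U\neq\emptyset$ exactly when some point of the dense set $\{h_{n,m}(\omega)\}$ lies in the open set $U$.

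The only slightly delicate point — and the one I would be most careful about — is justifying that the closure of the countable set $\{(f_n(\omega),g_m(\omega))\}$ really equals $F(\omega)\times G(\omega)$ and that $\varphi$ of it has closure $\varphi(F(\omega)\times G(\omega))$; both rely essentially on compactness of the values (so that $\varphi$ restricted to $F(\omega)\times G(\omega)$ is uniformly continuous and in particular maps dense subsets to dense subsets of the image, and the image is automatically closed). Without compactness the image of a closed set under a continuous map need not be closed, which is exactly why the hypothesis that $F$ and $G$ take compact values is needed. Everything else is a routine application of the selection theorem and the equivalence between weak measurability and the existence of a Castaing representation, both of which are available in \cite[Chapter 8]{Aubin_2009}.
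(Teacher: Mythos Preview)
Your proof is correct, but it takes a genuinely different route from the paper's own argument. You go through Castaing representations: pick countable dense measurable selections $f_n$ of $F$ and $g_m$ of $G$, compose with $\varphi$, and observe that the resulting $h_{n,m}$ form a Castaing representation of $H$. The paper instead argues directly from the definition of weak measurability: for an open $W\subseteq Z$ one has
\[
H^{-1}(W)=\{\omega:(F(\omega)\times G(\omega))\cap\varphi^{-1}(W)\neq\emptyset\},
\]
and since $\varphi^{-1}(W)$ is open in the product, it is a countable union of open rectangles $U_n\times V_n$, whence $H^{-1}(W)=\bigcup_n\big(F^{-1}(U_n)\cap G^{-1}(V_n)\big)\in\cA$. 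The paper's approach is shorter and avoids the selection machinery entirely; your approach has the virtue of also producing, as a by-product, explicit measurable selections of $H$. In both proofs compactness is used only to guarantee that $H(\omega)$ is closed; one small remark is that your justification ``$\varphi$ is uniformly continuous and in particular maps dense subsets to dense subsets of the image'' overstates the requirement --- plain continuity already gives $\varphi(\overline{D})\subseteq\overline{\varphi(D)}$, so density of $\varphi(D)$ in $\varphi(K)$ needs no compactness.
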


\begin{proof}
    First of all, observe that $H$ indeed takes closed subsets of $Z$ as values. Let now $W$ be any open subset of $Z$. Then we have
    \begin{equation*}
        H^{-1}(W) = \{\omega\in\Omega\such\varphi(F(\omega)\times G(\omega))\cap W\neq\emptyset\}
         = \{\omega\in\Omega\such (F(\omega)\times G(\omega))\cap \varphi^{-1}(W)\neq\emptyset\}.
    \end{equation*}
   Observe that $\varphi^{-1}(W)$ is an open subset of $X\times Y$, therefore, there exist sequences $(U_n)_{n=1}^{\infty}$ and $(V_n)^{\infty}_{n=1}$ of open subsets of $X$ and $Y$ respectively such that
   \begin{equation*}
       \varphi^{-1}(W)=\bigcup_{n=1}^{\infty}(U_n\times V_n).
   \end{equation*}
   Thus, we have
   \begin{equation*}
       H^{-1}(W) = \bigcup_{n=1}^{\infty}(F^{-1}(U_n)\cap G^{-1}(V_n))\in\cA
   \end{equation*}
   concluding the proof.
\end{proof}

The following useful alternative characterization of measurability for closed-set-valued maps follows immediately from \Cref{dfn:measurable_close_set_valued_map} coupled with the separability of the metric space $X$.

\begin{lem}
\label{lemma:alternative_characterization_measurability}

Let $F:\Omega\to\cK^{*}(X)$ be a map. The following statements are equivalent:

\begin{enumerate}[label=(\roman*)]
   \item The map $F:\Omega\to\cK^{*}(X)$ is $\cA$-measurable.

    \item For any $x\in X$, the function
    \[\Omega\ni\omega\mapsto\dist(x,F(\omega))\in[0,\infty)\]
    is $\cA$-measurable.

   \end{enumerate}

\end{lem}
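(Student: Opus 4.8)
The plan is to establish the two implications separately; the only topological input needed is that $X$, being a separable metric space, possesses a countable base consisting of open balls of rational radius centred at points of a fixed countable dense subset.

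For (i) $\Rightarrow$ (ii), I would fix $x\in X$ and note that for every $r>0$ one has the chain of equivalences $\dist(x,F(\omega))<r \iff F(\omega)\cap B(x,r)\neq\emptyset \iff \omega\in F^{-1}(B(x,r))$, and the last set belongs to $\cA$ by the $\cA$-measurability of $F$. Hence the preimage of each half-open interval $[0,r)$ under the map $\omega\mapsto\dist(x,F(\omega))$ lies in $\cA$; since such intervals generate the Borel $\sigma$-algebra of $[0,\infty)$, this map is $\cA$-measurable. (Non-emptiness of $F(\omega)$ enters only to ensure the distance is finite, so that $[0,\infty)$ is the correct codomain.)

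For (ii) $\Rightarrow$ (i), let $\{x_n\}_{n\geq1}$ be a countable dense subset of $X$. A routine argument (pick, around a given point of an open set $U$, a small ball contained in $U$, then a rational-radius ball about some $x_n$ squeezed inside it) shows that every open $U\subseteq X$ equals the union of those balls $B(x_n,r)$ with $n\geq1$ and $r\in\Q_{>0}$ that are contained in $U$; this is a countable collection, say $U=\bigcup_{k\geq1}B(x_{n_k},r_k)$. Since each $B(x_{n_k},r_k)$ is open, $F(\omega)\cap U\neq\emptyset$ if and only if $F(\omega)\cap B(x_{n_k},r_k)\neq\emptyset$ for some $k$, i.e.\ $\dist(x_{n_k},F(\omega))<r_k$ for some $k$. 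Therefore $F^{-1}(U)=\bigcup_{k\geq1}\{\omega\in\Omega\such\dist(x_{n_k},F(\omega))<r_k\}$, a countable union of members of $\cA$ by hypothesis (ii), so $F^{-1}(U)\in\cA$ and $F$ is $\cA$-measurable.

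I do not anticipate any genuine obstacle: the argument is short, and the only step demanding a (standard) verification is that the rational-radius balls about a countable dense set form a base for the topology of $X$.
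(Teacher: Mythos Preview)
Your argument is correct and is precisely the standard verification the paper alludes to: the paper does not spell out a proof but simply remarks that the equivalence ``follows immediately from'' the definition of $\cA$-measurability ``coupled with the separability of the metric space $X$,'' which is exactly what you have made explicit. There is nothing to add.
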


Consequently, we do not have to worry about measurability issues when ``comparing'' the values of measurable closed-set-valued maps pointwise.

\begin{cor}
    \label{corollary:comparing_closed_set_valued_maps}
    Let $F,G:\Omega\to\cK^{*}(X)$ be $\cA$-measurable maps. Then, we have
    \begin{equation*}
        \{\omega\in\Omega\such F(\omega)\subseteq G(\omega)\}\in\cA.
    \end{equation*}
\end{cor}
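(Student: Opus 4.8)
The plan is to reduce the pointwise containment $F(\omega)\subseteq G(\omega)$ to a \emph{countable} family of conditions, each of which refers only to distances from \emph{fixed} points of $X$, so that Lemma~\ref{lemma:alternative_characterization_measurability} applies directly. Concretely, I would fix a countable dense subset $\{x_k\}_{k\in\N}$ of $X$ (which exists, $X$ being Polish hence separable), and for $k,m\in\N$ introduce the sets
\[
    A_{k,m} := \Bigl\{\omega\in\Omega \such \dist(x_k,F(\omega))\ge\tfrac1m \Bigr\}\cup\Bigl\{\omega\in\Omega\such \dist(x_k,G(\omega))<\tfrac2m\Bigr\}.
\]
The goal is then to establish the identity
\[
    \{\omega\in\Omega\such F(\omega)\subseteq G(\omega)\} \;=\; \bigcap_{k\in\N}\bigcap_{m\in\N} A_{k,m},
\]
after which the conclusion is immediate.

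The core of the argument is the verification of this identity. The inclusion ``$\subseteq$'' is easy: if $F(\omega)\subseteq G(\omega)$ then $\dist(x,G(\omega))\le\dist(x,F(\omega))$ for every $x\in X$ (the infimum defining the left side is over a larger set), so whenever $\dist(x_k,F(\omega))<\tfrac1m$ we get $\dist(x_k,G(\omega))<\tfrac1m<\tfrac2m$, i.e. $\omega\in A_{k,m}$. For the reverse inclusion, I would take $\omega\in\bigcap_{k,m}A_{k,m}$ and $y\in F(\omega)$, and show $\dist(y,G(\omega))=0$; since $G(\omega)$ is closed this forces $y\in G(\omega)$, hence $F(\omega)\subseteq G(\omega)$. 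To see $\dist(y,G(\omega))=0$, fix $m\in\N$, use density to pick $k$ with $d(x_k,y)<\tfrac1m$, note $\dist(x_k,F(\omega))\le d(x_k,y)<\tfrac1m$, invoke $\omega\in A_{k,m}$ to get $\dist(x_k,G(\omega))<\tfrac2m$, and conclude $\dist(y,G(\omega))\le d(y,x_k)+\dist(x_k,G(\omega))<\tfrac3m$; letting $m\to\infty$ finishes it. The one subtlety worth flagging is that one must \emph{not} try to characterize the containment by a naive pointwise inequality $\dist(x_k,F(\omega))\ge\dist(x_k,G(\omega))$ over the dense set — that equivalence is false — which is exactly why the slack factor $\tfrac2m$ (equivalently, the limit over $m$) is needed.

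Granting the identity, measurability drops out. For each fixed $k$, Lemma~\ref{lemma:alternative_characterization_measurability} applied to $F$ and to $G$ shows that $\omega\mapsto\dist(x_k,F(\omega))$ and $\omega\mapsto\dist(x_k,G(\omega))$ are $\cA$-measurable functions into $[0,\infty)$; hence $\{\dist(x_k,F(\omega))\ge\tfrac1m\}$ and $\{\dist(x_k,G(\omega))<\tfrac2m\}$, being preimages of Borel subsets of $[0,\infty)$, belong to $\cA$, so $A_{k,m}\in\cA$. A countable intersection of members of the $\sigma$-algebra $\cA$ is again in $\cA$, which gives the claim. I do not expect a genuine obstacle here — the only thing to get right is the equivalence above; an alternative via a Castaing representation of $F$ (a sequence of $\cA$-measurable selections whose closure is $F(\omega)$ for every $\omega$) is also available, but it relies on a joint-measurability fact not recorded in the excerpt, so the direct argument is cleaner in this setting.
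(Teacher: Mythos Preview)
Your proof is correct, but it differs from the paper's in an interesting way --- and your side remark about the ``naive'' approach is actually mistaken.

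The paper uses precisely the characterization you warn against: it shows that
\[
    \{\omega : F(\omega)\subseteq G(\omega)\} = \bigcap_{n=1}^{\infty}\{\omega : \dist(x_n,F(\omega))\ge\dist(x_n,G(\omega))\},
\]
and this identity \emph{is} true. The point you may have missed is that $x\mapsto\dist(x,F(\omega))-\dist(x,G(\omega))$ is continuous (each term is $1$-Lipschitz), so nonnegativity on a dense set forces nonnegativity everywhere; then for $y\in F(\omega)$ one gets $0=\dist(y,F(\omega))\ge\dist(y,G(\omega))\ge 0$, whence $y\in G(\omega)$ by closedness. The measurability of each set in the intersection follows from Lemma~\ref{lemma:alternative_characterization_measurability} since $\{h_1\ge h_2\}$ is measurable whenever $h_1,h_2$ are.

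Your route via the sets $A_{k,m}$ with the slack $\tfrac{2}{m}$ is a valid workaround, but it is more elaborate than necessary: the extra index $m$ and the limiting argument are doing work that continuity of the distance functions already handles for free in the paper's version. Both proofs ultimately rest on the same lemma and the same density argument; the paper's is just cleaner.
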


\begin{proof}
	Let $(x_n)^{\infty}_{n=1}$ be a sequence of points of $X$ with
	\begin{equation*}
		X=\mathrm{clos}(\{x_n\such n=1,2,\ldots\}).
	\end{equation*}
	We claim that
	\begin{equation}
	\label{basic_inclusion}
		\{\omega\in\Omega\such F(\omega)\subseteq G(\omega)\}
		=\bigcap_{n=1}^{\infty}\{\omega\in\Omega\such \dist(x_n,F(\omega))\geq\dist(x_n,G(\omega))\}.
	\end{equation}
	Given \Cref{lemma:alternative_characterization_measurability}, this is enough to conclude the required result.
	
	The inclusion $\subseteq$ in \eqref{basic_inclusion} is clear. For the reverse inclusion $\supseteq$, let $\omega\in\Omega$ such that
	\[\dist(x_n,F(\omega))\geq\dist(x_n,G(\omega)) \afterline{for all} n=1,2,\ldots\]
	Let $x\in F(\omega)$ be arbitrary. Then, one can find a sequence $(y_m)^{\infty}_{m=1}$ of points in $\{x_n\such n=1,2,\ldots\}$ with $y_m\to x$ in $X$ as $m\to\infty$. In other words, $\lim_{m\to\infty}d(y_m,x)=0$, where $d$ is the metric of $X$. Since $x\in F(\omega)$, then $\lim_{m\to\infty}\dist(y_m,F(\omega))=0$. By assumption $\dist(y_m,G(\omega))\leq\dist(y_m,F(\omega))$, for all~$m=1,2,\ldots$ and therefore $\lim_{m\to\infty}\dist(y_m,G(\omega))=0$. It immediately follows that $\dist(x,G(\omega))=0$. Since $G(\omega)$ is a closed subset of $X$, $x\in G(\omega)$ concluding the proof.
\end{proof}

To develop a measurability theory for set-valued functions, we need to be able to study them through usual ``single-valued'' functions that \emph{represent} them.

\begin{dfn}
\label{dfn:selection_map}
	Let $F:\Omega\to\cK^{*}(X)$ be a map. A \emph{selection map for $F$} is a map $f:\Omega\to X$ such that $f(\omega)\in F(\omega)$ for any $\omega\in\Omega$.
\end{dfn}

A far-reaching characterization of the measurability of closed-set-valued maps in terms of selection maps was established by C.~Castaing and it is also known as the \emph{Castaing representation theorem}.

\begin{thm}[\protect{\cite[Theorem 8.3.1]{Aubin_2009}}]
\label{thm:Castaing_representation}

Let $F:\Omega\to\cK^{*}(X)$ be a map. The following statements are equivalent:

\begin{enumerate}[label=(\roman*)]
    \item The map $F:\Omega\to\cK^{*}(X)$ is $\cA$-measurable.

    \item There exists a sequence $f_n:\Omega\to X$ ($n=1,2,\ldots$) of $\cA$-measurable selection maps for $F$ such that
    \begin{equation*}
        F(\omega) = \mathrm{clos}(\{f_n(\omega)\such n=1,2,\ldots\}),\quad\forall\omega\in\Omega.
    \end{equation*}
\end{enumerate}
\end{thm}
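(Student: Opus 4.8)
The plan is to prove the two implications separately. The implication $(ii)\Rightarrow(i)$ is immediate: if $(f_n)_{n=1}^{\infty}$ is a sequence of $\cA$-measurable selection maps with $F(\omega)=\mathrm{clos}(\{f_n(\omega)\such n\geq 1\})$ for every $\omega\in\Omega$, then for any open $U\subseteq X$ the openness of $U$ means it meets a set precisely when it meets its closure, so $F(\omega)\cap U\neq\emptyset$ if and only if $f_n(\omega)\in U$ for some $n$. Hence $F^{-1}(U)=\bigcup_{n=1}^{\infty}f_n^{-1}(U)\in\cA$, as required. The substance is in $(i)\Rightarrow(ii)$, which I would split into a single-selection lemma and a localization argument.

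The first step is to show that any $\cA$-measurable $G\colon\Omega\to\cK^{*}(X)$ admits at least one $\cA$-measurable selection. Fix a countable dense set $\{x_k\such k\in\N\}\subseteq X$ (using separability of $X$) and write $d$ for the metric of $X$. I would construct recursively countably-valued, hence $\cA$-measurable, maps $s_n\colon\Omega\to\{x_k\such k\in\N\}$ satisfying $\dist(s_n(\omega),G(\omega))<2^{-n}$ for all $\omega$, and $d(s_n(\omega),s_{n-1}(\omega))<2^{-n+2}$ for all $\omega$ and all $n\geq 1$. For $n=0$ one uses that $\Omega=\bigcup_k G^{-1}(B(x_k,1))$, because $G$ has non-empty values, disjointifies these measurable sets, and sets $s_0$ equal to $x_k$ on the $k$-th resulting piece. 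For the inductive step, on each measurable piece $\{s_{n-1}=x_k\}$ one has $\dist(x_k,G(\omega))<2^{-n+1}$, so this piece is covered by the measurable sets $\{s_{n-1}=x_k\}\cap G^{-1}(B(x_j,2^{-n}))$ with $j$ ranging over the indices with $d(x_j,x_k)<2^{-n+1}+2^{-n}$; disjointifying and assigning the value $x_j$ produces $s_n$ with the two claimed estimates by the triangle inequality. Since $\sum_n 2^{-n+2}<\infty$, the sequence $(s_n(\omega))_n$ is Cauchy uniformly in $\omega$, hence converges by completeness of $X$ to some $f(\omega)$; $f$ is $\cA$-measurable as a pointwise limit of $\cA$-measurable maps into a metric space, and $f(\omega)\in G(\omega)$ since $\dist(s_n(\omega),G(\omega))\to 0$ and $G(\omega)$ is closed.

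The second step upgrades this to a Castaing representation of $F$. Let $f_0$ be one measurable selection of $F$ obtained from the first step. For each pair $(k,j)\in\N\times\N$ put $\Omega_{k,j}:=F^{-1}(B(x_k,2^{-j}))\in\cA$, and on $\Omega_{k,j}$ set $G_{k,j}(\omega):=\mathrm{clos}\bigl(F(\omega)\cap B(x_k,2^{-j})\bigr)$. Using again that an open set meets a set iff it meets its closure, $G_{k,j}$ is an $\cA$-measurable map into $\cK^{*}(X)$ on $\Omega_{k,j}$, so the first step yields a measurable selection $g_{k,j}$ of $G_{k,j}$ there, with $g_{k,j}(\omega)\in F(\omega)\cap\overline{B}(x_k,2^{-j})$; one extends $g_{k,j}$ to all of $\Omega$ by letting it equal $f_0$ on $\Omega\setminus\Omega_{k,j}$, which keeps it $\cA$-measurable and still a selection of $F$. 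Finally one checks density: given $\omega\in\Omega$, $x\in F(\omega)$ and $\varepsilon>0$, choose $j$ with $2^{-j+1}<\varepsilon$ and then $x_k$ with $d(x_k,x)<2^{-j}$; then $\omega\in\Omega_{k,j}$ and $d(g_{k,j}(\omega),x)\leq d(g_{k,j}(\omega),x_k)+d(x_k,x)\leq 2^{-j}+2^{-j}<\varepsilon$. Since each $g_{k,j}(\omega)$ lies in the closed set $F(\omega)$, this gives $F(\omega)=\mathrm{clos}(\{g_{k,j}(\omega)\such k,j\in\N\})$, and enumerating the countable family $\{g_{k,j}\such k,j\in\N\}$ as a sequence produces the selection maps required in $(ii)$.

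The main obstacle is the recursive construction in the first step: one must simultaneously keep each $s_n$ countably valued and $\cA$-measurable, retain control of $\dist(s_n(\omega),G(\omega))$, and force the increments $d(s_n(\omega),s_{n-1}(\omega))$ to be summable uniformly in $\omega$ so that the limit exists and is measurable. This is precisely where the density of $\{x_k\}$ and the $\cA$-measurability of the preimages $G^{-1}(B(x_j,2^{-n}))$, together with disjointification, are indispensable; once this lemma is in hand, the localization in the second step is routine bookkeeping.
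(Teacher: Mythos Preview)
The paper does not give its own proof of this result; it is stated as a citation of \cite[Theorem 8.3.1]{Aubin_2009}, with the surrounding text explicitly noting that such proofs are omitted and the reader is referred to that source. Your argument is correct and is essentially the standard Kuratowski--Ryll-Nardzewski selection construction followed by the localization that appears in Aubin--Frankowska, so there is nothing to compare against and no gap to flag.
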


Let now $d$ be any positive integer. We are particularly interested in $\cA$-measurable maps $F:\Omega\mapsto\cK_{\mathrm{cs}}(\C^d)$. For technical reasons, we need the following lemma:
\begin{lem}
    \label{lemma:a_e_boundedness_and_measurability_inner_product}
    Let $F:\Omega\to\cK_{\mathrm{cs}}(\C^d)$ be an $\cA$-measurable map.
    \begin{enumerate}
        \item The function $|F|:\Omega\to[0,\infty]$, where $|F|(\omega):=|F(\omega)|$ for $\omega\in\Omega$, is $\cA$-measurable.

        \item Let $\mu$ be a measure on $(\Omega,\cA)$ such that $F(\omega)$ is bounded for $\mu$-a.e.~$\omega\in\Omega$. Then there exists an $\cA$-measurable map $\tilde{F}:\Omega\to\cK_{\bcs}(\C^d)$ with $\tilde{F}(\omega) = F(\omega)$ for $\mu$-a.e.~$\omega\in\Omega$.
    \end{enumerate}
\end{lem}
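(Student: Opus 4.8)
The plan is to treat the two parts in order, using part~(1) to prepare part~(2).

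\emph{Part~(1).} I would argue straight from \Cref{dfn:measurable_close_set_valued_map}. For $r\in[0,\infty)$, one has $|F|(\omega)\leq r$ precisely when $F(\omega)\subseteq\{x\in\C^d\such|x|\leq r\}$, equivalently when $F(\omega)\cap\big(\C^d\setminus\{x\in\C^d\such|x|\leq r\}\big)=\emptyset$. Since $\C^d\setminus\{x\in\C^d\such|x|\leq r\}$ is open, the $\cA$-measurability of $F$ gives $F^{-1}\big(\C^d\setminus\{x\in\C^d\such|x|\leq r\}\big)\in\cA$, so its complement $\{\omega\such|F|(\omega)\leq r\}$ also lies in $\cA$. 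Combined with $\{\omega\such|F|(\omega)\leq\infty\}=\Omega\in\cA$, this shows $|F|:\Omega\to[0,\infty]$ is $\cA$-measurable. Alternatively, I could invoke the Castaing representation \Cref{thm:Castaing_representation} to write $F(\omega)=\mathrm{clos}(\{f_n(\omega)\such n\geq1\})$ for $\cA$-measurable selections $f_n$ and note that $|F|(\omega)=\sup_n|f_n(\omega)|$ is a countable supremum of $\cA$-measurable functions.

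\emph{Part~(2).} I would set $A:=\{\omega\in\Omega\such|F|(\omega)<\infty\}=\{\omega\in\Omega\such F(\omega)\text{ is bounded}\}$. By part~(1) this belongs to $\cA$, and by hypothesis $\mu(\Omega\setminus A)=0$. For $\omega\in A$ the set $F(\omega)$ is non-empty, closed, bounded, convex and complex-symmetric, hence $F(\omega)\in\cK_{\bcs}(\C^d)$. Define $\tilde F(\omega):=F(\omega)$ for $\omega\in A$ and $\tilde F(\omega):=\{0\}$ for $\omega\in\Omega\setminus A$; since $\{0\}\in\cK_{\bcs}(\C^d)$, the map $\tilde F$ takes values in $\cK_{\bcs}(\C^d)$, and $\tilde F=F$ on $A$, hence $\mu$-a.e. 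To check that $\tilde F$ is $\cA$-measurable, I would fix an open $U\subseteq\C^d$ and write
\[\tilde F^{-1}(U)=\big(A\cap F^{-1}(U)\big)\cup\big((\Omega\setminus A)\cap\{\omega\such0\in U\}\big).\]
The first term lies in $\cA$ because $A\in\cA$ and $F$ is $\cA$-measurable; the second term equals $\Omega\setminus A$ when $0\in U$ and $\emptyset$ otherwise, so it lies in $\cA$ in either case. Hence $\tilde F^{-1}(U)\in\cA$ and $\tilde F$ is $\cA$-measurable.

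I do not expect a genuine obstacle: the statement is bookkeeping on top of the definitions and the topology of $\C^d$. The two points I would be careful about are (i) phrasing $\{|F|\leq r\}$ via the \emph{open} complement of a closed ball so that \Cref{dfn:measurable_close_set_valued_map} applies directly, and (ii) confirming that overwriting $F$ by the single point $\{0\}$ off the full-measure set $A$ preserves measurability, which succeeds exactly because a single point is detected by an open set $U$ only when $0\in U$.
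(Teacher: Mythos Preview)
Your proof is correct and follows essentially the same approach as the paper: for part~(1) the paper likewise observes that $\{\omega:|F(\omega)|>r\}=F^{-1}(\{v:|v|>r\})\in\cA$, and for part~(2) it sets $\tilde F:=\1_{\Omega\setminus A}F$ with $A$ the (measurable) set where $F$ is unbounded, which is exactly your construction with the roles of $A$ and its complement swapped. Your write-up is in fact more detailed, since the paper declares the measurability of $\tilde F$ ``clear'' while you verify it explicitly.
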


\begin{proof}
	\begin{enumerate}
		\item It suffices to observe that
		\begin{equation*}
			\{\omega\in\Omega\such |F(\omega)|>r\}=F^{-1}(\{v\in\C^d\such |v|>r\})\in\cA \quad\forall r\in(0,\infty).
		\end{equation*}
		
		\item Set
		\begin{equation*}
			A := \{\omega\in\Omega\such F(\omega)\text{ is unbounded}\}.
		\end{equation*}
		By part (1), we have that $A\in\cA$. Thus, by assumption, it holds that $\mu(A)=0$. Consider the map
		\[\tilde{F}:=\1_{\Omega\setminus A}F:\Omega\to\cK_{\bcs}(\C^d).\]
		It is clear that $\tilde{F}$ takes values in $\cK_{\bcs}(\C^d)$, is $\cA$-measurable, and $\tilde{F}(\omega)=F(\omega)$ for $\mu$-a.e.~$\omega\in\Omega$.
	\end{enumerate}
\end{proof}

\subsection{Lebesgue spaces of convex-set-valued maps}

In this subsection, we briefly consider Lebesgue spaces of convex body valued functions, which furnish natural domains and/or target spaces for the operators considered in this paper.

First, fix a positive integer $d$ as well as a $\sigma$-finite measure space $(\Omega,\mu)$.

For a $\mu$-measurable map $F:\Omega\to\cK_{\mathrm{cs}}(\C^d)$, we define
\begin{equation*}
    \norm{F}_{L^{p}_{\cK}(\Omega,\mu)}:=\left(\int_{\Omega}|F(\omega)|^{p}\dd\mu(\omega)\right)^{1/p} \afterline{for all} 0<p<\infty.
\end{equation*}
Observe that if $\norm{F}_{L^{p}_{\cK}(\Omega,\mu)}<\infty$ for some $0<p<\infty$, then $F(\omega)$ is bounded for $\mu$-a.e.~$\omega\in\Omega$. Naturally, we identify all $\mu$-measurable maps $F,G:\Omega\to\cK_{\mathrm{cs}}(\C^d)$ for which $F(\omega)=G(\omega)$ for $\mu$-a.e.~$\omega\in\Omega$, and  for each $p\in(0,\infty)$ we define $L^{p}_{\cK}(\Omega,\mu)$ to be the set of all (equivalence classes of) $\mu$-measurable maps $F:\Omega\to\cK_{\mathrm{cs}}(\C^d)$ which satisfy $\norm{F}_{L^{p}_{\cK}(\Omega,\mu)}<\infty$. As usual, our notation does not distinguish between equivalence classes and representatives.

In the case when $p>1$, $\norm{\var}_{L^p_\cK(\Omega,\mu)}$ satisfies all the necessary properties (triangle inequality, absolute homogeneity, and positive definiteness) of a norm. Nevertheless, this does \emph{not} define a norm as the space $L^p_\cK(\Omega,\mu)$ is not a vector space; there is no additive inverse when the addition is defined through the Minkowski sum. However, this does not create a problem for our work here.

Note that in view of \Cref{lemma:a_e_boundedness_and_measurability_inner_product} whenever $F\in L^{p}_{\cK}(\Omega,\mu)$ we can assume without loss of generality that $F(\omega)$ is bounded ---i.e. $F(\omega)\in\cK_\bcs(\C^d)$--- for all $\omega\in\Omega$.

It is important to mention that a consequence of \cite[Lemma 3.9]{Cruz_Uribe_Bownik_Extrapolation} is that the computation of a Lebesgue ``norm'' of a convex body valued function reduces to the computation of the same Lebesgue ``norm'' of an appropriately chosen selection function. For future reference, we record a slightly more general form of \cite[Lemma 3.9]{Cruz_Uribe_Bownik_Extrapolation}:

\begin{lem}
\label{lemma:compute_Lebesgue_norm_through_selection_function}
	Let $W:\Omega\to\mathrm{P}_{d}(\C)$ be a $\mu$-measurable function, and $F:\Omega\to\cK_{\bcs}(\C^d)$ a $\mu$-measurable map. Then, there exists a $\mu$-measurable selection function $h:\Omega\to\C^d$ for $F$ such that $|W(\omega)h(\omega)|=|W(\omega)F(\omega)|$ for $\mu$-a.e.~$\omega\in\Omega$.
	
	In particular, there exists a $\mu$-measurable selection function $h:\Omega\to\C^d$ for $F$ such that $|h(\omega)|=|F(\omega)|$ for $\mu$-a.e.~$\omega\in\Omega$.
\end{lem}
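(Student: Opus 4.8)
The plan is to realize the sought selection $h$ as an $\cA$-measurable selection of the ``argmax'' multifunction
\[A(\omega):=\{v\in F(\omega)\such|W(\omega)v|=|W(\omega)F(\omega)|\},\qquad\omega\in\Omega.\]
Since $F(\omega)\in\cK_{\bcs}(\C^d)$ is compact and $v\mapsto|W(\omega)v|$ is continuous, the number $\phi(\omega):=|W(\omega)F(\omega)|=\sup_{v\in F(\omega)}|W(\omega)v|$ is finite and is attained on $F(\omega)$, so $A(\omega)$ is non-empty and closed (being the intersection of $F(\omega)$ with the closed set $\{v\such|W(\omega)v|\geq\phi(\omega)\}$). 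Once we know that $A$ is $\cA$-measurable, \Cref{thm:Castaing_representation} applied to $A$ produces an $\cA$-measurable selection $h\colon\Omega\to\C^d$, and then $h(\omega)\in A(\omega)\subseteq F(\omega)$ with $|W(\omega)h(\omega)|=\phi(\omega)=|W(\omega)F(\omega)|$, which is the first assertion; the ``in particular'' statement follows at once by taking $W\equiv\id$.

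First I would fix, via \Cref{thm:Castaing_representation}, a sequence $(f_n)_{n\geq1}$ of $\cA$-measurable selections of $F$ with $F(\omega)=\mathrm{clos}\{f_n(\omega)\such n\geq1\}$ for every $\omega$. Each function $\omega\mapsto|W(\omega)f_n(\omega)|$ is $\cA$-measurable, being the composition of the measurable maps $W$ and $f_n$ with the continuous operations of matrix-vector multiplication and the Euclidean norm; hence $\phi=\sup_{n\geq1}|W(\cdot)f_n(\cdot)|$ is $\cA$-measurable, and by density of $\{f_n(\omega)\}$ in $F(\omega)$ together with continuity of $v\mapsto|W(\omega)v|$ one gets $\phi(\omega)=\sup_{n\geq1}|W(\omega)f_n(\omega)|$.

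Next I would express $A$ through the $f_n$ and check its measurability via \Cref{lemma:alternative_characterization_measurability}. For $k\geq1$ set $E_k(\omega):=\{f_n(\omega)\such|W(\omega)f_n(\omega)|>\phi(\omega)-1/k\}$ and $G_k(\omega):=\mathrm{clos}(E_k(\omega))$; by the definition of $\phi(\omega)$ as a supremum each $E_k(\omega)$ is non-empty, so each $G_k(\omega)$ is a non-empty compact subset of $F(\omega)$, and the $G_k(\omega)$ decrease in $k$. A short density-and-continuity argument then gives $A(\omega)=\bigcap_{k\geq1}G_k(\omega)$. For each fixed $x\in\C^d$ one has
\[\dist\big(x,G_k(\omega)\big)=\inf_{n\geq1}\Big(|x-f_n(\omega)|+\infty\cdot\1_{\{|W(\cdot)f_n(\cdot)|\leq\phi-1/k\}}(\omega)\Big),\]
a countable infimum of $\cA$-measurable $[0,\infty]$-valued functions (the indicator set is $\cA$-measurable since $\phi$ and $|W(\cdot)f_n(\cdot)|$ are), so $\omega\mapsto\dist(x,G_k(\omega))$ is $\cA$-measurable; and since the $G_k(\omega)$ are non-empty, compact and decreasing with intersection $A(\omega)$, we have $\dist(x,A(\omega))=\sup_{k\geq1}\dist(x,G_k(\omega))$, which is $\cA$-measurable. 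By \Cref{lemma:alternative_characterization_measurability} this yields the $\cA$-measurability of $A$, and we conclude as described above.

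The main obstacle is precisely the measurability of the argmax multifunction $A$: non-emptiness of its values comes for free from compactness of $F(\omega)$, but converting the pointwise prescription ``$|W(\omega)v|$ maximal'' into a manifestly measurable description requires approximating $A(\omega)$ from above by the super-level sets $G_k(\omega)$ built out of a Castaing sequence. An alternative would be to invoke a general measurable maximum theorem for Carath\'eodory integrands, but the route above stays self-contained within the tools already recorded in this section.
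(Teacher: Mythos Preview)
Your argument is correct. It is, however, a genuinely different route from the paper's.

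The paper does not build the argmax multifunction at all. Instead, it reduces to the unweighted case by a change of variable: since $WF\colon\Omega\to\cK_{\bcs}(\C^d)$ is again measurable (by \Cref{lemma:measurability_preserved_through_continuous_operations}), one may invoke \cite[Lemma~3.9]{Cruz_Uribe_Bownik_Extrapolation} directly on $WF$ to obtain a $\bar\mu$-measurable selection $g$ of $WF$ with $|g(\omega)|=|W(\omega)F(\omega)|$ everywhere, where $\bar\mu$ is the completion of $\mu$; then $h:=W^{-1}g$ is a selection of $F$ with $|Wh|=|g|=|WF|$, and a routine null-set modification upgrades $\bar\mu$-measurability to $\mu$-measurability. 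So the paper's proof is a three-line reduction to a cited black box, at the cost of passing through the completion.

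Your approach trades that external citation for a self-contained construction: you approximate the argmax set from above by the closed super-level sets $G_k$ built from a Castaing sequence for $F$, verify measurability of each $G_k$ via the distance-function criterion, and pass to the intersection using compactness. This stays entirely within the tools recorded in this section (\Cref{thm:Castaing_representation} and \Cref{lemma:alternative_characterization_measurability}) and never needs the completion $\bar\mu$; effectively, you are reproving a weighted version of \cite[Lemma~3.9]{Cruz_Uribe_Bownik_Extrapolation} by hand. The paper's route is shorter; yours is more transparent and portable.
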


\begin{proof}
	Let $\bar{\mu}$ be the completion of $\mu$. By \cite[Lemma 3.9]{Cruz_Uribe_Bownik_Extrapolation} coupled with \Cref{lemma:measurability_preserved_through_continuous_operations}, there exists a $\bar{\mu}$-measurable selection function $g:\Omega\to\C^d$ for the $\mu$-measurable map $W F:\Omega\to\cK_{\bcs}(\C^d)$ with $|W(\omega)F(\omega)|=|g(\omega)|$ for all $\omega\in\Omega$. Then, there exists a $\mu$-measurable function $\tilde{g}:\Omega\to\C^d$ with $\tilde{g}(\omega)=g(\omega)$ for $\mu$-a.e.~$\omega\in\Omega$. Let $A$ be a $\mu$-measurable subset of $\Omega$ such that $\mu(\Omega\setminus A)=0$ and $\tilde{g}(\omega)=g(\omega)$ for all $\omega\in A$. Consider the function $h:=\1_{A}W^{-1}\tilde{g}:\Omega\to\C^d$.
	It is clear that $h:\Omega\to\C^d$ is a $\mu$-measurable selection function for $F$ with $|W(\omega)h(\omega)|=|W(\omega)F(\omega)|$ for all $\omega\in A$ thus concluding the proof.
\end{proof}

\medskip

Since we are interested in weighted spaces, we also need to establish the weighted versions of the above definitions.
Let $F:\Omega\to\cK_\cs(\C^d)$ and $W:\Omega\to\mathrm{M}_d(\C)$ be $\mu$-measurable maps.
Then, for any $0<p<\infty$, we define the quantity
\[\norm{F}_{L^{p}_{\cK,W}(\Omega,\mu)} := \left( \int_{\Omega}|W(\omega)^{1/p}F(\omega)|^{1/p}\dd\mu(\omega) \right)^{1/p},\]
and the space $L^{p}_{\cK,W}(\Omega,\mu)$ of all (equivalence classes of) $\mu$-measurable maps $F:\Omega\to\cK_\cs(\C^d)$ such that $\norm{F}_{L^p_{\cK,W}(\Omega,\mu)}<\infty$.
The equivalence classes are defined as above.

The quantity $\norm{\var}_{L^{p}_{\cK,W}(\Omega,\mu)}$ again satisfies all the properties of a norm when $p>1$, but $L^{p}_{\cK,W}(\Omega,\mu)$ is not a vector space.
Also, just like above, whenever $F\in L^p_{\cK,W}(\Omega,\mu)$ we can assume without loss of generality that $F(\omega)\in\cK_\bcs(\C^d)$ for every $\omega\in\Omega$.

\section{Bilinear convex body sparse domination}
\label{section:bilinear_convex_body_sparse_domination}

In their paper \cite[Theorem 5.7]{BFP2016}, Bernicot, Frey, and Petermichl prove a sparse dual estimate for sublinear operators $T$, which satisfy the assumptions of \Cref{section:preliminaries} and act on functions from $L^p(M,\mu)$. Here, refining a recent result by Hyt{\"o}nen \cite{Hyt2024}, we extend their result to sublinear operators $\mathbf{T}$ which now act upon vector-valued functions $f\in L^p(M,\mu;\C^d)$ and yield as output convex body valued functions $\mathbf{T}f\in L^{p}_{\cK}(M,\mu)$. More specifically, we prove what one can call a \emph{bilinear convex body sparse domination} for sublinear operators.

The number $d$ stands for a positive integer. 
Also, $\overline{\D}$ denotes the closed unit disk in $\C$, and $\overline{\mathbf{B}}$ the (Euclidean) closed unit ball of $\C^d$.

\subsection{Convex body sublinear operators}
\label{subsec:assumptions_operators}

We start by introducing the class of operators, whose sparse bounds are of interest to us. We need two fundamental definitions.

\begin{dfn}
\label{dfn:sublinear_convex_set}
	Let $1<p<\infty$. A map $\mathbf{T}:L^{p}(M,\mu;\C^d)\to L^{p}_{\cK}(M,\mu )$ is said to be \emph{sublinear} if for all $f,g\in L^{p}(M,\mu;\C^d)$ and for all $a\in\C$ it holds that
	\[\mathbf{T}(f+g)(x)\subseteq\mathbf{T}f(x)+\mathbf{T}g(x)\]
	and
	\[\mathbf{T}(af)(x)=a\mathbf{T}f(x)=|a|\mathbf{T}f(x)\]
	for $\mu$-a.e. $x\in M$.   
\end{dfn}

\begin{dfn}
\label{dfn:compatibility_scalar_convex_body}
	Let $1<p<\infty$. Consider a map $\mathbf{T}:L^{p}(M,\mu;\C^d)\to L^{p}_{\cK}(M,\mu)$ and a map $T:L^{p}(M,\mu)\to L^{p}(M,\mu)$. We say that $\mathbf{T}$ is \emph{compatible} with $T$ if for any $f\in L^{p}(M,\mu)$ and for any $v\in\C^d$ we have
	\begin{equation}
	\label{action_on_lines}
		\mathbf{T}(fv)(x)=Tf(x)\overline{\D}v=\{\lambda Tf(x)v\such\lambda\in\overline{\D}\}
	\end{equation}
	for $\mu$-a.e.~$x\in M$. 
\end{dfn}


In the current text, we are interested in sublinear maps $\mathbf{T}:L^{p}(M,\mu;\C^d)\to L^{p}_{\cK}(M,\mu )$ that are compatible with those operators $T:L^{p}(M,\mu)\to L^{p}(M,\mu)$ which satisfy the assumptions of \Cref{section:preliminaries}. 

\subsubsection{Estimating the operators}
\label{subsec:estimate_operator}

Let $1<p<\infty$ and let $\mathbf{T}:L^{p}(M,\mu;\C^d)\to L^{p}_{\cK}(M,\mu )$ be a map. 
Let $W$ be a $d\times d$ matrix weight on $M$, such that the function $W^{-1/(p-1)}$ is also locally integrable. Suppose we want to show that there exists a finite positive constant $C$, such that
\begin{equation*}
    \norm{\mathbf{T}f}_{L^{p}_{\cK,W}(M,\mu)}\leq C\norm{f}_{L^{p}_{W}(M,\mu)}
\end{equation*}
for any compactly supported function $f\in L^{p}(M,\mu;\C^d)$. By \Cref{lemma:compute_Lebesgue_norm_through_selection_function}, it is enough to prove that for any $\mu$-measurable selection function $h:M\to\C^d$ for~$\mathbf{T}f$ we have
\begin{equation*}
    \norm{h}_{L^{p}_{W}(M,\mu)}\leq C\norm{f}_{L^{p}_{W}(M,\mu)}.
\end{equation*}
Thus, in view of the Riesz representation theorem and by a standard density argument we only have to show that for any \emph{compactly supported} function $g\in L^{p'}(M,\mu;\C^d)$ one has
\begin{equation*}
    \left|\int_{M}h(x)\cdot g(x)\dd\mu(x)\right|\leq C\norm{f}_{L^{p}_{W}(M,\mu)}\norm{g}_{L^{p'}_{W'}(M,\mu)},
\end{equation*}
where $W':=W^{-1/(p-1)}$.

Take $\lambda\geq1+\frac{2}{\delta}$, where $\delta$ is the parameter of $\dgrid$ as in Subsection \ref{subsec:dyadic}. (The choice of this particular constant $\lambda$ is explained in the proof of \Cref{thm:bilinear_convex_body_sparse_domination} below.)
Looking back at the construction of the dyadic sets in Subsection \ref{subsec:dyadic} (also see \cite{HytKai2012}), thanks to the doubling properties of $\mu$, we see that there are cubes $Q_1,\ldots,Q_{K}\in\dgrid$, where $K$ is a large enough positive integer depending only on $\dgrid$ and $\mu$, such that all dilates $\lambda Q_i$, $i=1,\ldots,K$ contain the support of $f$ and $g$, and in addition the union $\bigcup_{i=1}^{K}Q_i$ also contains the support of $f$ and $g$ (up to a set of zero $\mu$-measure).
Therefore, we see that it suffices to show that whenever $Q_0$ is a dyadic set in $\dgrid$ such that $\lambda Q_0$ contains the support of both $f$ and $g$, then we have
\begin{equation*}
    \int_{Q_0}|h(x)\cdot g(x)|\dd\mu(x)\leq C\norm{f}_{L^{p}_{W}(M,\mu)}\norm{g}_{L^{p'}_{W'}(M,\mu)}.
\end{equation*}
Observe that it is always true that
\begin{equation*}
	\int_{Q_0}|h(x)\cdot g(x)|\dd\mu(x)\leq\int_{Q_0}|\mathbf{T}f(x)\cdot g(x)|\dd\mu (x).
\end{equation*}
Thus, all we need to show is that
\begin{equation*}
    \int_{Q_0}|\mathbf{T}f(x)\cdot g(x)|\dd\mu (x)\leq C\norm{f}_{L^{p}_{W}(M,\mu)}\norm{g}_{L^{p'}_{W'}(M,\mu)}.
\end{equation*}

\subsubsection{Extensions of scalar linear operators}

Let us observe that the above setup already covers the case of the usual extensions of \emph{linear} operators on vector-valued functions. Indeed, if $T:L^{p}(M,\mu)\to L^{p}(M,\mu)$ is a linear operator, then $\vec{T}:L^{p}(M,\mu;\C^d)\to L^{p}(M,\mu;\C^d)$ is defined as
\begin{equation}
\label{eq:standard_linear_extension}
	\vec{T}f:=(Tf_1,\ldots,Tf_d) \afterline{for all} f=(f_1,\ldots,f_d)\in L^{p}(M,\mu;\C^d).
\end{equation}
Now, consider the operator $\mathbf{T}:L^{p}(M,\mu;\C^d)\to L^{p}_{\cK}(M,\mu )$ which is given by
\begin{equation*}
    \mathbf{T}f(x):=\overline{\D}\,\vec{T}f(x)\quad\text{ for }x\in M \text{ and }f\in L^{p}(M,\mu;\C^d).
\end{equation*}
It is easy to see that $\mathbf{T}$ is sublinear and compatible with $T$.

Moreover,
\begin{equation*}
    \norm{\mathbf{T}f}_{L^{p}_{\cK,W}(M,\mu)}=\norm{\vec{T}f}_{L^{p}_{W}(M,\mu)}
\end{equation*}
for all $f\in L^{p}(M,\mu;\C^d)$ and for any $d\times d$ matrix weight $W$ on $M$. Thus, if there exists a finite positive constant $C$ such that
\begin{equation*}
    \norm{\mathbf{T}f}_{L^{p}_{\cK,W}(M,\mu)}\leq C\norm{f}_{L^{p}_{W}(M,\mu)},
\end{equation*}
for every compactly supported function $f\in L^{p}_{W}(M,\mu)$, then we deduce
\begin{equation*}
    \norm{\vec{T}f}_{L^{p}_{W}(M,\mu)}\leq C\norm{f}_{L^{p}_{W}(M,\mu)},
\end{equation*}
for every compactly supported function $f\in L^{p}_{W}(M,\mu)$. If in addition the original linear operator $T:L^{p}(M,\mu)\to L^{p}(M,\mu)$ is bounded, then since $\vec{T}$ is itself linear, a standard density argument yields that we can find a bounded linear extension $\widetilde{\vec{T}}:L^{p}_{W}(M,\mu)\to L^{p}_{W}(M,\mu)$ of $\vec{T}$ (that is, agreeing with $\vec{T}$ on $L^{p}(M,\mu;\C^d)\cap L^{p}_{W}(M,\mu)$) with $\norm{\widetilde{\vec{T}}}_{L^{p}_{W}(M,\mu)}\leq C$.

Finally, note that \cref{eq:standard_linear_extension} is the standard way to extend a linear operator $T$ acting on scalar-valued functions to a linear operator $\vec{T}$ acting on vector-valued functions. Typically, ---as is the case for the current text as well--- the arrow in $\vec{T}$ is omitted and ``$T$'' is used to refer to both $T:L^{p}(M,\mu)\to L^{p}(M,\mu)$ or $\vec{T}:L^{p}(M,\mu;\C^d)\to L^{p}(M,\mu;\C^d)$ depending on the function space the operator is acting on.

\subsection{Bootstrapping scalar sparse dual bounds}

Bilinear convex body bounds for the operators of interest are achieved through an inductive process. In this subsection, we treat the inductive step of this procedure. We assume that the ``base'' operator which acts on scalar-valued functions admits a sparse domination principle that relies on an inductive step of an appropriate form. Then, we adapt \cite[Proposition 4.6]{Hyt2024}.


\subsubsection{Notation}

We begin by introducing some notation and conventions. Let $X, Y$ be vector spaces over $\C$.

If $x$ is an element in $X^{d}$, then we denote by $x_1,\ldots,x_d$ its components (i.e. $x_i\in X$ for all $i=1,\dots,d$). Moreover, if $v$ is a vector in $\C^d$, then we denote by $v^1,\ldots,v^{d}$ its coordinates. Also for $v,w\in\C^d$, the operation $v\cdot w$ stands for the usual Hermitian product of $v$ and $w$ in $\C^d$:
\[v\cdot w=\sum_{i=1}^d v^i\overline{w^i}.\]

If $T:X\to Y$ is a linear map, then ---as explained earlier--- we extend $T$ to a linear map $\vec{T}:X^{d}\to Y^{d}$ by setting
\begin{equation*}
    \vec{T}(x):=(T(x_1),\ldots,T(x_d)) \afterline{for} x\in X^{d}.
\end{equation*}
Abusing the notation, we denote $\vec{T}$ simply by $T$.

For $x\in X^{d}$ and $u\in\C^d$, we follow \cite{Hyt2024} and set
\begin{equation*}
    v\bullet x:=\sum_{i=1}^{d}v^ix_i.
\end{equation*}
In the case when $X=\C$,
\begin{equation} \label{eq:bullet_to_Hermitian_product}
    v\bullet x=\sum_{i=1}^{d}v^{i}x^{i}=v\cdot\bar{x}\qquad\forall v,x\in\C^d,
\end{equation}
where $\bar{x}=(\overline{x^1},\ldots,\overline{x^d})$.

For $A\in M_{d}(\C)$ and $x\in X^d$ we set
\begin{equation*}
    A\bullet x:=\left(\sum_{j=1}^{d}A_{1j}x_{j},\ldots,\sum_{j=1}^{d}A_{dj}x_{j}\right)\in X^{d}.
\end{equation*}
Notice that if $X=\C$, then we simply have $A\bullet x=Ax$ for all $x\in\C^d$ and $A\in M_{d}(\C)$.

\subsubsection{Convex body ``averages''}

Here, we describe the ``building blocks'' of our sparse domination principle.

Let $X$ be a normed space over $\C$, and $X^{*}$ its dual space. For all $x\in X^{d}$, we denote
\begin{equation*}
    \convex{x}_{X} := \{ x^{*}(x):~x^{*}\in X^{*},\ \norm{x^{*}}_{X^*} \leq 1 \}.
\end{equation*}
It is clear that $\convex{x}_{X}$ is a complex-symmetric subset of $\C^d$. It is proved in \cite[Lemma 2.3]{Hyt2024} that $\convex{x}_{X}$ is a convex body in $\C^d$ for all $x\in X^{d}$; the proof is given there only for real normed spaces, but it works verbatim for complex ones. In our application, the role of the Banach space $X$ is  taken over by the Lebesgue spaces $L^p(Q,\frac{\mathrm{d}\mu}{\mu(Q)})$, where $1\leq p<\infty$ and $Q\subseteq M$ is some Borel set (with $0<\mu(Q)<\infty$), equipped with the normalized measure $\frac{\mathrm{d}\mu}{\mu(Q)}$; for brevity we write $\L^{p}(Q):=L^{p}(Q,\frac{\mathrm{d}\mu}{\mu(Q)})$. In this case, $X^{d}$ can be identified with the normalized Lebesgue space $\L^{p}(Q;\C^d):=L^{p}(Q,\frac{\mathrm{d}\mu}{\mu(Q)};\C^d)$. Naturally, for $f=(f_1,\dots,f_d)\in \L^p(Q;\C^d)$ we have
\begin{equation*}
	\dashint_Q \varphi f \dd\mu= \Bigg(\dashint_Q \varphi f_1 \dd\mu,\dots,\dashint_Q \varphi f_d \dd\mu \Bigg) \quad\text{ for any }\varphi\in \L^{p'}(Q),
\end{equation*}
and under the usual identification of $(\L^{p}(Q))^{*}$ with $\L^{p'}(Q)$ we have
\begin{equation*}
	\convex{f}_{\L^p(Q;\C^d)}= \Biggl\{\dashint_Q \varphi f \dd\mu \such \varphi\in\L^{p'}(Q),~\norm{\varphi}_{\L^{p'}(Q)}\leq 1 \Biggr\}.
\end{equation*}

\subsubsection{The bootstrapping principle}

Here, we state and prove our bootstrapping principle.
Recall that a (non-empty) family $\cG\subseteq\dgrid$ covers the (non-empty) family $\cF\subseteq\dgrid$ when $\bigcup_{Q\in\cF}Q\subseteq\bigcup_{P\in\cG}P$, and no dyadic set in $\cG$ is strictly contained in any dyadic set in $\cF$.

Our goal is to prove the following.

%
%

\begin{prop}
\label{prop:sublinear_get_vectors}
	Fix a dyadic set $Q\in\cald$ and a number $\lambda>1$. Also, take $\varepsilon\in(0,1)$ and $1\leq r_1, r_2<\infty$ with $r_1<r_2'$. For $p\in[r_1,r_2']$, let $\mathbf{T}:L^{p}(M,\mu;\C^d)\to L^{p}_{\cK}(M,\mu)$ be a sublinear map that is compatible with a map $T:L^{p}(M,\mu)\to L^{p}(M,\mu)$. Assume that there exist linear maps $\Lambda_{P}:L^{p}(M,\mu)\to L^{p}(M,\mu)$ indexed by dyadic sets $P\in\cald(Q)$ and constants $C_1,C_2>0$ such that the following holds:
	
\smallskip

	For all $f\in L^{p}(M,\mu)$ supported on $\lambda Q$, there exists a family $\cF$ of pairwise disjoint dyadic sets in $\cald(Q)$ with $\sum_{B\in\cF}\mu(B)\leq\varepsilon\mu(Q)$ such that for any family $\cG$ of pairwise disjoint dyadic sets in $\cald(Q)$ which covers $\cF$ the following estimates hold:
	\begin{gather}
	\label{outside_estimate}
		\int_{Q\setminus\cup_{P\in\cG}P}|T(\Lambda_{Q}f)(x)g(x)|\dd\mu(x)
		\leq C_1\norm{f}_{\L^{r_1}(\lambda Q)}\norm{g}_{\L^{r_2}(\lambda Q)}
	\intertext{and}
	\label{other_statement_which_i_dont_understand_completely}
		\sum_{P\in\cG}\int_{P}|T((\Lambda_{Q}-\Lambda_{P})f)(x)g(x)|\dd\mu(x)
		\leq C_2\norm{f}_{\L^{r_1}(\lambda Q)}\norm{g}_{\L^{r_2}(\lambda Q)}
	\end{gather}
	for any $g\in L^{p'}(M,\mu)$ supported on $\lambda Q$.

\smallskip
	
	Then, for any $f\in L^{p}(M,\mu;\C^d)$ supported on $\lambda Q$, there exists a family $\cG$ of pairwise disjoint dyadic sets in $\cald(Q)$ with $\sum_{P\in\cG}\mu(P)\leq d\varepsilon\mu(Q)$ such that
	\begin{equation}
	\label{eq:sublinear_convex_body_with_tail}
		\int_{Q}|\mathbf{T}(\Lambda_{Q}f)(x)\cdot g(x)|\dd\mu(x)\leq Cd^{3/2}|\convex{f}_{\L^{r_1}(\lambda Q;\C^d)}\cdot\convex{g}_{\L^{r_2}(\lambda Q;\C^d)}|+\sum_{P\in\cG}\int_{P}|\mathbf{T}(\Lambda_{P}f)(x)\cdot g(x)|\dd\mu(x)
	\end{equation}
	for any $g\in L^{p'}(M,\mu;\C^d)$ supported on $\lambda Q$, with $C=C_1+C_2$.
\end{prop}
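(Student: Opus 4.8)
The plan is to run a single stopping-time step which, after a John-ellipsoid reduction, decouples the vector-valued estimate into $d$ applications of the scalar hypotheses \eqref{outside_estimate}--\eqref{other_statement_which_i_dont_understand_completely}, one per axis of a John ellipsoid attached to $f$. Write $X:=\L^{r_1}(\lambda Q)$ and $Y:=\L^{r_2}(\lambda Q)$, so that $\convex{f}_{\L^{r_1}(\lambda Q;\C^d)}=\convex{f}_X$ and $\convex{g}_{\L^{r_2}(\lambda Q;\C^d)}=\convex{g}_Y$ are convex bodies in $\C^d$ (using \cite[Lemma 2.3]{Hyt2024} and that $f,g$ restricted to the finite-measure set $\lambda Q$ lie in $X^d$, $Y^d$). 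The case $\convex{f}_X=\{0\}$, i.e.\ $f=0$ $\mu$-a.e.\ on $\lambda Q$, is immediate with $\cG=\emptyset$. Otherwise let $V$ be the complex-linear span of $\convex{f}_X$, of complex dimension $d'\le d$; inside $V$ the body $\convex{f}_X$ is full-dimensional, hence carries a John ellipsoid $\cE\subseteq\convex{f}_X\subseteq\sqrt{d'}\,\cE$ of the form $\cE=A(\overline{\mathbf{B}}\cap V)$ with $A$ positive definite on $V$. Pick an orthonormal eigenbasis $v_1,\dots,v_{d'}$ of $A$ with eigenvalues $\lambda_1,\dots,\lambda_{d'}>0$, extend it to an orthonormal basis $v_1,\dots,v_d$ of $\C^d$, and set $\lambda_j:=0$ for $j>d'$. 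Put $\phi_j:=f\cdot v_j\in L^{p}(M,\mu)$; since $\convex{f}_X\subseteq V$ forces $f(x)\in V$ for $\mu$-a.e.\ $x$, one has $\phi_j=0$ for $j>d'$, while in all cases $f=\sum_{j=1}^d\phi_j v_j$, so that $\Lambda_Q f=\sum_j(\Lambda_Q\phi_j)v_j$ and $(\Lambda_Q-\Lambda_P)f=\sum_j((\Lambda_Q-\Lambda_P)\phi_j)v_j$ by linearity of $\Lambda_Q$ and $\Lambda_P$.

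The argument rests on two elementary computations. First, unwinding the definition of $\convex{\cdot}$ and the duality of normalized Lebesgue spaces, $\norm{\phi_j}_{\L^{r_1}(\lambda Q)}=\sup_{a\in\convex{f}_X}|a\cdot v_j|$ and $\norm{\,|v_j\cdot g|\,}_{\L^{r_2}(\lambda Q)}=\norm{v_j\cdot g}_{\L^{r_2}(\lambda Q)}=\sup_{b\in\convex{g}_Y}|v_j\cdot b|$. Second, $\convex{f}_X\subseteq\sqrt{d'}\,\cE$ gives $\norm{\phi_j}_{\L^{r_1}(\lambda Q)}\le\sqrt{d'}\,|Av_j|=\sqrt{d'}\,\lambda_j\le\sqrt{d}\,\lambda_j$, whereas $\lambda_j v_j=Av_j\in\cE\subseteq\convex{f}_X$ gives $\lambda_j|v_j\cdot b|=|(\lambda_j v_j)\cdot b|\le|\convex{f}_X\cdot\convex{g}_Y|$ for every $b\in\convex{g}_Y$. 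Multiplying and summing over $j$ yields the bookkeeping bound
\[\sum_{j=1}^d\norm{\phi_j}_{\L^{r_1}(\lambda Q)}\norm{v_j\cdot g}_{\L^{r_2}(\lambda Q)}\le d^{3/2}\,\big|\convex{f}_{\L^{r_1}(\lambda Q;\C^d)}\cdot\convex{g}_{\L^{r_2}(\lambda Q;\C^d)}\big|.\]

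For the stopping data, apply the scalar hypothesis to each nonzero $\phi_j$ to get pairwise disjoint $\cF_j\subseteq\cald(Q)$ with $\sum_{B\in\cF_j}\mu(B)\le\varepsilon\mu(Q)$ satisfying \eqref{outside_estimate}--\eqref{other_statement_which_i_dont_understand_completely} against every cover (set $\cF_j:=\emptyset$ if $\phi_j=0$), and let $\cG$ be the family of maximal dyadic sets of $\bigcup_j\cF_j$: then $\cG\subseteq\cald(Q)$ is pairwise disjoint, covers each $\cF_j$, and $\sum_{P\in\cG}\mu(P)=\mu(\bigcup_{P\in\cG}P)\le\sum_j\sum_{B\in\cF_j}\mu(B)\le d\varepsilon\mu(Q)$. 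Next, sublinearity of $\mathbf{T}$ together with compatibility \eqref{action_on_lines} and the decomposition of $\Lambda_Q f$ give, for $\mu$-a.e.\ $x$, $\mathbf{T}(\Lambda_Q f)(x)\cdot g(x)\subseteq\sum_j T(\Lambda_Q\phi_j)(x)\,\overline{\D}\,(v_j\cdot g(x))$, hence $|\mathbf{T}(\Lambda_Q f)(x)\cdot g(x)|\le\sum_j|T(\Lambda_Q\phi_j)(x)|\,|v_j\cdot g(x)|$, and likewise with $\Lambda_Q$ replaced by $\Lambda_Q-\Lambda_P$; the set-valued operations involved preserve measurability and $\mu$-a.e.\ boundedness by \Cref{lemma:measurability_preserved_through_continuous_operations} and \Cref{lemma:a_e_boundedness_and_measurability_inner_product}, so these relations hold pointwise and may be integrated. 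Finally, split $\int_Q=\int_{Q\setminus\bigcup_{P\in\cG}P}+\sum_{P\in\cG}\int_P$: the first term is bounded, via the pointwise inequality, \eqref{outside_estimate} applied to each pair $(\phi_j,|v_j\cdot g|)$ (legitimate since $\cG$ covers $\cF_j$) and the bookkeeping bound, by $C_1 d^{3/2}\,|\convex{f}_{\L^{r_1}(\lambda Q;\C^d)}\cdot\convex{g}_{\L^{r_2}(\lambda Q;\C^d)}|$; on each $P\in\cG$, sublinearity gives $\mathbf{T}(\Lambda_Q f)(x)\subseteq\mathbf{T}((\Lambda_Q-\Lambda_P)f)(x)+\mathbf{T}(\Lambda_P f)(x)$, which isolates the tail $\sum_{P\in\cG}\int_P|\mathbf{T}(\Lambda_P f)\cdot g|\dd\mu$, while the remainder $\sum_{P\in\cG}\int_P|\mathbf{T}((\Lambda_Q-\Lambda_P)f)\cdot g|\dd\mu$ is estimated by the pointwise inequality, \eqref{other_statement_which_i_dont_understand_completely} and the bookkeeping bound by $C_2 d^{3/2}\,|\convex{f}_{\L^{r_1}(\lambda Q;\C^d)}\cdot\convex{g}_{\L^{r_2}(\lambda Q;\C^d)}|$. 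Adding up proves \eqref{eq:sublinear_convex_body_with_tail} with $C=C_1+C_2$.

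I expect the real difficulty to be the dimensional bookkeeping, namely keeping the power of $d$ down to $d^{3/2}$. Controlling $|\mathbf{T}(\Lambda_Q f)(x)\cdot g(x)|$ through the cruder product $|\convex{f}_X|\,|\convex{g}_Y|$ would destroy the $f$--$g$ correlation — already $\convex{f}_X\cdot\convex{g}_Y$ can equal $\{0\}$ while $|\convex{f}_X|\,|\convex{g}_Y|$ is large — and is far too lossy; the saving is that $\lambda_j v_j\in\convex{f}_X$ by virtue of $\cE\subseteq\convex{f}_X$, so that each of the $d$ axis contributions is dominated by a single copy of $|\convex{f}_X\cdot\convex{g}_Y|$, while only the reverse John containment $\convex{f}_X\subseteq\sqrt{d}\,\cE$ supplies the extra $\sqrt{d}$. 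The one remaining subtlety, the possible degeneracy of $\convex{f}_X$, is dealt with by passing to its linear span, which can only decrease $d$.
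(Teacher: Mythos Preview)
Your proof is correct and follows essentially the same approach as the paper: a John-ellipsoid decomposition of $\convex{f}_{\L^{r_1}(\lambda Q;\C^d)}$, one scalar stopping family per axis, common cover $\cG$, and the $d^{3/2}$ bookkeeping bound. The only differences are cosmetic---you inline the proof of the $d^{3/2}$ estimate (which the paper imports as \Cref{lemabc:one_scale} from \cite{Hyt2024}) by working directly in the eigenbasis of the ellipsoid matrix, and you absorb the degenerate case into the main argument by passing to the linear span of $\convex{f}_X$, whereas the paper treats it as a separate Case~2 via an isometry $S:H\to\C^m$.
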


Our proof of \Cref{prop:sublinear_get_vectors} is a refinement of the proof of \cite[Proposition 4.6]{Hyt2024}. The following result from \cite{Hyt2024} will play a central role.

\begin{lemabc}[\protect{\cite[Lemma 4.1]{Hyt2024}}]
\label{lemabc:one_scale}
	Let $X,Y$ be complex normed spaces, and take $x\in X^{d}$ and $y\in Y^{d}$. Assume that $0\in\mathrm{Int}\,(\convex{x}_{X})$. Let $\cE$ be the John ellipsoid of $\convex{x}_{X}$, and consider an invertible matrix $A\in M_{d}(\C)$ such that $A\cE=\overline{\mathbf{B}}$. Also, let $\{v_1,\ldots,v_d\}$ be an orthonormal basis of $\C^d$. If we set
	\begin{equation*}
		x_i:=v_i\bullet(A\bullet x) \inline{and} y_i:=v_i\bullet((A^{\ast})^{-1}\bullet y) \afterline{for} i=1,\ldots,d,
	\end{equation*}
	then it holds that
	\begin{equation*}
		\sum_{i=1}^{d}\norm{x_i}_{X}\cdot\norm{y_i}_{Y}\leq d^{3/2}|\convex{x}_{X}\cdot\convex{y}_{Y}|.
	\end{equation*}
\end{lemabc}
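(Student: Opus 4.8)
The plan is to deduce the inequality from two elementary observations about the convex-body average $\convex{\cdot}$, combined with the John ellipsoid inclusion $\cE\subseteq\convex{x}_{X}\subseteq\sqrt{d}\,\cE$ recalled earlier. First I would record that the matrix $\bullet$-action commutes with $\convex{\cdot}$: for any $w\in X^{d}$ and any $C\in M_{d}(\C)$ one has $\convex{C\bullet w}_{X}=C\convex{w}_{X}$, since for every $x^{*}$ in the unit ball of $X^{*}$ the vector $x^{*}(C\bullet w):=(x^{*}((C\bullet w)_1),\dots,x^{*}((C\bullet w)_d))$ equals $C\,x^{*}(w)$, where $x^{*}(w):=(x^{*}(w_1),\dots,x^{*}(w_d))\in\C^{d}$. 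Second, a Cauchy--Schwarz estimate on $\C^{d}$ gives, for any $v\in\C^{d}$, any $w\in X^{d}$ and any such $x^{*}$, that $|x^{*}(v\bullet w)|=|\sum_{j}v^{j}x^{*}(w_j)|\le|v|\,|x^{*}(w)|\le|v|\,|\convex{w}_{X}|$; taking the supremum over $x^{*}$ yields $\norm{v\bullet w}_{X}\le|v|\,|\convex{w}_{X}|$, and likewise with $Y$ in place of $X$.

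Next I would set $z:=A\bullet x\in X^{d}$ and $\widetilde{y}:=(A^{*})^{-1}\bullet y\in Y^{d}$, so that $x_i=v_i\bullet z$ and $y_i=v_i\bullet\widetilde{y}$, and by the first observation $\convex{z}_{X}=A\convex{x}_{X}$ and $\convex{\widetilde{y}}_{Y}=(A^{*})^{-1}\convex{y}_{Y}$. Since $A\cE=\overline{\mathbf{B}}$, the John ellipsoid inclusions give
\[
	\overline{\mathbf{B}}=A\cE\subseteq A\convex{x}_{X}=\convex{z}_{X}\subseteq A\bigl(\sqrt{d}\,\cE\bigr)=\sqrt{d}\,\overline{\mathbf{B}},
\]
so in particular $|\convex{z}_{X}|\le\sqrt{d}$. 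As each $v_i$ is a unit vector, the second observation yields $\norm{x_i}_{X}=\norm{v_i\bullet z}_{X}\le|\convex{z}_{X}|\le\sqrt{d}$ and $\norm{y_i}_{Y}=\norm{v_i\bullet\widetilde{y}}_{Y}\le|\convex{\widetilde{y}}_{Y}|$ for every $i=1,\dots,d$.

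Finally I would lower-bound the right-hand side by $|\convex{\widetilde{y}}_{Y}|$. From $A\cE=\overline{\mathbf{B}}$ we get $A^{-1}\overline{\mathbf{B}}=\cE\subseteq\convex{x}_{X}$, while $A^{*}\convex{\widetilde{y}}_{Y}=\convex{y}_{Y}$; and for $a\in\overline{\mathbf{B}}$ and $b\in\convex{\widetilde{y}}_{Y}$ the pair $(A^{-1}a,A^{*}b)$ lies in $\convex{x}_{X}\times\convex{y}_{Y}$, with $(A^{-1}a)\cdot(A^{*}b)=a\cdot b$ for the Hermitian product on $\C^{d}$, by the adjoint identity $(A^{-1})^{*}A^{*}=(AA^{-1})^{*}=\id$. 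Taking suprema first over $a\in\overline{\mathbf{B}}$ and then over $b\in\convex{\widetilde{y}}_{Y}$ shows $|\convex{x}_{X}\cdot\convex{y}_{Y}|\ge|\convex{\widetilde{y}}_{Y}|$, whence
\[
	\sum_{i=1}^{d}\norm{x_i}_{X}\,\norm{y_i}_{Y}\le\sqrt{d}\sum_{i=1}^{d}|\convex{\widetilde{y}}_{Y}|=d^{3/2}\,|\convex{\widetilde{y}}_{Y}|\le d^{3/2}\,|\convex{x}_{X}\cdot\convex{y}_{Y}|.
\]
I expect the only genuinely delicate part to be the bookkeeping with the complex matrix $A$, its adjoint, and the Hermitian inner product on $\C^{d}$ over complex scalars --- in particular verifying that the $\bullet$-action commutes with $\convex{\cdot}$ and that $(A^{-1}a)\cdot(A^{*}b)=a\cdot b$; once those are in place the estimate is just Cauchy--Schwarz together with the John ellipsoid inclusions, and note that orthonormality of $\{v_1,\dots,v_d\}$ is used only through $|v_i|=1$.
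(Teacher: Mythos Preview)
Your proof is correct and follows essentially the same approach as Hyt\"onen's original argument (to which the paper defers without reproducing a proof): transform to ``John coordinates'' via $A$, use the inclusion $\overline{\mathbf{B}}\subseteq A\convex{x}_{X}\subseteq\sqrt{d}\,\overline{\mathbf{B}}$ to bound each $\norm{x_i}_{X}$ by $\sqrt{d}$, bound each $\norm{y_i}_{Y}$ by $|\convex{\widetilde{y}}_{Y}|$, and then recover $|\convex{\widetilde{y}}_{Y}|\le|\convex{x}_{X}\cdot\convex{y}_{Y}|$ from the duality identity $(A^{-1}a)\cdot(A^{*}b)=a\cdot b$. Your observation that orthonormality of $\{v_1,\dots,v_d\}$ enters only through $|v_i|=1$ is accurate and worth noting.
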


We omit the proof of \Cref{lemabc:one_scale} and instead refer the reader to \cite[Lemma 4.1]{Hyt2024}. We note that the proof given there concerns the case of normed spaces over the real numbers, but it extends to normed spaces over the complex numbers with only minor modifications.

\begin{proof}[Proof of \Cref{prop:sublinear_get_vectors}]
Take functions $f\in L^{p}(M,\mu;\C^d)$ and $g\in L^{p'}(M,\mu;\C^d)$ both supported on $\lambda Q$. Consider the convex body $K:=\convex{f}_{\L^{r_1}(\lambda Q)}$. The case when $f=0$ $\mu$-a.e. on $\lambda Q$ is trivial. Assume now that $f$ is not $\mu$-a.e. equal to $0$ on $\lambda Q$. Then, we distinguish two cases.

\item[\bf Case 1.]
	$0\in\mathrm{Int}(K)$.
	
	Let $\cE$ be the John ellipsoid of $K$, and let $R:\C^d\to\C^d$ be an invertible linear map such that $R\cE=\overline{\mathbf{B}}$. Pick an orthonormal basis $\{v_1,\ldots,v_d\}$ of $\C^d$. Then, we can write
	\begin{equation*}
		Rf=\sum_{i=1}^{d}f_i v_i \inline{and} (R^{*})^{-1}g=\sum_{i=1}^{d}g_i v_i
	\end{equation*}
	for some $f_1,\ldots,f_d\in L^{p}(M,\mu)$ and $g_1,\ldots,g_d\in L^{p'}(M,\mu)$. For each $i=1,\ldots,d$, let $\cF_i$ be the subfamily of $\cald(Q)$ which is given by the assumptions for the function $f_i\in L^{p}(M,\mu)$. Consider the dyadic family
	\begin{equation*}
		\cG := \left\{\text{maximal dyadic sets in }\bigcup_{i=1}^{d}\cF_{i}\right\},
	\end{equation*}
	where by ``maximal'' we mean that we select those sets from $\bigcup_{i=1}^{d}\cF_{i}$ which are not properly contained in any other set of $\bigcup_{i=1}^{d}\cF_{i}$. Then, observe that $\cG$ consists of disjoint dyadic sets (thanks to dyadic structure) and that it covers $\cF_i$ for all $i=1,\ldots,d$.
	
	Now, we estimate
	\begin{equation*}
		\int_{Q}|\mathbf{T}(\Lambda_{Q}f)(x)\cdot g(x)|\dd\mu(x)
		= \int_{Q\setminus\cup_{P\in\cG}P}|\mathbf{T}(\Lambda_{Q}f)(x)\cdot g(x)|\dd\mu (x)
		+ \sum_{P\in\cG}\int_{P}|\mathbf{T}(\Lambda_{Q}f)(x)\cdot g(x)|\dd\mu (x).
	\end{equation*}
	Applying the subadditivity of $\mathbf{T}$, we obtain
	\begin{equation*}
		\mathbf{T}(\Lambda_{Q}f)(x)\cdot g(x)
		= \mathbf{T}(\Lambda_{Q}f-\Lambda_{P}f+\Lambda_{P}f)(x)\cdot g(x)
		\subseteq \mathbf{T}((\Lambda_{Q}-\Lambda_{P})f)(x)\cdot g(x)
		+ \mathbf{T}(\Lambda_{P}f)(x)\cdot g(x)
	\end{equation*}
	for $\mu$-a.e.~$x\in M$ and for every $P\in\cG$. It follows that
	\begin{equation*}
		\begin{split}
			\int_{Q}|\mathbf{T}(\Lambda_{Q}f)(x) & \cdot g(x)|\dd\mu(x)
			\leq \int_{Q\setminus\cup_{P\in\cG}P}|\mathbf{T}(\Lambda_{Q}f)(x)\cdot g(x)|\dd\mu (x)\\
			& + \sum_{P\in\cG}\int_{P}|\mathbf{T}((\Lambda_{Q}-\Lambda_{P})f)(x)\cdot g(x)|\dd\mu(x)
			+ \sum_{P\in\cG}\int_{P}|\mathbf{T}(\Lambda_{P}f)(x)\cdot g(x)|\dd\mu(x).
		\end{split}
	\end{equation*}
	This means that it only remains to show that
	\begin{equation*}
		\begin{split}
			\int_{Q\setminus\cup_{P\in\cG}P}|\mathbf{T}(\Lambda_{Q}f)(x)\cdot g(x)|\dd\mu (x)
			+\sum_{P\in\cG}\int_{P}|\mathbf{T}( & (\Lambda_{Q}-\Lambda_{P})f)(x)\cdot g(x)|\dd\mu(x)	\\
			&	\leq Cd^{3/2}|\convex{f}_{\L^{r_1}(\lambda Q;\C^d)}\cdot\convex{g}_{\L^{r_2}(\lambda Q;\C^d)}|.
		\end{split}
	\end{equation*}
	
	We start with the first summand.
	In order to lighten the formulae, let us set $F:=Q\setminus\bigcup_{P\in\cG}P$.
	The sublinearity of $\mathbf{T}$ and \eqref{action_on_lines} give us that
	\begin{align*}
		\mathbf{T}(\Lambda_{Q}f)(x) \cdot g(x)
		&	\subseteq\left(\sum_{i=1}^{d}T(\Lambda_{Q}f_i)(x)\overline{\D}R^{-1}v_i\right) \cdot g(x)	\\
		&	= \left(\sum_{i=1}^{d}T(\Lambda_{Q}f_i)(x)\overline{\D}v_i\right)\cdot ((R^{*})^{-1}g(x))	\\
		&	=\sum_{i=1}^{d}(T(\Lambda_{Q}f_i)(x)\overline{g_i(x)})\overline{\D}
	\end{align*}
	for $\mu$-a.e.~$x\in M$. It follows that
	\begin{align*}
		\int_F|\mathbf{T}(\Lambda_{Q}f)(x)\cdot g(x)|\dd\mu (x)
		&	\leq \sum_{i=1}^{d}\int_F|(T(\Lambda_{Q}f_i)(x)\overline{g_i(x)})\overline{\D}|\dd\mu (x)	\\
		&	= \sum_{i=1}^{d}\int_F|T(\Lambda_{Q}f_i)(x)\overline{g_i(x)}|\dd\mu(x).
	\end{align*}
	For each $i=1,\ldots,d$, assumption \eqref{outside_estimate} for the functions $f_i$ and $\overline{g_i}$ implies that
	\begin{equation*}
		\int_F|T(\Lambda_{Q}f_i)(x)\overline{g_i(x)}|\dd\mu(x)\leq
	C_1\norm{f_i}_{\L^{r_1}(\lambda Q)}\norm{g_i}_{\L^{r_2}(\lambda Q)}.
	\end{equation*}
	Thus, using \Cref{lemabc:one_scale} we deduce
	\begin{align*}
		\int_{Q\setminus\bigcup_{P\in\cG}P}|\mathbf{T}(\Lambda_{Q}f)(x)\cdot g(x)|\dd\mu (x)
		&	\leq C_1\sum_{i=1}^{d}\norm{f_i}_{\L^{r_1}(\lambda Q)}\norm{g_i}_{\L^{r_2}(\lambda Q)}	\\
		&	\leq C_1d^{3/2}|\convex{f}_{\L^{r_1}(\lambda Q;\C^d)}\cdot\convex{g}_{\L^{r_2}(\lambda Q;\C^d)}|.
	\end{align*}
	
	Similarly, we obtain
	\begin{equation*}
		\sum_{P\in\cG}\int_{P}|\mathbf{T}((\Lambda_{Q}-\Lambda_{P})f)(x)\cdot g(x)|\dd\mu(x)
		\leq C_2d^{3/2}|\convex{f}_{\L^{r_1}(\lambda Q;\C^d)}\cdot\convex{g}_{\L^{r_2}(\lambda Q;\C^d)}|
	\end{equation*}
	concluding the proof in this case.

\item[\bf Case 2.]
	$\mathrm{Int}(K)=\emptyset$. 

	
	Let $H$ be the affine subspace of $\C^d$ generated by $K$, and observe that $H$ is a vector subspace of $\C^d$. Let $P:\C^d\to H$ be the orthogonal projection of $\C^d$ on $H$. Recall that $P$ is a Hermitian linear map. Moreover, one can easily see that $f=Pf$; see the second part of the proof of \cite[Proposition 4.2]{Hyt2024}.
	
	Set $m:=\dim_{\C}H>0$ ($m$ is positive, because $f$ is not the zero function), and let $S:H\to\C^m$ be an invertible linear map with $S(v)\cdot S(w)=v\cdot w$ for all $v,w\in H$. Let $\cE$ be the John ellipsoid of the convex body $S(K)$ in $\C^m$ and let $R:\C^m\to\C^m$ be an invertible linear map such that $R\cE=\overline{\mathbf{B}}_{m}$, where $\overline{\mathbf{B}}_m$ is the Euclidean closed unit ball of $\C^m$. Pick an orthonormal basis $\{v_1,\ldots,v_m\}$ of $\C^m$. Let us write
	\begin{equation*}
		RSf=RSPf=\sum_{i=1}^{m}f_iv_i,\quad (R^{\ast})^{-1}SPg=\sum_{i=1}^{m}g_iv_i,
	\end{equation*}
	for some $f_1,\ldots,f_m\in L^{p}(M,\mu)$ and $g_1,\ldots,g_m\in L^{p'}(M,\mu)$. For each $i=1,\ldots,m$, let $\cF_i$ be the subfamily of $\cald(Q)$ offered by the assumptions for the function $f_i\in L^{p}(M,\mu)$. Let
	\begin{equation*}
		\cG := \left\{\text{maximal dyadic sets in }\bigcup_{i=1}^{m}\cF_{i}\right\}.
	\end{equation*}
	Observe that $\cG$ covers $\cF_i$, for all $i=1,\ldots,m$. Then, similarly to Case 1, we only have to show that
	\begin{equation}
	\label{estimate_goal}
		\begin{split}
			\int_{Q\setminus\cup_{P\in\cG}P}|\mathbf{T}(\Lambda_{Q}f)(x)\cdot g(x)|\dd\mu (x)
			+ \sum_{P\in\cG}\int_{P} |\mathbf{T}( & (\Lambda_{Q}-\Lambda_{P})f)(x) \cdot g(x)|\dd\mu(x)	\\
			& \leq Cd^{3/2}|\convex{f}_{\L^{r_1}(\lambda Q;\C^d)}\convex{g}_{\L^{r_2}(\lambda Q;\C^d)}|.
		\end{split}
	\end{equation}
	First of all, set $F:=Q\setminus\bigcup_{P\in\cF}P$. Then, by the sublinearity of $\mathbf{T}$ and \eqref{action_on_lines} we have
	\begin{align*}
		\mathbf{T}(\Lambda_{Q}f)(x)\cdot g(x) \dd\mu(x)
		&	\subseteq \left( \sum_{i=1}^{m} T(\Lambda_{Q}f_i)(x)\overline{\D}S^{-1}R^{-1}v_i \right) \cdot g(x)	\\
		&	= \left(\sum_{i=1}^{m}T(\Lambda_{Q}f_i)(x)\overline{\D}PS^{-1}R^{-1}v_i\right)\cdot g(x)	\\
		&	= \left( \sum_{i=1}^{m}T(\Lambda_{Q}f_i)(x)\overline{\D}v_i \right) \cdot (R^{*})^{-1}SPg(x)	\\
		&	= \sum_{i=1}^{m}(T(\Lambda_{Q}f_i)(x)\overline{g_i(x)})\overline{\D}.
	\end{align*}
	The estimates proceed then as in case 1 with $d$ replaced by $m$, $f$ by $Sf$ and $g$ by $SPg$, so that at the end we obtain
	\begin{equation*}
		\begin{split}
			\int_{Q\setminus\bigcup_{P\in\cG}P}|\mathbf{T}(\Lambda_{Q}f)(x)\cdot g(x)\dd\mu (x)|
			+\sum_{P\in\cG}\int_{P}|\mathbf{T}( & (\Lambda_{Q}-\Lambda_{P})f)(x)\cdot g(x)|\dd\mu(x)	\\
			&	\leq Cm^{3/2}|\convex{Sf}_{\L^{r_1}(\lambda Q;\C^d)}\cdot\convex{SPg}_{\L^{r_2}(\lambda Q;\C^d)}|.
		\end{split}
	\end{equation*}
	Finally, we observe that
	\begin{align*}
		\convex{Sf}_{\L^{r_1}(\lambda Q;\C^d)}\cdot\convex{SPg}_{\L^{r_2}(\lambda Q;\C^d)}
		&	= (S\convex{f}_{\L^{r_1}(\lambda Q;\C^d)})\cdot(SP\convex{g}_{\L^{r_2}(\lambda Q;\C^d)})	\\
		&	= (P\convex{f}_{\L^{r_1}(\lambda Q;\C^d)})\cdot\convex{g}_{\L^{r_2}(\lambda Q;\C^d)}	\\
		&	= \convex{f}_{\L^{r_1}(\lambda Q;\C^d)}\cdot\convex{g}_{\L^{r_2}(\lambda Q;\C^d)},
	\end{align*}
	concluding the proof.
\end{proof}

\begin{rem} \label{rem:cubes_can_be_taken_small}
	Examining the previous proof, we see that in the assumptions of \Cref{prop:sublinear_get_vectors} we can restrict the covering collections $\cG$ to the ones which additionally satisfy $\sum_{P\in\cG}\mu(P) \leq d\varepsilon\mu(Q)$.
\end{rem}

\subsection{Applying the bootstrapping principle}

Here we accomplish the extension of \cite[Theorem 5.7]{BFP2016} to vector-valued functions, with the help of \Cref{prop:sublinear_get_vectors}.

The set up is as follows. Recall that $1\leq p_0< 2< q_0\leq \infty$, and fix $p\in(p_0,q_0)$. Let $\mathbf{T}:L^{p}(M,\mu;\C^d)\to L^{p}_{\cK}(M,\mu)$ be a sublinear operator compatible with a bounded sublinear operator $T:L^{p}(M,\mu)\to L^{p}(M,\mu)$ satisfying the assumptions in \Cref{section:preliminaries} (see Definitions \ref{dfn:sublinear_convex_set} and \ref{dfn:compatibility_scalar_convex_body}).

\newtheorem*{bilinear_convex_body_sparse_domination}{\Cref{thm:bilinear_convex_body_sparse_domination}}
\begin{bilinear_convex_body_sparse_domination}
	Let $p\in (p_0,q_0)$. Let $\cald$ be a dyadic system in $M$ as in \Cref{subsec:dyadic}. Let also $0<\varepsilon<1$. Then, there exists a constant $C=C(T,\mu,p_0,q_0,\varepsilon)>0$ such that for all $f\in L^p(M,\mu;\C^d)$ and $g\in L^{p'}(M,\mu;\C^d)$, whose coordinate functions are all supported on $\lambda Q_0$ for some cube $Q_0\in \cald$ and sufficiently large (depending on $\dgrid$) $\lambda\geq 2$, there exists an $\varepsilon$-sparse collection $\cals\subseteq \cald$ (depending on $T,p_0,q_0,d,f$ and $g$) with
    \[\int_{Q_0} |\mathbf{T}f \cdot g| \dd\mu\leq Cd^{3/2}\sum_{P\in \cals}μ(P)\Big|\convex{f}_{\L^{p_0}(\lambda P;\C^d)}\cdot\convex{g}_{\L^{q_0'}(\lambda P;\C^d)}\Big|.\]
\end{bilinear_convex_body_sparse_domination}

The proof of this theorem follows very closely the original proof for scalar-valued functions up to the point where one needs to check that the assumptions of \Cref{prop:sublinear_get_vectors} hold. Here, we present the important definitions and briefly mention all necessary properties, but the reader would have to confer \cite{BFP2016} for more details. In addition, as we explained in \Cref{subsec:dyadic}, the parameter $\lambda$ ($5$ in the original proof of \cite[Theorem 5.7]{BFP2016}) is made here more precise.

\begin{proof}[Sketch of proof]
	For $p\in (p_0,q_0)$ and $\lambda \geq 2$ (to be fixed later), consider functions $f\in L^p(M,\mu)$ and $g\in L^{p'}(M,\mu)$ supported on $\lambda Q_0$, where $Q_0\in \dgrid$.
	
\medskip
    
	Now, let $\eta>0$ be some large enough constant (also to be fixed later) and define the following set:
	\[E:= E_{\eta, \lambda}:= \left\{x\in Q_0 \such \max\{\cM^*_{Q_0,p_0}f(x), T^\#_{Q_0}f(x)\}> \eta \norm{f}_{\L^{p_0}(\lambda Q_0)}\right\}.\]
	(Remember the definitions \eqref{eq:sharp_maximal_operator} and \eqref{eq:sharp_T_operator}.)
	
	For large enough $\eta> 0$, the set $E$ is properly contained inside $Q_0$, and $\mu(E)\leq \frac{K}{\eta} \mu(Q_0)$ (for some $K>0$). In fact, $E$ is a union of dyadic cubes, therefore $E$ is open (as dyadic sets are open themselves). Hence, one can find a ``maximal'' covering of $E$ by dyadic sets $\{B_j\}_j \subseteq \dgrid(Q_0)$, which are disjoint up a to a set of $\mu$-measure zero; readily this means that
	\begin{itemize}[leftmargin=1.5\parindent]
		\item $E= \bigsqcup_j B_j$ (up to a null set), and
		\item $\hat B_j\cap E^c\not= \emptyset$ for all $j$. (Recall $\hat B_j$ is the parent of $B_j$.) 
	\end{itemize}
	In particular, $\sum_{j=1}^\infty \mu(B_j) = \mu(E) \leq \frac{K}{\eta} \mu(Q_0)$.
	
\medskip
	Next, for any $P\in \dgrid(Q_0)$ we define the linear operators $\Lambda_{P}$:
	\begin{gather*}
		\Lambda_{P}f:=\begin{cases}
			\int^{\ell(P)^2}_{0}\cQ_{t}^{(N)}(f\1_{\lambda P})\,\frac{\mathrm{d}t}{t}	&	\text{when}~P\not=Q_0,	\\
			f	&	\text{when}~P=Q_0.
		\end{cases}
	\end{gather*}
	
	
	In Step 1 of their proof, Bernicot et al. show that
	\begin{equation} \label{eq:thm5.7}
		\Biggl|\int_{Q_0} Tf \cdot g \dd \mu \Biggr|\leq C\eta \mu(Q_0) \norm{f}_{\L^{p_0}(\lambda Q_0)} \norm{g}_{\L^{q_0'}(\lambda Q_0)} + \Biggl| \sum_j \int_{B_j} T_{B_j}f \cdot g \dd\mu \Biggr|.
	\end{equation}
	This is attained by noticing that each $\hat{B}_j$ is contained in $Q_0$, contains points which lie inside $Q_0$ but outside $E$, and for $\mu$-almost every such $x\in Q_0\setminus E$ the quantities $\cM^*_{Q_0,p_0}f(x)$ and $T^\#_{Q_0}f(x)$ are sufficiently small. The rest comes down to a series of several precise decompositions and approximations.

	
\medskip
	
	
	Here, \eqref{eq:thm5.7} is similar to \cref{eq:sublinear_convex_body_with_tail} for $Q=Q_0$.
%
%
%
%
	Our aim is to show that the conditions of the assumption in \Cref{prop:sublinear_get_vectors} hold. Then, the sum on the left-hand side will enter an iterative process and go to zero.

	Towards this goal, let $\{P_i\}_i \subseteq \dgrid(Q_0)$ be a dyadic collection, which is comprised of disjoint dyadic sets, and covers $\{B_j\}_j$, that is, the $P_i$'s are not strictly contained in any of the $B_j$'s, and $\bigcup_j B_j \subseteq \bigcup_i P_i$.
	In fact, we can additionally assume that $\sum_i \mu(P_i) \leq \frac{dK}{\eta} \mu(Q_0)$; however, this is will not be necessary.
	Moreover, the $P_i$'s satisfy that $E \subseteq \bigsqcup_i P_i$ (up to a $\mu$-null set) and $\hat{P}_i \cap E^c \not= \emptyset$, ``almost like'' the collection of the $B_j$'s.
	
\medskip
	
	Naturally, our proof is very close to the one for Theorem 5.7 in \cite{BFP2016}, but this is unavoidable. For this reason and to keep things contained, we omit the parts which apply verbatim in our case.
	
\smallskip
	
	For starters, there might exist only one such $P_i$, namely $Q_0$ itself. In this case, both \eqref{outside_estimate} and \eqref{other_statement_which_i_dont_understand_completely} hold trivially as the left-hand side would be zero, and thus \eqref{eq:sublinear_convex_body_with_tail} also holds.
	
	From now on, we assume that all $P_i$'s are properly contained in $Q_0$. Thanks to the dyadic structure, this means that $\hat{P}_i \subseteq Q_0$. And since $\hat{P}_i \cap E^c \not= \emptyset$, the definitions of $E$, $M_{Q_0,p_0}^*$ and $T_{Q_0}^\#$ imply that
	\begin{gather}
	\label{eq:inf_of_maximal}
		\inf_{y\in \hat{P}_i} \cM_{p_0}f(y)  \leq \eta \norm{f}_{\L^{p_0}(\lambda Q_0)}
	\intertext{and}
	\label{eq:non_sharp_T}
		\Biggl(\dashint_{\hat{P}_i} \Bigg| T\int_{\ell(\hat{P}_i)^2}^\infty \cQ_t^{(N)}(f) \,\frac{\mathrm{d}t}{t} \Bigg|^{q_0} \dd \mu \Biggr)^\frac{1}{q_0} \leq \eta \norm{f}_{\L^{p_0}(\lambda Q_0)}.
	\end{gather}
	
	For the first condition of \Cref{prop:sublinear_get_vectors}, we can apply H{\"o}lder's inequality which, combined with the fact that $|Tf(x)| \leq T^\#_{Q_0}f(x) \leq \eta \norm{f}_{\L^{p_0}(\lambda Q_0)}$ for almost every $x\in Q_0 \setminus E$, gives us
	\begin{equation*}
		\int_{Q_0 \setminus \cup_i P_i} |Tf \cdot g| \dd\mu
		\leq \eta \mu(Q_0) \norm{f}_{\L^{p_0}(\lambda Q_0)} \norm{g}_{\L^{q_0'}(Q_0)}
		\lesssim \eta \mu(Q_0) \norm{f}_{\L^{p_0}(\lambda Q_0)} \norm{g}_{\L^{q_0'}(\lambda Q_0)}.
	\end{equation*}
	This is exactly \cref{outside_estimate}.
	
	
	The verification of the second condition is much trickier. Like above, H{\"o}lder's inequality gives us that
	\begin{equation} \label{eq:Hoelder_of_the_ugly_part}
		\sum_i \int_{P_i} |T(f-\Lambda_{P_i}f) \cdot g| \dd\mu
			\leq \sum_i \mu(P_i) \Bigg( \dashint_{P_i} |T(f-\Lambda_{P_i}f)|^{q_0} \dd\mu \Bigg)^\frac{1}{q_0} \Bigg( \dashint_{P_i} |g|^{q_0'} \dd\mu \Bigg)^\frac{1}{q_0'}.
	\end{equation}
	
	Further, one needs to bound the $\L^{q_0}(P_i)$ norm of $T(f-\Lambda_{P_i}f)$. We do not present all the details in our paper, as this was already done in \cite{BFP2016}. Nonetheless, we explain what we think is important and necessary for the proof to work for the larger (than $B_j$) dyadic sets $P_i$.
	
	Start by fixing an index $i \geq 1$, and recall the definition \ref{def:sublinearity_definition} of sublinearity for $T$.
	Then, we have
	\begin{equation} \label{eq:split_to_work_outside_the_balls}
	\begin{aligned}
		|T(f-\Lambda_{P_i}f)|
			&	= \Big| T \left( \int_0^\infty \cQ_t^{(N)}(f) \,\frac{\mathrm{d}t}{t} - \int_0^{\ell(P_i)^2} \cQ_t^{(N)}(f\1_{\lambda P_i}) \,\frac{\mathrm{d}t}{t} \right) \Big|	\\
			&	= \Big| T \left( \int_0^{\ell(P_i)^2} \!\cQ_t^{(N)}(f\1_{\lambda P_i}+f\1_{(\lambda P_i)^c}) \,\frac{\mathrm{d}t}{t} + \int_{\ell(P_i)^2}^\infty \!\cQ_t^{(N)}(f) \,\frac{\mathrm{d}t}{t} - \int_0^{\ell(P_i)^2} \!\cQ_t^{(N)}(f\1_{\lambda P_i}) \,\frac{\mathrm{d}t}{t} \right) \!\Big|	\\
			&	\leq \Big| T\int_0^{\ell(P_i)^2} \cQ_t^{(N)}(f\1_{(\lambda P_i)^c}) \,\frac{\mathrm{d}t}{t} \Big| + \Big| T\int_{\ell(P_i)^2}^\infty \cQ_t^{(N)}(f) \,\frac{\mathrm{d}t}{t} \Big| =: |\cT_1(P_i;\lambda)| + |\cT_2(P_i)|.
	\end{aligned}
	\end{equation}
	Here, we used Calder{\'o}n reproducing formula (\Cref{prop:calderon_reproducing_formula}), and the linearity of~$\cQ_t^{(N)}$.
	
	For the $\L^{p_0}(P_i)$ norms of $\cT_1(P_i;\lambda)$ and $\cT_2(P_i)$, fix $k_0\in \Z_{\geq 0}$ and take $\lambda$ to be at least $2^{k_0}$ so that for any $Q\in \dgrid$
	\[(\lambda Q)^c \subseteq \bigcup_{k \geq k_0} S_k(Q).\]
	
	After a series of geometrical observations and computations, Bernicot et al. show that for $k_0=2$ and any $j=1,2,\dots$
	\begin{equation} \label{eq:geometric_argument}
	\begin{aligned}
		\norm{\cT_1(B_j;\lambda)}_{\L^{q_0}(B_j)}
			&	= \left( \dashint_{B_j} \Big| T\int_0^{\ell(B_j)^2} \cQ_t^{(N)}(f\1_{(\lambda B_j)^c}) \,\frac{\mathrm{d}t}{t} \Big|^{q_0} \dd\mu \right)^\frac{1}{q_0}	\\
			&	\leq \int_0^{\ell(B_j)^2} \left( \dashint_{B_j} |T\cQ_t^{(N)}(f\1_{(\lambda B_j)^c})|^{q_0} \dd\mu \right)^\frac{1}{q_0} \,\frac{\mathrm{d}t}{t}	\\
			&	\lesssim\dots\lesssim \sum_{k\geq k_0} 2^{-k} \norm{f}_{\L^{p_0}(S_k(B_j))} \lesssim \sup_{k\geq k_0} \norm{f}_{\L^{p_0}(2^{k+1}B_j)},
	\end{aligned}
	\end{equation}
	where the implicit constants depend only on $p_0$, $q_0$, the doubling measure $\mu$, and the parameters $c_0$, $C_0$, and $\delta$ of the dyadic system. (The first inequality comes from the sublinearity \eqref{eq:decomposition_linearity_operator_inequality} of the operator~$T$ followed by Minkowski's inequality.) In fact, their proof holds true for any $k_0\geq 0$ and for any $Q\in \dgrid$ in place of the $B_j$'s; in particular
	\[\norm{\cT_1(P_i;\lambda)}_{\L^{q_0}(P_i)} \lesssim \sup_{k\geq k_0} \norm{f}_{\L^{p_0}(2^{k+1}P_i)}.\]
	
	Notice, now, that if $\hat{P}_i \subseteq 2^{k+1}P_i$ for all $k\geq k_0$, then
	\[\sup_{k\geq k_0} \norm{f}_{\L^{p_0}(2^{k+1}P_i)} \lesssim \inf_{y\in \hat{P}_i} \cM_{p_0}f(y)\]
	thanks to \Cref{lemma:covering_dilation_of_dyadic_set_with_ball}. The smallest $k_0$ for which this can be true (recall the definition \eqref{eq:dilation}) is
	\[k_0 = \lceil \log_2\big(1+\frac{2}{\delta}\big) - 1\rceil.\]
	Consequently, we can take $\lambda \geq 1+\frac{2}{\delta}$. (We note that in the case of the ``usual'' dyadic cubes in $\R^m$, where $\delta=\frac{1}{2}$, one can simply take $\lambda=5$.)
	
	Using \eqref{eq:inf_of_maximal}, we eventually get
	\begin{equation} \label{eq:T_the_first}
		\norm{\cT_1(P_i;\lambda)}_{\L^{q_0}(P_i)} \lesssim \eta \norm{f}_{\L^{p_0}(\lambda Q_0)}.
	\end{equation}
	
	For $\cT_2$, the sublinearity of $T$ gives us that
	\[|\cT_2(P_i)| \leq \Big| T\int_{\ell(\hat{P}_i)^2}^\infty \cQ_t^{(N)}(f) \,\frac{\mathrm{d}t}{t}\Big| + \Big|T\int_{\ell(P_i)^2}^{\ell(\hat{P}_i)^2} \cQ_t^{(N)}(f) \,\frac{\mathrm{d}t}{t}\Big|.\]
	Taking the $\L^{q_0}$-norm over $P_i$, we get
	\begin{align*}
		\norm{\cT_2(P_i)}_{\L^{q_0}(P_i)}
			&	\lesssim \left( \dashint_{\hat{P}_i} \Big| T\int_{\ell(\hat{P}_i)^2}^\infty \cQ_t^{(N)}(f) \,\frac{\mathrm{d}t}{t} \Big|^{q_0} \dd\mu \right)^\frac{1}{q_0} + \left( \dashint_{\hat{P}_i} \Big| T\int_{\ell(P_i)^2}^{\ell(\hat{P}_i)^2} \cQ_t^{(N)}(f) \,\frac{\mathrm{d}t}{t} \Big|^{q_0} \dd\mu \right)^\frac{1}{q_0}	\\
			&	 \leq \eta \norm{f}_{\L^{p_0}(\lambda Q_0)} + \int_{\ell(P_i)^2}^{\ell(\hat{P}_i)^2} \left( \dashint_{\hat{P}_i} |T\cQ_t^{(N)}(f)|^{q_0} \dd\mu \right)^\frac{1}{q_0} \,\frac{\mathrm{d}t}{t}.
	\end{align*}
	The first part of the second inequality comes from \eqref{eq:non_sharp_T}; the second from Minkowski's inequality and the sublinearity of $T$.
	
	At this point, the same argument used in \eqref{eq:geometric_argument} works verbatim for our second summand above, since
	\[
		\int_{\ell(P_i)^2}^{\ell(\hat{P}_i)^2} \left( \dashint_{\hat{P}_i} |T\cQ_t^{(N)}(f)|^{q_0} \dd\mu \right)^\frac{1}{q_0} \,\frac{\mathrm{d}t}{t}
		\leq \int_0^{\ell(\hat{P}_i)^2} \left( \dashint_{\hat{P}_i} |T\cQ_t^{(N)}(f)|^{q_0} \dd\mu \right)^\frac{1}{q_0} \,\frac{\mathrm{d}t}{t}.
	\]
	The difference is that the absence of the $\1_{(\lambda \hat{P}_i)^c}$ term forces the sum, and in turn the supremum, to start from $k_0=0$:
	\[
		\int_{\ell(P_i)^2}^{\ell(\hat{P}_i)^2} \left( \dashint_{\hat{P}_i} |T\cQ_t^{(N)}(f)|^{q_0} \dd\mu \right)^\frac{1}{q_0} \,\frac{\mathrm{d}t}{t}
		\lesssim \sup_{k\geq 0} \norm{f}_{\L^{p_0}(2^{k+1}\hat{P}_i)}.
	\]
	
	But now, $\hat{P}_i \subseteq 2^{k+1}\hat{P}_i$ is true for all $k\geq0$, and therefore through \Cref{lemma:covering_dilation_of_dyadic_set_with_ball} again one gets
	\[\sup_{k\geq 0} \norm{f}_{\L^{p_0}(2^{k+1}\hat{P}_i)} \lesssim \inf_{y\in \hat{P}_i} \cM_{p_0}f(y).\]
	Hence, \eqref{eq:inf_of_maximal} along with the above inequalities give us that
	\begin{equation} \label{eq:T_the_second}
		\norm{\cT_2(P_i)}_{\L^{q_0}(P_i)} \lesssim \eta \norm{f}_{\L^{p_0}(\lambda Q_0)}.
	\end{equation}
	
	Combining \eqref{eq:T_the_first} and \eqref{eq:T_the_second} with \eqref{eq:split_to_work_outside_the_balls}, we finally have that
	\[\norm{(T-T\Lambda_{P_i})f}_{\L^{q_0}(P_i)} \lesssim \eta \norm{f}_{\L^{p_0}(\lambda Q_0)}.\]
	And substituting this back to \eqref{eq:Hoelder_of_the_ugly_part}, we get
	\begin{equation} \label{eq:ugly_put_together}
		\sum_i \int_{P_i} |T(f-\Lambda_{P_i}f) \cdot g| \dd\mu
		\lesssim  \eta \norm{f}_{\L^{p_0}(\lambda Q_0)}	\sum_i \mu(P_i) \Big( \dashint_{P_i} |g|^{q_0'} \dd\mu \Big)^\frac{1}{q_0'}.
	\tag{\ref*{eq:Hoelder_of_the_ugly_part}'}
	\end{equation}
	
	For $q_0'=1$ (or $q_0=\infty$),
	\[
		\sum_i \mu(P_i) \Big( \dashint_{P_i} |g|^{q_0'} \dd\mu \Big)^\frac{1}{q_0'}
		= \sum_i \int_{P_i} |g| \dd\mu
		= \int_{\cup_i P_i} |g| \dd\mu
		\lesssim \mu(Q_0) \dashint_{\lambda Q_0} |g| \dd\mu,
	\]
	since the $P_i$'s are disjoint and ${\cup_i P_i}\subseteq Q_0$. Inputting this back to \eqref{eq:ugly_put_together}, we get exactly \eqref{other_statement_which_i_dont_understand_completely}.
	
	For $q_0'\in(1,2)$ (or $q_0\in(2,\infty)$), first observe that $\big( \dashint_{Q} |g|^{q_0'} \dd\mu \big)^\frac{1}{q_0'} \leq \inf_{y\in Q} \cM_{q_0'}g(y)$ for any $Q\in \dgrid$. Then, since the $P_i$'s are disjoint, we have that
	\begin{align*}
		\sum_i \mu(P_i) \Big( \dashint_{P_i} |g|^{q_0'} \dd\mu \Big)^\frac{1}{q_0'}
			&	\leq \sum_i \mu(P_i) \inf_{y\in P_i} \cM_{q_0'}g(y) \leq \sum_i \int_{P_i} \cM_{q_0'}g(y) \dd\mu(y)	\\
			&	= \int_{\cup_i P_i} \cM_{q_0'}g(y) \dd\mu(y) \lesssim \mu(\cup_i P_i)^{1-\frac{1}{q_0'}} \norm{g^{q_0'}}_{L^1(\cup_i P_i)}^\frac{1}{q_0'}	\\
			&	\leq \mu(Q_0) \norm{g}_{\L^{q_0'}(Q_0)} \lesssim \mu(Q_0) \norm{g}_{\L^{q_0'}(\lambda Q_0)}.
	\end{align*}
	For the inequality in the second line, we have used Kolmogorov's inequality and the boundedness of~$\cM_{q_0'}$; in the third line, we used the fact that $\sum_i \mu(P_i) \leq \mu(Q_0)$ and \Cref{lemma:covering_dilation_of_dyadic_set_with_ball}. This combined with \eqref{eq:ugly_put_together} again gives us exactly~\eqref{other_statement_which_i_dont_understand_completely}.
	
	Therefore, since both \cref{outside_estimate,other_statement_which_i_dont_understand_completely} are true, the assumptions of \Cref{prop:sublinear_get_vectors} are fulfilled. Consequently, there exists a dyadic family $\{Q_1^j\}_j$ of pairwise disjoint sets in $\dgrid(Q_0)$ with $\sum_j\mu(Q_1^j)\leq\frac{dK}{\eta}\mu(E)$ and such that
	\begin{equation*}
		\int_{Q_0}|\mathbf{T}(\Lambda_{Q_0}f)(x)\cdot g(x)|\dd\mu(x)\leq Cd^{3/2}|\convex{f}_{\L^{p_0}(\lambda Q_0;\C^d)}\cdot\convex{g}_{\L^{q_0'}(\lambda Q_0;\C^d)}|+\sum_{j}\int_{Q_1^{j}}|\mathbf{T}(\Lambda_{Q_1^{j}}f)(x)\cdot g(x)|\dd\mu(x)
	\end{equation*}
	for all $f\in L^p(M,\mu;\C^d)$ and $g\in L^{p'}(M,\mu,\C^d)$ whose coordinates are supported on $\lambda Q_0$.
	
	
\medskip
	
	Now, we need to construct our sparse family. This can be done by following Step 2 in the proof of Bernicot et al. almost without any changes; the fact that here we are dealing with vector-valued functions does not affect their argument. The conclusion is similar to \cite[Corollary 5.4]{Hyt2024}.

 	Here, we explain the iterative procedure with which one constructs the sparse family. Initially, set~$\cals:=\{Q_0\}$. Then, we can rewrite the above inequality in the following form
	\begin{align*}
		\int_{Q_0}|\mathbf{T}(\Lambda_{Q_0}f)(x)\cdot g(x)|\dd\mu(x)&\leq Cd^{3/2}|\convex{f}_{\L^{p_0}(\lambda Q_0;\C^d)}\cdot\convex{g}_{\L^{q_0'}(\lambda Q_0;\C^d)}|\\
        &+\sum_{j}\int_{Q_1^{j}}|\mathbf{T}(\Lambda_{Q_1^{j}}f^j)(x)\cdot g^j(x)|\dd\mu(x),
	\end{align*}
	where (for all $j$) $f^j=\1_{\lambda Q^j_1}f$, and $g^j=\1_{\lambda Q^j_1}g$; $f^j$ and $g^j$ are supported on $\lambda Q_1^j$ (by the definition of $\Lambda_P$ and since the integration is over $Q_1^j$).
	
	Next, we can add the collection of all such $Q_1^j$ into $\cals$ by redefining $\cals := \cals \cup \bigcup_j\{Q_1^j\}$, and then iterate on each $Q_1^j$ to get
	\begin{align*}
		\int_{Q_1^j}|\mathbf{T}(\Lambda_{Q_1^j}f^j)(x)\cdot g^j(x)|\dd\mu(x)&\leq Cd^{3/2}|\convex{f}_{\L^{p_0}(\lambda Q_1^{j};\C^d)}\cdot\convex{g}_{\L^{q_0'}(\lambda Q_1^{j};\C^d)}|\\
        &+\sum_{k}\int_{Q_2^{j,k}}|\mathbf{T}(\Lambda_{Q_2^{j,k}}f^{j,k})(x)\cdot g^{j,k}(x)|\dd\mu(x),
	\end{align*}
	and so on.
	
	Eventually, as we show below, the remainder terms go to $0$ and we conclude that
	\[\int_{Q_0} |\mathbf{T}f \cdot g|\dd\mu \leq Cd^{3/2}\eta \sum_{Q\in \cals} μ(Q) \Big| \convex{f}_{\L^{p_0}(\lambda Q;\C^d)}\cdot\convex{g}_{\L^{q_0'}(\lambda Q;\C^d)} \Big|.\]
	
	Following the argument in \cite{BFP2016} one needs to pick an $\eta$ large enough so that the series $\sum_{\alpha\geq1} \big(d\frac{K}{\eta}\big)^\alpha \leq \frac{dK}{\eta-dK}$ converges. For such large $\eta$, the dyadic collection $\cals\subseteq\dgrid(Q_0)$ is $\varepsilon$-sparse with~$\varepsilon = 1-\frac{dK}{\eta-dK}$.
	
	All that is left to do is to check that the remainder term indeed goes to $0$.

	Let $\cG_n$ be the family of all cubes added at the initial sparse collection $\{ Q_0\}$ at the $n$-th step of our construction. The remainder term at the $n$-th step is clearly
	\begin{equation*}
		\sum_{P\in\cG_n}\int_{P}|\mathbf{T}(\Lambda_{P}f)(x)\cdot g(x)|\dd\mu(x).
	\end{equation*}
	By construction, $\lim_{n\rightarrow\infty}\mu(F_n)=0$, where $F_n:=\bigcup_{P\in\cG_n}P$.
 
    Write now $f=(f_1,\ldots,f_d)$ and $g=(g_1,\ldots,g_d)$. Then, similarly to the proof of \Cref{prop:sublinear_get_vectors} we have
	\begin{equation*}
		\sum_{P\in\cG_n}\int_{P}|\mathbf{T}(\Lambda_{P}f)(x)\cdot g(x)|\dd\mu(x)
		\leq \sum_{i=1}^{d}\sum_{P\in\cG_n}\int_{P}|T(\Lambda_{P}f_i)(x)g_i(x)|\dd\mu(x).
	\end{equation*}
	Fix $i\in\{1,\ldots,d\}$. We show that
    \begin{equation*}
        \lim_{n\rightarrow\infty}\sum_{P\in\cG_n}\int_{P}|T(\Lambda_{P}f_i)(x)g_i(x)|\dd\mu(x)=0.
    \end{equation*}
    Although this follows by a standard argument, we include the details for the reader's convenience.

    Apply first H{\"o}lder's inequality:
	\begin{equation*}
		\sum_{P\in\cG_n}\int_{P}|T(\Lambda_{P}f_i)(x)g_i(x)|\dd\mu(x) \leq \sum_{P\in\cG_n}\left(\int_{P}|T(\Lambda_{P}f_i)(x)|^{p}\dd\mu(x)\right)^{1/p} \left(\int_{P}|g_i(x)|^{p'}\dd\mu(x)\right)^{1/p'}.
	\end{equation*}
	By \Cref{rem:operator_estimates} (iii) and \Cref{tails_Calderon_formula},  we deduce that the operators $\Lambda_{P}$, $P\in\dgrid(Q_0)$ are $L^{p}(M,\mu)$-bounded, \emph{uniformly in the cube $P$}. Thus, since $T$ is also by assumption $L^{p}(M,\mu)$ bounded we obtain
	\begin{align*}
		&\sum_{P\in\cG_n}\left(\int_{P}|T(\Lambda_{P}f_i)(x)|^{p}\dd\mu(x)\right)^{1/p}\left(\int_{P}|g_i(x)|^{p'}\dd\mu(x)\right)^{1/p'}\\
		&\lesssim\norm{T}_{L^{p}(M,\mu)}\sum_{P\in\cG_n}\left(\int_{P}|f_i(x)|^{p}\dd\mu(x)\right)^{1/p}\left(\int_{P}|g_i(x)|^{p'}\dd\mu(x)\right)^{1/p'}.
	\end{align*}
	Applying H{\"o}lder's inequality (for sums) and taking into account the disjointness of the sets $P\in\cG_n$, we get
	\begin{align*}
		&	\sum_{P\in\cG_n}\left(\int_{P}|f_i(x)|^{p}\dd\mu(x)\right)^{1/p}\left(\int_{P}|g_i(x)|^{p'}\dd\mu(x)\right)^{1/p'}	\\
		&	\leq \left(\sum_{P\in\cG_n}\int_{P}|f_i(x)|^{p}\dd\mu(x)\right)^{1/p}\left(\sum_{P\in\cG_n}\int_{P}|g_i(x)|^{p'}\dd\mu(x)\right)^{1/p'}	\\
		&	=\left(\int_{F_n}|f_i|^{p}\dd\mu\right)^{1/p}\left(\int_{F_n}|g_i|^{p'}\dd\mu\right)^{1/p'}.
	\end{align*}
	Since the functions $|f_i|^{p}$ and $|g_i|^{p'}$ are $\mu$-integrable and $\lim_{n\rightarrow\infty}\mu(F_n)=0$, we deduce the desired result.
\end{proof}


\section{Matrix weighted bounds for bilinear convex body sparse forms }\label{section:matrix_weighted_bounds}

In this section we obtain matrix weighted bounds for bilinear convex body sparse forms of the type appearing in \Cref{thm:bilinear_convex_body_sparse_domination}. Our main result is the following.

\newtheorem*{matrix_weighted_bound}{\Cref{thm:matrix_weighted_bound}}
\begin{matrix_weighted_bound}
Let $1\leq p_0<q_0\leq\infty$ and $p\in(p_0,q_0)$. Let $\cald$ be a dyadic system in $M$ as in Subsection~\ref{subsec:dyadic} and let $\lambda\geq2$. Let $0<\varepsilon<1$ and let $\cals\subseteq\cald$ be an $\varepsilon$-sparse collection. Set
    \begin{equation*}
        t:=\frac{p}{p_0},\quad s:=\frac{q_0}{p},\quad a:=\frac{t'p}{t},\quad b:=s'p.
    \end{equation*}
    Let $W$ be a $d\times d$ matrix weight on $M$ such that $[W^{s'}]_{A_{a',b}}<\infty$. Then, there exists a constant $C=(\mu,d,p_0,q_0,p)$ such that for all $f\in L^{p}_{W}(M,\mu)$ and all $g\in L^{p'}_{W'}(M,\mu)$ the following estimate holds:
    \begin{equation*}
        \sum_{P\in\cals}\mu(P)|\convex{f}_{\L^{p_0}(\lambda P)}\cdot\convex{g}_{\L^{q_0'}(\lambda P)}|\leq\frac{C}{\varepsilon}([W^{s'}]_{A_{a',b}})^{\alpha}\norm{f}_{L^{p}_{W}(M,\mu)}\norm{g}_{L^{p'}_{W'}(M,\mu)},
    \end{equation*}
    where $W':=W^{-1/(p-1)}$ and
    \begin{equation*}
        \alpha:=\frac{1}{s'(p-p_0)}+\frac{1}{p'}.
    \end{equation*}
\end{matrix_weighted_bound}

After reviewing some notation and facts on the so-called \emph{reducing operators} in Subsection \ref{subsection:reducing_operators}, which furnish a major tool for manipulating matrix weights in estimates, we introduce the so-called \emph{Muckenhoupt characteristics} for matrix weights in Subsection \ref{subsection:muckenhoupt_weights}. Finally, we prove \Cref{thm:matrix_weighted_bound} in Subsection \ref{subsec:proof_matrix_weighted_estimate}.

Further in the paper, we implement the following elementary facts without special mention:
\begin{enumerate}[label=(\roman*)]
    \item For all $x_1,\ldots,x_k\geq0$, for all $a>0$, and for all positive integers $k$, we have
    \begin{equation}
    \label{break power of sum}
        (x_1+\ldots+x_k)^{a}\simeq_{a,k} x_1^{a}+\ldots+x_k^{a}.
    \end{equation}
    In the rest of the text, we mostly apply this for $k=1,2,3,4$ or $d$ non-negative numbers. Moreover, the place of $a$ will be taken by the exponent or exponents at hand.
	
    
    \item For a matrix $A\in M_{d}(\C)$, we denote by $|A|$ the \emph{spectral norm} of $A$, that is, the largest singular value of $A$ (i.e. the square root of the largest eigenvalue of the matrix $A^*A$). Then, it is true that
    \begin{equation}
    \label{matrix norm as sum of norms of columns}
        |A|\simeq_{d}\sum_{i=1}^{d}|Ae_i|,
    \end{equation}
    where $(e_1,\ldots,e_d)$ is the standard orthonormal basis of $\C^d$.

\end{enumerate}

\subsection{Reducing operators} \label{subsection:reducing_operators}

Integral averages of scalar weights play a major role in the investigation of scalar weighted estimates. In the context of matrix weighted estimates, it is often the case that plain integral averages of positive definite matrix valued functions are poorly suited for the problems at hand. The concept of the so-called \emph{reducing operators} offers in many cases a possible remedy.

Let $(X,m)$ be a measure space and let $E$ be a measurable subset of $X$ with $0<m(E)<\infty$. Recall that an $m$-measurable function $f:E\to M_{d}(\C)$ (for some positive integer $d$) is said to be \emph{$m$-integrable over $E$} if
\begin{equation*}
    \int_{E}|f(x)|\dd m(x)<\infty,
\end{equation*}
and that in this case we denote
\begin{equation*}
    \aver{f}_{E}:=\frac{1}{m(E)}\int_{E}f(x)\dd m(x)=\dashint_{E}f(x)\dd m(x).
\end{equation*}

	\begin{dfn}
	\begin{enumerate}[labelindent=!,leftmargin=1.5\parindent]
	
		\item Let $1\leq p<\infty$, and $W:E\to\mathrm{M}_{d}(\C)$ be an $m$-integrable function such that $W(x)$ is a positive definite matrix for $m$-a.e.~$x\in E$. Then, one can consider the norm
		\begin{equation*}
		    e \mapsto \rho_{W,E,p}(e):=\left(\dashint_{E}|W(x)^{1/p}e|^{p}\dd m(x)\right)^{1/p}
		\end{equation*}
		acting on vectors $e\in\C^d$. Let $K\subseteq\C^d$ be the closed unit ball of $\rho_{W, E,p}$. In the terminology of \Cref{section:convex_set_preliminaries}, $K$ is a convex body in $\C^d$ with $0\in\mathrm{Int}(K)$. Let $\cE$ be the John ellipsoid of $K$. Subsequently, as explained in \Cref{section:convex_set_preliminaries}, there exists a positive definite matrix $A\in\mathrm{M}_{d}(\C)$ with $A\overline{\mathbf{B}}=\cE$, where $\overline{\mathbf{B}}$ is the standard (Euclidean) closed unit ball in $\C^d$. Following \cite{Goldberg_2003}, we call the matrix $A^{-1}$ \emph{the reducing operator of $W$ over $E$ with respect to the exponent $p$} and denote $\cW_{E,p}:=A^{-1}$.
		
		(We omit the measure $m$ from the notation of the reducing operator.) Since $\cE\subseteq K\subseteq d^{1/2}\cE$ (see \cite{Goldberg_2003, NPTV2017}), one easily deduces that
		\begin{equation*}
		    \left(\dashint_{E}|W(x)^{1/p}e|^{p}\dd m(x)\right)^{1/p}\leq|\cW_{E,p}e| \leq d^{1/2}\left(\dashint_{E}|W(x)^{1/p}e|^{p}\dd m(x)\right)^{1/p}\quad\forall e\in\C^d.
		\end{equation*}
		Notice that if $p=2$ or $d=1$, we simply have $\cW_{E,p}=\left(\dashint_{E}W(x)\dd m(x)\right)^{1/p}$.
		
		
		\item If $1<p<\infty$ and $W:E\to\mathrm{M}_{d}(\C)$ is a $m$-measurable function with $W(x)\in\mathrm{P}_{d}(\C)$ for $m$-a.e.~$x\in E$ such that the function $V:=W^{-1/(p-1)}$ is $m$-integrable over $E$, then we denote $\cW'_{E,p}:=\cV_{E,p'}$.
	\end{enumerate}
\end{dfn}

Notice that, if $1<s<\infty$, and $W:E\to\mathrm{M}_{d}(\C)$ is a $m$-integrable function with $W(x)\in\mathrm{P}_{d}(\C)$ for $m$-a.e.~$x\in E$, then, by Jensen's inequality, we have
\begin{equation*}
    \dashint_{E}|W(x)^{1/s}|\dd m(x)=\dashint_{E}|W(x)|^{1/s}\dd m(x)\leq\left(\dashint_{E}|W(x)|\dd m(x)\right)^{1/s}<\infty.
\end{equation*}
Therefore, the function $W^{1/s}$ is also $m$-integrable over $E$. However, this might not be the case for $W^s$.

The following standard lemma, whose proof we omit, helps to avoid ambiguous terminology below.
\begin{lem}
\label{lemma:standardlemma}
    Let $(X,m)$ be a measure space and let $E$ be a measurable subset of $X$ with $0<m(E)<\infty$.
 Let $1\leq a,b<\infty$, and $W,V:E\to\mathrm{M}_{d}(\C)$ be $m$-measurable functions with $W(x),V(x)\in\mathrm{P}_{d}(\C)$ for $m$-a.e.~$x\in E$. Then, we have
    \begin{equation*}
        \dashint_{E}\left(\dashint_{E}|W(x)^{1/b}V(y)^{1/a}|^{a}\dd m(y)\right)^{b/a}\dd m(x)<\infty\iff W,~V\text{ are $m$-integrable over }E.
    \end{equation*}
    Moreover, in this case, the following holds:
    \begin{equation*}
    \dashint_{E}\left(\dashint_{E}|W(x)^{1/b}V(y)^{1/a}|^{a}\dd m(y)\right)^{b/a}\dd m(x)\simeq_{d,a,b}|\cW_{E,b}\cV_{E,a}|^{b}.
    \end{equation*}
\end{lem}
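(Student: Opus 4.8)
The plan is to reduce everything to properties of the reducing operators $\cW_{E,b}$ and $\cV_{E,a}$, using the two-sided comparison between the reducing operator norm and the $L^p$-average norm recorded in the definition. The first observation is that for a fixed $y$, writing $N(x) := |W(x)^{1/b}V(y)^{1/a}|$, we have, by \eqref{matrix norm as sum of norms of columns}, $N(x) \simeq_d \sum_{j=1}^d |W(x)^{1/b} V(y)^{1/a} e_j|$, and hence, combining with \eqref{break power of sum}, the inner integral $\dashint_E |W(x)^{1/b}V(y)^{1/a}|^{a}\dd m(y)$ is comparable (with constants depending only on $d,a$) to $\sum_{j=1}^d \dashint_E |W(x)^{1/b}(V(y)^{1/a}e_j)|^{a}\dd m(y)$. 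The point is that for each fixed vector $v = V(y)^{1/a}e_j$, the quantity $\dashint_E |W(x)^{1/b} v|^{a}\dd m(y)$ is exactly an $L^a$-average against the weight $W^{a/b}$ (more precisely, against $(W^{1/b})^{a}$), which by the defining inequality of the reducing operator is comparable to $|\cW_{E}' v|^{a}$ for an appropriate reducing operator; one must be slightly careful with exponents, using the reducing operator of the weight $W^{a/b}$ with respect to the exponent $a$, which we may also denote $\cW_{E,b}$ after unwinding the normalizations (since $(W^{a/b})^{1/a} = W^{1/b}$). This is the bookkeeping step where the roles of $a$ and $b$ in the statement must be tracked carefully.

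After this reduction, I would treat the outer integral symmetrically. We have shown
\[
\dashint_E\Big(\dashint_E |W(x)^{1/b}V(y)^{1/a}|^{a}\dd m(y)\Big)^{b/a}\dd m(x) \simeq_{d,a,b} \dashint_E \Big(\sum_{j=1}^d |\cW_{E,b} V(x)^{1/a} e_j|^{a}\Big)^{b/a}\dd m(x),
\]
and by another application of \eqref{break power of sum} (now with exponent $b/a$) together with \eqref{matrix norm as sum of norms of columns} this is comparable to $\sum_{j=1}^d \dashint_E |\cW_{E,b} V(x)^{1/a} e_j|^{b}\dd m(x)$. For fixed $j$, write $u_j := \cW_{E,b} $ applied to... more carefully: the integrand is $|\cW_{E,b}(V(x)^{1/a}e_j)|^{b}$, and since $\cW_{E,b}$ is a fixed invertible positive-definite matrix, $\dashint_E |\cW_{E,b} V(x)^{1/a} e_j|^{b}\dd m(x)$ is an $L^b$-average of the vector-valued function $x \mapsto V(x)^{1/a}e_j$ against the scalar weight $1$ composed with the linear map $\cW_{E,b}$; equivalently it equals $\dashint_E |(\cW_{E,b} V(x)^{1/a}) e_j|^b \dd m(x)$, which by \eqref{matrix norm as sum of norms of columns} and \eqref{break power of sum} summed over $j$ is comparable to $\dashint_E |\cW_{E,b}V(x)^{1/a}|^b\dd m(x)$, and this in turn, after moving $\cW_{E,b}$ through and recognizing an $L^b$-average against the weight $V^{b/a}$ (whose $1/b$-power is $V^{1/a}$), is comparable to $|\cW_{E,b}\cV_{E,a}|^b$ by the reducing-operator inequality applied to the matrix $\cW_{E,b}$ tested against the columns of $\cV_{E,a}$. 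This yields the claimed comparison.

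For the equivalence of finiteness with integrability of $W$ and $V$: the forward direction is immediate from the comparison just established, since $|\cW_{E,b}\cV_{E,a}|<\infty$ forces both reducing operators to be genuine (finite, invertible) matrices, which by their construction requires $W^{1/b}$ and $V^{1/a}$ (hence, by the Jensen observation in the excerpt, $W$ and $V$) to be $m$-integrable over $E$; conversely, if $W,V$ are $m$-integrable over $E$ then so are $W^{1/b}, V^{1/a}$ by Jensen, the reducing operators are well-defined, and the comparison gives a finite upper bound. The main obstacle I anticipate is purely notational: keeping the normalizations of the reducing operators straight, i.e. verifying that the reducing operator of $W^{a/b}$ with respect to exponent $a$ coincides (up to the conventions fixed in the definition) with what the statement calls $\cW_{E,b}$, so that the exponents $a,b$ appearing in the two averages match the subscripts on $\cW$ and $\cV$. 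Once that correspondence is pinned down, every remaining step is a routine application of \eqref{break power of sum}, \eqref{matrix norm as sum of norms of columns}, and the defining two-sided inequality for reducing operators, all with constants depending only on $d,a,b$.
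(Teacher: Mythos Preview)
The paper omits the proof of this lemma entirely (it is introduced with ``whose proof we omit''), so there is no argument to compare against. Your overall strategy---expand the matrix norm via \eqref{matrix norm as sum of norms of columns}, use \eqref{break power of sum}, and invoke the defining two-sided inequality for reducing operators---is exactly right, but two points in your execution need fixing.

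First, the integration variables are swapped in your main step. The inner integral is over $y$, so $W(x)^{1/b}$ is a \emph{fixed} matrix there while $V(y)^{1/a}$ varies; hence the inner average must produce $\cV_{E,a}$, not $\cW_{E,b}$. Your column expansion gives $\sum_j\dashint_E|W(x)^{1/b}(V(y)^{1/a}e_j)|^a\dd m(y)$, but $V(y)^{1/a}e_j$ is not a fixed vector in $y$, so the reducing-operator inequality does not apply as you wrote it. The fix is to first use $|W(x)^{1/b}V(y)^{1/a}|=|V(y)^{1/a}W(x)^{1/b}|$ (both factors are self-adjoint, and $|A|=|A^*|$) and then expand in the columns $W(x)^{1/b}e_j$, which \emph{are} fixed in $y$. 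This yields $\dashint_E|V(y)^{1/a}W(x)^{1/b}|^a\dd m(y)\simeq_{d,a}|\cV_{E,a}W(x)^{1/b}|^a$; raising to $b/a$ and repeating the argument for the outer integral gives $|\cW_{E,b}\cV_{E,a}|^b$. Your displayed intermediate expression $\dashint_E(\sum_j|\cW_{E,b}V(x)^{1/a}e_j|^a)^{b/a}\dd m(x)$ cannot arise from integrating over $y$: that integration cannot eliminate $W(x)$ or introduce $V(x)$.

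Second, your argument for the forward implication is circular: the comparison with $|\cW_{E,b}\cV_{E,a}|^b$ is only valid once both reducing operators exist, which already presupposes integrability of $W$ and $V$. A direct argument is needed. For a.e.\ fixed $x$ one has $|W(x)^{1/b}V(y)^{1/a}|\geq\lambda_{\min}(W(x)^{1/b})\,|V(y)^{1/a}|$, so finiteness of the inner integral for one such $x$ forces $\dashint_E|V|\dd m<\infty$; with $V$ integrable, the bound $|W(x)^{1/b}V(y)^{1/a}|\geq\lambda_{\min}(V(y)^{1/a})\,|W(x)^{1/b}|$ together with $\dashint_E\lambda_{\min}(V(y)^{1/a})^a\dd m(y)>0$ then gives $\dashint_E|W|\dd m<\infty$ from finiteness of the double integral.
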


\subsection{Matrix Muckenhoupt weights} \label{subsection:muckenhoupt_weights}

The weight $W$ in \Cref{thm:matrix_weighted_bound} was assumed to satisfy the condition $[W^{s'}]_{A_{a',b}}<\infty$, which is a so-called \emph{matrix Muckenhoupt condition}. We explain the terminology in this subsection.


\begin{dfn}\label{dfn:muckenhoupt_weights}
Let $1<p\leq q<\infty$, and $W:M\to \mathrm{M}_{d}(\C)$ be a measurable function with values $\mu$-a.e.~in the set of positive-definite $d \times d$ complex-valued matrices. We define
\begin{equation}
    \label{matrix-weight}
    [W]_{A_{p,q}} := \sup_{B} \dashint_B \left(\dashint_B |W(x)^{\frac{1}{q}} W(y)^{-\frac{1}{q}}|^{p'} \dd\mu(y) \right)^{\frac{q}{p'}} \dd\mu(x),
\end{equation}
where the supremum ranges over all open balls $B\subseteq M$. If $[W]_{A_{p,q}}<\infty$, then we say that $W$ is a \emph{$d\times d$ matrix $A_{p,q}$ weight}, and we write $W\in A_{p,q}$. If $p=q$, then we denote $A_{p,p}:=A_{p}$.
\end{dfn}

Variants of the matrix $A_{p,q}$ weights were already introduced and studied in \cite{Hunt_Bellman_1996, Lauzon_Treil_2007}. The particular form of condition \eqref{matrix-weight} considered here was first introduced and studied in \cite{Isralowitz_Moen_2019} and further investigated in \cite{Cardenas_Isralowitz_I2021}.

Below, we present several properties of the matrix $A_{p,q}$ weights that are important for our purposes. These were all established in \cite{Isralowitz_Moen_2019} in the case when the ambient space $M$ is simply $\R^n$ equipped with the Euclidean metric and the $n$-dimensional Lebesgue measure, but they still hold true in arbitrary doubling measure metric spaces simply by replacing cubes with balls. See \cite[Section 3]{Isralowitz_Moen_2019} for the proofs.

\begin{lem}
\label{lemma:properties_of_Apq_weights}
	Let $1<p\leq q<\infty$, and let $W$ be a $d\times d$ matrix $A_{p,q}$ weight on $M$. Set $r:=1+\frac{q}{p'}$. Then, the following are true:
	\begin{align*}
		[W^{-p'/q}]_{A_{q',p'}}	&	\simeq_{d,p,q}[W]_{A_{p,q}}^{1/(r-1)};	\\
		\sup_{e\in\C^d}[|W^{1/q}e|^{q}]_{A_{r}}	&	\lesssim_{d,p,q}[W]_{A_{p,q}};	\\
		\sup_{e\in\C^d}[|W^{-1/q}e|^{p'}]_{A_{r'}}	&	\lesssim_{d,p,q}[W^{-p'/q}]_{A_{q',p'}}.
	\end{align*}
\end{lem}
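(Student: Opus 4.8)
The plan is to reduce all three estimates to two ingredients already at hand: the reducing-operator characterization of the $A_{p,q}$ characteristic from \Cref{lemma:standardlemma}, which will produce the duality identity, and an elementary submultiplicativity bound for $W^{1/q}$, which will produce the two scalar Muckenhoupt bounds. Throughout I would write $V:=W^{-p'/q}$ and record that $r-1=q/p'$, hence $1/(r-1)=p'/q$ and $r'=1+p'/q$.

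\textbf{The duality relation.} Since $[W]_{A_{p,q}}<\infty$, \Cref{lemma:standardlemma} (the ``$\iff$'' part) forces $V$ to be locally $\mu$-integrable, so for every ball $B$ the reducing operators $\cW_{B,q}$ (of $W$ at exponent $q$) and $\cV_{B,p'}$ (of $V$ at exponent $p'$) are defined. I would apply \Cref{lemma:standardlemma} first with exponents $(a,b)=(p',q)$ to the pair $(W,V)$: this rewrites the ball-integrand in \Cref{dfn:muckenhoupt_weights} and yields $[W]_{A_{p,q}}\simeq_{d,p,q}\sup_{B}|\cW_{B,q}\cV_{B,p'}|^{q}$. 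Then I would apply it again with exponents $(a,b)=(q,p')$ to the pair $(V,W)$ --- here one uses $(q')'=q$ to identify the inner exponent --- obtaining $[W^{-p'/q}]_{A_{q',p'}}=[V]_{A_{q',p'}}\simeq_{d,p,q}\sup_{B}|\cV_{B,p'}\cW_{B,q}|^{p'}$. Because $\cW_{B,q}$ and $\cV_{B,p'}$ are positive definite, hence self-adjoint, $|\cV_{B,p'}\cW_{B,q}|=|\cW_{B,q}\cV_{B,p'}|$, and since $t\mapsto t^{p'/q}$ is increasing these two displays combine to $[W^{-p'/q}]_{A_{q',p'}}\simeq_{d,p,q}[W]_{A_{p,q}}^{p'/q}=[W]_{A_{p,q}}^{1/(r-1)}$.

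\textbf{The scalar $A_r$ bound.} Fix $e\in\C^d$ and a ball $B$. The key point is the pointwise inequality $|W(x)^{1/q}e|\leq|W(x)^{1/q}W(y)^{-1/q}|\,|W(y)^{1/q}e|$, valid for all $x,y$ because $W(x)^{1/q}e=(W(x)^{1/q}W(y)^{-1/q})(W(y)^{1/q}e)$. From it I would divide, raise to the power $p'$, and average in $y\in B$ to get, for every $x$,
\[
\dashint_{B}|W(y)^{1/q}e|^{-p'}\dd\mu(y)\;\leq\;|W(x)^{1/q}e|^{-p'}\dashint_{B}|W(x)^{1/q}W(y)^{-1/q}|^{p'}\dd\mu(y);
\]
then raise to the power $q/p'$, multiply by $|W(x)^{1/q}e|^{q}$, and average in $x\in B$. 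The left-hand side becomes exactly $\bigl(\dashint_{B}|W^{1/q}e|^{q}\bigr)\bigl(\dashint_{B}|W^{1/q}e|^{-p'}\bigr)^{q/p'}$, which is precisely the ball-quantity defining $[\,|W^{1/q}e|^{q}\,]_{A_{r}}$ once one notes that $|W^{1/q}e|^{-p'}=(|W^{1/q}e|^{q})^{-1/(r-1)}$, while the right-hand side is dominated by $[W]_{A_{p,q}}$. Taking the supremum over $B$ and then over $e$ gives $\sup_{e}[\,|W^{1/q}e|^{q}\,]_{A_{r}}\lesssim_{d,p,q}[W]_{A_{p,q}}$.

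\textbf{The third relation and the main difficulty.} For the last estimate I would simply feed the weight $V=W^{-p'/q}$ and the exponent pair $(q',p')$ into the scalar bound just proved: the requirement $q'\leq p'$ is just $p\leq q$; one has $V\in A_{q',p'}$ by the duality relation already established; the associated exponent is $1+\tfrac{p'}{(q')'}=1+\tfrac{p'}{q}=r'$; and $V^{1/p'}=W^{-1/q}$, so the conclusion reads $\sup_{e}[\,|W^{-1/q}e|^{p'}\,]_{A_{r'}}\lesssim_{d,p,q}[V]_{A_{q',p'}}=[W^{-p'/q}]_{A_{q',p'}}$. I do not expect a genuine obstacle in any of this: the whole argument is exponent bookkeeping, and the one place to be careful is matching which function and which exponent play which role in the two applications of \Cref{lemma:standardlemma}, together with the local integrability of $V$ (exactly where $[W]_{A_{p,q}}<\infty$ enters). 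Beyond that, nothing about the ambient space is used that is not already encoded in \Cref{lemma:standardlemma} and the standard scalar $A_r$ theory, so the proof runs verbatim as in \cite[Section 3]{Isralowitz_Moen_2019} with balls in place of cubes.
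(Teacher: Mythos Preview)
Your proposal is correct. The paper does not supply its own proof of this lemma at all; it simply records the three relations and defers to \cite[Section~3]{Isralowitz_Moen_2019}, remarking that the arguments there carry over to doubling metric measure spaces by replacing cubes with balls. Your plan is precisely a self-contained unpacking of that reference: the duality relation via \Cref{lemma:standardlemma} and $|\cW_{B,q}\cV_{B,p'}|=|\cV_{B,p'}\cW_{B,q}|$, the scalar $A_r$ bound via the submultiplicativity $|W(x)^{1/q}e|\le|W(x)^{1/q}W(y)^{-1/q}|\,|W(y)^{1/q}e|$, and the third line by feeding $V=W^{-p'/q}$ with the pair $(q',p')$ back into the second. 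The exponent bookkeeping you flag (in particular $r'=1+p'/q$ and $|W^{1/q}e|^{-p'}=(|W^{1/q}e|^{q})^{-1/(r-1)}$) is exactly right, and your caution about the local integrability of $V$ is the only point where $[W]_{A_{p,q}}<\infty$ is genuinely needed. There is nothing to add.
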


We also need the so-called ``weak'' reverse H{\"o}lder property of scalar Muckenhoupt weights, in general, doubling measure metric spaces, as obtained from \cite[Theorem 1.1]{Hytonen_Perez_Rela_2012} coupled with (a trivial generalization of) \cite[Lemma 4.1]{NPTV2017}.

\begin{lem}
    \label{lemma:weak_reverse_Holder_for_scalar_Muckenhoupt_weights}
    Let $w$ be a scalar weight on $M$ with $[w]_{A_{p}}<\infty$ for some $1<p<\infty$. Then, there exist constants $a_1,a_2>0$ depending only on the doubling measure $\mu$ and $p$ such that for all $0<\varepsilon\leq\frac{1}{1+a_1[w]_{A_{p}}}$ we have
    \begin{equation*}
        \aver{w^{1+\varepsilon}}_{B}^{1/(1+\varepsilon)}\leq a_2\aver{w}_{2B},
    \end{equation*}
    for all open balls $B\subseteq M$, where $2B$ denotes the ball co-centric with $B$ and twice its radius.
\end{lem}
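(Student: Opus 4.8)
The plan is to derive this weak reverse H\"older inequality by combining the sharp quantitative reverse H\"older property for $A_{\infty}$ weights on spaces of homogeneous type, due to Hyt\"onen, P\'erez and Rela \cite[Theorem 1.1]{Hytonen_Perez_Rela_2012}, with the elementary covering/doubling argument of \cite[Lemma 4.1]{NPTV2017}; the latter is stated for $\R^n$, but its proof relies only on the Vitali covering lemma and on the doubling property of $\mu$, so it extends to an arbitrary doubling measure metric space --- this is the ``trivial generalization'' alluded to.

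First I would pass from the $A_{p}$ characteristic to the (smaller) Fujii--Wilson $A_{\infty}$ characteristic $[w]_{A_{\infty}}:=\sup_{B}\big(\int_{B}w\dd\mu\big)^{-1}\int_{B}\calm(w\1_{B})\dd\mu$, using the standard comparison $[w]_{A_{\infty}}\lesssim_{\mu,p}[w]_{A_{p}}$, which holds in any doubling measure metric space. Since $[w]_{A_{p}}\geq 1$ always, any threshold of the form $\varepsilon\leq\frac{1}{c\,[w]_{A_{\infty}}}$ is implied by one of the form $\varepsilon\leq\frac{1}{1+a_{1}[w]_{A_{p}}}$, provided $a_{1}$ is taken large enough in terms of $c$ and the comparison constant; hence it suffices to establish the inequality for all $0<\varepsilon\leq\frac{1}{c\,[w]_{A_{\infty}}}$ with $c=c(\mu,p)$.

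Next I would invoke \cite[Theorem 1.1]{Hytonen_Perez_Rela_2012}: there is $\tau=\tau(\mu)>0$ such that, writing $r_{w}:=1+\frac{1}{\tau[w]_{A_{\infty}}}$, one has $\big(\dashint_{B}w^{r_{w}}\dd\mu\big)^{1/r_{w}}\lesssim_{\mu}\dashint_{B^{\ast}}w\dd\mu$ for every ball $B$, where $B^{\ast}$ is a controlled enlargement of $B$ dictated by the dyadic structure of $(M,d,\mu)$. Then for any $0<\varepsilon\leq\frac{1}{\tau[w]_{A_{\infty}}}$ we have $1+\varepsilon\leq r_{w}$, so Jensen's inequality on the probability space $(B,\frac{\dd\mu}{\mu(B)})$ gives
\[
	\Big(\dashint_{B}w^{1+\varepsilon}\dd\mu\Big)^{1/(1+\varepsilon)}\leq\Big(\dashint_{B}w^{r_{w}}\dd\mu\Big)^{1/r_{w}}\lesssim_{\mu}\dashint_{B^{\ast}}w\dd\mu.
\]
Finally, I would appeal to the doubling-metric-space version of \cite[Lemma 4.1]{NPTV2017} to recast this in the stated form --- a clean estimate over every ball $B$ with the enlargement $2B$ on the right --- thereby fixing the constant $a_{2}=a_{2}(\mu)$; combined with the first two paragraphs this yields the claim with $a_{1},a_{2}$ depending only on $\mu$ and $p$.

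The main obstacle --- really only a matter of bookkeeping --- is to verify that the single clean threshold $\varepsilon\leq\frac{1}{1+a_{1}[w]_{A_{p}}}$ simultaneously dominates the three constraints that appear along the way: the loss in $[w]_{A_{\infty}}\lesssim_{\mu,p}[w]_{A_{p}}$, the Hyt\"onen--P\'erez--Rela threshold $\frac{1}{\tau[w]_{A_{\infty}}}$, and the condition $1+\varepsilon\leq r_{w}$ used in the monotonicity step --- all of which, given $[w]_{A_{p}}\geq 1$, reduces to enlarging $a_{1}$ --- and to confirm that the covering/doubling manipulations of \cite[Lemma 4.1]{NPTV2017} go through verbatim for balls in $(M,d,\mu)$. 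I do not anticipate any genuine difficulty beyond this.
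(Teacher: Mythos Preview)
Your proposal is correct and follows precisely the route the paper itself indicates: the lemma is stated as being obtained from \cite[Theorem 1.1]{Hytonen_Perez_Rela_2012} together with a trivial generalization of \cite[Lemma 4.1]{NPTV2017}, and no further proof is given. Your explicit passage through the Fujii--Wilson $A_{\infty}$ characteristic and the Jensen monotonicity step are exactly the standard way to unpack those two citations, so there is nothing to add.
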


In particular, \Cref{lemma:weak_reverse_Holder_for_scalar_Muckenhoupt_weights} can be used to show that scalar $A_p$ weights for $1<p<\infty$ satisfy an openness property, see, for instance, \cite{Hytonen_Perez_Rela_2012} for a precise statement. However, counterexamples due to \cite{Bownik_2001} show that matrix $A_p$ weights on $\R^n$ fail to satisfy an analogous openness property for any $1<p<\infty$. 

\subsection{Proving the matrix-weighted bound}
\label{subsec:proof_matrix_weighted_estimate}

We devote this subsection to a proof of \Cref{thm:matrix_weighted_bound}.

First, we need some preparation. Fix $1\leq p_0<p<q_0\leq\infty$. Next, set
\begin{equation*}
	t:=\frac{p}{p_0},\quad s:=\frac{q_0}{p},\quad \tilde{s}:=\frac{p'}{q_0'},
\end{equation*}
and
\begin{equation*}
	a:=\frac{t'p}{t},\quad b:=s'p.
\end{equation*}
Observe that $t, s, \tilde{s}, a, b > 1$, and $t, \tilde{s}, a, b <\infty$. Direct computation gives
\begin{equation*}
	a=\frac{p}{t-1}=t'p_0,\quad b=\frac{p'}{\tilde{s}-1}=\tilde{s}'q_0',
\end{equation*}
and
\begin{equation*}
	\frac{1}{a}+\frac{1}{b}=\frac{1}{p_0}-\frac{1}{q_0}\leq1.
\end{equation*}
In particular, we have $b\geq a'$. Finally, define
\begin{equation*}
	r:=s'(t-1)+1.
\end{equation*}
Observe that $r \in (1,\infty)$ and
\begin{equation*}
	\frac{b}{a}=r-1=\frac{s't}{t'}.
\end{equation*} 

Let $W$ be a $d\times d$ matrix weight on $M$. Set $U:=W^{-1/(t-1)}$ and $V:=W^{s'}$. Notice that
\begin{equation*}
	U=V^{-1/(r-1)} \inline{and} V^{1/b} = W^{1/p} = U^{-1/a}.
\end{equation*}
Assume now that
\begin{equation*}
	[V]_{A_{a',b}}<\infty.
\end{equation*}
Then, by \Cref{lemma:standardlemma} we deduce the following:
\begin{equation}
	\label{Ap reducing operators}
	|\cU_{B,a}\cV_{B,b}|\lesssim_{d,a,b}[V]_{A_{a',b}}^{1/b},
\end{equation}
for all open balls $B\subseteq M$. Also, \Cref{lemma:properties_of_Apq_weights} for $p=a'$ and $q=b$ gives us that
\begin{gather}
\label{uniform scalar Ap}
	[|V^{1/b}e|^{b}]_{A_{r}}\lesssim_{d,a,b}[V]_{A_{a',b}}\quad\forall e\in\C^d\setminus\{0\},	\\
\label{dual uniform scalar Ap}
	[|U^{1/a}e|^{a}]_{A_{r'}}\lesssim_{d,a,b}[U]_{A_{b',a}}\quad\forall e\in\C^d\setminus\{0\},	\\
\intertext{and}
\label{dual characteristic}
	[U]_{A_{b',a}}\simeq_{d,a,b}[V]_{A_{a',b}}^{1/(r-1)}.
\end{gather}

Now, we are finally ready to prove \Cref{thm:matrix_weighted_bound}

\begin{proof}[Proof (of \Cref{thm:matrix_weighted_bound})]
	We adapt the arguments from \cite[Section 5]{NPTV2017}. We keep the setup and notation from the previous subsection. Throughout the proof, we ignore all implied constants depending only on $\mu,d,p_0,q_0$ and $p$. First of all, it suffices to prove that
	\begin{equation*}
		\sum_{P\in\cals}\mu(P)|\convex{W^{-1/p}f}_{\L^{p_0}(\lambda P)}\cdot\convex{W^{1/p}g}_{\L^{q_0'}(\lambda P)}|\lesssim\frac{1}{\varepsilon}([W^{s'}]_{A_{a',b}})^{\alpha}\norm{f}_{L^{p}(M,\mu;\C^d)}\norm{g}_{L^{p'}(M,\mu;\C^d)},
	\end{equation*}
	for all $f\in L^{p}(M,\mu;\C^d)$ and all $g\in L^{p'}(M,\mu;\C^d)$. In what follows, we fix such $f$ and $g$. From \Cref{lemma:covering_dilation_of_dyadic_set_with_ball} we have that for all $P\in\cals$, there exists an open ball $B(P)\subseteq M$ such that $\lambda P\subseteq B(P)$ and $\mu(B(P))\lesssim\mu(\lambda P)$. Thus, we begin by estimating
	\begin{align*}
		A
		&	:=\sum_{P\in\cals}\mu(P)|\convex{W^{-1/p}f}_{\L^{p_0}(\lambda P)}\cdot\convex{W^{1/p}g}_{\L^{q_0'}(\lambda P)}|\\
		&	=
		\sum_{P\in\cals}\mu(P)|(\calu_{2B(P),a}^{-1}\convex{W^{-1/p}f}_{\L^{p_0}(\lambda P)})\cdot(\calu_{2B(P), a}\convex{W^{1/p}g}_{\L^{q_0'}(\lambda P)})|\\
		&	\leq\sum_{P\in \cals} \mu(P) \aver*{|\calu_{2B(P),a}^{-1} W^{-1/p} f|^{p_0}}_{\lambda P}^{1/p_0}\aver*{|\calu_{2B(P),a} W^{1/p} g|^{q_0'}}_{\lambda P}^{1/q_0'}\\
		&	\lesssim\sum_{P\in \cals} \mu(P) \aver*{|\calu_{2B(P),a}^{-1} W^{-1/p} f|^{p_0}}_{B(P)}^{1/p_0}\aver*{|\calu_{2B(P),a} W^{1/p} g|^{q_0'}}_{B(P)}^{1/q_0'}.
	\end{align*}
	We apply H{\"o}lder's inequality for the exponents $p$ and $p'$:
	\begin{equation*}
		A \leq
		\left( \sum_{P \in \cals} \mu(P) \aver*{|\calu_{2B(P),a}^{-1} W^{-1/p} f|^{p_0}}_{B(P)}^{p/p_0} \right)^{1/p}
		\left( \sum_{P \in \cals} \mu(P)\aver*{|\calu_{2B(P),a} W^{1/p} g|^{q_0'}}_{B(P)}^{p'/q_0'} \right)^{1/p'}
		=: A_1 \cdot A_2.
	\end{equation*}
	
	Now we estimate $A_1$ and $A_2$ from above separately. We start with $A_1$:
	\begin{align*}
		A_1^p
		&	= \sum_{P \in \cals} \mu(P) \aver*{|\calu_{2B(P),a}^{-1} W^{-1/p} f|^{p_0}}_{B(P)}^{p/p_0} = \sum_{P \in \cals} \mu(P)  \aver*{|\calu_{2B(P),a}^{-1} U^{1/a} f|^{a/t'}}_{B(P)}^{t}\\
		&	\leq \sum_{P \in \cals} \mu(P)  \aver*{|\calu_{2B(P),a}^{-1} U^{1/a}|^{a/t'} \cdot |f|^{a/t'}}_{B(P)}^{t}.
	\end{align*}
	Set  $\theta:=t'(1+\delta)$, for some $\delta>0$ to be chosen later. Then, applying H{\"o}lder's inequality for the exponents $\theta$ and $\theta'$, we get
	\begin{align*}
		 A_1^p
		 &	\leq \sum_{P \in \cals} \mu(P) \aver*{|\calu_{2B(P),a}^{-1} U^{1/a}|^{a\theta/t'}}_{B(P)}^{t/\theta} \cdot {\aver*{|f|^{a\theta'/t' }}}_{B(P)}^{t/\theta'} \\
		 &	=\sum_{P \in \cals} \mu(P) \aver*{|\calu_{2B(P),a}^{-1} U^{1/a}|^{a(1+\delta)}}_{B(P)}^{(t-1)/(1+\delta)} \cdot {\aver*{|f|^{a\theta'/t' }}}_{B(P)}^{t/\theta'}.
	\end{align*}
	For every $P\in\cals$, consider the scalar weight
	\begin{equation*}
		\omega_{P}:=|\calu_{2B(P),a}^{-1} U^{1/a}|^a = |U^{1/a} \calu_{2B(P),a}^{-1}|^a
	\end{equation*}
	on $M$. Then, by \eqref{dual uniform scalar Ap} coupled with (a tiny modification of) \cite[Lemma 3.4]{DKPS2024}, we deduce
	\begin{equation*}
		\sup_{P\in\cals}[\omega_{P}]_{A_{r'}} \lesssim[U]_{A_{b',a}}.
	\end{equation*}
	Thus, by the weak Reverse H{\"o}lder property for scalar Muckenhoupt weights, \Cref{lemma:weak_reverse_Holder_for_scalar_Muckenhoupt_weights}, there exists a constant $a_1>0$ depending only on $\mu,d,p_0,q_0$ and $p$ such that for the choice
	\begin{equation*}
		\delta: = \frac{1}{1+a_1[U]_{A_{b', a}}}
	\end{equation*}
	we have, for every $P\in\cals$,
	\begin{equation*}
		\aver*{\omega_{P}^{1+\delta}}_{B}^{1/(1+\delta)}\lesssim\aver*{\omega_{P}}_{2B},
	\end{equation*}
	for every open ball $B\subseteq M$. Thus, for every $P\in\cals$, we obtain
	\begin{equation*}
		\aver*{\omega_P^{1 + \delta}}_{B(P)}^{1/(1 + \delta)} \lesssim \aver*{\omega_P}_{2B(P)} =  \dashint_{2B(P)}|U(x)^{1/a} \calu_{2B(P), a}^{-1}|^{a} \dd\mu(x)\simeq |\cU_{2B(P),a}\cU_{2B(P),a}^{-1}|^{a}=1.
	\end{equation*}
	Thus, we have
	\begin{align*}
		A_1^{p}
		&\lesssim \sum_{P \in \cals}\mu(P)\aver*{|f|^{a\theta'/t'}}_{B(P)}^{t/\theta'}
		\leq\frac{1}{\varepsilon}\sum_{P\in\cals}\mu(F_{P})\aver*{|f|^{a\theta'/t'}}_{B(P)}^{t/\theta'}\\
		&\leq\frac{1}{\varepsilon}\sum_{P\in\cals}\int_{F_{P}}(\calm(|f|^{a\theta'/t'})(x))^{t/\theta'}\dd\mu(x)
		\leq\frac{1}{\varepsilon}\int_{M}(\calm(|f|^{a\theta'/t'})(x))^{t/\theta'}\dd\mu(x),
	\end{align*}
	since the sets $F_{P}$, $P\in\cals$ are pairwise disjoint. Observe that $\theta>t'$, therefore $\theta'<t$, that is $q:=t/\theta' > 1$. Thus, applying \eqref{Hardy_Litlewood_bound} we obtain
	\begin{equation*}
		A_1^{p}
		\lesssim\frac{1}{\varepsilon}(q')^{q}\int_{M}(|f(x)|^{a\theta'/t'})^{q}\dd\mu(x)
		=(q')^{q}\frac{1}{\varepsilon}\int_{M}|f(x)|^{p}\dd\mu(x).
	\end{equation*}
	\Cref{lemma:final_estimate} below shows that $(q')^{q}\lesssim\delta^{-1}$, therefore we finally obtain
	\begin{equation*}
		A_1\lesssim\frac{1}{\varepsilon^{1/p}}[U]_{A_{b',a}}^{1/p}\norm{f}_{L^{p}(M,\mu;\C^d)}.
	\end{equation*}
	
	To estimate $A_2$, we first observe that for all $P\in\cals$, by \eqref{Ap reducing operators} we get
	\begin{align*}
		\aver*{|\cU_{2B(P),a} W^{1/p} g|^{q_0'}}_{B(P)}
		&	\leq\aver*{|\cU_{2B(P),a} \cV_{2B(P), b} \cV_{2B(P),b}^{-1} W^{1/p}g|^{q_0'}}_{B(P)}	\\
		&	\leq |\cU_{2B(P),a} \cV_{2B(P),b}|^{q_0'} \cdot \aver*{|\cV_{2B(P),b}^{-1} W^{1/p}g|^{q_0'}}_{B(P)}	\\
		&	\lesssim [V]_{A_{a',b}}^{q_0'/b} \cdot \aver*{|\cV_{2B(P),b}^{-1} W^{1/p}g|^{q_0'}}_{B(P)}.
	\end{align*}
	Similarly to the above and using \eqref{uniform scalar Ap} instead of \eqref{dual uniform scalar Ap}, we obtain
	\begin{equation*}
		\left( \sum_{P \in \cals} \mu(P)\aver*{|\cV_{2B(P),b}^{-1} W^{1/p} g|^{q_0'}}_{B(P)}^{p'/q_0'} \right)^{1/p'}
		\lesssim\frac{1}{\varepsilon^{1/p'}}[V]_{A_{a',b}}^{1/p'}\norm{g}_{L^{p'}(M,\mu;\C^d)}.
	\end{equation*}
	Putting all estimates together and using \eqref{dual characteristic}, we finally deduce
	\begin{equation*}
		A \lesssim \frac{1}{\varepsilon}([V]_{A_{a',b}})^{\alpha}\norm{f}_{L^{p}(M,\mu;\C^d)}\norm{g}_{L^{p'}(M,\mu;\C^d)},
	\end{equation*}
	where
	\begin{equation*}
		\alpha
		:=\frac{1}{p(r-1)}+\frac{1}{b}+\frac{1}{p'}
		=\frac{1}{s'p(t-1)}+\frac{1}{s'p}+\frac{1}{p'}
		=\frac{t}{s'p(t-1)}+\frac{1}{p'}
		=\frac{1}{s'(p-p_0)}+\frac{1}{p'},
	\end{equation*}
	concluding the proof.
\end{proof}

\begin{lem}
\label{lemma:final_estimate}
	Let $0<\delta<1$. Set $\theta:=t'(1+\delta)$ and $q:=\frac{t}{\theta'}$. Then, we have $(q')^{q}\lesssim\delta^{-1}$.
\end{lem}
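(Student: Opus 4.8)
The plan is to compute $q$ and $q'$ explicitly in terms of $t$ and $\delta$, and then to notice that the only delicate point is a single logarithmic correction. First I would unwind the definitions: since $\theta=t'(1+\delta)$, we have $\theta'=\theta/(\theta-1)$, hence, using $t/t'=t-1$,
\[
q=\frac{t}{\theta'}=t\Bigl(1-\frac{1}{t'(1+\delta)}\Bigr)=t-\frac{t-1}{1+\delta}=\frac{1+t\delta}{1+\delta}.
\]
Consequently $q-1=\dfrac{(t-1)\delta}{1+\delta}$, so that $q>1$ (as it must be, being a conjugate exponent), $0\le q-1\le(t-1)\delta$, and $1<q\le 1+t$; moreover
\[
q'=\frac{q}{q-1}=\frac{1+t\delta}{(t-1)\delta},\qquad 1<q'\le\frac{1+t}{(t-1)\delta}=:\frac{C_t}{\delta},
\]
where $C_t:=\frac{1+t}{t-1}>1$ and we used $0<\delta<1$.

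Next I would split $(q')^{q}=q'\cdot(q')^{q-1}$ and control the second factor. Since $1<q'\le C_t/\delta$ with $C_t/\delta>1$, and $0\le q-1\le(t-1)\delta$, monotonicity in the base and in the exponent gives
\[
(q')^{q-1}\le\Bigl(\frac{C_t}{\delta}\Bigr)^{(t-1)\delta}
=\exp\!\Bigl((t-1)\bigl(\delta\log C_t+\delta\log\tfrac1\delta\bigr)\Bigr).
\]
Now $\delta\log C_t\le\log C_t$ because $\delta<1$, and the function $\delta\mapsto\delta\log(1/\delta)$ is bounded on $(0,1)$ by an absolute constant (its maximum being $1/e$ for the natural logarithm). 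Hence $(q')^{q-1}\le e^{(t-1)(\log C_t+1/e)}=:C_t'$, a constant depending only on $t$ (equivalently, on $p_0$ and $p$). Therefore
\[
(q')^{q}=q'\cdot(q')^{q-1}\le C_t'\cdot\frac{C_t}{\delta}\lesssim_t\delta^{-1},
\]
which is the claim.

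The only genuine obstacle is that the crude estimates $q'\lesssim\delta^{-1}$ together with $q\le 1+t$ only yield $(q')^{q}\lesssim\delta^{-(1+t)}$, which is far too lossy for the application; the point is the sharper observation that $q-1$ is of order $\delta$, so that $(q')^{q-1}=\delta^{-O(\delta)}$ stays bounded because $\delta\log(1/\delta)\to 0$ as $\delta\to 0^{+}$. Everything else is a routine computation, and no hypothesis beyond $0<\delta<1$ (and $t>1$, which holds since $t=p/p_0$) is needed.
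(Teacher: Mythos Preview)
Your proof is correct and follows essentially the same approach as the paper: both compute $q-1=\dfrac{(t-1)\delta}{1+\delta}$ explicitly and then exploit that $q-1$ is of order~$\delta$ to keep the ``extra'' factor $(q')^{q-1}$ bounded. The only cosmetic difference is the elementary inequality invoked for that bounded factor: the paper writes $q'=1+1/y$ with $y=q-1$ and uses $(1+1/y)^{y}\le e$ to get $(q')^{q}\le e(1+1/y)\lesssim_t\delta^{-1}$, whereas you bound $(q')^{q-1}\le(C_t/\delta)^{(t-1)\delta}$ and use that $\delta\log(1/\delta)\le 1/e$ on $(0,1)$; these two facts are equivalent in strength and lead to the same conclusion.
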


\begin{proof}
	Set $y:= q - 1> 0$, then
	
	\begin{equation*}
		y = \frac{t}{\theta'} - 1 = \frac{t}{\frac{\theta}{\theta - 1}} - 1 = \frac{t(\theta -1) - \theta}{\theta}.
	\end{equation*}
	
	Since $\theta = t' (1 + \delta)$, we rewrite
	
	\begin{equation*}
		y = \frac{t(t' (1 + \delta) -1) -t' (1 + \delta)}{t' (1 + \delta)},
	\end{equation*}
	which gives
	\begin{equation}\label{yyy}
		y
		= \frac{t\left(\frac{t}{t-1} (1 + \delta) -1\right) - \frac{t}{t-1} (1 + \delta)}{\frac{t}{t-1} (1 + \delta)}
		= \frac{(t(1 + \delta) - t + 1) - (1 + \delta)}{(1 + \delta)}
		= \frac{\delta}{1 + \delta} (t - 1).
	\end{equation}
	
	Now, we observe that
	\begin{equation*}
		(q')^q = ((y+1)')^{y + 1} = \left(1 + \frac{1}{y} \right)^{y + 1} \leq e \left(1 + \frac{1}{y} \right).
	\end{equation*}
	Thus, using \eqref{yyy}, we get
	\begin{equation*}
		(q')^q  \leq e \left( 1 + \frac{1+ \delta}{\delta (t - 1)}\right) = e \left(\frac{\delta t + 1}{\delta (t - 1)} \right) < e \frac{t + 1}{\delta (t -1)}\lesssim\delta^{-1}
	\end{equation*}
	concluding the proof.
\end{proof}

\subsection{An open question on sharpness of estimates} Although we expect our estimates to be both qualitatively and quantitatively sharp, especially in the view of \cite{Matrix_A2}, we have not been able to prove that as of the moment of writing.


\section{A hierarchy of two-exponent matrix Muckenhoupt conditions}
\label{section:matrix-weight}

In this section we want to understand the relationship of the various two-exponent matrix Muckenhoupt characteristics $[W]_{A_{p,q}}$ for $1<p\leq q<\infty$ with $\frac{q}{p'}+1=r$ for some \emph{fixed} $r$. In the scalar case, all these characteristics coincide. However, in the matrix case, they turn out to be quite different from each other, and only one-sided inclusions remain in general true. We provide explicit, fully constructive counterexamples demonstrating that. As a result, we deduce that the matrix weighted bound of \Cref{thm:matrix_weighted_bound} improves the one obtained earlier in \cite{Laukkarinen_2023}. Our methods are very flexible and allow us to consider even several endpoint cases that play an important role in the limited range extrapolation in \Cref{section:extrapolation} later.

Throughout this section, for the sake of clarity and simplicity we focus only on $\R^n$ equipped with the Euclidean metric and the $n$-dimensional Lebesgue measure. In particular, we simply write $\mathrm{d} x$ for integration under the Lebesgue measure, and $|E|$ for the $n$-dimensional Lebesgue measure of any Lebesgue measurable subset $E$ of $\R^n$.

\subsection{The members of the hierarchy}
\label{subsec:members_hierarchy}

Let us first recall the two-exponent matrix Muckenhoupt conditions from 
\Cref{dfn:muckenhoupt_weights}. In the case of $\R^n$, one usually takes the supremum in \Cref{dfn:muckenhoupt_weights} to range over cubes $Q\subseteq\R^n$ (with faces parallel to the coordinate hyperplanes) instead of open balls $B\subseteq\R^n$. Then, the resulting Muckenhoupt characteristics are equivalent to their ``ball'' versions up to constants depending only on $n$ and the exponents in question. The resulting equivalence classes of (scalar and matrix) weights coincide with their ``ball'' versions.

\begin{dfn}\label{dfn:muckenhoupt_weights_Rn}
Let $1<p\leq q<\infty$, and $W:\R^n\to \mathrm{M}_{d}(\C)$ be a measurable function with values a.e.~in the set of positive-definite $d \times d$ complex-valued matrices. We define
\begin{equation*}
    [W]_{A_{p,q}} := \sup_{Q} \dashint_Q \left(\dashint_Q |W(x)^{\frac{1}{q}} W(y)^{-\frac{1}{q}}|^{p'} \dd y \right)^{\frac{q}{p'}} \dd x < \infty,
\end{equation*}
where the supremum ranges over all cubes $Q\subseteq\R^n$ (with faces parallel to the coordinate hyperplanes). If $[W]_{A_{p,q}}<\infty$, then we say that $W$ is a \emph{$d\times d$ matrix $A_{p,q}$ weight}, and we write $W\in A_{p,q}$. If $p=q$, then we denote $A_{p,p}:=A_{p}$.
\end{dfn}

Furthermore, we are interested in the following endpoint cases.

\begin{dfn}\label{dfn:endpoint_infty_muckenhoupt_weights}
Let $W:\R^n\to\mathrm{M}_{d}(\C)$ be a measurable function with values a.e.~in the set of positive-definite $d \times d$ complex-valued matrices, and $1\leq t<\infty$. We define
\begin{equation*}
 [W]_{A_{t,\infty}}:=\sup_{B}\esssup_{x\in B}\dashint_{B}|W(x)^{1/t}W(y)^{-1/t}|^{t}\dd y,
\end{equation*}
where the supremum ranges over all cubes $Q\subseteq\R^n$ (with faces parallel to the coordinate hyperplanes). If $[W]_{A_{t,\infty}}<\infty$, then we say that $W$ is a \emph{$d\times d$ matrix $A_{t,\infty}$ weight}, and we write $W\in A_{t,\infty}$. Moreover, we denote $A_{1,\infty}:=A_{\infty}$.
\end{dfn}

Our definition of $A_{\infty}$ follows \cite{Cruz_Uribe_Bownik_Extrapolation}. However, our definitions of $A_{t,\infty}$ and $A_{\infty}$ deviate from the ones appearing in other places in the literature, even in the scalar case. See, for example, \cite{Hunt_Bellman_1996, Volberg_1997, Hytonen_Perez_Rela_2012, NPTV2017, Cruz_Uribe_OFS_2018, Isralowitz_Davey_2024}. We also refer to \cite{Duoandikoextea_Ombrosi_2016} for a detailed discussion of the relationship between the various definitions in the scalar case.

It is worth noting that if $W: \R^n\to \mathrm{P}_{d}(\C)$ is a measurable function with $W\in A_{t,\infty}$ for some $1\leq t<\infty$, then the function $|W|$ is locally bounded and the function $W^{-1}$ is locally integrable.

We turn our attention to reverse H{\"o}lder classes.

\begin{dfn}
If $w$ is a scalar weight on $\R^n$ and $1<s<\infty$, then we define
\begin{equation*}
    [w]_{RH_{s}}:=\sup_{Q}\frac{\aver{w^{s}}_{Q}^{1/s}}{\aver{w}_{Q}},
\end{equation*}
where the supremum ranges over all cubes $Q$ in $\R^n$ (with faces parallel to the coordinate hyperplanes). We say that $w$ belongs to the \emph{Reverse H{\"o}lder class of order $s$}, whenever $[w]_{RH_{s}}<\infty$, and we write $w\in RH_{s}$.

Moreover, following \cite{Auscher_Martell_I_2007}, for a scalar weight $w$ on $\R^n$ we define
\begin{equation*}
    [w]_{RH_{\infty}}:=\sup_{Q}\esssup_{x\in Q}\frac{w(x)}{\aver{w}_{Q}},
\end{equation*}
where the supremum ranges over all cubes $Q$ in $\R^n$ (with faces parallel to the coordinate hyperplanes). We say that $w$ belongs to the \emph{Reverse H{\"o}lder class of order $\infty$} whenever $[w]_{RH_{\infty}}<\infty$, and we write $w\in RH_{\infty}$.
\end{dfn}

\begin{dfn}
If $W$ is a $d\times d$ matrix weight on $\R^n$, $1<t<\infty$, and $1<s\leq\infty$, then following \cite{Laukkarinen_2023} we define
\begin{equation*}
    [W]_{RH_{t,s}}:=\sup_{e\in\C^d\setminus\{0\}}[|W^{1/t}e|^{t}]_{RH_{s}}<\infty.
\end{equation*}
We say that $W$ belongs to the \emph{Reverse H{\"o}lder class of exponent $t$ and order $s$} and write $W\in RH_{t,s}$ when $[W]_{RH_{t,s}}<\infty$.
\end{dfn}

\subsection{Situation in the scalar case}
\label{subsec:situation_scalar}

The various condition introduced in \Cref{subsec:members_hierarchy} are related to each other in the case of scalar weights in a very simple way. We briefly review how below.

Let $w$ be a scalar weight on $\R^n$. Then, for all $1\leq p\leq q<\infty$, we clearly have
\begin{equation}
\label{two_exponent_scalar}
    [w]_{A_{p,q}}=[w]_{A_r},\quad\text{where }r:=1+\frac{q}{p'}.
\end{equation}
It is also obvious that
\begin{equation}
\label{two_exponent_endpoint_scalar}
    [w]_{A_{t,\infty}}=[w]_{A_{\infty}},
\end{equation}
for all $1\leq t<\infty$.

Finally, let $1<s,t<\infty$ and set $r:=s(t-1)+1$. It is a well-known fact that
\begin{equation}
\label{analyze_two_exponent_scalar}
    w\in A_{t}\cap RH_{s} \iff w^{s}\in A_{r},
\end{equation}
see, for example, \cite{Johnson_Neugebauer_1991, Auscher_Martell_I_2007,BFP2016}.

\subsection{Commonalities with the scalar case}
\label{subsection:positive_results_on_weights}

In this subsection we show that at least parts of \eqref{two_exponent_scalar} and \eqref{two_exponent_endpoint_scalar}, as well as an appropriate form of \eqref{analyze_two_exponent_scalar} survive in the matrix case.

We begin with the correct analog of \eqref{analyze_two_exponent_scalar}.

\begin{lem}
\label{lemma:containmentsofmatrixclasses1}
	Let $W$ be a $d\times d$ matrix weight on $\R^n$, and $1<t,s<\infty$. Then, the following hold:
	\begin{enumerate}
		\item 
		\begin{equation*}
			\max([W]_{A_t},[W]_{RH_{t,s}})^{s}\lesssim_{d,t,s}[W^{s}]_{A_{t,st}}\lesssim_{d,t,s}([W]_{A_{t}}[W]_{RH_{t,s}})^{s}.
		\end{equation*}
		In particular, $W\in A_t\cap RH_{t,s}$ if and only if $W^{s}\in A_{t,st}$.
		
		\item 
		\begin{equation*}
			\max([W]_{A_{t}},[W]_{RH_{t,\infty}})^{t'/t}\lesssim_{d,t}[W^{t'/t}]_{A_{t',\infty}}\lesssim_{d,t}([W]_{A_{t}}[W]_{RH_{t,\infty}})^{t'/t}.
		\end{equation*}
		In particular, $W\in A_t\cap RH_{t,\infty}$ if and only if $W^{t'/t}\in A_{t',\infty}$.
	\end{enumerate}
\end{lem}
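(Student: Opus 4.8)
The plan is to reduce everything to the corresponding statement for scalar weights of the form $w_e := |W^{1/t}e|^t$, $e \in \C^d \setminus \{0\}$, combined with the comparison between the matrix Muckenhoupt characteristic and reducing operators provided by \Cref{lemma:standardlemma}. First I would note that for part (1) one has $st \geq t$, so $W^s \in A_{t,st}$ is a genuine two-exponent condition; I would set $p := t$, $q := st$, so that $p' = t'$ and $\frac{q}{p'} = \frac{st}{t'} = s(t-1)$, whence $r := 1 + \frac{q}{p'} = s(t-1)+1$, exactly the scalar exponent in \eqref{analyze_two_exponent_scalar}. The key identity to record is that $(W^s)^{1/q} = W^{1/t}$ and $(W^s)^{-1/q} = W^{-1/t}$, so the defining average for $[W^s]_{A_{t,st}}$ is literally
\[
	\dashint_Q \left( \dashint_Q |W(x)^{1/t} W(y)^{-1/t}|^{t'} \dd y \right)^{s(t-1)} \dd x,
\]
and by \Cref{lemma:standardlemma} applied with the pair of weights $W^s$ (for the outer exponent $b := q = st$) and $W^{-s/(r-1)} = W^{-t'/t}$ (for the inner exponent $a := q' $, noting $q' = \frac{st}{st-1}$ and checking $\frac{1}{a} + \frac{1}{b} = \ldots$) this average is comparable to $|\cW_{Q,st}\,(\cW^{-t'/t})_{Q,q'}|^{st}$; in other words $[W^s]_{A_{t,st}} \simeq_{d,t,s} \sup_Q |\cW^{(s)}_{Q}|^{st}$ for a suitable reducing-operator product. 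The same principle rewrites $[W]_{A_t}$ and $[W]_{RH_{t,s}}$ in reducing-operator language, after which the whole lemma becomes an inequality between products of (scalar-valued, since these are $1\times1$ after testing against $e$) averages.

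Concretely, for the \emph{left-hand} inequalities I would test on a single vector $e$: writing $w := |W^{1/t}e|^t$, the scalar weight $w$ satisfies $[w]_{A_r} = [w^s]$-type bounds, and by \eqref{analyze_two_exponent_scalar} and \eqref{two_exponent_scalar} one has $[w]_{A_t}, [w]_{RH_s} \lesssim [w^s]_{A_r}^{c}$ with explicit $c$; then I would invoke \Cref{lemma:properties_of_Apq_weights} (or rather its proof technique) to pass from $\sup_e [|W^{1/t}e|^t]_{A_r} \lesssim [W^s]_{A_{t,st}}$, which combined with $[W]_{A_t} = \sup_e [|W^{1/t}e|^t]_{\ldots}$-style characterizations (these are exactly how $[W]_{A_t}$ and $[W]_{RH_{t,s}}$ unwind when one only keeps the diagonal direction) gives $\max([W]_{A_t},[W]_{RH_{t,s}})^s \lesssim [W^s]_{A_{t,st}}$. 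For the \emph{right-hand} inequality I would run the reducing-operator estimate: $|\cW^{(s)}_Q| \le |\cW_{Q,t}| \cdot |\cW_{Q,t}^{-1} \cW^{(s)}_Q|$ and bound each factor — the first by $[W]_{A_t}^{1/t}$-type quantities via \Cref{lemma:standardlemma} again, and the cross term by a reverse-Hölder estimate in the matrix sense, i.e. controlled by $[W]_{RH_{t,s}}$; multiplying and raising to the power $st$ yields $[W^s]_{A_{t,st}} \lesssim ([W]_{A_t}[W]_{RH_{t,s}})^s$. Part (2) is handled identically with $s = \infty$: one sets $a := t'$ and the outer exponent becomes $t'/t$ (matching $[W^{t'/t}]_{A_{t',\infty}}$), replaces the $L^s$-averages by essential suprema throughout, and uses the $RH_\infty$ and $A_{t,\infty}$ definitions from \Cref{subsec:members_hierarchy} in place of their finite-exponent analogs; the scalar input is $w \in A_t \cap RH_\infty \iff w^{t'/t} \in A_{t',\infty}$, which follows from \eqref{two_exponent_endpoint_scalar} and the scalar limiting case of \eqref{analyze_two_exponent_scalar}.

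The main obstacle I expect is the \emph{cross-term / reverse-Hölder step} in the upper bounds: controlling $|\cW_{Q,t}^{-1}\,\cW^{(s)}_Q|$ (and its $s=\infty$ analog) by $[W]_{RH_{t,s}}$ requires knowing that the scalar reverse-Hölder estimate upgrades uniformly over all directions $e$ to a statement about reducing operators, which is precisely where the matrix structure could a priori fail — one must use that $[W]_{RH_{t,s}} = \sup_e [|W^{1/t}e|^t]_{RH_s}$ controls $\dashint_Q |W(x)^{1/t}e|^{st}\dd x$ relative to $(\dashint_Q |W(x)^{1/t}e|^t \dd x)^s$ \emph{simultaneously} for every $e$, and then convert this into the operator-norm comparison via the equivalence $\dashint_Q |\cdot|^{st} \simeq |\cW_{Q,st}(\cdot)|^{st}$ and the ellipsoid inclusions $\cE \subseteq K \subseteq \sqrt{d}\,\cE$. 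The constants here will pick up the usual $\sqrt{d}$ factors, which is fine since the statement only claims comparability with $d,t,s$-dependent constants. Everything else — the exponent bookkeeping, the applications of \Cref{lemma:standardlemma}, and the scalar facts \eqref{two_exponent_scalar}, \eqref{two_exponent_endpoint_scalar}, \eqref{analyze_two_exponent_scalar} — is routine and I would not grind through it in detail.
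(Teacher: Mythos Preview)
Your overall architecture---reduce to reducing operators via \Cref{lemma:standardlemma}, handle the reverse H\"older part by testing direction-wise on $e$, and identify the cross-term as the crux---matches the paper's proof for the upper bound and for the $[W]_{RH_{t,s}}$ half of the lower bound. In particular, your ``main obstacle'' paragraph is essentially how the paper runs the upper bound: one rewrites the $A_{t,st}$-expression over $Q$ as $\dashint_Q |W(x)^{1/t}\cW'_{Q,t}|^{st}\dd x$, applies $RH_{t,s}$ columnwise to drop the outer exponent from $st$ to $t$, and then recognizes the resulting quantity as $|\cW_{Q,t}\cW'_{Q,t}|^{ts}\lesssim [W]_{A_t}^s$. (Your earlier splitting $|\cW^{(s)}_Q|\le |\cW_{Q,t}|\cdot|\cW_{Q,t}^{-1}\cW^{(s)}_Q|$ is not the right object---$|\cW_{Q,t}|$ alone does not encode $[W]_{A_t}$---but your later paragraph corrects course.)

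There is, however, a genuine gap in your plan for the bound $[W]_{A_t}^s\lesssim [W^s]_{A_{t,st}}$. You propose to pass through $\sup_e[|W^{1/t}e|^t]_{A_t}$ and then invoke ``$[W]_{A_t}=\sup_e[|W^{1/t}e|^t]_{\ldots}$-style characterizations''. No such characterization is available: the matrix $A_t$ characteristic is \emph{not} recovered from the supremum over directions of the scalar $A_t$ characteristics of $|W^{1/t}e|^t$. \Cref{lemma:properties_of_Apq_weights} only gives the inequality $\sup_e[|W^{1/t}e|^t]_{A_r}\lesssim [W]_{A_{p,q}}$, not the reverse, and the failure of the reverse direction is precisely the phenomenon underlying the counterexamples in \Cref{subsection:negative_results_on_weights}. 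So your scalar-reduction route dead-ends at $\sup_e[|W^{1/t}e|^t]_{A_t}\lesssim[W^s]_{A_{t,st}}^{1/s}$, which is strictly weaker than what you need. The paper's fix is a one-liner you have overlooked: since $s\ge 1$, Jensen's inequality applied directly to the defining double integral gives
\[
\dashint_Q\left(\dashint_Q|W(x)^{1/t}W(y)^{-1/t}|^{t'}\dd y\right)^{t/t'}\dd x
\le\left(\dashint_Q\left(\dashint_Q|W(x)^{1/t}W(y)^{-1/t}|^{t'}\dd y\right)^{st/t'}\dd x\right)^{1/s},
\]
hence $[W]_{A_t}\le[W^s]_{A_{t,st}}^{1/s}$ with constant $1$. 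No scalar reduction, no reducing operators---just convexity of $x\mapsto x^s$. For the $[W]_{RH_{t,s}}$ half the paper does use reducing operators (inserting $\cW'_{Q,t}(\cW'_{Q,t})^{-1}$ and the standard inequality $|(\cW'_{Q,t})^{-1}e|\le|\cW_{Q,t}e|$), which is equivalent in spirit to your direction-wise approach. Part (2) is indeed the $s=\infty$ analogue and the paper explicitly omits it as identical.
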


\begin{proof}
     For (1), set $V:=W^{s}$. Then, $V^{1/(st)}=W^{1/t}$.
     
     First, assume that $[W^{s}]_{A_{t,st}}<\infty$. For all cubes $Q\subseteq\R^n$, since $s\geq1$, Jensen's inequality yields
    \begin{equation*}
        \dashint_{Q}\left(\dashint_{Q}|W(x)^{1/t}W(y)^{-1/t}|^{t'}\dd y\right)^{t/t'}\dd x \leq \left(\dashint_{Q}\left(\dashint_{Q}|W(x)^{1/t}W(y)^{-1/t}|^{t'}\dd y\right)^{st/t'}\dd x\right)^{1/s},
    \end{equation*}
    which directly implies that $[W]_{A_{t}}\leq[V]_{A_{t,st}}^{1/s}$.
    
    Let us now show that $[W]_{RH_{t,s}}\lesssim_{d,t,s}[V]_{A_{t,s't}}^{1/s}$. Let $Q\subseteq\R^n$ be any cube and let $e\in\C^d\setminus\{0\}$. Since $[W]_{A_{t}}<\infty$, $W^{-1/(t-1)}$ is integrable over $Q$. Therefore, repeatedly applying \eqref{matrix norm as sum of norms of columns} and \eqref{break power of sum}, we can estimate
    \begin{equation*}
        \dashint_{Q}|W(x)^{1/t}\cW_{Q,t}'|^{st}\dd x\simeq_{d,t,s}
        \dashint_{Q}\left(\dashint_{Q}|W(x)^{1/t}W(y)^{-1/t}|^{t'}\dd y\right)^{st/t'}\mathrm{d}x\leq[V]_{A_{t,st}}.
    \end{equation*}
    It follows that
    \begin{equation*}
        \dashint_{Q}|W(x)^{1/t}e|^{st}\dd x=\dashint_{Q}|W(x)^{1/t}\cW_{Q,t}'(\cW_{Q,t}')^{-1}e|^{st}\dd x\lesssim_{d,t,s}[V]_{A_{t,st}}|(\cW_{Q,t}')^{-1}e|^{st}.
    \end{equation*}
    It is well-known (see e.g. \cite{Hunt_Bellman_1996, Lauzon_Treil_2007, DKPS2024}) that $ |(\cW_{Q,t}')^{-1}e|\leq|\cW_{Q,t}e|$.
    From this, we get that
    \begin{equation*}
        \dashint_{Q}|W(x)^{1/t}e|^{st}\dd x\lesssim_{d,t,s}[V]_{A_{t,st}}|\cW_{Q,t}e|^{st}\simeq_{d,t,s}
        [V]_{A_{t,st}}\left(\dashint_{Q}|W(x)^{1/t}e|^{t}\dd x\right)^{s}.
    \end{equation*}
    Consequently, $[W]_{RH_{t,s}}\lesssim_{d,t,s}[V]_{A_{t,s't}}^{1/s}$.

    Conversely, assume that $W\in A_{t}\cap RH_{t,st}$. Then, repeatedly applying \eqref{matrix norm as sum of norms of columns} and \eqref{break power of sum}, we can estimate
    \begin{align*}
        \dashint_{Q}\left(\dashint_{Q}|W(x)^{1/t}W(y)^{-1/t}|^{t'}\dd y\right)^{st/t'}\dd x
        &	\simeq_{d,t,s}\dashint_{Q}|W(x)^{1/t}\cW_{Q,t}'|^{st}\dd x	\\
		&	\lesssim_{d,t,s}[W]_{RH_{t,s}}^{s}\left(\dashint_{Q}|W(x)^{1/t}\cW_{Q,t}'|^{t}\dd x\right)^{s}	\\
        &	\simeq_{d,t,s}[W]_{RH_{t,s}}^{s}|\cW_{Q,t}\cW_{Q,t}'|^{ts}	\\
		&	\lesssim_{d,t,s}[W]_{RH_{t,s}}^{s}[W]_{A_{t}}^{s},
    \end{align*}
    for all cubes $Q\subseteq\R^n$. It follows that $[V]_{A_{t,st}}\lesssim_{d,t,s} [W]_{RH_{t,s}}^{s}[W]_{A_{t}}^{s}$.

    We omit the proof for (2) as it is very similar to the proof for part (1).
\end{proof}

Next, we treat the parts of \eqref{two_exponent_scalar}, \eqref{two_exponent_endpoint_scalar} that survive in the scalar case.

\begin{lem}
\label{lemma:containments_of_matrix_classes_2}
	Let $W$ be a $d\times d$ matrix weight on $\R^n$.
	\begin{enumerate}
		\item Let $1<p_1,q_1,p_2,q_2<\infty$ with $p_i\leq q_i$ (for $i=1,2$), $p_1\leq p_2$ and $\frac{q_2}{p_2'}=\frac{q_1}{p_1'}$. Then,
		\begin{equation*}
			[W]_{A_{p_1,q_1}}\leq[W]_{A_{p_2,q_2}}.
		\end{equation*}
		
		\item Let $1\leq s\leq t<\infty$. Then,
		\begin{equation*}
			[W]_{A_{t,\infty}}\leq[W]_{A_{s,\infty}}.
		\end{equation*}
	\end{enumerate}
\end{lem}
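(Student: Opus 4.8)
The plan is to deduce both parts from a single pointwise matrix inequality, itself a consequence of the classical \emph{Cordes inequality}: for positive-definite matrices $A,B\in\mathrm{P}_{d}(\C)$ and $0<\alpha\le\beta$,
\begin{equation*}
	|A^{\alpha}B^{\alpha}|\le|A^{\beta}B^{\beta}|^{\alpha/\beta},
\end{equation*}
equivalently the function $(0,\infty)\ni u\mapsto|A^{u}B^{u}|^{1/u}$ is non-decreasing. (The usual form of Cordes' inequality is the case $\beta=1$, namely $|A^{\alpha}B^{\alpha}|\le|AB|^{\alpha}$ for $\alpha\in[0,1]$; the general version follows by applying it to $A^{\beta},B^{\beta}$ with exponent $\alpha/\beta\in[0,1]$.) I would record this as an auxiliary lemma and then run the same averaging argument for both parts.

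For part (1), I would first unwind the hypotheses. Since $p_1\le p_2$ we have $p_1'\ge p_2'$, so setting $c:=q_1/p_1'=q_2/p_2'\in(0,\infty)$ we get $q_1=cp_1'\ge cp_2'=q_2$ and $p_i'=q_i/c$. Hence, for $i=1,2$,
\begin{equation*}
	[W]_{A_{p_i,q_i}}=\sup_{Q}\dashint_{Q}\left(\dashint_{Q}|W(x)^{1/q_i}W(y)^{-1/q_i}|^{q_i/c}\dd y\right)^{c}\dd x.
\end{equation*}
Fix $x,y$, put $A=W(x)$, $B=W(y)^{-1}$, and write $u_i:=1/q_i$, so that $|A^{u_i}B^{u_i}|^{q_i/c}=\big(|A^{u_i}B^{u_i}|^{1/u_i}\big)^{1/c}$. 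Since $q_1\ge q_2$ gives $u_1\le u_2$, the monotonicity above yields the pointwise bound
\begin{equation*}
	|W(x)^{1/q_1}W(y)^{-1/q_1}|^{q_1/c}\le|W(x)^{1/q_2}W(y)^{-1/q_2}|^{q_2/c}.
\end{equation*}
Averaging in $y$, raising to the power $c>0$, averaging in $x$, and taking the supremum over cubes $Q$ — each step monotone — gives $[W]_{A_{p_1,q_1}}\le[W]_{A_{p_2,q_2}}$. If $[W]_{A_{p_2,q_2}}=\infty$ there is nothing to prove; otherwise all averages above are finite, because the exponent $p_i'/q_i=1/c$ of $|W|^{-1}$ is the same for $i=1,2$, so local integrability of $W^{-p_i'/q_i}$ transfers.

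Part (2) runs on the same mechanism, only more directly. For $1\le s\le t<\infty$, apply the matrix inequality with $\alpha=1/t\le\beta=1/s$ to get $|A^{1/t}B^{1/t}|\le|A^{1/s}B^{1/s}|^{s/t}$, hence $|A^{1/t}B^{1/t}|^{t}\le|A^{1/s}B^{1/s}|^{s}$. Taking $A=W(x)$, $B=W(y)^{-1}$, averaging in $y$, then taking $\esssup_{x\in Q}$ and $\sup_{Q}$ yields $[W]_{A_{t,\infty}}\le[W]_{A_{s,\infty}}$ (trivial when the right-hand side is infinite, and otherwise legitimate since for an $A_{s,\infty}$ weight $W^{-1}$ is locally integrable and $|W|$ locally bounded).

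There is no serious obstacle; the content lies entirely in invoking the correct operator inequality. The one point requiring care is tracking the \emph{direction} of the monotonicity, which gets inverted upon passing to the reciprocal exponent $u=1/q$ appearing in \eqref{matrix-weight}: a larger power of the matrices corresponds to a larger value of $|A^{u}B^{u}|^{1/u}$, so after the substitution the \emph{larger} $q$ (equivalently, the \emph{smaller} $p$) produces the \emph{smaller} characteristic. It is worth noting that the scalar identities \eqref{two_exponent_scalar} and \eqref{two_exponent_endpoint_scalar} correspond precisely to the equality case of Cordes' inequality for commuting $A,B$, which makes it natural to expect — and, as \Cref{prop:counterexample_statement} shows, true — that these inclusions are genuinely strict for non-scalar weights.
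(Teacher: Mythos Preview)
Your proof is correct and follows essentially the same route as the paper: both arguments rest on the Cordes inequality applied pointwise to $A=W(x)$, $B=W(y)^{-1}$, using that $q_2/q_1=p_2'/p_1'\le1$ (equivalently, your monotonicity of $u\mapsto|A^uB^u|^{1/u}$). Your repackaging via the common ratio $c=q_i/p_i'$ makes the structure slightly more transparent, but the content is identical.
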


In the proof, we apply the following inequality due to Cordes \cite{Cordes_book}.

\begin{lem}[Cordes Inequality]
    Let $A,B\in\mathrm{P}_{d}(\C)$ and $0\leq a\leq1$. Then, it holds that
    \begin{equation*}
        |A^{a}B^{a}|\leq|AB|^{a}.
    \end{equation*}
\end{lem}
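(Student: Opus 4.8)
The plan is to derive Cordes's inequality from the classical Löwner–Heinz inequality, i.e.\ the operator monotonicity of the power function $t\mapsto t^{a}$ on $[0,\infty)$ for $0\leq a\leq 1$. The endpoint cases $a=0$ and $a=1$ are immediate, so I will assume $0<a<1$; recall that $A,B\in\mathrm{P}_{d}(\C)$ are in particular invertible and self-adjoint, and all their real powers are again positive definite.

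First I would square both sides and use the identity $|X|^{2}=|X^{*}X|=|XX^{*}|$. Since $A$ and $B$ are self-adjoint, this gives
\begin{equation*}
	|AB|^{2}=|AB^{2}A|\inline{and}|A^{a}B^{a}|^{2}=|A^{a}B^{2a}A^{a}|,
\end{equation*}
where $AB^{2}A$ and $A^{a}B^{2a}A^{a}$ are positive semidefinite. So it is enough to prove the single inequality $|A^{a}B^{2a}A^{a}|\leq|AB^{2}A|^{a}$.

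Put $\mu:=|AB^{2}A|>0$. Since $AB^{2}A$ is positive semidefinite and $|AB^{2}A|$ is its largest eigenvalue, we have the operator inequality $AB^{2}A\leq\mu I$. Conjugating by the self-adjoint invertible matrix $A^{-1}$ preserves the Löwner order and yields $B^{2}\leq\mu A^{-2}$. Now the Löwner–Heinz inequality applies: as $0\leq a\leq 1$, raising both sides to the power $a$ gives $B^{2a}\leq(\mu A^{-2})^{a}=\mu^{a}A^{-2a}$. Conjugating this by $A^{a}$ and using that powers of a fixed positive definite matrix commute, we obtain
\begin{equation*}
	A^{a}B^{2a}A^{a}\leq\mu^{a}A^{a}A^{-2a}A^{a}=\mu^{a}I,
\end{equation*}
so $|A^{a}B^{2a}A^{a}|\leq\mu^{a}=|AB^{2}A|^{a}$. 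Taking square roots in the reformulation of the previous paragraph then gives $|A^{a}B^{a}|\leq|AB|^{a}$, which is the assertion.

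The only non-elementary ingredient is the Löwner–Heinz inequality itself (operator monotonicity of $t\mapsto t^{a}$ for $0\leq a\leq 1$), which I would simply invoke as a classical fact with a reference to a standard matrix-analysis source (e.g.\ Bhatia, \emph{Matrix Analysis}, or the original theorems of Löwner and Heinz); the paper already cites Cordes's monograph for the statement, so an analogous citation for Löwner–Heinz is in the same spirit. Everything else reduces to the identity $|X|^{2}=|X^{*}X|$, the fact that a positive semidefinite matrix $X$ satisfies $X\leq|X|\,I$, and the stability of Löwner inequalities under conjugation by invertible self-adjoint matrices — all routine. Hence I anticipate no real obstacle beyond the decision to cite rather than reprove the Löwner–Heinz inequality.
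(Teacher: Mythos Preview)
Your argument is correct and is the standard derivation of Cordes's inequality from the L\"owner--Heinz theorem. The paper, however, does not give any proof of this lemma at all: it simply states the inequality and attributes it to Cordes's monograph \cite{Cordes_book}, treating it as a known black box to be invoked later in the proof of \Cref{lemma:containments_of_matrix_classes_2}. So there is nothing to compare against---you have supplied a full proof where the paper chose to cite. Your reduction via $|X|^2=|XX^{*}|$, the Loewner-order conjugation step, and the appeal to operator monotonicity of $t\mapsto t^{a}$ are all sound, and the explicit invocation of L\"owner--Heinz is entirely in keeping with the paper's practice of citing classical operator-theoretic facts.
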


\begin{proof}[Proof (of \Cref{lemma:containments_of_matrix_classes_2})]
	For (1), first observe that
	\begin{equation*}
		0<\frac{q_2}{q_1}=\frac{p_2'}{p_1'}\leq 1.
	\end{equation*}
	Therefore, by Cordes inequality, we estimate
	\begin{align*}
		\dashint_{Q}\left(\dashint_{Q}|W(x)^{1/q_1}W(y)^{-1/q_1}|^{p_1'}\dd y\right)^{q_1/p_1'}\dd x
		&	= \dashint_{Q}\left(\dashint_{Q}|(W(x)^{1/q_2})^{q_2/q_1}(W(y)^{-1/q_2})^{q_2/q_1}|^{q_1'}\dd y\right)^{q_1/p_1'}\dd x	\\
		&	\leq\dashint_{Q}\left(\dashint_{Q}|W(x)^{1/q_2}W(y)^{-1/q_2}|^{q_2p_1'/q_1}\dd y\right)^{q_1/p_1'}\dd x	\\
		&	=\dashint_{Q}\left(\dashint_{Q}|W(x)^{1/q_2}W(y)^{1/q_2}|^{p_2'}\dd y\right)^{q_2/p_2'}\dd x,
	\end{align*}
	for all cubes $Q\subseteq\R^n$. The desired result follows.
	
	The proof for (2) is analogous to the proof of part (1).
\end{proof}

\subsection{Differences from the scalar case} \label{subsection:negative_results_on_weights}

Despite the positive results in Subsection \ref{subsection:positive_results_on_weights}, \eqref{two_exponent_scalar} and \eqref{two_exponent_endpoint_scalar} can actually fail in the matrix case. The following two propositions demonstrate that. Let
\begin{equation*}
    \cD:=\{[m2^{k},(m+1)2^{k}):~m,k\in\Z\}
\end{equation*}
be the family of dyadic intervals in $\R$. For $1<p\leq q<\infty$, we denote
\begin{equation*}
[W]_{A_{p,q},\cD}:=\sup_{I\in\cD}\dashint_{I}\left(\dashint_{I}|W(x)^{1/q}W(y)^{-1/q}|^{p'}\dd y\right)^{q/p'}\dd x.
\end{equation*}
Similarly to \Cref{dfn:endpoint_infty_muckenhoupt_weights} (with the cubes replaced by the dyadic intervals of $\R$), we define $[W]_{A_{t,\infty},\cD}$ for $1\leq t<\infty$.

\begin{prop}
\label{prop:weightscounterexample}

\begin{enumerate}
    \item Let $1<p_1\leq q_1<\infty$. Then, there exists a $2\times 2$ matrix weight $W$ on $\R$ with $[W]_{A_{p_1,q_1}}<\infty$ such that for all $1<p_2\leq q_2<\infty$ with $p_1<p_2$ and $\frac{q_2}{p_2'}=\frac{q_1}{p_1'}$ we have $[W]_{A_{p_2,q_2},\cD}=\infty$.

    \item Let $1<t<\infty$. Then, there exists a $2\times 2$ matrix weight $W$ on $\R$ with $[W]_{A_{t,\infty}}<\infty$ such that $[W]_{A_{s,\infty},\cD}=\infty$ for all $1\leq s<t$.

\end{enumerate}
\end{prop}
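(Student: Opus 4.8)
\textit{Proof proposal.}
I would prove both parts with a single, completely explicit, piecewise-constant $2\times 2$ matrix weight, tuned differently in the two statements. Let $R(\theta):=\left(\begin{smallmatrix}\cos\theta & -\sin\theta\\ \sin\theta & \cos\theta\end{smallmatrix}\right)\in\mathrm{M}_{2}(\C)$, fix a small parameter $\gamma>0$, put $\epsilon_k:=(k+2)^{-\gamma}\downarrow 0$, and on the unit intervals $J_k:=[k,k+1)$ ($k\ge 0$) set
\[
W(x):=R(1-\epsilon_k)^{*}\,\mathrm{diag}(1,V_k)\,R(1-\epsilon_k)\quad(x\in J_k),\qquad W(x):=W\restriction_{J_0}\quad(x<0),
\]
where $V_k:=\epsilon_k^{-q_1}$ for part (1) and $V_k:=\epsilon_k^{-t}$ for part (2). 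The exponent appearing in $V_k$ is always the \emph{good} one (the $q_1$ of $A_{p_1,q_1}$, the $t$ of $A_{t,\infty}$), and $\gamma$ is taken small enough, depending on the exponents, that $\gamma p_1'<1$ and $\gamma q_1<1$ (resp.\ $\gamma t<1$) and that one further inequality below holds; then $W$ is a genuine matrix weight, since $|W|$ is locally bounded and $|W^{-1}|\equiv 1$. The picture is that the eigenframe of $W$ rotates slowly and converges as $k\to\infty$, while the eccentricity $V_k$ blows up, the two being coupled so that the angular spread over $J_k$ (which is $\simeq\epsilon_k$) equals $V_k^{-1/(\text{good exponent})}$. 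The first step is to reduce the characteristic over an interval $B$ to a tractable form: from the elementary identity $\bigl|W(x)^{1/q}W(y)^{-1/q}\bigr|\simeq 1+V(x)^{1/q}\bigl|\sin(\theta(x)-\theta(y))\bigr|+(V(x)/V(y))^{1/q}$ (valid because $V\ge 1$ forces $|W(y)^{-1/q}|=1$), together with \Cref{lemma:standardlemma} on reducing operators, one gets for $\Phi_{p,q}(B):=\dashint_{B}\bigl(\dashint_{B}|W(x)^{1/q}W(y)^{-1/q}|^{p'}\dd y\bigr)^{q/p'}\dd x$ the comparison
\[
\Phi_{p,q}(B)\ \simeq\ 1+\dashint_{B}V(x)\Bigl(\dashint_{B}\bigl|\sin(\theta(x)-\theta(y))\bigr|^{p'}\dd y\Bigr)^{q/p'}\dd x
\]
up to a term bounded by $(\dashint_{B}V)(\dashint_{B}V^{-p'/q})^{q/p'}$, which is just a scalar Muckenhoupt characteristic of the power weight $k\mapsto V_k$ and is bounded uniformly in $B$ because $\gamma p_1'<1$. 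For $A_{t,\infty}$ the same reduction is done directly, with the outer $\dashint$ replaced by an $\esssup$. The lower bound $|W(x)^{1/q}W(y)^{-1/q}|\gtrsim V(x)^{1/q}|\sin(\theta(x)-\theta(y))|$ is what will force divergence.

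For the \textbf{upper bound} ($[W]_{A_{p_1,q_1}}<\infty$ in part (1), $[W]_{A_{t,\infty}}<\infty$ in part (2)) I would split the intervals $B$ into: (i) those meeting at most two consecutive pieces $J_k$, where $W$ takes at most two constant matrix values whose ``$A$-distance'' is bounded because $V_k^{1/(\text{good exp})}(\epsilon_k-\epsilon_{k+1})\to 0$; and (ii) longer intervals $B=[m,M)$, handled by summing $\dashint_{B}V(x)(\dashint_{B}|\sin(\theta(x)-\theta(y))|^{p_1'}\dd y)^{q_1/p_1'}\dd x$ over the pieces. Since $\theta(x)-\theta(y)=\epsilon_{k(y)}-\epsilon_{k(x)}$ is small one has $|\sin(\cdot)|^{p_1'}\simeq|\epsilon_{k(y)}-\epsilon_{k(x)}|^{p_1'}$, and the resulting double power‑sum collapses to a bounded quantity precisely because $V_k=\epsilon_k^{-q_1}$ (this is exactly where the tuning is spent). \emph{The hard part is the uniformity: one must check this over all positions and lengths of $B$ — coarse/low $B$, fine/high $B$, everything in between, and non-dyadic $B$ as well.} For part (2) the $\esssup_{x\in B}$ is, a bit surprisingly, not an extra difficulty: the same tuning makes $\dashint_{B}|W(x)^{1/t}W(y)^{-1/t}|^{t}\dd y$ bounded \emph{uniformly in $x\in B$}, the large factor $V(x)$ being compensated both by the tiny angular mismatch and by the smallness of $\dashint_{B}V^{-1}$.

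For the \textbf{divergence}, in part (1) I would test on the genuinely dyadic intervals $I_L:=[0,2^{L})\in\cD$. For $x$ in a piece $J_j$ with $j\ge 2^{L-1}$, a positive fraction (by measure) of $y\in I_L$ lies in pieces with $\epsilon_{k(y)}-\epsilon_j\simeq\epsilon_j$, so $\dashint_{I_L}|\sin(\theta(x)-\theta(y))|^{p_2'}\dd y\gtrsim\epsilon_j^{\,p_2'}$; combined with the lower bound above this gives
\[
\Phi_{p_2,q_2}(I_L)\ \gtrsim\ \frac{1}{2^{L}}\sum_{2^{L-1}\le j<2^{L}}V_j\,\epsilon_j^{\,q_2}\ \simeq\ \frac{1}{2^{L}}\sum_{j<2^{L}}(j+2)^{\gamma(q_1-q_2)}\ \simeq\ 2^{L\gamma(q_1-q_2)}\ \xrightarrow[L\to\infty]{}\ \infty,
\]
the divergence happening \emph{exactly because} $q_2<q_1$, equivalently $p_2>p_1$. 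Since this is a sequence of dyadic intervals, $[W]_{A_{p_2,q_2},\cD}=\infty$; as the estimate involves $q_1-q_2$ only through its positivity, it holds for every admissible $p_2>p_1$ with the single weight $W$. Part (2) is identical with $q_2$ replaced by $s$, $q_1$ by $t$, and the outer average by an $\esssup$ (which can only help): $\esssup_{x\in I_L}\dashint_{I_L}|W(x)^{1/s}W(y)^{-1/s}|^{s}\dd y\gtrsim 2^{L\gamma(t-s)}\to\infty$ for every $s\in[1,t)$.

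Finally one notes that the hypothesis $q_2/p_2'=q_1/p_1'$ (respectively, staying within the one family $A_{s,\infty}$) is used only to keep the scalar Muckenhoupt exponent $r$ fixed, so that the \emph{scalar} versions of all these characteristics coincide and the phenomenon is genuinely matrix‑valued; the construction itself cares only about the orderings $q_1>q_2$ and $p_1'>p_2'$. To summarize the plan: choose $\gamma$ small, verify the reduction of $\Phi_{p,q}$ and of the $A_{t,\infty}$-characteristic via \Cref{lemma:standardlemma} and $2\times 2$ linear algebra, prove the uniform upper bound at the good exponent by the two-case split on $B$ (the main obstacle), and read off the divergence at all other exponents from the test on $I_L=[0,2^{L})$. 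All the remaining work is a bounded amount of elementary power-sum estimation.
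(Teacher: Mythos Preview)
Your approach is correct and takes a genuinely different route from the paper's.  The paper builds its weight on $[0,1)$ out of an explicit two-matrix family $C_n=\mathrm{diag}(1,n)$ and $D_n=\bigl(\begin{smallmatrix}1&\frac{1}{2\sqrt n}\\ \frac{1}{2\sqrt n}&\frac1n\end{smallmatrix}\bigr)$, chosen so that $\tr(C_nD_n),\tr(C_n^{-1}D_n^{-1})\simeq 1$ while $\tr(C_n^{a}D_n^{a})\to\infty$ for every $a>1$ (this is their Lemma~\ref{first lemma}, an explicit Cordes-inequality counterexample); it then places $C_{n+1}^{-q_1/2}$ and $D_{n+1}^{q_1/2}$ on the two halves of each dyadic $[2^{-n-1},2^{-n})$, and extends to $\R$ by periodic reflection.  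The upper bound at $(p_1,q_1)$ is verified through an explicit case analysis of the four trace families $\tr(C_m^{\pm1}C_n^{\mp1})$, $\tr(D_m^{\pm1}D_n^{\mp1})$, $\tr(C_m^{-1}D_n^{-1})$, $\tr(D_mC_n)$, all of which turn out to be $\simeq\max(1,n/m)$; the divergence at $(p_2,q_2)$ is read off a \emph{single} dyadic interval $J_n$ (the two halves already witness it).

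Your rotation construction on $[0,\infty)$ is the Bownik picture made transparent: eccentricity $V_k$ and angular drift $\epsilon_k$ are coupled so that $V_k^{1/q_1}\epsilon_k\equiv 1$.  What you gain is the clean reduction formula $|W(x)^{1/q}W(y)^{-1/q}|\simeq 1+V(x)^{1/q}|\sin(\theta(x)-\theta(y))|+(V(x)/V(y))^{1/q}$, which handles both parts and both bounds in one stroke and replaces the paper's per-trace bookkeeping by a single scalar power-sum.  What you pay for is exactly what you flag as the hard part: the uniformity over \emph{all} intervals needs a couple more sub-cases than your sketch records (in particular $B=[m,m+N)$ with $N\lesssim m$ versus $N\gtrsim m$), each of which is a routine power-sum estimate once $\gamma p_1'<1$ (and $\gamma t<1$ for part~(2)) is in force.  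Two small remarks: the appeal to \Cref{lemma:standardlemma} is not really needed --- your $2\times2$ identity already yields the reduction directly --- and the condition $\gamma q_1<1$ seems unnecessary for part~(1).
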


\begin{prop}
\label{prop:weights_counterexample_2}
	\begin{enumerate}
		\item Let $1<p_2\leq q_2<\infty$. Then, there exists a $2\times 2$ matrix weight $W$ on $\R$ with $[W]_{A_{p_2,q_2},\cD}=\infty$ such that $[W]_{A_{p_1,q_1}}<\infty$, for all $1<p_1\leq q_1<\infty$ with $\frac{q_1'}{p_1}=\frac{q_2'}{p_2}$ and $p_1<p_2$.
		
		\item Let $1\leq s<\infty$. Then, there exists a $2\times 2$ matrix weight $W$ on $\R$ with $[W]_{A_{s,\infty},\cD}=\infty$ such that $[W]_{A_{t,\infty}}<\infty$, for all $s<t<\infty$.
	\end{enumerate}
\end{prop}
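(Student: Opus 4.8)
The plan is to prove both parts by exhibiting explicit $2\times 2$ matrix weights on $\R$ of the same type as the ones constructed in the proof of Proposition~\ref{prop:weightscounterexample}, namely weights of the form $W(x)=O(x)^{\ast}D(x)O(x)$ with $O(x)$ a rotation whose angle $\theta(x)$ varies slowly as $x\to 0$ --- say $\theta(x)\asymp\gamma\log_2(1/|x|)$ for a small parameter $\gamma$ to be chosen --- and $D(x)=\operatorname{diag}(|x|^{\alpha},|x|^{\beta})$ a diagonal power weight, with $W$ taken constant away from a neighbourhood of $0$. The slow rotation makes the eigenframes of $W$ at the two ends of a long dyadic interval misaligned by an amount proportional to the number of scales, and this misalignment, coupled with the anisotropy $|x|^{\alpha}$ versus $|x|^{\beta}$, is what forces a dyadic average of the off-diagonal quantity $|W(x)^{1/q}W(y)^{-1/q}|$ to blow up for the ``large'' exponent pair while it stays bounded for the ``small'' ones. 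I would stress that this must be a genuinely new construction rather than a formal consequence of Proposition~\ref{prop:weightscounterexample}: the duality $[W]_{A_{p,q}}\simeq_{d,p,q}[W^{-p'/q}]_{A_{q',p'}}^{r-1}$ of \Cref{lemma:properties_of_Apq_weights} (and its dyadic analogue) maps the family of pairs in Proposition~\ref{prop:weightscounterexample} to itself, so it carries that statement back into itself, not into the present one.

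First I would record two reductions valid on every cube $Q\subseteq\R$. By Lemma~\ref{lemma:standardlemma} the average defining $[W]_{A_{p,q}}$ over $Q$ is comparable, with constants depending only on $d,p,q$, to $|\cW_{Q,q}\,\cV_{Q,p'}|^{q}$ with $V:=W^{-p'/q}$; thus both the ``$=\infty$'' and the ``$<\infty$'' assertions become statements about products of reducing operators over (dyadic) intervals. By Lemma~\ref{lemma:containmentsofmatrixclasses1}(1), applied with $t=p_1$ and $s=q_1/p_1$, the condition $[W]_{A_{p_1,q_1}}<\infty$ is equivalent to $W\in A_{p_1}\cap RH_{p_1,q_1/p_1}$, i.e.\ to the two directional scalar conditions $\sup_{e}[|W^{1/p_1}e|^{p_1}]_{A_{r_1}}<\infty$ with $r_1=1+q_1/p_1'$, and $\sup_{e}[|W^{1/p_1}e|^{p_1}]_{RH_{q_1/p_1}}<\infty$; for the explicit rotation-plus-diagonal weight these reduce, via \eqref{matrix norm as sum of norms of columns} and \eqref{break power of sum}, to elementary power-weight estimates. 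The relevant arithmetic is that, moving along the curve $q_1'/p_1=q_2'/p_2$ with $p_1<p_2$, one has $q_1>q_2$ together with a strict gain in the scalar exponents governing $A_{r_1}$ and $RH_{q_1/p_1}$ compared with the endpoint pair $(p_2,q_2)$ --- a direct computation in the $(1/p,1/q)$ parametrisation, cf.\ Subsection~\ref{subsec:situation_scalar} --- and it is precisely this slack that the tuning of $\alpha$, $\beta$, $\gamma$ will exploit.

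Next I would carry out the two estimates. For ``$=\infty$'' I would take the dyadic intervals $I_k:=[2^{-k-1},2^{-k})$ and $J_k:=[0,2^{-k})$; on $I_k$ the angle $\theta$ sweeps an amount $\gtrsim\gamma$, so over $J_k$ the frames at the extremities differ by $\asymp\gamma k$, and a short computation shows that the corresponding reducing-operator product over $J_k$ grows like a positive power of $k$, which by Lemma~\ref{lemma:standardlemma} gives $[W]_{A_{p_2,q_2},\cD}=\infty$. For ``$<\infty$'' I would prove, uniformly over all cubes $Q\subseteq\R$ (not merely dyadic ones) and over all admissible $(p_1,q_1)$ at once, that the corresponding product of reducing operators stays bounded: cubes far from $0$ contribute a bounded amount because $W$ is constant there, while a cube meeting $0$ is split into the intervals $I_k$ it contains and one sums a convergent series, the summability being exactly what the gain in exponents along the curve provides. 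For part~(2) the same scheme works with the finite-order reverse H{\"o}lder condition replaced by its $RH_{\infty}$ version: by Lemma~\ref{lemma:containmentsofmatrixclasses1}(2), $[W]_{A_{t,\infty}}<\infty$ is equivalent to $W\in A_{t}\cap RH_{t,\infty}$, i.e.\ to $\sup_{e}[|W^{1/t}e|^{t}]_{A_{t}}<\infty$ and $\sup_{e}[|W^{1/t}e|^{t}]_{RH_{\infty}}<\infty$, and one chooses $\alpha,\beta$ so that the directional weights are borderline for $A_{s,\infty}$ but comfortably inside $A_{t,\infty}$ for every $t>s$; the $RH_{\infty}$ part supplies the essential-supremum control built into Definition~\ref{dfn:endpoint_infty_muckenhoupt_weights}, and the rotation again produces the dyadic blow-up. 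Since this endpoint condition is not the naive $q\to\infty$ limit of $A_{p,q}$, it is cleaner to run the construction directly than to pass to a limit in part~(1).

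The step I expect to be the main obstacle is the ``$<\infty$'' direction. One has to produce a single bound that is uniform over the whole one-parameter family of pairs $(p_1,q_1)$ with $q_1'/p_1=q_2'/p_2$ and $p_1<p_2$, and over all cubes of $\R$ including the long ones touching the singularity at $0$; this forces $\alpha$, $\beta$ and the rotation rate $\gamma$ to be chosen with just enough room, and in particular one must check that the available slack does not degenerate as $p_1\uparrow p_2$. The ``$=\infty$'' half, by contrast, needs only one well-chosen sequence of dyadic intervals and should be routine once the example is written down.
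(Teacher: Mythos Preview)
Your plan departs from the paper's in two essential ways, and the reductions you invoke contain concrete errors.

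\textbf{What the paper does.} The paper does not write out the proof but indicates that one uses the \emph{second} matrix family recorded in the Appendix: $C_n=\operatorname{diag}(1,n(1+\log n))$ and $D_n$ a rotation of $\operatorname{diag}\bigl(1,1/(n(1+\log n))\bigr)$ by the angle $\theta_n$ with $\sin\theta_n=1/\sqrt{n}$. The key property is the \emph{reverse} of \Cref{first lemma}: $\tr(C_nD_n)\simeq 1+\log n\to\infty$, while for every fixed $a\in(0,1)$ one has $\tr(C_n^aD_n^a)\simeq 1$ together with the cross-term bounds listed in the Appendix. One then sets $A_n:=C_n^{-q_2/2}$, $B_n:=D_n^{q_2/2}$ and follows verbatim the \emph{piecewise-constant} construction of \Cref{lemma:weightscounterexample} and its extension to $\R$. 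The divergence $|A_n^{-1/q_2}B_n^{1/q_2}|\simeq(1+\log n)^{1/2}$ gives $[W]_{A_{p_2,q_2},\cD}=\infty$; along $q_1'/p_1=q_2'/p_2$ with $p_1<p_2$ one has $q_1>q_2$, so $a:=q_2/q_1\in(0,1)$ and the Appendix bounds reproduce the summability argument of \Cref{lemma:weightscounterexample}. Note in particular that the construction in the proof of \Cref{prop:weightscounterexample} is \emph{not} a continuously rotating weight $O(x)^*D(x)O(x)$ as you describe, but a step function taking constant matrix values on the halves $(J_n)_\pm$ of dyadic intervals. The logarithmic factor in the eigenvalues is what lets a single weight sit exactly at the $(p_2,q_2)$ threshold.

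\textbf{Errors in your reductions.} \Cref{lemma:containmentsofmatrixclasses1}(1) with $t=p_1$, $s=q_1/p_1$ says $V^{q_1/p_1}\in A_{p_1,q_1}$ if and only if $V\in A_{p_1}\cap RH_{p_1,q_1/p_1}$. Applied to the target weight this yields $W\in A_{p_1,q_1}\iff W^{p_1/q_1}\in A_{p_1}\cap RH_{p_1,q_1/p_1}$, not $W$ itself as you wrote. Second, even after correcting the power, the matrix condition $A_{p_1}$ is \emph{not} equivalent to the directional condition $\sup_e[|\,\cdot\,e|^{p_1}]_{A_{r_1}}<\infty$: \Cref{lemma:properties_of_Apq_weights} gives only one inequality, and your choice $r_1=1+q_1/p_1'$ conflates the $A_{p_1}$ and $A_{p_1,q_1}$ settings. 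So the step ``reduce to elementary scalar power-weight estimates'' is not justified as stated.

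\textbf{On the viability of your construction.} Your weight is scale-invariant in the sense that both the eigenvalue ratio $|x|^{\beta-\alpha}$ and the angle difference $\theta(x)-\theta(y)\asymp\gamma\log(y/x)$ depend only on $x/y$; hence the $A_{p,q}$ integral over $[0,T)$ is independent of $T$, and the argument has to produce a genuine threshold in $(p,q)$ from a single fixed integral. That is conceivable, but you have not shown that the curve $q'/p=\text{const}$ lies in the ``finite'' region for every $p_1<p_2$ once $(p_2,q_2)$ is placed on the boundary, nor have you dealt with the oscillation of $\sin(\gamma\log(y/x))$ for $y/x$ large. This is exactly the difficulty you flag at the end, and the paper sidesteps it entirely by working with the discrete logarithmic family rather than pure powers.
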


Before proving \Cref{prop:weightscounterexample} and \Cref{prop:weights_counterexample_2}, let us briefly examine their implications for the relationship of the matrix weighted bounds of \Cref{thm:matrix_weighted_bound} to those obtained earlier by \cite{Laukkarinen_2023}. We first recall the notation of \Cref{thm:matrix_weighted_bound}: we consider $1\leq p_0<q_0\leq\infty$, $p\in(p_0,q_0)$ and let
    \begin{equation*}
        t:=\frac{p}{p_0},\quad s:=\frac{q_0}{p},\quad a:=\frac{p}{t-1},\quad b:=s'p.
    \end{equation*}
    
    In this notation, the matrix-weighted bounds in \cite{Laukkarinen_2023} are valid for matrix weights $W\in A_{t}\cap RH_{t,s'}$. By \Cref{lemma:containmentsofmatrixclasses1}, this condition is equivalent to $W^{s'}\in A_{t,s't}$. Observe that $t=\frac{p}{p_0}\leq p$. Therefore
\begin{equation*}
    a=\frac{p}{t-1}\geq\frac{t}{t-1}=t',
\end{equation*}
and consequently $a'\leq t$. Since $\frac{b}{a}=\frac{s't}{t'}$, by \Cref{lemma:containments_of_matrix_classes_2} we deduce that the condition $W^{s'}\in A_{a',b}$ featuring in \Cref{thm:matrix_weighted_bound} is weaker than the condition $W\in A_{t}\cap RH_{t,s'}$. Observe also that $a'=t$ if and only if $p_0=1$. Therefore, if $p_0>1$ and $d\geq2$, then using \Cref{prop:weightscounterexample} one can find a $d\times d$ matrix weight $W$ on $\R^n$ with $W^{s'}\in A_{a',b}$ but $W\notin A_{t}\cap RH_{t,s'}$.

Let us now prove \Cref{prop:weightscounterexample} and \Cref{prop:weights_counterexample_2}. The core of both proofs boils down to constructing appropriate families of counterexamples to the Cordes inequality. One such family is given in the lemma below.

\begin{lem}
    \label{first lemma}
    Set
    \begin{equation*}
        C_n:=
        \begin{bmatrix}
            1&0\\
            0&n
        \end{bmatrix}
        ,\quad
        D_n:=
        \begin{bmatrix}
            1&\frac{1}{2n^{1/2}}\\
            \frac{1}{2n^{1/2}}&\frac{1}{n}
        \end{bmatrix}
        ,\quad n=1,2,\ldots.
    \end{equation*}
    Then, $C_n,D_n\in\mathrm{P}_{2}(\C)$ with
    \begin{equation*}
        \tr(C_nD_n),~\tr(C_n^{-1}D_n^{-1})\simeq 1,
    \end{equation*}
    for all $n=1,2,\ldots$. Moreover, for all $a>1$,
    \begin{equation*}
        \lim_{n\to\infty}\tr(C_n^{a}D_n^{a})=\infty.
    \end{equation*}
\end{lem}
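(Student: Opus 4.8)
The plan is to dispatch the three assertions in order, with essentially all the difficulty concentrated in the last one. For positive definiteness, $C_n$ is trivially in $\mathrm{P}_2(\C)$, being diagonal with positive entries, while $D_n\in\mathrm{P}_2(\C)$ follows from $\tr(D_n)=1+\tfrac1n>0$ together with $\det(D_n)=\tfrac1n-\tfrac1{4n}=\tfrac3{4n}>0$. The two trace identities then collapse to elementary $2\times 2$ arithmetic as soon as one uses that $C_n=\mathrm{diag}(1,n)$ is diagonal: on the one hand $\tr(C_nD_n)=(D_n)_{11}+n\,(D_n)_{22}=1+n\cdot\tfrac1n=2$, and on the other hand, computing $D_n^{-1}$ from its adjugate and $\det(D_n)=\tfrac3{4n}$ gives $(D_n^{-1})_{11}=\tfrac43$ and $(D_n^{-1})_{22}=\tfrac{4n}{3}$, whence $\tr(C_n^{-1}D_n^{-1})=(D_n^{-1})_{11}+\tfrac1n\,(D_n^{-1})_{22}=\tfrac83$. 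Both quantities are absolute constants, so indeed $\tr(C_nD_n),\tr(C_n^{-1}D_n^{-1})\simeq 1$.

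For the divergence of $\tr(C_n^{a}D_n^{a})$ when $a>1$, the key reduction is once more to exploit that $C_n^{a}=\mathrm{diag}(1,n^{a})$ is diagonal, so that, since $D_n^{a}$ is positive definite (hence has nonnegative diagonal entries), $\tr(C_n^{a}D_n^{a})=(D_n^{a})_{11}+n^{a}\,(D_n^{a})_{22}\ge n^{a}\,(D_n^{a})_{22}$. Thus it suffices to prove a lower bound $(D_n^{a})_{22}\gtrsim_a \tfrac1n$, for then $\tr(C_n^{a}D_n^{a})\gtrsim_a n^{a-1}\to\infty$. To bound $(D_n^{a})_{22}$ from below I would diagonalise the $2\times 2$ matrix as $D_n=\lambda_+P_++\lambda_-P_-$, with eigenvalues $\lambda_+>\lambda_->0$ and orthogonal rank-one projections $P_{\pm}$, and use that for every root $\lambda$ of the characteristic polynomial of $D_n$ the vector $v_\lambda:=\bigl(\tfrac{1}{2\sqrt{n}},\,\lambda-1\bigr)$ lies in $\ker(D_n-\lambda I)$ and is nonzero (its first entry never vanishes). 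This yields the closed forms
\begin{equation*}
(P_+)_{22}=\frac{|\langle e_2,v_{\lambda_+}\rangle|^2}{|v_{\lambda_+}|^2}=\frac{(\lambda_+-1)^2}{\tfrac{1}{4n}+(\lambda_+-1)^2}=1-(P_-)_{22}=\frac{\tfrac{1}{4n}}{\tfrac{1}{4n}+(\lambda_--1)^2}.
\end{equation*}
Since $0<\lambda_-<\lambda_+<\tr(D_n)=1+\tfrac1n\le 2$ we get $(\lambda_--1)^2\le 1$, hence $(P_+)_{22}\ge \tfrac{1/(4n)}{1/(4n)+1}\ge\tfrac1{5n}$ for all $n$; and $\lambda_+\ge\tfrac12\tr(D_n)>\tfrac12$. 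Combining these, $(D_n^{a})_{22}=\lambda_+^{a}(P_+)_{22}+\lambda_-^{a}(P_-)_{22}\ge\lambda_+^{a}(P_+)_{22}\ge 2^{-a}/(5n)$, which is exactly the bound needed, so that $\tr(C_n^{a}D_n^{a})\ge n^{a-1}/(5\cdot 2^{a})\to\infty$.

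The step I expect to be the crux is precisely securing $(D_n^{a})_{22}\gtrsim_a 1/n$ rather than the much weaker $(D_n^{a})_{22}\ge (D_n)_{22}^{a}=n^{-a}$ coming from Jensen's inequality applied to the spectral measure of $D_n$ at $e_2$ and the convex function $t\mapsto t^{a}$; the latter would only give $\tr(C_n^{a}D_n^{a})\ge 1$, which is useless. This is the point at which the exact cancellation $\tr(C_nD_n)=2$ valid at $a=1$ breaks down: one has to see that $D_n^{a}$ still carries a definite amount of mass, of order $1/n$ in the $e_2$-direction, at the eigenvalue $\lambda_+\approx 1$, and this in turn rests on the fine size $\lambda_+-1\asymp 1/n$ of the perturbation --- equivalently, on $(P_-)_{22}$ being bounded below $1$ by an amount of order $1/n$. (Morally, $D_n=ww^{*}+\tfrac{3}{4n}e_2e_2^{*}$ with $w=\bigl(1,\tfrac1{2\sqrt{n}}\bigr)$, so its top eigendirection is $\approx w/|w|$, whose $e_2$-component has size $\tfrac1{2\sqrt{n}}$.) Once this structural observation is isolated, the remaining ingredients --- the explicit formulae for $\lambda_{\pm}$, the verification that $v_\lambda$ is an eigenvector, and the elementary inequalities above --- are all routine.
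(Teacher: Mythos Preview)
Your proof is correct and follows essentially the same route as the paper: diagonalise $D_n$, use that $C_n^a$ is diagonal to reduce $\tr(C_n^aD_n^a)$ to a weighted sum of eigenprojection entries, and isolate the term $n^a\lambda_+^a(P_+)_{22}\gtrsim_a n^{a-1}$. Your argument is in fact slightly more economical than the paper's, which computes the eigenvalues $\lambda_\pm$ explicitly and shows $|1-\lambda_+|\simeq 1/n$ to obtain $(P_+)_{22}\simeq 1/n$; you sidestep this by rewriting $(P_+)_{22}=1-(P_-)_{22}=\frac{1/(4n)}{1/(4n)+(\lambda_--1)^2}$ and using only the crude bound $(\lambda_--1)^2\le 1$.
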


 We apply \Cref{first lemma} to prove \Cref{lemma:weightscounterexample}. For completeness, we give a proof of \Cref{first lemma} in \Cref{section:appendix}, together with a second family of counterexamples to the converse of the Cordes inequality. The latter family allows one to prove in a similar way \Cref{prop:weights_counterexample_2}. Thus, we concentrate on proving \Cref{prop:weightscounterexample}. We only prove part (1) of \Cref{prop:weightscounterexample}; it is clear that part (2) can be proved similarly. 

Going forward, by ``interval'' we always mean a left-closed, right-open, non-trivial bounded interval in $\R$. Before proving part (1) of \Cref{prop:weightscounterexample}, we construct an analogous counterexample on the unit interval $I_0:=[0,1)$. A \emph{weight} $W$ on $I_0$ is an integrable function $W:I_0\to\mathrm{M}_{d}(\C)$ with $W(x)\in\mathrm{P}_{d}(\C)$ for a.e.~$x\in I_0$. Moreover, for $1<p\leq q<\infty$ we denote
\begin{equation*}
[W]_{A_{p,q}(I_0)}:=\sup_{I\subseteq I_0}\dashint_{I}\left(\dashint_{I}|W(x)^{1/q}W(y)^{-1/q}|^{p'}\dd y\right)^{q/p'}\dd x,
\end{equation*}
and we define $[W]_{A_{p,q},\cD(I_0)}$ analogously.

\begin{lem}
\label{lemma:weightscounterexample}
    Let $1<p_1\leq q_1<\infty$. Then, there exists a $2\times 2$ matrix weight $W$ on $I_0$ with $[W]_{A_{p_1,q_1}(I_0)}<\infty$ such that for all $1<p_2\leq q_2<\infty$ with $p_1<p_2$ and $\frac{q_2}{p_2'}=\frac{q_1}{p_1'}$, we have $[W]_{A_{p_2,q_2},\cD(I_0)}=\infty$.
\end{lem}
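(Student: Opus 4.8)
The plan is to build an explicit $2\times 2$ matrix weight $W$ on $I_0=[0,1)$ out of the family $C_n, D_n$ from \Cref{first lemma}, arranged at dyadic scales. More precisely, I would fix a rapidly increasing sequence of positive integers $(n_k)_{k\geq1}$ (to be specified at the end), and on each dyadic interval $I_k:=[2^{-k},2^{-k+1})$ of length $2^{-k}$ set $W$ equal to a constant positive-definite matrix built from $C_{n_k}$ and $D_{n_k}$; say, roughly, $W\big|_{I_k}$ alternates between a multiple of $C_{n_k}^{q_1}$ on the left half of $I_k$ and a multiple of $D_{n_k}^{q_1}$ on the right half (or some similar two-value pattern inside $I_k$), with $W\equiv\mathrm{Id}$ on the remaining half of $I_0$. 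The point of the construction is that on the dyadic interval $I_k$ itself, the quantity $\dashint_{I_k}\big(\dashint_{I_k}|W(x)^{1/q_2}W(y)^{-1/q_2}|^{p_2'}\dd y\big)^{q_2/p_2'}\dd x$ is comparable (using \eqref{matrix norm as sum of norms of columns}, \eqref{break power of sum}, and the fact that $W$ takes only two constant values on $I_k$) to $\tr(C_{n_k}^{a}D_{n_k}^{a})$ for the exponent $a$ determined by $p_2, q_2$; since $a>1$ exactly when $p_2>p_1$ (as I will arrange $a = p_1'/p_2' \cdot (\text{something}) \ldots$ — more precisely $a$ is a function of the ratio $q_2 p_2'$-type quantities, and equals $1$ precisely in the $p_2=p_1$ normalization), \Cref{first lemma} forces this to blow up as $k\to\infty$, giving $[W]_{A_{p_2,q_2},\cD(I_0)}=\infty$.

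The second and more delicate half is verifying $[W]_{A_{p_1,q_1}(I_0)}<\infty$, where now the supremum is over \emph{all} subintervals $I\subseteq I_0$, not just dyadic ones. Here the key structural feature of $C_n, D_n$ is that $\tr(C_nD_n)\simeq1$ and $\tr(C_n^{-1}D_n^{-1})\simeq1$ uniformly in $n$; by \Cref{lemma:standardlemma} (applied on a single $I_k$ with $a=p_1', b=q_1$) this says precisely that the ``local'' $A_{p_1,q_1}$ characteristic of $W$ restricted to each $I_k$ is bounded independently of $k$. So the local contributions are fine. The work is to control an arbitrary interval $I$: split into cases according to how $I$ sits relative to the dyadic decomposition $\{I_k\}$. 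If $I$ is contained in a single $I_k$, we use the constancy of $W$ there together with the John-ellipsoid / reducing-operator estimates. If $I$ overlaps several consecutive $I_k$'s, then $|I|$ is comparable to the length of the largest $I_k$ it meets, and I would estimate $\cW_{I,q_1}$ and $\cW'_{I,q_1}$ from the dominant scale, using that $W=\mathrm{Id}$ on a fixed proportion of $I_0$ near the origin's complement to keep the reducing operators from degenerating. Along the way I would need to normalize the $C_{n_k}, D_{n_k}$ blocks by suitable scalar factors (say $\aver{|W|^{?}}$-type normalizers, or simply divide by $n_k$) so that the matrix $A_{p_1,q_1}$ condition — which is scaling-invariant in $W$ on each constancy piece but \emph{not} globally — stays uniformly bounded across scales; this is the standard ``glue constant pieces together'' bookkeeping.

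The main obstacle I expect is exactly this gluing step: ensuring that the global $A_{p_1,q_1}$ characteristic over non-dyadic intervals straddling many scales stays finite while the dyadic $A_{p_2,q_2}$ characteristic diverges. The tension is that making the blocks ``badly behaved enough'' for the $A_{p_2,q_2}$ blow-up (which needs $n_k\to\infty$ fast) must not destroy the $A_{p_1,q_1}$ boundedness, and intervals that see two adjacent scales $I_k, I_{k+1}$ with wildly different $n_k, n_{k+1}$ are where one could lose control. The resolution is a lacunary/telescoping choice of $(n_k)$ together with the observation that for $A_{p_1,q_1}$ the relevant quantities are the \emph{well-conditioned} traces $\tr(C_nD_n), \tr(C_n^{-1}D_n^{-1})\simeq1$ rather than the ill-conditioned $\tr(C_n^aD_n^a)$, so adjacent scales contribute comparably and a straightforward summation over the (boundedly many relevant) scales closes the estimate. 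Once \Cref{lemma:weightscounterexample} is established on $I_0$, the passage to all of $\R$ in \Cref{prop:weightscounterexample}(1) is routine: translate and rescale copies of this example onto a disjoint sequence of dyadic intervals tending to infinity (or simply extend $W$ by $\mathrm{Id}$ outside $I_0$), noting that the $A_{p_1,q_1}$ condition over $\R$ reduces to the behaviour over intervals comparable to, or contained in, $I_0$ plus the trivial identity region, while the dyadic $A_{p_2,q_2}$ blow-up is already witnessed by the intervals $I_k\subseteq I_0\subseteq\R$.
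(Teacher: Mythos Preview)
Your overall architecture---place blocks built from $C_n,D_n$ on dyadic pieces of $I_0$, use \Cref{first lemma} to force the $A_{p_2,q_2}$ blow-up, and verify $A_{p_1,q_1}$ by a case analysis on subintervals---matches the paper. But there is a genuine gap in the multi-scale part.

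You propose a ``rapidly increasing'' (lacunary) sequence $(n_k)$ and claim that for the $A_{p_1,q_1}$ bound ``the relevant quantities are the well-conditioned traces $\tr(C_nD_n),\tr(C_n^{-1}D_n^{-1})\simeq1$.'' This is only true for intervals contained in a single scale. An interval $I$ that straddles the boundary between two adjacent dyadic pieces (say the $k$-th and $(k+1)$-st) sees \emph{cross-scale} products such as $|W(x)^{1/q_1}W(y)^{-1/q_1}|$ with $x,y$ in different pieces; these are governed by traces like $\tr(C_{n_k}^{-1}D_{n_{k+1}}^{-1})$ and $\tr(D_{n_k}C_{n_{k+1}})$. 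A direct computation (which the paper carries out) gives
\[
\tr(C_m^{-1}D_n^{-1})\simeq\tr(D_mC_n)\simeq\tr(C_m^{-1}C_n)\simeq\tr(D_mD_n^{-1})\simeq\max\Big(1,\frac{n}{m}\Big),
\]
which is \emph{not} uniformly bounded. With a lacunary choice of $(n_k)$ the ratio $n_{k+1}/n_k$ is large, the adjacent pieces have comparable measure (weight $\simeq1$ in the average), and the $A_{p_1,q_1}$ characteristic blows up already on such intervals. Scalar renormalization cannot fix this: rescaling $D_n\mapsto\alpha_nD_n$ to tame $\tr(C_m^{-1}D_n^{-1})$ forces $\alpha_n\gtrsim n$, which then destroys the diagonal bound $\tr(C_n(\alpha_nD_n))\simeq\alpha_n$.

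The paper's resolution is the opposite of lacunary: take $n_k=k+1$. Then the cross-scale traces grow only like $\max(1,(j+1)/(i+1))$, and for intervals of the form $I_k=[0,2^{-k})$ the geometric measure weighting $2^{k-j}$ dominates this polynomial growth, so the sum $\sum_{j\geq k}2^{k-j}\big((j+1)/(i+1)\big)^{p_1'/2}$ is bounded. Linear growth is still enough for the $A_{p_2,q_2}$ blow-up, since \Cref{first lemma} only requires $n\to\infty$. You should also pin down the exponent cleanly: with $A_n=C_n^{-q_1/2}$, $B_n=D_n^{q_1/2}$ one gets $|A_n^{-1/q_2}B_n^{1/q_2}|^2\simeq\tr(C_n^{q_1/q_2}D_n^{q_1/q_2})$, and $q_1/q_2=p_1'/p_2'>1$ exactly when $p_2>p_1$.
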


\begin{proof}
    Throughout the proof, we suppress from the notation dependence of implied constants on $p_1$ and~$q_1$. 
    
    For all positive integers $n$, consider the matrices $C_n,D_n$ from \Cref{first lemma} and set
    \begin{equation*}
        A_n:=C_n^{-q_1/2},\quad B_n:=D_n^{q_1/2}.
    \end{equation*}
    Notice that
    for all $1<p_2\leq q_2<\infty$ with $p_1<p_2$ and $\frac{q_2}{p_2'}=\frac{q_1}{p_1'}$ we have
    \begin{equation*}
        |A_n^{-1/q_2}B_n^{1/q_2}|=|C_n^{q_1/(2q_2)}D_n^{q_1/(2q_2)}|\simeq\tr(C_n^{q_1/q_2}D_n^{q_1/q_2})^{1/2},
    \end{equation*}
    for all $n=1,2,\ldots$, therefore since $\frac{q_1}{q_2}=\frac{p_1'}{p_2'}>1$, \Cref{first lemma} implies $\lim_{n\to\infty}|A_n^{-1/q_2}B_n^{1/q_2}|=\infty$.

    Set now
    \begin{equation*}
        W:=\sum_{n=0}^{\infty}(\1_{(J_n)_{-}} A_{n+1}+\1_{(J_n)_{+}} B_{n+1}),
    \end{equation*}
    where
    \begin{equation*}
    J_n:=\left[\frac{1}{2^{n+1}},\frac{1}{2^n}\right),\quad n=0,1,2,\ldots,
    \end{equation*}
    and for all intervals $I=[a,b)\subseteq\R$ we denote
    \begin{equation*}
        I_{-}:=\left[a,\frac{a+b}{2}\right),\quad I_{+}:=\left[\frac{a+b}{2},b\right).
    \end{equation*}
    Notice that for all $1<p_2\leq q_2<\infty$ with $p_1<p_2$ and $\frac{q_2}{p_2'}=\frac{q_1}{p_1'}$ we have
    \begin{align*}
        [W]_{A_{p_2,q_2}(I_0)}&\geq\dashint_{J_n}\left(\dashint_{J_n}|W(x)^{1/q_2}W(y)^{-1/q_2}|^{p_2'}\dd y\right)^{q_2/p_2'}\dd x\\
        &\gtrsim_{q_2,p_2}\dashint_{(J_n)_{+}}\left(\dashint_{(J_n)_{-}}|W(x)^{1/q_2}W(y)^{-1/q_2}|^{p_2'}\dd y\right)^{q_2/p_2'}\dd x\\
        &=|B_{n+1}^{1/q_2}A_{n+1}^{-1/q_2}|^{q_2}=|A_{n+1}^{-1/q_2}B_{n+1}^{1/q_2}|^{q_2},
    \end{align*}
    for all $n=0,1,2,\ldots$, therefore $[W]_{A_{p_2,q_2},\cD(I_0)}=\infty$.

    It remains now to show that $[W]_{A_{p_1,q_1}(I_0)}<\infty$. We show a slightly stronger property, that is:
    \begin{equation*}
        \esssup_{x\in I}\dashint_{I}|W(x)^{1/q_1}W(y)^{-1/q_1}|^{p_1'}\dd y\lesssim1,
    \end{equation*}
    for all subintervals $I$ of $I_0$.
    
    Let us first show that for all $k=0,1,2,\ldots$ we have
   \begin{equation*}
        \esssup_{x\in I_k}\dashint_{I_k}|W(x)^{1/q_1}W(y)^{-1/q_1}|^{p_1'}\dd y\lesssim1,
    \end{equation*}
    where
    \begin{equation*}
    I_k:=\left[0,\frac{1}{2^k}\right),\quad k=0,1,2\ldots.
    \end{equation*}

    Let $k$ be any nonnegative integer. Observe that $I_k=\bigcup_{i=k}^{\infty}J_k$. Thus, it suffices to prove that
    \begin{equation*}
        \esssup_{x\in J_i}\dashint_{I_k}|W(x)^{1/q_1}W(y)^{-1/q_1}|^{p_1'}\dd y\lesssim1,\quad\forall i=k,k+1,\ldots.
    \end{equation*}
    We show the following more general estimate, which is later useful for the proof of \Cref{prop:weightscounterexample}:
    \begin{equation*}
        \esssup_{x\in J_i}\dashint_{I_k}|W(x)^{1/q_1}W(y)^{-1/q_1}|^{p_1'}\dd y\lesssim\max\left(1,\frac{k+1}{i+1}\right)^{p_1'/2},\quad\forall i=0,1,2,\ldots.
    \end{equation*}
    Let $i$ be any nonnegative integer. For all $x\in (J_i)_{-}$, we compute
    \begin{align*}
        \dashint_{I_k}|W(x)^{1/q_1}W(y)^{-1/q_1}|^{p_1'}\dd y
        &	= \sum_{j=k}^{\infty}2^{k-j}\dashint_{J_j}|W(x)^{1/q_1}W(y)^{-1/q_1}|^{p_1'}\dd y	\\
        	\begin{split}
		    	& \simeq \sum_{j=k}^{\infty}2^{k-j}\dashint_{(J_j)_{-}}|W(x)^{1/q_1}W(y)^{-1/q_1}|^{p_1'}\dd y	\\
				&\qquad\qquad + \sum_{j=k}^{\infty}2^{k-j}\dashint_{(J_j)_{+}}|W(x)^{1/q_1}W(y)^{-1/q_1}|^{p_1'}\dd y
        	\end{split}	\\
        &	= \sum_{j=k}^{\infty}2^{k-j}|A_{i+1}^{1/q_1}A_{j+1}^{-1/q_1}|^{p_1'}
        	+ \sum_{j=k}^{\infty}2^{k-j}|A_{i+1}^{1/q_1}B_{j+1}^{-1/q_1}|^{p_1'}	\\
        &	\simeq \sum_{j=k}^{\infty}2^{k-j}\tr(C_{i+1}^{-1}C_{j+1})^{p_1'/2}
        	+ \sum_{j=k}^{\infty}2^{k-j}\tr(C_{i+1}^{-1}D_{j+1}^{-1})^{p_1'/2}.
    \end{align*}
    A similar computation yields
    \begin{equation*}
        \dashint_{I_k}|W(x)^{1/q_1}W(y)^{-1/q_1}|^{p_1'}\dd y
        \simeq \sum_{j=k}^{\infty}2^{k-j}\tr(D_{i+1}C_{j+1})^{p_1'/2}
        + \sum_{j=k}^{\infty}2^{k-j}\tr(D_{i+1}D_{j+1}^{-1})^{p_1'/2},
    \end{equation*}
    for all $x\in (J_i)_{+}$. Direct computation gives
    \begin{gather*}
        \tr(C_{m}^{-1}C_n)\simeq\max\left(1,\frac{n}{m}\right),\quad  \tr(D_{m}C_n)\simeq\max\left(1,\frac{n}{m}\right),	\\
		\tr(C_m^{-1}D_{n}^{-1})\simeq\max\left(1,\frac{n}{m}\right), \quad \tr(D_{m}D_n^{-1})\simeq\max\left(1,\frac{n}{m}\right)
    \end{gather*}
    for all $m,n=1,2,\ldots$. We distinguish two cases:
    
    \begin{enumerate}[label=\bf Case \arabic*.]
		\item $i\geq k$. Then, for all $x\in J_i$, we have
		\begin{align*}
			\dashint_{I_k}|W(x)^{1/q_1}W(y)^{-1/q_1}|^{p_1'}\dd y
			&	\simeq \sum_{j=k}^{i}2^{k-j}+\sum_{j=i+1}^{\infty}2^{k-j}\left(\frac{j+1}{i+1}\right)^{p_1'/2}	\\
			&	\lesssim1+\sum_{j=k}^{\infty}2^{k-j}\left(\frac{j+1}{k+1}\right)^{p_1'/2}
				= 1 + \sum_{j=k}^{\infty}2^{k-j}\left(\frac{j-k}{k+1}+1\right)^{p_1'/2}	\\
			&	\lesssim 1 + \sum_{j=k}^{\infty}2^{k-j}((j-k)^{p_1'/2}+1)
				\lesssim 1.
		\end{align*}
		
		\item $i<k$. Then, for all $x\in J_i$, we have
		\begin{align*}
				\dashint_{I_k}|W(x)^{1/q_1}W(y)^{-1/q_1}|^{p_1'}\dd y
			&	\simeq \sum_{j=k}^{\infty}2^{k-j}\left(\frac{j+1}{i+1}\right)^{p_1'/2}	\\
			&	\lesssim \frac{1}{(i+1)^{p_1'/2}}\sum_{j=k}^{\infty}2^{k-j}((j-k)^{p_1'/2}+(k+1)^{p_1'/2})	\\
			&	\lesssim \left(\frac{k+1}{i+1}\right)^{p_1'/2}.
		\end{align*}
    \end{enumerate}
    
    Let now $I$ by any subinterval of $I_0$. Let $k$ be the largest nonnegative integer with $I\subseteq I_k$. If $|I|\geq\frac{|I_k|}{512}$, then clearly
    \begin{equation*}
        \esssup_{x\in I}\dashint_{I}|W(x)^{1/q_1}W(y)^{-1/q_1}|^{p_1'}\dd y\lesssim\esssup_{x\in I_{k}}\dashint_{I_k}|W(x)^{1/q_1}W(y)^{-1/q_1}|^{p_1'}\dd y\lesssim1.
    \end{equation*}
    Assume now that $|I|<\frac{|I_k|}{512}$. Then, at least one of the two must hold:
    \begin{itemize}[leftmargin=1.5\parindent]
        \item $I\subseteq J_k=(J_{k})_{+}\cup(J_{k})_{-}$
        \item $I\subseteq (J_{k+1})_{+}\cup(J_{k})_{-}$.
    \end{itemize}
    And since
    \begin{equation*}
        \tr(C_{i}D_{j}), ~ \tr(C_{i}^{-1}D_{j}^{-1}), ~ \tr(C_{i}C_{j}^{-1}), ~ \tr(D_iD_{j}^{-1}) \simeq 1
    \end{equation*}
    for all $i,j\in\{ k+1,k+2\}$, it easily follows that
    \begin{equation*}
        \esssup_{x\in I}\dashint_{I}|W(x)^{1/q_1}W(y)^{-1/q_1}|^{p_1'}\dd y\lesssim1,
    \end{equation*}
    concluding the proof.
\end{proof}

To extend the counterexample of \Cref{lemma:weightscounterexample} to the whole real line, we use a standard trick that was also used in \cite[Lemma 2.2]{Kakaroumpas_2019}.

\begin{proof}[Proof (of part (1) of \Cref{prop:weightscounterexample})]
     Throughout the proof, we suppress from the notation dependence of implied constants on $p_1$ and $q_1$.
     
     Let $W$ be the $2\times 2$ matrix weight on $I_0$ from the proof of \Cref{lemma:weightscounterexample}. Consider the function $\widetilde{W}:\R\to\mathrm{M}_{2}(\R)$ given by
    \begin{equation*}
        \widetilde{W}(x):=w(x-k),\quad x\in(k,k+1),\quad\text{if }k\text{ is even},
    \end{equation*}
    \begin{equation*}
        \widetilde{W}(x):=w(k+1-x),\quad x\in(k,k+1),\quad\text{if }k\text{ is odd},
    \end{equation*}
    \begin{equation*}
        \widetilde{W}(k):=I\in M_2(\R),\quad k\in\Z.
    \end{equation*}
    It is obvious that $[\widetilde{W}]_{A_{p_2,q_2},\cD}=\infty$, for all $1<p_2\leq q_2<\infty$ with $p_1<p_2$ and $\frac{q_2}{p_2'}=\frac{q_1}{p_1'}$. It remains to show that $[\widetilde{W}]_{A_{p_1,q_1}}<\infty$. We prove a slightly stronger fact, namely:
    \begin{equation*}
        \esssup_{x\in I}\dashint_{I}|\widetilde{W}(x)^{1/q_1}\widetilde{W}(y)^{-1/q_1}|^{p_1'}\dd y\lesssim1,
    \end{equation*}
    for all intervals $I\subseteq\R$.

    First of all, let $m,n\in\Z$ with $m<n$ be arbitrary. We check that
    \begin{equation*}
        \esssup_{x\in[m,n)}\dashint_{[m,n)}|\widetilde{W}(x)^{1/q_1}\widetilde{W}(y)^{-1/q_1}|^{p_1'}\dd y\lesssim1.
    \end{equation*}
    Since $[m,n)=\bigcup_{k=m}^{n-1}[k,k+1)$, we may compute
    \begin{align*}
        \esssup_{x\in[m,n)}\dashint_{[m,n)}|\widetilde{W}(x)^{1/q_1}\widetilde{W}(y)^{-1/q_1}|^{p_1'}\dd y
        &	= \esssup_{x\in I_0}\dashint_{[m,n)}|W(x)^{1/q_1}\widetilde{W}(y)^{-1/q_1}|^{p_1'}\dd y	\\
        &	= \esssup_{x\in I_0}\sum_{k=m}^{n-1} \frac{1}{n-m} \dashint_{[k,k+1)}|W(x)^{1/q_1}\widetilde{W}(y)^{-1/q_1}|^{p_1'}\dd y	\\
        &	= \esssup_{x\in I_0}\sum_{k=m}^{n-1}\frac{1}{n-m}\dashint_{I_0}|W(x)^{1/q_1}W(y)^{-1/q_1}|^{p_1'}\dd y	\\
        &	= \esssup_{x\in I_0}\dashint_{I_0}|W(x)^{1/q_1}W(y)^{-1/q_1}|^{p_1'}\dd y
        \lesssim1.
    \end{align*}
    Moreover, let $k,\ell$ be arbitrary non-negative integers. We check that
    \begin{equation*}
        \esssup_{x\in\left[-\frac{1}{2^{k}},\frac{1}{2^{\ell}}\right)}\dashint_{\left[-\frac{1}{2^{k}},\frac{1}{2^{\ell}}\right)}|\widetilde{W}(x)^{1/q_1}\widetilde{W}(y)^{-1/q_1}|^{p_1'}\dd y\lesssim1.
    \end{equation*}
    First of all, we have
	\begin{align*}
		\esssup_{x\in\left[-\frac{1}{2^{k}},0\right)} \dashint_{\left[-\frac{1}{2^{k}},\frac{1}{2^{\ell}}\right)} & |\widetilde{W}(x)^{1/q_1}\widetilde{W}(y)^{-1/q_1}|^{p_1'}\dd y	\\
		\begin{split}
			&	\leq \esssup_{x\in\left[-\frac{1}{2^{k}},0\right)} \frac{2^{-k}}{2^{-k}+2^{-\ell}}\dashint_{\left[-\frac{1}{2^{k}},0\right)}|\widetilde{W}(x)^{1/q_1}\widetilde{W}(y)^{-1/q_1}|^{p_1'}\dd y	\\
			&\qquad\qquad	+ \esssup_{x\in\left[-\frac{1}{2^{k}},0\right)} \frac{2^{-\ell}}{2^{-k}+2^{-\ell}}\dashint_{\left[0,\frac{1}{2^{\ell}}\right)}|\widetilde{W}(x)^{1/q_1}\widetilde{W}(y)^{-1/q_1}|^{p_1'}\dd y
		\end{split}	\\
		\begin{split}
			&	= \esssup_{x\in I_k}\frac{2^{\ell}}{2^{k}+2^{\ell}} \dashint_{I_k}|W(x)^{1/q_1}W(y)^{-1/q_1}|^{p_1'}\dd y	\\
			&\qquad\qquad + \esssup_{x\in I_k}\frac{2^{k}}{2^{k}+2^{\ell}}\dashint_{I_{\ell}}|W(x)^{1/q_1}W(y)^{-1/q_1}|^{p_1'}\dd y.
		\end{split}
	\end{align*}
    Since $I_k=\bigcup_{i=k}^{\infty}J_{i}$, by the proof of \Cref{lemma:weightscounterexample} we deduce
    \begin{equation*}
        \esssup_{x\in I_k}\dashint_{I_k}|W(x)^{1/q_1}W(y)^{-1/q_1}|^{p_1'}\dd y \lesssim 1
    \end{equation*}
    and
    \begin{equation*}
        \esssup_{x\in I_k}\dashint_{I_{\ell}}|W(x)^{1/q_1}W(y)^{-1/q_1}|^{p_1'}\dd y\lesssim\max\left(1,\frac{\ell+1}{k+1}\right)^{p_1'/2}.
    \end{equation*}
    It follows that
    \begin{align*}
    &\esssup_{x\in\left[-\frac{1}{2^{k}},0\right)}\dashint_{\left[-\frac{1}{2^{k}},\frac{1}{2^{\ell}}\right)}|\widetilde{W}(x)^{1/q_1}\widetilde{W}(y)^{-1/q_1}|^{p_1'}\dd y\lesssim
    1+\frac{2^{k}}{2^{k}+2^{\ell}}\max\left(1,\frac{\ell+1}{k+1}\right)^{p_1'/2}.
    \end{align*}
    Analogously, we obtain
    \begin{align*}
    &\esssup_{x\in\left[0,\frac{1}{2^{\ell}}\right)}\dashint_{\left[-\frac{1}{2^{k}},\frac{1}{2^{\ell}}\right)}|\widetilde{W}(x)^{1/q_1}\widetilde{W}(y)^{-1/q_1}|^{p_1'}\dd y\lesssim
    1+\frac{2^{\ell}}{2^{k}+2^{\ell}}\max\left(1,\frac{k+1}{\ell+1}\right)^{p_1'/2}.
    \end{align*}
    If $k<\ell$, then
    \begin{align*}
        \frac{2^{k}}{2^{k}+2^{\ell}}\left(\frac{\ell+1}{k+1}\right)^{p_1'/2}\leq 2^{k-\ell}\left(\frac{\ell-k}{k+1}+1\right)^{p_1'/2}\lesssim 2^{k-\ell}(\ell-k)^{p_1'/2}\lesssim1.
    \end{align*}
    The case $\ell<k$ is symmetric, thus we deduce the desired estimate.
    
    Let now $I$ be any interval in $\R$. We distinguish the following cases:
	
	\begin{enumerate}[label=\bf Case \arabic*.]
    \item There exists $m\in\Z$ with $I\subseteq[m,m+1)$. It is then immediate by \Cref{lemma:weightscounterexample} that
    \begin{align*}
        \esssup_{x\in I}\dashint_{I}|\widetilde{W}(x)^{1/q_1}\widetilde{W}(y)^{-1/q_1}|^{p_1'}\dd y\lesssim1.
    \end{align*}

    \item There does not exist $m\in\Z$ with $I\subseteq[m,m+1)$, and $|I|\geq\frac{1}{2}$. Then, there exist $m,n\in\Z$ with $m+1<n$ such that $I\subseteq[m,n)$ and $I\nsubseteq[m+1,n)$, $I\nsubseteq[m,n-1)$. It is clear that $|I|\simeq|[m,n)|$, thus
    \begin{align*}
       \esssup_{x\in I}\dashint_{I}|\widetilde{W}(x)^{1/q_1}\widetilde{W}(y)^{-1/q_1}|^{p_1'}\dd y
        \lesssim \esssup_{x\in[m,n)}\dashint_{[m,n)}|\widetilde{W}(x)^{1/q_1}\widetilde{W}(y)^{-1/q_1}|^{p_1'}\dd y\lesssim1.
    \end{align*}

    \item There does not exist $m\in\Z$ with $I\subseteq[m,m+1)$, and $|I|<\frac{1}{2}$. Then, there exists $m\in\Z$ with $I\subseteq[m,m+2)$ and $m+1\in\mathrm{Int}(I)$.

    If $m+1$ is odd, that is $m$ is even, then it is clear that
    \begin{align*}
        \widetilde{W}(x)\in \{A_1,B_1\},\quad\text{for a.e. }x\in I,
    \end{align*}
   therefore we trivially deduce
    \begin{align*}
        \esssup_{x\in I}\dashint_{I}|\widetilde{W}(x)^{1/q_1}\widetilde{W}(y)^{-1/q_1}|^{p_1'}\dd y\lesssim1.
    \end{align*}

    If $m+1$ is even, that is $m$ is odd, then it is easy to see that there exist positive integers $k,\ell$ with
    \begin{equation*}
        I\subseteq J:=\left[m+1-\frac{1}{2^{k}},m+1+\frac{1}{2^{\ell}}\right)
    \end{equation*}
    and $|I|\geq\frac{1}{2}|J|$, therefore
    \begin{align*}
        \esssup_{x\in I}\dashint_{I}|\widetilde{W}(x)^{1/q_1}\widetilde{W}(y)^{-1/q_1}|^{p_1'}\dd y
        &	\lesssim \esssup_{x\in J}\dashint_{J}|\widetilde{W}(x)^{1/q_1}\widetilde{W}(y)^{-1/q_1}|^{p_1'}\dd y	\\
        &	= \esssup_{x\in\left[-\frac{1}{2^{k}},\frac{1}{2^{\ell}}\right)} \dashint_{\left[-\frac{1}{2^{k}},\frac{1}{2^{\ell}}\right)}|\widetilde{W}(x)^{1/q_1}\widetilde{W}(y)^{-1/q_1}|^{p_1'}\dd y \lesssim 1,
    \end{align*}
    concluding the proof.
    \end{enumerate}
\end{proof}

    
\section{Limited range extrapolation for matrix weights}
\label{section:extrapolation}

In this section, we establish a limited-range extrapolation theorem for matrix weights. Although the proof is a straightforward adaptation of \cite[Theorem 9.1]{Cruz_Uribe_Bownik_Extrapolation} guided by \cite[Theorem 4.9]{Auscher_Martell_I_2007}, we carefully track all details. We restrict ourselves to $\R^n$ equipped with the Euclidean metric and $n$-dimensional Lebesgue measure as underlying space. In particular, we simply write $\mathrm{d} x$ for integration under the Lebesgue measure, and $|E|$ for the $n$-dimensional Lebesgue measure of any Lebesgue-measurable subset $E$ of $\R^n$.

\subsection{Preliminaries}

Here we briefly recall some important definitions and facts from \cite{Cruz_Uribe_Bownik_Extrapolation}, where we refer for details and proofs.

\subsubsection{Integration of convex set valued maps}

We consider a complete $\sigma$-finite measure space $(\Omega,\cA,\mu)$.

\begin{dfn}
    Let $F:\Omega\to\cK_{\bcs}(\C^d)$ be an $\cA$-measurable map. We say that this map is \emph{$\mu$-integrably bounded}, if there exists a $\mu$-integrable function $k:\Omega\to[0,\infty)$ with $F(\omega)\subseteq k(\omega)\overline{\mathbf{B}}$, for all $\omega\in\Omega$.
\end{dfn}

Observe that an $\cA$-measurable map $F:\Omega\to\cK_{\bcs}(\C^d)$ is $\mu$-integrably bounded if and only if the function $|F|:\Omega\to[0,\infty)$ is $\mu$-integrable.

\begin{dfn}
    Let $F:\Omega\to\cK_{\bcs}(\C^d)$ be a $\mu$-integrably bounded map. Following \cite[Definition 3.11]{Cruz_Uribe_Bownik_Extrapolation}, we define the \emph{Aumann $\mu$-integral of $F$ over $\Omega$}, $\int_{\Omega}F(\omega)\dd\mu(\omega)$, as the following subset of $\C^d$:
    \begin{equation*}
        \int_{\Omega}F(\omega)\dd\mu(\omega) := \left\{\int_{\Omega}f(\omega)\dd\mu(\omega)\such f:\Omega\to\C^d\;\mu\text{-integrable selection function for }F\right\}.
    \end{equation*}
\end{dfn}

Let $F:\Omega\to\cK_{\bcs}(\C^d)$ be a $\mu$-integrably bounded map. In \cite[Section 3]{Cruz_Uribe_Bownik_Extrapolation} it is proved that the Aumann integral $\int_{X}F(\omega)\dd\mu(\omega)$ is a convex body in $\C^d$. For all $A\in\cA$, we denote
\begin{equation*}
    \int_{A}F(\omega)\dd\mu(\omega) := \int_{X}\1_{A}(\omega)F(\omega)\dd\mu(\omega) = \int_{A}F(\omega)\dd\mu|_{A}(\omega).
\end{equation*}

\begin{rem}
    \label{rem:Aumann_integral_nontrivial}
    Let $F:\Omega\to\cK_{\bcs}(\C^d)$ be a $\mu$-integrably bounded map. It is proved in \cite[Lemma 3.19]{Cruz_Uribe_Bownik_Extrapolation} that $\int_{X}F(\omega)\dd\mu(\omega)=\lbrace0\rbrace$ if and only if $F(\omega)=\lbrace0\rbrace$ for $\mu$-a.e. $\omega\in\Omega$.
\end{rem}

\subsubsection{Convex body valued maximal function}

\begin{dfn}
\begin{enumerate}
    
\item For any measurable function $F:\R^n\to\cK_{\mathrm{cs}}(\C^d)$ and for any $d\times d$ matrix weight $W$ on $\R^n$, we denote
\begin{equation*}
    \norm{F}_{L^{p}_{\cK}(W)}:=\left(\int_{\R^n}|W(x)^{1/p}F(x)|^{p}\dd x\right)^{1/p},\quad 0<p<\infty,
\end{equation*}
and we also denote
\begin{equation*}
    \norm{F}_{L^{\infty}_{\cK}(W)}:=\esssup_{x\in\R^n}|W(x)F(x)|.
\end{equation*}
Then, for any $0<p\leq\infty$, we denote by $L^{p}_{\cK}(W)$ the family of all (Lebesgue-a.e.~equivalence classes of) measurable functions $F:\R^n\to\cK_{\mathrm{cs}}(\C^d)$ with $\norm{F}_{L^{p}_{\cK}(W)}<\infty$. Observe that if $F\in L^{p}_{\cK}(W)$ for some $0<p\leq\infty$, then the subset $F(x)$ of $\C^d$ is bounded, for a.e.~$x\in\R^n$.

\item If $1\leq p,q\leq \infty$ and $W,V$ are $d\times d$ matrix weights on $\R^n$, then a map $T$ is said to be $L^{p}_{\cK}(W)$-$L^{q}_{\cK}(V)$ bounded, if it is a map $T:L^{p}_{\cK}(W)\to L^{q}_{\cK}(V)$ and there exists $0<C<\infty$ such that
\begin{equation*}
    \norm{Tf}_{L^{q}_{\cK}(V)}\leq C\norm{f}_{L^{p}_{\cK}(W)},
\end{equation*}
for all $f\in L^{p}_{\cK}(W)$; in this case, we denote by $\Vert T\Vert_{L^{p}_{\cK}(W)\to L^{q}_{\cK}(V)}$ the best such constant. Moreover, we denote $\Vert T\Vert_{L^{p}_{\cK}(W)}:=\Vert T\Vert_{L^{p}_{\cK}(W)\to L^{p}_{\cK}(W)}$.

\item A measurable map $F:\R^n\to\cK_{\mathrm{cs}}(\C^d)$ is said to be \emph{locally integrable} if the function $|F|:\R^n\to[0,\infty]$ is locally integrable.

\end{enumerate}
\end{dfn}

\begin{rem}
If $F\in L^{p}_{\cK}(W)$ for some $1<p<\infty$ and some $d\times d$ matrix weight $W$ such that $W^{1/(p-1)}$ is also a $d\times d$ matrix weight, then it is easy to see that $F$ is locally integrable on $\R^n$.
\end{rem}

\begin{dfn}
\begin{enumerate}
\item A map $T$ acting from a space of $\cK_{\mathrm{cs}}(\C^d)$-valued functions on $\R^n$ that is closed under the (pointwise) Minkowski sum and (pointwise) multiplication with complex numbers into a space of $\cK_{\mathrm{cs}}(\C^d)$-valued functions on $\R^n$ is said to be \emph{sublinear}, if for all $F, G$ in the domain of definition of $T$ and for all $a\in\C$ we have
\begin{equation*}
    T(F+G)(x)\subseteq TF(x)+TG(x)
\end{equation*}
and
\begin{equation*}
    T(aF)(x)=aTF(x)=|a|TF(x),
\end{equation*}
for a.e.~$x\in\R^n$.

\item A map $T$ acting from a space of $\cK_{\mathrm{cs}}(\C^d)$-valued functions on $\R^n$ into a space of $\cK_{\mathrm{cs}}(\C^d)$-valued functions on $\R^n$ is said to be \emph{monotone} if $TF(x)\subseteq TG(x)$ for a.e. $x\in\R^n$, whenever $F,G$ are in the domain of definition of $T$ such that $F(x)\subseteq G(x)$ for a.e.~$x\in\R^n$.
\end{enumerate}
\end{dfn}

\begin{dfn}
If $F:\R^n\to\cK_{\bcs}(\C^d)$ is a locally integrable function, then following \cite{Cruz_Uribe_Bownik_Extrapolation} we define its \emph{convex-set-valued maximal function} by
\begin{equation*}
    \cM^{\cK}F(x):=\mathrm{clos}\bigg(\mathrm{conv}\bigg(\bigcup_{\substack{Q\text{ cube in }\R^n\\x\in Q}}\aver{F}_{Q}\bigg)\bigg)
\end{equation*}
where $\aver{F}_{Q}:=\frac{1}{|Q|}\int_{Q}F(x)\dd x$. Observe that $\cM^{\cK}F(x)\in\cK_{
\mathrm{cs}}(\C^d)$ for a.e.~$x\in\R^n$. It is proved in \cite[Section 5]{Cruz_Uribe_Bownik_Extrapolation} that this gives rise to an operator $\cM^{\cK}$ that is sublinear and monotone moreover satisfies $F(x)\subseteq\cM^{\cK}F(x)$, for a.e.~$x\in\R^n$.
\end{dfn}

\begin{rem}
  Let $F:\R^n\to\cK_{\bcs}(\C^d)$ be a locally integrable function that is not a.e.~equal to $\{0\}$. Then, for all $x\in\R^n$, there exists a cube $Q\subseteq\R^n$ with $x\in Q$ such that $F$ is not a.e.~equal to $\{0\}$ on $Q$, therefore by \Cref{rem:Aumann_integral_nontrivial} we deduce $\aver{F}_{Q}\neq\{0\}$. It follows that $\cM^{\cK}F(x)\neq\{0\}$, for a.e.~$x\in\R^n$.
\end{rem}

We need the following weighted bound for the maximal function $\cM^{\cK}$ proved in \cite{Cruz_Uribe_Bownik_Extrapolation}.

\begin{letteredtheorem}[\protect{\cite[Theorem 6.9]{Cruz_Uribe_Bownik_Extrapolation}}]
\label{thm:boundconvexsetmaximalfunction}
    Let $1<p<\infty$ and let $W$ be a $d\times d$ matrix weight on $\R^n$. Then, $\cM^{\cK}$ acts boundedly on $L^{p}_{\cK}(W)$, $\cM^{\cK}:L^{p}_{\cK}(W)\to L^{p}_{\cK}(W)$ with
    \begin{equation*}
        \norm{\cM^{\cK}}_{L^{p}_{\cK}(W)}\lesssim_{d,n,p}[W]_{A_{p}}^{1/(p-1)}.
    \end{equation*}
\end{letteredtheorem}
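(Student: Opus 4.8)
The plan is to reduce the convex-body inequality to a scalar (Goldberg-type) maximal inequality and then prove the latter by a quantitative stopping-time argument modelled on the sharp scalar bound of Buckley and Hyt{\"o}nen--P{\'e}rez.

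\emph{Reduction to selections.} Since $[W]_{A_{p}}<\infty$, \Cref{lemma:standardlemma} shows that $W^{-1/(p-1)}$ is again a $d\times d$ matrix weight, so every $F\in L^{p}_{\cK}(W)$ is locally integrable and $F(x)$ is bounded for a.e.\ $x$. Fix such an $F$. Because $v\mapsto|W(x)^{1/p}v|$ is a continuous seminorm on $\C^{d}$, for a.e.\ $x$ one has $|W(x)^{1/p}\cM^{\cK}F(x)|=\sup_{Q\ni x}|W(x)^{1/p}\aver{F}_{Q}|$ (the closure and the convex hull are immaterial for the supremum of a continuous convex function). For each cube $Q\ni x$, every element of the Aumann average $\aver{F}_{Q}$ is of the form $\aver{f}_{Q}$ for an integrable selection $f$ of $F$ over $Q$, and since $f(y)\in F(y)$,
\[
|W(x)^{1/p}\aver{f}_{Q}|\leq\aver{|W(x)^{1/p}f(\cdot)|}_{Q}\leq\aver{|W(x)^{1/p}W(\cdot)^{-1/p}|\,\varphi}_{Q},\qquad\varphi(y):=|W(y)^{1/p}F(y)|.
\]
Taking suprema gives the pointwise domination $|W(x)^{1/p}\cM^{\cK}F(x)|\leq M_{W}\varphi(x)$, where $M_{W}\varphi(x):=\sup_{Q\ni x}\dashint_{Q}|W(x)^{1/p}W(y)^{-1/p}|\,\varphi(y)\dd y$ is Goldberg's matrix maximal operator. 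As $\|\varphi\|_{L^{p}(\R^{n})}=\|F\|_{L^{p}_{\cK}(W)}$, it suffices to prove $\|M_{W}\|_{L^{p}(\R^{n})\to L^{p}(\R^{n})}\lesssim_{d,n,p}[W]_{A_{p}}^{1/(p-1)}$.

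\emph{Linearising with reducing operators.} Write $V:=W^{-1/(p-1)}$, so $V^{1/p'}=W^{-1/p}$, and for a cube $Q$ put $\cW_{Q}:=\cW_{Q,p}$, $\cW'_{Q}:=\cV_{Q,p'}$. Submultiplicativity of the spectral norm and H{\"o}lder's inequality give, for every $Q\ni x$,
\[
\dashint_{Q}|W(x)^{1/p}W(y)^{-1/p}|\,\varphi(y)\dd y\leq|W(x)^{1/p}\cW'_{Q}|\Big(\dashint_{Q}|(\cW'_{Q})^{-1}V(y)^{1/p'}|^{p'}\dd y\Big)^{1/p'}\Big(\dashint_{Q}\varphi^{p}\Big)^{1/p}.
\]
Expanding the operator norms in the standard basis via \eqref{matrix norm as sum of norms of columns}, using \eqref{break power of sum} and the defining estimates for reducing operators, one gets $\dashint_{Q}|(\cW'_{Q})^{-1}V^{1/p'}|^{p'}\lesssim_{d}1$ and $\dashint_{Q}|W(x)^{1/p}\cW_{Q}^{-1}|^{p}\dd x\lesssim_{d}1$, while \Cref{lemma:standardlemma} gives $|\cW_{Q}\cW'_{Q}|^{p}\lesssim_{d}[W]_{A_{p}}$. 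Since $|W(x)^{1/p}\cW'_{Q}|\leq|W(x)^{1/p}\cW_{Q}^{-1}|\,|\cW_{Q}\cW'_{Q}|$, this yields
\[
M_{W}\varphi(x)^{p}\lesssim_{d}[W]_{A_{p}}\,\sup_{Q\ni x}\phi_{Q}(x)\,\dashint_{Q}\varphi^{p},\qquad\phi_{Q}:=|W(\cdot)^{1/p}\cW_{Q}^{-1}|^{p},\quad\dashint_{Q}\phi_{Q}\lesssim_{d}1.
\]

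\emph{The quantitative stopping time.} Restricting to finitely many dyadic grids (a harmless reduction), it remains to bound $\int_{\R^{n}}\sup_{Q\in\cald,\,Q\ni x}\phi_{Q}(x)\,\aver{\varphi^{p}}_{Q}\dd x$ by $[W]_{A_{p}}^{1/(p-1)}\|\varphi\|_{L^{p}}^{p}$. Here the mere average bound $\dashint_{Q}\phi_{Q}\lesssim_{d}1$ only delivers the exponent $1/p$ on $[W]_{A_{p}}$, which is not sharp when $1<p<2$. To recover the exponent $1/(p-1)$ one runs the Buckley--Hyt{\"o}nen--P{\'e}rez scheme: freeze a realising cube $Q(x)$ for each $x$, extract the Calder{\'o}n--Zygmund stopping cubes $\cals$, and on each stopping cube replace Lebesgue average by the $\sigma_{Q}$-average with $\sigma_{Q}:=|(\cW'_{Q})^{-1}V^{1/p'}|^{p'}\dd y$; the sparseness then holds with respect to $\sigma_{Q}$ at the cost of a reverse-H{\"o}lder factor which, by \Cref{lemma:weak_reverse_Holder_for_scalar_Muckenhoupt_weights} and \Cref{lemma:properties_of_Apq_weights}, is controlled uniformly in $Q$ and in direction by $\sup_{e}[|V^{1/p'}e|^{p'}]_{A_{p'}}\lesssim_{d}[W^{-1/(p-1)}]_{A_{p'}}\simeq_{d}[W]_{A_{p}}^{1/(p-1)}$, while the weighted Carleson embedding contributes the factor $[W]_{A_{p}}$ via $|\cW_{Q}\cW'_{Q}|^{p}\lesssim_{d}[W]_{A_{p}}$ and the $\sigma_{Q}$-maximal operator contributes only a $p$-dependent constant via \eqref{Hardy_Litlewood_bound}. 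Multiplying, $\int(M_{W}^{\cald}\varphi)^{p}\lesssim_{d,n,p}[W]_{A_{p}}^{1+1/(p-1)}\|\varphi\|_{L^{p}}^{p}=[W]_{A_{p}}^{p/(p-1)}\|\varphi\|_{L^{p}}^{p}$, which is the claim.

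\emph{Main obstacle.} The delicate point is this last step: because the reducing operators $\cW_{Q},\cW'_{Q}$, hence the auxiliary weights $\sigma_{Q}$, change from cube to cube, one cannot simply invoke a fixed-weight maximal theorem. The mechanism that makes the stopping-time telescoping go through is the one-sided comparability $|\cW_{Q}\cW_{\widetilde{Q}}^{-1}|\lesssim_{d,n}1$ whenever $Q\subseteq\widetilde{Q}$ are dyadic cubes of comparable size (immediate from comparability of averages over nested comparable cubes), together with the fact that the weak reverse H{\"o}lder constants of the scalar weights $y\mapsto|(\cW'_{Q})^{-1}W(y)^{-1/p}e|^{p'}$ are bounded independently of $Q$ and $e$; arranging the bookkeeping so that exactly the exponent $1/(p-1)$ emerges is where the real work lies. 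Note that no matrix $A_{\infty}$ self-improvement is available, by the counterexamples of Bownik, so the whole argument has to be carried out through scalar reverse-H{\"o}lder estimates applied to the one-dimensional sections $|W^{1/p}e|^{p}$ and $|W^{-1/p}e|^{p'}$.
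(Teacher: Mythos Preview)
The paper does not prove this statement; it is quoted verbatim as a lettered theorem from \cite{Cruz_Uribe_Bownik_Extrapolation} and used as a black box in the proof of \Cref{thm:limited_range_extrapolation_matrix_weights}. There is therefore no proof in the paper to compare against.

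On your attempt itself: the reduction in your first paragraph is correct and is exactly the bridge one wants---for a.e.\ $x$ the quantity $|W(x)^{1/p}\cM^{\cK}F(x)|$ is dominated pointwise by Goldberg's auxiliary maximal function $M_{W}\varphi(x)$ applied to $\varphi=|W^{1/p}F|$, so the theorem does reduce to the sharp $L^{p}(\R^n)$ bound for $M_{W}$.

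The gap is in your derivation of that sharp bound. Your H\"older step with exponents $p,p'$ at the cube level produces
\[
M_{W}\varphi(x)^{p}\lesssim_{d}[W]_{A_{p}}\,\sup_{Q\ni x}\phi_{Q}(x)\,\aver{\varphi^{p}}_{Q},
\]
and you then assert that $\int\sup_{Q\ni x}\phi_{Q}(x)\aver{\varphi^{p}}_{Q}\dd x\lesssim[W]_{A_{p}}^{1/(p-1)}\|\varphi\|_{L^{p}}^{p}$. But $\psi:=\varphi^{p}$ is only an $L^{1}$ function, and even in the trivial case $\phi_{Q}\equiv 1$ the left side becomes $\int M\psi$, which is not controlled by $\|\psi\|_{L^{1}}$. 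The mechanism you describe---switching to $\sigma_{Q}$-averages, reverse H\"older on scalar sections, Carleson embedding---does not, as written, explain how this $L^{1}$-endpoint obstruction is bypassed, nor how the bookkeeping produces exactly the extra factor $[W]_{A_{p}}^{1/(p-1)}$ rather than some other power. The known proofs of the sharp exponent for $M_{W}$ avoid this by \emph{not} applying H\"older to isolate $\aver{\varphi^{p}}_{Q}$; they keep $\varphi$ at the $L^{1}$-of-the-cube level and run a principal-cube/stopping argument in which the reducing operators $\cW_{Q},\cW'_{Q}$ enter through $|\cW_{Q}\cW'_{Q}|\lesssim_{d}[W]_{A_{p}}^{1/p}$ together with a reverse-H\"older self-improvement on the dual-side scalar weights $|W^{-1/p}e|^{p'}$, which is where the exponent $1/(p-1)$ actually appears. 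Your sketch names the right ingredients but does not assemble them into a working argument.
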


\subsubsection{Rubio de Francia algorithm for convex body valued functions}

\begin{dfn}
Let $W:\R^n\to \mathrm{M}_{d}(\C)$ be a measurable function with values a.e.~in the set of positive-definite $d \times d$ complex-valued matrices. Following Bownik and Cruz-Uribe \cite{Cruz_Uribe_Bownik_Extrapolation}, we define
\begin{equation*}
 [W]_{A_1}:=\sup_{Q}\esssup_{x\in Q}\dashint|W^{-1}(x)W(y)|\dd y,
\end{equation*}
where the supremum ranges over all cubes $Q\subseteq\R^n$. If $[W]_{A_1}<\infty$, then we say that $W$ is a $d\times d$ matrix $A_1$ weight, in symbols $W\in A_1$. Observe that $[W]_{A_1}=[W^{-1}]_{A_{\infty}}$.
\end{dfn}

\begin{dfn}
Following \cite{Cruz_Uribe_Bownik_Extrapolation}, we say that a locally integrable function $F:\R^n\to\cK_{\bcs}(\C^d)$ belongs to the $A_1^{\cK}$ class, in symbols $F\in A^{\cK}_1$, if there exists $0<C<\infty$ such that
\begin{equation*}
    \cM^{\cK}F(x)\subseteq F(x),
\end{equation*}
for a.e.~$x\in\R^n$; in this case, we denote the infimum of all such constants $C$ by $[F]_{A_{1}^{\cK}}$.
\end{dfn}

In what follows, we denote by $\overline{\mathbf{B}}$ the closed unit ball in $\C^d$. Bownik and Cruz-Uribe \cite[Corollary 7.4]{Cruz_Uribe_Bownik_Extrapolation} describe precisely the relation between the $A_1^{\cK}$ class and matrix $A_1$ weights.

\begin{lemabc}[\protect{\cite[Corollary 7.4]{Cruz_Uribe_Bownik_Extrapolation}}]
    \label{thm:twoA1classes}
    If $W$ is a $d\times d$ matrix weight on $\R^n$, then $W\overline{\mathbf{B}}\in A_1^{\cK}$ if and only if $W\in A_1$.
\end{lemabc}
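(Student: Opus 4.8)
The plan is to translate the set inclusion defining $A_1^{\cK}$ into a family of \emph{scalar} pointwise inequalities by passing to support functions, and then to match that family (taken with a uniform constant) against the matrix $A_1$ condition. Write $F := W\overline{\mathbf{B}}$; then $F(x) = W(x)\overline{\mathbf{B}}$ is a complex ellipsoid around $0$, so $F(x)\in\cK_{\bcs}(\C^d)$, the map $F$ is measurable by \Cref{lemma:measurability_preserved_through_continuous_operations}, and it is locally integrable since $|F(x)| = |W(x)|$. For $K\in\cK_{\bcs}(\C^d)$ and $e\in\C^d$ set $h_K(e):=\sup_{z\in K}|\langle z,e\rangle|$; by complex symmetry $h_K(e)$ equals the ordinary (real) support function of $K$ at $e$, so for $K,L\in\cK_{\bcs}(\C^d)$ and $C>0$ one has $K\subseteq CL$ if and only if $h_K(e)\le Ch_L(e)$ for all $e$, and by continuity of support functions it is enough to test this on a countable dense subset of $\C^d$.

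First I would compute the relevant support functions. Since $W(y)$ is self-adjoint, $h_{W(y)\overline{\mathbf{B}}}(e)=\sup_{|v|\le 1}|\langle W(y)v,e\rangle|=|W(y)e|$. Using the standard formula for the support function of an Aumann integral (a measurable-selection argument, as in \Cref{section:extrapolation}), $h_{\aver{F}_Q}(e)=\dashint_Q h_{F(y)}(e)\dd y=\dashint_Q|W(y)e|\dd y$, and since the support function of the closed convex hull of a union is the supremum of the support functions, $h_{\cM^{\cK}F(x)}(e)=\sup_{Q\ni x}\dashint_Q|W(y)e|\dd y=\cM(|W(\cdot)e|)(x)$, where $\cM$ is the Hardy--Littlewood maximal operator over cubes; also $h_{F(x)}(e)=|W(x)e|$. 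Hence $W\overline{\mathbf{B}}\in A_1^{\cK}$ if and only if there is $C>0$ with $\cM(|W(\cdot)e|)(x)\le C|W(x)e|$ for a.e.\ $x$ and all $e$. Testing on a countable dense set of directions and using that $e\mapsto\cM(|W(\cdot)e|)(x)$ is $\cM(|W|)(x)$-Lipschitz lets us interchange the ``a.e.\ $x$'' and ``all $e$'' quantifiers, so this is equivalent to $\Lambda_W:=\sup_{e\in\C^d\setminus\{0\}}[\,|W(\cdot)e|\,]_{A_1}<\infty$. It then remains to prove $\Lambda_W\simeq_d[W]_{A_1}$.

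For $\Lambda_W\le[W]_{A_1}$, I would write, for a.e.\ $x\in Q$ and all $y\in Q$, $|W(y)e|=|W(y)W^{-1}(x)\,W(x)e|\le|W(y)W^{-1}(x)|\,|W(x)e|=|W^{-1}(x)W(y)|\,|W(x)e|$, using submultiplicativity of the spectral norm and $|M|=|M^{*}|$ together with self-adjointness; averaging in $y$ over $Q$ gives $\dashint_Q|W(y)e|\dd y\le[W]_{A_1}|W(x)e|$, hence $[\,|W(\cdot)e|\,]_{A_1}\le[W]_{A_1}$ for every $e$.

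The reverse bound $[W]_{A_1}\lesssim_d\Lambda_W$ is the step I expect to be the real obstacle. Here I would use the reducing operator $\cW_{Q,1}$ of $W$ over $Q$ with respect to the exponent $1$, which satisfies $\dashint_Q|W(y)v|\dd y\le|\cW_{Q,1}v|\le d^{1/2}\dashint_Q|W(y)v|\dd y$ for all $v$. From the scalar condition one gets, for a.e.\ $x\in Q$ and all $e$, $|\cW_{Q,1}e|\le d^{1/2}\Lambda_W|W(x)e|$, i.e.\ $|\cW_{Q,1}W^{-1}(x)|\le d^{1/2}\Lambda_W$. On the other hand, $|W^{-1}(x)W(y)|\le|W^{-1}(x)\cW_{Q,1}|\,|\cW_{Q,1}^{-1}W(y)|=|\cW_{Q,1}W^{-1}(x)|\,|\cW_{Q,1}^{-1}W(y)|$, while $|\cW_{Q,1}^{-1}W(y)|=|W(y)\cW_{Q,1}^{-1}|\simeq_d\sum_i|W(y)\cW_{Q,1}^{-1}e_i|$ and $\dashint_Q|W(y)\cW_{Q,1}^{-1}e_i|\dd y\le|\cW_{Q,1}\cW_{Q,1}^{-1}e_i|=1$, so $\dashint_Q|\cW_{Q,1}^{-1}W(y)|\dd y\lesssim_d1$. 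Combining these, $\dashint_Q|W^{-1}(x)W(y)|\dd y\lesssim_d\Lambda_W$ for a.e.\ $x\in Q$, and taking the supremum over cubes $Q$ gives $[W]_{A_1}\lesssim_d\Lambda_W$; together with the previous paragraphs this yields the equivalence. Besides this reducing-operator manipulation, the points requiring genuine care are the two quantifier interchanges (in $x$ versus in $e$) and invoking the support-function identity for the Aumann integral.
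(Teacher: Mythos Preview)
The paper does not give its own proof of this statement; it is quoted verbatim as \cite[Corollary 7.4]{Cruz_Uribe_Bownik_Extrapolation} and used as a black box in the extrapolation section. So there is no in-paper argument to compare against.

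Your proof is correct. The reduction via support functions, $h_{\cM^{\cK}(W\overline{\mathbf{B}})(x)}(e)=\cM(|W(\cdot)e|)(x)$ and $h_{W(x)\overline{\mathbf{B}}}(e)=|W(x)e|$, cleanly identifies $W\overline{\mathbf{B}}\in A_1^{\cK}$ with the uniform scalar $A_1$ condition $\Lambda_W:=\sup_{e\neq 0}[\,|W(\cdot)e|\,]_{A_1}<\infty$, and the two-sided comparison $\Lambda_W\simeq_d[W]_{A_1}$ via reducing operators is standard and carried out correctly. Two small points worth making explicit: (i) in the quantifier interchange you need $\cM(|W|)(x)<\infty$ a.e.\ for the Lipschitz argument; this is not automatic for a merely locally integrable $W$, but under either hypothesis it follows (e.g.\ from $\Lambda_W<\infty$ applied to the basis vectors $e_i$, since $\cM(|W|)\lesssim_d\sum_i\cM(|W(\cdot)e_i|)\le\Lambda_W\sum_i|W(\cdot)e_i|\simeq_d\Lambda_W|W|$); (ii) the paper's printed definition of $A_1^{\cK}$ omits the constant $C$ in the inclusion, which is a typo --- you have used the intended definition $\cM^{\cK}F(x)\subseteq C\,F(x)$, consistent with the line defining $[F]_{A_1^{\cK}}$ as the infimum of such $C$. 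This support-function route is essentially the same strategy used in the Bownik--Cruz-Uribe reference.
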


Bownik and Cruz-Uribe \cite[Theorem 7.6]{Cruz_Uribe_Bownik_Extrapolation} establish a Rubio de Francia-type algorithm to prove their extrapolation algorithm. This algorithm is of fundamental importance also for us, so we recall it below. We refer to \cite[Theorem 7.6]{Cruz_Uribe_Bownik_Extrapolation} for a detailed proof.

\begin{letteredtheorem}[\protect{\cite[Theorem 7.6]{Cruz_Uribe_Bownik_Extrapolation}}]
\label{thm:RubiodeFranciaalgorithm}
    Let $1\leq p\leq\infty$ and let $W$ be a $d\times d$ matrix weight on $\R^n$. Let $T:L^{p}_{\cK}(W)\to L^{p}_{\cK}(W)$ be a bounded sublinear and monotone map. Given $G\in L^{p}_{\cK}(W)$, define $SG$ by
    \begin{equation*}
        SG:=\sum_{k=0}^{\infty}\frac{1}{2^k\norm{T}_{L^{p}_{\cW}(K)}^{k}}T^{k}G,
    \end{equation*}
    where $T^0G:=G$ and $T^k$ denotes the $k$-fold composition of $T$ with itself. Then, this series converges in $L^{p}_{\cK}(W)$. Moreover, it has the following properties:
    \begin{enumerate}
        \item $G(x)\subseteq SG(x)$, for a.e.~$x\in\R^n$
        \item $\norm{SG}_{L^{p}_{\cK}(W)}\leq 2\norm{G}_{L^{p}_{\cK}(W)}$
        \item $T(SG)(x)\subseteq 2\norm{T}_{L^{p}_{\cK}(W)}SG(x)$, for a.e.~$x\in\R^n$.
    \end{enumerate}
\end{letteredtheorem}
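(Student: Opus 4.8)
The plan is to run the classical Rubio de Francia iteration in the set-valued category, where the absence of additive inverses forces one to replace a ``geometric series of operators'' by an increasing union of Minkowski partial sums. We may assume $c:=2\norm{T}_{L^{p}_{\cK}(W)}>0$: if $\norm{T}_{L^{p}_{\cK}(W)}=0$ then $T^{k}G=\{0\}$ for every $k\geq1$, so $SG=G$ and all three assertions are trivial. For $N\geq0$ put $S_{N}G:=\sum_{k=0}^{N}c^{-k}\,T^{k}G$, the pointwise Minkowski sum of the maps $c^{-k}T^{k}G\in L^{p}_{\cK}(W)$. First I would record the elementary facts: each $S_{N}G$ is measurable by \Cref{lemma:measurability_preserved_through_continuous_operations} (applied to scalar multiplication and to $\varphi(a,b)=a+b$); since $0\in T^{k}G(x)$ for all $k$ (values lie in $\cK_{\mathrm{cs}}(\C^d)$), the sequence $\bigl(S_{N}G(x)\bigr)_{N}$ is increasing; and, using subadditivity of $\norm{\cdot}_{L^{p}_{\cK}(W)}$ under Minkowski sums together with $\norm{T^{k}G}_{L^{p}_{\cK}(W)}\leq\norm{T}_{L^{p}_{\cK}(W)}^{k}\norm{G}_{L^{p}_{\cK}(W)}$,
\[
	\norm{S_{N}G}_{L^{p}_{\cK}(W)}\leq\sum_{k=0}^{N}2^{-k}\norm{G}_{L^{p}_{\cK}(W)}\leq2\norm{G}_{L^{p}_{\cK}(W)}.
\]
Then I would \emph{define} $SG(x):=\mathrm{clos}\bigl(\bigcup_{N}S_{N}G(x)\bigr)$; this is closed, convex and complex-symmetric, it contains $G(x)=S_{0}G(x)$, which is (1); it is measurable because $\dist(v,SG(x))=\inf_{N}\dist(v,S_{N}G(x))$ and \Cref{lemma:alternative_characterization_measurability} applies; and by monotone convergence when $p<\infty$, respectively by the uniform bound above when $p=\infty$, one gets $\norm{SG}_{L^{p}_{\cK}(W)}\leq2\norm{G}_{L^{p}_{\cK}(W)}$, which is (2). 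In particular $SG(x)$ is bounded for a.e.~$x$, so after modification on a null set (\Cref{lemma:a_e_boundedness_and_measurability_inner_product}) we may take $SG(x)\in\cK_{\bcs}(\C^d)$ for every $x$.

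For the convergence of the series I would isolate the tail. Fix $N$ and set $R_{N}(x):=\bigcup_{M>N}\sum_{k=N+1}^{M}c^{-k}T^{k}G(x)$. Since $S_{M}G(x)=S_{N}G(x)+\sum_{k=N+1}^{M}c^{-k}T^{k}G(x)$ for $M>N$ and $S_{N}G(x)$ is compact, one has $SG(x)=S_{N}G(x)+\mathrm{clos}(R_{N}(x))$; moreover $\mathrm{clos}(R_{N})$ lies in $L^{p}_{\cK}(W)$ and, by Minkowski's inequality,
\[
	\norm{\mathrm{clos}(R_{N})}_{L^{p}_{\cK}(W)}\leq\sum_{k>N}c^{-k}\norm{T^{k}G}_{L^{p}_{\cK}(W)}\leq2^{-N}\norm{G}_{L^{p}_{\cK}(W)}.
\]
Because $0\in R_{N}(x)$, the weighted Hausdorff distance between $S_{N}G$ and $SG$ is dominated by the function $x\mapsto|W(x)^{1/p}\,\mathrm{clos}(R_{N})(x)|$, whose $L^{p}$ norm tends to $0$; hence $S_{N}G\to SG$ in $L^{p}_{\cK}(W)$, which is the asserted convergence, and along a subsequence $S_{N}G(x)\to SG(x)$ in Hausdorff distance for a.e.~$x$.

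It remains to prove (3), namely $T(SG)(x)\subseteq c\,SG(x)=2\norm{T}_{L^{p}_{\cK}(W)}SG(x)$ for a.e.~$x$. Sublinearity of $T$ gives
\[
	T(S_{N}G)(x)\subseteq\sum_{k=0}^{N}c^{-k}T^{k+1}G(x)=c\sum_{k=1}^{N+1}c^{-k}T^{k}G(x)\subseteq c\,S_{N+1}G(x)\subseteq c\,SG(x)
\]
for a.e.~$x$, the middle inclusion using $0\in G(x)$. On the other hand, from $SG=S_{N}G+\mathrm{clos}(R_{N})$ in $L^{p}_{\cK}(W)$ and sublinearity, $T(SG)(x)\subseteq T(S_{N}G)(x)+T(\mathrm{clos}(R_{N}))(x)\subseteq c\,SG(x)+T(\mathrm{clos}(R_{N}))(x)$ a.e., while $\norm{T(\mathrm{clos}(R_{N}))}_{L^{p}_{\cK}(W)}\leq\norm{T}_{L^{p}_{\cK}(W)}\,2^{-N}\norm{G}_{L^{p}_{\cK}(W)}\to0$. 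Passing to a subsequence along which $|W(x)^{1/p}T(\mathrm{clos}(R_{N}))(x)|\to0$ for a.e.~$x$, and using that $c\,SG(x)$ is closed, I would conclude that every $v\in T(SG)(x)$ lies in $c\,SG(x)$; monotonicity of $T$, though assumed, is not needed in this formulation. The principal difficulty is structural rather than computational: $L^{p}_{\cK}(W)$ has no additive inverse, so the usual Neumann-series identity $T\circ S=\sum_{k\geq1}c^{-k}T^{k}$ is unavailable, and instead one must interpret the infinite ``sum'' as the pointwise closure of an increasing union of partial sums, verify measurability of such countable unions and closures via \Cref{lemma:alternative_characterization_measurability}, and transfer $L^{p}$-convergence of the partial sums to pointwise Hausdorff convergence and then back through $T$ using only sublinearity and the operator bound --- everything driven by the single tail estimate $\norm{\mathrm{clos}(R_{N})}_{L^{p}_{\cK}(W)}\leq2^{-N}\norm{G}_{L^{p}_{\cK}(W)}$, which is uniform in $1\leq p\leq\infty$, so that the endpoint $p=\infty$ needs no separate treatment beyond reading $L^{p}$ norms as essential suprema.
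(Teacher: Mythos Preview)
The paper does not prove this theorem; it quotes it from \cite[Theorem~7.6]{Cruz_Uribe_Bownik_Extrapolation} and explicitly refers the reader there for a detailed proof. So there is no in-paper argument to compare against.

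Your proof is correct and self-contained. You run the classical Rubio de Francia iteration in the convex-body setting, realizing the infinite Minkowski sum as the pointwise closure of the increasing union of partial sums $S_{N}G$, handling measurability via \Cref{lemma:alternative_characterization_measurability}, the norm bound via monotone convergence, and property~(3) via the exact decomposition $SG=S_{N}G+\mathrm{clos}(R_{N})$ combined with sublinearity and the tail estimate $\norm{\mathrm{clos}(R_{N})}_{L^{p}_{\cK}(W)}\leq 2^{-N}\norm{G}_{L^{p}_{\cK}(W)}$; the passage to a pointwise-convergent subsequence to absorb $T(\mathrm{clos}(R_{N}))(x)$ into the closed set $c\,SG(x)$ is the right way to close the argument in the absence of additive inverses. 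Your observation that monotonicity of $T$ is not actually invoked for (1)--(3) is correct as written --- sublinearity and boundedness suffice for the bare iteration --- though monotonicity is part of the standing hypotheses in the source.
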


\begin{dfn}
If $F:\R^n\to\cK_{\bcs}(\C^d)$ is a measurable function and $W:\R^n\to M_{d}(\C)$ is a measurable function, then following \cite{Cruz_Uribe_Bownik_Extrapolation} we way that $F$ is \emph{ellipsoid-valued with respect to $W$} is there exists a function $r:\R^n\to\R$ with $F(x)=r(x)W(x)$, for a.e.~$x\in\R^n$.
\end{dfn}

Cruz-Uribe and Bownik \cite[Lemma 8.5]{Cruz_Uribe_Bownik_Extrapolation} show that their Rubio de Francia algorithm in \Cref{thm:RubiodeFranciaalgorithm} behaves well for input functions that are ellipsoid-valued with respect to a given matrix function. 

\begin{lemabc}[\protect{\cite[Lemma 8.5]{Cruz_Uribe_Bownik_Extrapolation}}]
\label{thm:preservationofellipsoidvalued}
    Let $V:\R^n\to M_{d}(\C)$ be a measurable function and $T$ be an operator as in \Cref{thm:RubiodeFranciaalgorithm} such that whenever $G\in L^{p}_{\cK}(W)$ is ellipsoid-valued with respect to $V$, then $T(G)$ is also ellipsoid-valued with respect to $V$. Then, $SG$ is also ellipsoid-valued with respect to $V$, for all $G\in L^{p}_{\cK}(W)$ that are ellipsoid-valued with respect to $V$.
\end{lemabc}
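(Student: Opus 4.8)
The plan is to follow the effect of the Rubio de Francia operator $S$ from \Cref{thm:RubiodeFranciaalgorithm} term by term along its defining series $SG=\sum_{k=0}^{\infty}(2^{k}\norm{T}_{L^{p}_{\cK}(W)}^{k})^{-1}T^{k}G$, showing that the ellipsoid-valued structure is preserved at every finite partial sum and then passes to the limit.

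First I would record two elementary facts about the convex bodies involved. If $r\in\R$ and $E\in\cK_{\bcs}(\C^d)$, then $rE=|r|E$ since $E$ is complex-symmetric; hence $F$ being ellipsoid-valued with respect to $V$ is the same as $F(x)=\rho(x)V(x)\overline{\mathbf{B}}$ for some \emph{nonnegative measurable} $\rho\colon\R^n\to[0,\infty)$, where one may take $\rho(x):=|F(x)|/|V(x)|$ on $\{V(x)\neq0\}$ and $\rho(x):=0$ otherwise. Second, for a convex set $E$ with $0\in E$ and scalars $s,t\geq0$ one has the Minkowski-sum identity $sE+tE=(s+t)E$; combined with the fact that every member of $\cK_{\bcs}(\C^d)$ contains $0$, this makes the partial sums $S_{N}G:=\sum_{k=0}^{N}(2^{k}\norm{T}_{L^{p}_{\cK}(W)}^{k})^{-1}T^{k}G$ a pointwise increasing sequence of convex bodies.

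Next I would run the induction on $k$: $T^{0}G=G$ is ellipsoid-valued with respect to $V$ by hypothesis, and if $T^{k}G$ is, then so is $T^{k+1}G=T(T^{k}G)$ by the assumed mapping property of $T$. Writing $T^{k}G(x)=r_{k}(x)V(x)\overline{\mathbf{B}}$ with $r_{k}\geq0$ measurable, the Minkowski-sum identity gives $S_{N}G(x)=\rho_{N}(x)V(x)\overline{\mathbf{B}}$ with $\rho_{N}:=\sum_{k=0}^{N}(2^{k}\norm{T}_{L^{p}_{\cK}(W)}^{k})^{-1}r_{k}$ measurable and nonnegative; since the $\rho_{N}$ increase, they converge pointwise to a measurable $\rho\colon\R^n\to[0,\infty]$.

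The final step is the passage to the limit, and this is the only slightly delicate point. By \Cref{thm:RubiodeFranciaalgorithm} the series converges in $L^{p}_{\cK}(W)$ and $\norm{SG}_{L^{p}_{\cK}(W)}<\infty$; since the partial sums increase pointwise, one identifies $SG(x)=\mathrm{clos}\big(\bigcup_{N}S_{N}G(x)\big)=\mathrm{clos}\big(\bigcup_{N}\rho_{N}(x)V(x)\overline{\mathbf{B}}\big)$ for a.e.\ $x$ (e.g.\ by extracting a pointwise a.e.\ convergent subsequence from the $L^{p}_{\cK}(W)$-limit and using monotonicity to recover the full sequence). Since $W$ is a matrix weight and $\norm{SG}_{L^{p}_{\cK}(W)}<\infty$, the set $SG(x)$ is bounded for a.e.\ $x$, which forces $\rho(x)<\infty$ a.e.: on $\{V(x)\neq0\}$ an infinite $\rho(x)$ would make the increasing union of the ellipsoids $\rho_{N}(x)V(x)\overline{\mathbf{B}}$ unbounded, while on $\{V(x)=0\}$ the value of $\rho$ is irrelevant. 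For $\rho(x)<\infty$, the increasing union $\bigcup_{N}\rho_{N}(x)V(x)\overline{\mathbf{B}}$ is contained in the closed set $\rho(x)V(x)\overline{\mathbf{B}}$ and dense in it (because $\rho_{N}(x)\to\rho(x)$), so its closure equals $\rho(x)V(x)\overline{\mathbf{B}}$. Therefore $SG(x)=\rho(x)V(x)\overline{\mathbf{B}}$ for a.e.\ $x$, i.e.\ $SG$ is ellipsoid-valued with respect to $V$ with scalar function $\rho$. The main obstacle is exactly this interchange of the infinite series with the pointwise ellipsoid description; the rest is routine bookkeeping with the two convex-geometry identities noted above.
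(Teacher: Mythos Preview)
The paper under review does not supply its own proof of this lemma; it is quoted from \cite[Lemma 8.5]{Cruz_Uribe_Bownik_Extrapolation} and used as a black box. Your argument is correct and is the natural one: induction makes each $T^kG$ ellipsoid-valued, the Minkowski identity $sE+tE=(s+t)E$ (valid since $0\in E$) collapses every partial sum to $\rho_N(x)V(x)\overline{\mathbf{B}}$ with $\rho_N$ nondecreasing, and the finiteness of $\norm{SG}_{L^{p}_{\cK}(W)}$ from \Cref{thm:RubiodeFranciaalgorithm} forces $\rho:=\lim_N\rho_N<\infty$ a.e.\ and identifies the limit as $\rho(x)V(x)\overline{\mathbf{B}}$. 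The one step you single out as delicate---matching the $L^{p}_{\cK}(W)$-sum with the pointwise closure of the increasing union---is in the cited source handled by \emph{defining} the infinite Minkowski sum pointwise as that closure, with ``converges in $L^{p}_{\cK}(W)$'' meaning the resulting map lies in the space; so your subsequence extraction, while harmless, is not needed.
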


Finally, we need the following reverse Jones factorization type lemma for matrix weights \cite[Corollary 8.9]{Cruz_Uribe_Bownik_Extrapolation}, which is itself a special case of a much more general result \cite[Proposition 8.8]{Cruz_Uribe_Bownik_Extrapolation}.

\begin{letteredtheorem}[\protect{\cite[Corollary 8.9]{Cruz_Uribe_Bownik_Extrapolation}}]
\label{thm:reverseJonesfactorization}
Let $1<p<\infty$, and let $W$ be a $d\times d$ matrix weight on $\R^n$.

\begin{enumerate}
   \item Let $s:\R^n\to[0,\infty)$ be a measurable function such that $W_1:=sW^{1/p}\in A_{\infty}$. Then, for all $p_0\in(p,\infty)$, we have
    \begin{equation*}
        \overline{W}:=s^{p_0-p}W^{p_0/p}\in A_{p_0},
    \end{equation*}
    with
    \begin{equation*}
        [\overline{W}]_{A_{p_0}}\lesssim_{d,n,p,p_0}[W]_{A_p}[W_1]_{A_{\infty}}^{p_0-p}.
    \end{equation*}

    \item Let $r:\R^n\to[0,\infty)$ be a measurable function such that $W_0:=rW^{1/p}\in A_{1}$. Then, for all $p_0\in(1,p)$, we have
    \begin{equation*}
        \overline{W}:=r^{p_0-p'(p_0-1)}W^{p_0/p}\in A_{p_0},
    \end{equation*}
    with
    \begin{equation*}
        [\overline{W}]_{A_{p_0}}\lesssim_{d,n,p,p_0}[W_0]_{A_{1}}^{p_0-p'(p_0-1)}[W]_{A_p}^{(p_0-1)/(p-1)}.
    \end{equation*}
    \end{enumerate}
\end{letteredtheorem}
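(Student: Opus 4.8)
\smallskip
\noindent\textbf{Strategy.} The plan is to adapt the full--range matrix extrapolation of \cite[Theorem 9.1]{Cruz_Uribe_Bownik_Extrapolation}, following the scalar limited--range scheme of \cite[Theorem 4.9]{Auscher_Martell_I_2007}, with the whole argument carried out at the level of the convex--set--valued maximal function $\cM^{\cK}$. Fix $q\in(p_0,q_0)$. If $q=p$ (possible only in case (1)) there is nothing to prove, so assume $q\neq p$ and distinguish two regimes: \emph{moving the exponent up}, $p_0<q<p$ (the only possibility in the endpoint cases (2) and (3), where $p=q_0$), and \emph{moving it down}, $p<q<q_0$. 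Fix $(F,G)\in\cF$ and a $d\times d$ matrix weight $W$ on $\R^n$ with $[W^{s(q)'}]_{A_{r(q)}}<\infty$; since neither $F$ nor $G$ is a.e.\ equal to $\{0\}$ we have $\|F\|_{L^{q}_{\cK}(W)},\|G\|_{L^{q}_{\cK}(W)}>0$, and we may assume $\|F\|_{L^{q}_{\cK}(W)}<\infty$ as well as $\|G\|_{L^{q}_{\cK}(W)}<\infty$ (otherwise the conclusion is vacuous). Throughout we pass freely between convex--body Lebesgue norms and norms of selection functions via \Cref{lemma:compute_Lebesgue_norm_through_selection_function}, and between the two--exponent matrix condition $[\,\cdot\,^{s'}]_{A_{a',b}}$ and the ``$A_{t}$'' and reverse--H\"older characteristics of the scalar slices $|W^{1/\cdot}e|^{\cdot}$ of powers of $W$ via \Cref{lemma:containmentsofmatrixclasses1} (with \Cref{lemma:containments_of_matrix_classes_2} used to move between nearby two--exponent classes).

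\smallskip
\noindent\textbf{The Rubio de Francia step.} In each regime we build, out of $F$ (and out of $G$), an auxiliary matrix weight $\overline{W}$ for which the hypothesis at exponent $p$ applies. Writing the $L^{q}_{\cK}(W)$--norm through an $L^{q/p_0}$--H\"older splitting (and, when $q>p$, also an $L^{p'/q_0'}$--splitting) reduces matters to iterating the convex--set--valued Rubio de Francia algorithm $S$ of \Cref{thm:RubiodeFranciaalgorithm}, built from $\cM^{\cK}$, on an ellipsoid--valued datum. By \Cref{thm:preservationofellipsoidvalued} the output $SG$ remains ellipsoid--valued, hence of the form $\rho(\cdot)V(\cdot)$ for a scalar $\rho\geq 0$ and a matrix function $V$; by property (3) of \Cref{thm:RubiodeFranciaalgorithm} together with \Cref{thm:twoA1classes} the associated matrix weight lies in $A_1$ (or, in the endpoint cases, in the $A_\infty$ variant), with characteristic controlled through $\|\cM^{\cK}\|_{L^{\tau}_{\cK}(\,\cdot\,)}\lesssim[\,\cdot\,]_{A_\tau}^{1/(\tau-1)}$ from \Cref{thm:boundconvexsetmaximalfunction}, where $\tau$ is the relevant dual auxiliary exponent; this contributes a power of $(q-p_0)^{-1}$ (and, in the second regime, of $(q_0-q)^{-1}$) to the final constant. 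Here one also checks that $\|F\|_{L^{p}_{\cK}(\overline W)}\lesssim\|F\|_{L^{q}_{\cK}(W)}<\infty$, so that the hypothesis at $p$ is applicable.

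\smallskip
\noindent\textbf{Reverse Jones factorization and closing the loop.} Feeding the $A_1$ (resp.\ $A_\infty$) building block produced above, together with $W$ (which by \Cref{lemma:containmentsofmatrixclasses1} sits in an $A_{q/p_0}\cap RH$--type class with characteristic $\lesssim[W^{s(q)'}]_{A_{r(q)}}$), into the reverse Jones factorization of \Cref{thm:reverseJonesfactorization} --- part (2) when $q>p$ and the $A_\infty$ branch, part (1), when $q<p$, in both cases applied with ``base exponent'' $q$ and ``target exponent'' $p$ and with the correct power of $W$ factored out --- yields a matrix weight $\overline W$ that lies, after re--reading the output through \Cref{lemma:containmentsofmatrixclasses1}, in exactly the class demanded by the hypothesis at $p$, i.e.\ $[\overline W^{s(p)'}]_{A_{r(p)}}<\infty$, with this quantity bounded by a product of powers of $[W^{s(q)'}]_{A_{r(q)}}$ and of the $A_1$/$A_\infty$ constant from the previous step. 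Applying the hypothesis at $p$ to $(F,G)$ and the weight $\overline W$, and then undoing the H\"older splittings (the $F$--side is controlled because $\overline W$ was built from $F$; the $G$--side because $SG\supseteq G$ pointwise and $\|SG\|_{L^{p}_{\cK}(W)}\leq 2\|G\|_{L^{p}_{\cK}(W)}$), gives
$\|F\|_{L^{q}_{\cK}(W)}\leq c_1K_{p}\!\big(c_2[W^{s(q)'}]_{A_{r(q)}}^{\alpha(p,q)}\big)\|G\|_{L^{q}_{\cK}(W)}$
with $c_1,c_2>0$ depending only on $n,d,p_0,q_0,p,q$, which is the asserted estimate. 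The endpoint cases $p=q_0$ are handled in the same way, with the $RH_{s(p)'}$--part replaced throughout by its $RH_\infty$/$A_\infty$ endpoint (which is precisely the $A_\infty$--building--block branch of \Cref{thm:reverseJonesfactorization}); when moreover $q_0=\infty$ one additionally drops the upper--side H\"older splitting and reads the relevant $s$--exponents in their limiting form.

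\smallskip
\noindent\textbf{Main obstacle.} The qualitative skeleton is a transcription of \cite[Theorem 9.1]{Cruz_Uribe_Bownik_Extrapolation} guided by \cite[Theorem 4.9]{Auscher_Martell_I_2007}; the substance is quantitative. Unlike the scalar case, where $A_t\cap RH_s$ is literally a single $A_r$ condition on $w^{s}$, in the matrix case $[W^{s'}]_{A_{a',b}}$ is only comparable to $([W]_{A_t}[W]_{RH_{t,s'}})^{s'}$ from above and to $\max([W]_{A_t},[W]_{RH_{t,s'}})^{s'}$ from below (\Cref{lemma:containmentsofmatrixclasses1}), and by \Cref{prop:counterexample_statement} nearby two--exponent classes genuinely differ; hence one must choose the powers of $W$ factored in \Cref{thm:reverseJonesfactorization} so that $\overline W$ lands in the \emph{exact} class the hypothesis requires, and then multiply together the $1/(\tau-1)$ from $\|\cM^{\cK}\|$, the exponents in \Cref{thm:reverseJonesfactorization} (themselves of the form $p-p'(p-1)=(q-p)/(q-1)$, etc.), and the $s'$--losses of \Cref{lemma:containmentsofmatrixclasses1}, to land on precisely the exponent $\alpha(p,q)$ of \eqref{eq:extrapolation_exponent}. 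Carrying out this bookkeeping without slack, and verifying that it degenerates correctly in the endpoint cases, is where essentially all of the effort goes.
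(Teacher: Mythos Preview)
Your proposal is not a proof of the stated theorem. The statement in question is \Cref{thm:reverseJonesfactorization}, the reverse Jones factorization result cited from \cite[Corollary 8.9]{Cruz_Uribe_Bownik_Extrapolation}. What you have written is a proof sketch for \Cref{thm:limited_range_extrapolation_matrix_weights}, the limited range extrapolation theorem. Indeed, your sketch explicitly \emph{invokes} \Cref{thm:reverseJonesfactorization} as a tool (``Feeding the $A_1$ (resp.\ $A_\infty$) building block \ldots into the reverse Jones factorization of \Cref{thm:reverseJonesfactorization}''), so it cannot possibly be a proof of that same statement.

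Moreover, the paper does not prove \Cref{thm:reverseJonesfactorization} at all: it is stated as a lettered theorem imported from \cite{Cruz_Uribe_Bownik_Extrapolation} (where it is a special case of \cite[Proposition 8.8]{Cruz_Uribe_Bownik_Extrapolation}), so there is no ``paper's own proof'' to compare against. If your intent was to sketch the proof of \Cref{thm:limited_range_extrapolation_matrix_weights}, then your outline is broadly aligned with the paper's approach in Section~\ref{section:extrapolation}; but as a response to the stated task it addresses the wrong result.
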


\subsection{Statement of main result}

In the rest of this section, $\cF$ denotes a non-empty family of pairs $(F,G)$ of measurable functions $F,G:\R^n\to\cK_{\textrm{bcs}}(\C^d)$ such that neither $F$ nor $G$ is a.e.~equal to $\{0\}$. Whenever we write an inequality of the form
\begin{equation*}
    \norm{F}_{L^{p}_{\cK}(W)}\leq C\norm{G}_{L^{p}_{\cK}(W)},\quad (F,G)\in\cF
\end{equation*}
we understand that the inequality holds for all pairs $(F,G)\in\cF$ with $\norm{F}_{L^{p}_{\cK}(W)}<\infty$. The constant $C$ depends on some concrete Muckenhoupt characteristic of some concrete power of the weight $W$, but not on the particular weight $W$ itself.

We need some notation. Let $0<p_0<q_0\leq\infty$. For all $p\in[p_0,q_0)$, we set
\begin{equation*}
    t(p):=\frac{p}{p_0}\geq 1,\quad s(p):=\frac{q_0}{p}\geq 1,\quad
    r(p):=s(p)'(t(p)-1)+1\geq 1,
\end{equation*}
where the usual convention $\infty'=1$ is used; notice that $1<s(p)\leq\infty$, so $1\leq(s(p))'<\infty$. Observe that for all $p,q\in[p_0,q_0)$ with $p<q$, we have $r(p)<r(q)$.

\newtheorem*{thm:limited_range_extrapolation_matrix_weights}{\Cref{thm:limited_range_extrapolation_matrix_weights}}
\begin{thm:limited_range_extrapolation_matrix_weights}
    Let $0<p_0<q_0\leq\infty$. Assume that for some fixed $p\in[p_0,q_0]$, there exists an increasing function $K_{p}$ such that one of the following holds:
    \begin{enumerate}
        \item If $p<q_0$, then
        \begin{equation*}
        \norm{F}_{L^{p}_{\cK}(W)}\leq K_{p}([W^{s(p)'}]_{A_{r(p)}})\norm{G}_{L^{p}_{\cK}(W)},\quad (F,G)\in\cF,
        \end{equation*}
    for all $d\times d$ matrix weights $W$ on $\R^n$ with $[W^{s(p)'}]_{A_{r(p)}}<\infty$.
        \item If $p=q_0<\infty$, then
        \begin{equation*}
        \norm{F}_{L^{p}_{\cK}(W)}\leq K_{p}([W^{t(p)'/t(p)}]_{A_{\infty}})\norm{G}_{L^{p}_{\cK}(W)},\quad (F,G)\in\cF,
        \end{equation*}
    for all $d\times d$ matrix weights $W$ on $\R^n$ with $[W^{t(p)'/t(p)}]_{A_{\infty}}<\infty$.
    \item If $p=q_0=\infty$, then
        \begin{equation*}
        \norm{F}_{L^{p}_{\cK}(W)}\leq K_{p}([W^{1/p_0}]_{A_{\infty}})\norm{G}_{L^{p}_{\cK}(W)},\quad (F,G)\in\cF,
        \end{equation*}
    for all $d\times d$ matrix weights $W$ on $\R^n$ with $[W^{1/p_0}]_{A_{\infty}}<\infty$.
    \end{enumerate}
     Then, for all $q\in(p_0,q_0)$,
    \begin{equation*}
        \norm{F}_{L^{q}_{\cK}(W)}\leq K_{q}([W^{s(q)'}]_{A_{r(q)}})\norm{G}_{L^{q}_{\cK}(W)},\quad (F,G)\in\cF
    \end{equation*}
    holds, for all $d\times d$ matrix weights $W$ on $\R^n$ with $[W^{s(q)'}]_{A_{r(q)}}<\infty$, where
    \begin{equation*}
        K_{q}([W^{s(q)'}]_{A_{r(q)}})=c_1K_{p}(c_2[W^{s(q)'}]_{A_{r(q)}}^{\alpha(p,q)}),
    \end{equation*}
    for some constants $c_1,c_2>0$ depending only on $n,d,p_0,q_0,p$ and $q$, and the exponent $\alpha(p,q)$ is given by
    \begin{equation}
    \label{eq:extrapolation_exponent}
        \alpha(p,q):=
        \begin{cases}
            \frac{r(p)-1}{r(q)-1},\text{ if } q<p<q_0\\\\
            \frac{1}{r(q)-1},\text{ if }q<p=q_0\\\\
            1,\text{ if }p\leq q
        \end{cases}
        .
    \end{equation}
\end{thm:limited_range_extrapolation_matrix_weights}

\begin{rem}
    Given the comparison results proved in \Cref{section:matrix-weight}, the conditions on the weights considered in the above theorem are reasonable. In particular, in the case $d=1$ we recover Theorem \Cref{thm:original_limited_range_extrapolation}. Moreover, in the case $p_0=1$ and $q_0=\infty$ we recover \cite[Theorem 9.1]{Cruz_Uribe_Bownik_Extrapolation}.
\end{rem}

\subsection{Proof of main result}

Here we prove \Cref{thm:limited_range_extrapolation_matrix_weights} in detail. We first need the following technical lemma.

\begin{lem}
    Let $p,q\in[p_0,q_0)$. The following two identities are true:
    \begin{equation}
    \label{first identity}
    s(p)'s(q)'(p-q)=ps(p)'-qs(q)'
    \end{equation}
and
    \begin{equation}
    \label{second identity}
    \frac{r(p)}{r(q)}=\frac{ps(p)'}{qs(q)'}.
    \end{equation}
\end{lem}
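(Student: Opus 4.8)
The plan is to reduce both identities to elementary algebra by first recording clean closed forms for $s(p)'$ and $r(p)$ in terms of $p$, $p_0$ and $q_0$. Since $s(p)=q_0/p$, taking Hölder conjugates gives
\[
s(p)' = \frac{q_0}{q_0-p}
\]
in the case $q_0<\infty$; in the degenerate case $q_0=\infty$ the convention $\infty'=1$ is the limit of this formula as $q_0\to\infty$, and there both \eqref{first identity} and \eqref{second identity} reduce instantly to $p-q=p-q$ and $p/q=p/q$ respectively (using $s(p)'=s(q)'=1$ and $r(p)=t(p)=p/p_0$). So from now on I would assume $q_0<\infty$; note that since $p,q\in[p_0,q_0)$ the denominators $q_0-p$ and $q_0-q$ are strictly positive, so all the fractions below make sense.

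For \eqref{first identity} I would substitute $s(p)'=q_0/(q_0-p)$ and $s(q)'=q_0/(q_0-q)$ into both sides. The left-hand side is visibly $\frac{q_0^2(p-q)}{(q_0-p)(q_0-q)}$. For the right-hand side, placing the two fractions $\frac{pq_0}{q_0-p}$ and $\frac{qq_0}{q_0-q}$ over the common denominator $(q_0-p)(q_0-q)$ yields the numerator $q_0\bigl(p(q_0-q)-q(q_0-p)\bigr)$, and $p(q_0-q)-q(q_0-p)=pq_0-qq_0=q_0(p-q)$, so the right-hand side equals the same expression. This proves the first identity.

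For \eqref{second identity} the key preliminary step is to simplify $r(p)$. Using $t(p)-1=(p-p_0)/p_0$ together with the formula for $s(p)'$,
\[
r(p)=s(p)'(t(p)-1)+1=\frac{q_0(p-p_0)+p_0(q_0-p)}{p_0(q_0-p)}=\frac{p(q_0-p_0)}{p_0(q_0-p)},
\]
where in the numerator the two $q_0p_0$ terms cancel, leaving $q_0p-p_0p=p(q_0-p_0)$. Consequently $\frac{r(p)}{r(q)}=\frac{p(q_0-q)}{q(q_0-p)}$, while on the other side $\frac{ps(p)'}{qs(q)'}=\frac{p\,q_0/(q_0-p)}{q\,q_0/(q_0-q)}=\frac{p(q_0-q)}{q(q_0-p)}$, and the two expressions agree.

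There is no genuine obstacle in this lemma: it is pure bookkeeping, and the argument above is essentially the whole proof modulo arithmetic. The only point that calls for a little attention is keeping the endpoint $q_0=\infty$ consistent with the rest of the computation, which is why I would dispatch that case at the very start and then work with the honest fraction $q_0/(q_0-p)$ throughout.
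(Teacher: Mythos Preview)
Your proof is correct and follows essentially the same approach as the paper: both treat the case $q_0=\infty$ separately at the outset and then use the closed form $s(p)'=q_0/(q_0-p)$ for the first identity. The only minor difference is in the second identity: the paper reduces it to the first one by cross-multiplying and expanding, whereas you derive the closed form $r(p)=\dfrac{p(q_0-p_0)}{p_0(q_0-p)}$ and compare the two ratios directly; both routes are equally short and valid.
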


\begin{proof}
If $q_0=\infty$, then $s(p)'=s(q)'=1$, so the first identity holds trivially, and the second says $\frac{t(p)}{t(q)}=\frac{p}{q}$, which also holds trivially because $t(p)=\frac{p}{p_0}$ and $t(q)=\frac{q}{p_0}$.

Now we prove the identities under the assumption that $q_0<\infty$. For the first identity we have
\begin{equation*}
    s(p)'=(q_0/p)'=\frac{q_0}{q_0-p},\quad s(q)'=\frac{q_0}{q_0-q},
\end{equation*}
so what we want to see becomes
\begin{equation*}
    \frac{q_0^2(p-q)}{(q_0-p)(q_0-q)}=\frac{pq_0}{q_0-p}-\frac{qq_0}{q_0-q}.
\end{equation*}
Computing
\begin{equation*}
    pq_0(q_0-q)-qq_0(q_0-p)=q_0^2p-pqq_0-qq_0^2+pqq_0=q_0^2(p-q),
\end{equation*}
we obtain the first identity.

The second identity can be equivalently rewritten as
\begin{align*}
    qs(q)'r(p)=ps(p)'r(q),
\end{align*}
that is
\begin{equation*}
    qs(q)'[s(p)'(t(p)-1)+1]=ps(p)'[s(q)'(t(q)-1)+1],
\end{equation*}
that is after expanding and rearranging
\begin{equation*}
    s(p)'s(q)'(qt(p)-q-pt(q)+p)=ps(p)'-qs(q)'.
\end{equation*}
We compute
\begin{equation*}
    qt(p)-q-pt(q)+p=\frac{qp}{p_0}-q-\frac{pq}{p_0}+p=p-q.
\end{equation*}
Thus, the second identity is equivalent to the first one, which concludes the proof.
\end{proof}

\begin{proof}[Proof (of \Cref{thm:limited_range_extrapolation_matrix_weights})]

We combine and adapt the proofs of \cite[Theorem 4.9]{Auscher_Martell_I_2007} and \cite[Theorem 9.1]{Cruz_Uribe_Bownik_Extrapolation}.

Let $q\in(p_0,q_0)$, $W$ be a $d\times d$ matrix weight with $[W^{s(q)'}]_{A_{r(q)}}<\infty$, and $(F,G)\in\cF$ with $0<\norm{F}_{L^{q}_{\cK}(W)}<\infty$. We may assume that $q\neq p$. We may also assume that $0<\norm{F}_{L^{q}_{\cK}(W)}<\infty$, otherwise the inequality to show is trivial. Throughout the proof, we suppress the notation dependence of implied constants on $n,d,p_0,q_0,p$, and $q$. Moreover, $c,c_1,c_2$ are positive finite constants depending only on these parameters which might change from line to line.

We now distinguish several cases.

\item[\bf Case 1.] $q<p<q_0$. We first need to define an associated Rubio de Francia operator. Thus, consider the following auxiliary operator $N_{W}$ acting on functions $H\in L^{r(q)}_{\cK}(V)$:
\begin{equation*}
    N_{W}H(x):=|V(x)^{1/r(q)}H(x)|W(x)^{-1/r(q)}\overline{\mathbf{B}},\quad\text{for a.e. }x\in\R^n,
\end{equation*}
where $V:=W^{s(q)'}$. For this operator we may compute
\begin{align*}
    \norm{N_{W}H}_{L^{r(q)}_{\cK}(W)}=\norm{H}_{L^{r(q)}_{\cK}(V)}.
\end{align*}
It is easy to see that $N_{W}$ is sublinear and monotone.

Furthermore, we claim that
\begin{equation*}
    H(x)\subseteq \Sigma(x)N_{W}H(x),\quad\text{for a.e. }x\in\R^n,
\end{equation*}
where $\Sigma:=W^{1/r(q)}\cdot V^{-1/r(q)}=W^{(1-s(q)')/r(q)}$. Indeed, this inclusion is equivalent to
\begin{equation*}
    H(x)\subseteq|V(x)^{1/r(q)}H(x)|V(x)^{-1/r(q)}\overline{\mathbf{B}},\quad\text{for a.e. }x\in\R^n,
\end{equation*}
that is
\begin{equation*}
    V(x)^{1/r(q)}H(x)\subseteq|V(x)^{1/r(q)}H(x)|\overline{\mathbf{B}},\quad\text{for a.e. }x\in\R^n,
\end{equation*}
which is obvious.

Now we define the operator $P_{W}$ acting on functions $H\in L^{r(q)}_{\cK}(W)$ by
\begin{equation*}
    P_{W}H(x):=N_{W}(\cM^{\cK}(\Sigma H))(x)=|V(x)^{1/r(q)}\cM^{\cK}(\Sigma H)(x)|W(x)^{-1/r(q)}\overline{\mathbf{B}},\quad\text{for a.e. }x\in\R^n.
\end{equation*}
Then, using \Cref{thm:boundconvexsetmaximalfunction} we can compute
\begin{align*}
    \norm{P_{W}H}_{L^{r(q)}_{\cK}(W)}=\norm{\cM^{\cK}(\Sigma H)}_{L^{r(q)}_{\cK}(V)}\lesssim[V]_{A_{r(q)}}^{1/(r(q)-1)}\norm{\Sigma H}_{L^{r(q)}_{\cK}(V)}=[V]_{A_{r(q)}}^{1/(r(q)-1)}\norm{H}_{L^{r(q)}_{\cK}(W)}.
\end{align*}
Since $\cM^{\cK}$ is itself sublinear and monotone, it is easy to see that $P_{W}$ is sublinear and monotone. (Although we do not need it, we note that it is easy to see that $H(x)\subseteq P_{W}H(x)$, for a.e.~$x\in\R^n$.) Note also that if $H$ is not a.e.~equal to $\{0\}$, then $P_{W}H(x)\neq\{0\}$, for a.e.~$x\in\R^n$.

All assumptions of \Cref{thm:RubiodeFranciaalgorithm} are fulfilled, thus we can define the following Rubio de Francia type operator acting on functions $H\in L^{r(q)}_{\cK}(W)$:
\begin{equation*}
    \cR_{W}H:=\sum_{k=0}^{\infty}\frac{1}{2^k\norm{P_{W}}_{L^{r(q)}_{\cK}(W)}^{k}}(P_{W})^{k}H.
\end{equation*}
We record the following properties of $\cR_{W}$ that we obtain from \Cref{thm:RubiodeFranciaalgorithm}:
\begin{equation*}
    H(x)\subseteq\cR_{W}H(x),
\end{equation*}
for a.e.~$x\in\R^n$,
\begin{equation*}
    \norm{\cR_{W}H}_{L^{r(q)}_{\cK}(W)}\leq 2\norm{H}_{L^{r(q)}_{\cK}(W)}
\end{equation*}
and
\begin{equation*}
    P_{W}(\cR_{W}H)(x)\subseteq\norm{P_{W}}_{L^{r(q)}_{\cK}(W)}\cR_{W}H(x)\subseteq c[V]_{A_{r(q)}}^{1/(r(q)-1)}\cR_{W}H(x),
\end{equation*}
for a.e. $x\in\R^n$. In particular, we have
\begin{equation*}
    \cM^{\cK}(\Sigma\cR_{W}H)(x)\subseteq\Sigma(x)P_{W}(\cR_{W}H)(x)\subseteq c[V]_{A_{r(q)}}^{1/(r(q)-1)}\Sigma(x)\cR_{W}H(x),
\end{equation*}
for a.e.~$x\in\R^n$. This means that if $H$ is not a.e.~equal to $\{0\}$, then the convex body valued function $\Sigma\cR_{W}H$ belongs to the $A^{1}_{\cK}$ class with $[\Sigma\cR_{W}H]_{A^1_{\cK}}\leq c[V]_{A_{r(q)}}^{1/(r(q)-1)}$.

Finally, notice that $P_{W}H$ is always ellipsoid-valued with respect to the matrix valued function $W^{-1/r(q)}$, therefore, by \Cref{thm:preservationofellipsoidvalued}, we deduce that if $H$ happens to be ellipsoid-valued with respect to $W^{-1/r(q)}$, then $\cR_{W}H$ is also ellipsoid-valued with respect to $W^{-1/r(q)}$.

We now define the scalar-valued function
\begin{equation*}
    \bar{h}(x):=\left(\frac{|W(x)^{1/q}F(x)|}{\norm{F}_{L^{q}_{\cK}(W)}}+\frac{|W(x)^{1/q}G(x)|}{\norm{G}_{L^{q}_{\cK}(W)}}\right)^{q/r(q)},\quad\text{ for a.e. }x\in\R^n.
\end{equation*}
It is obvious that
\begin{equation*}
    \norm{\bar{h}}_{L^{r(q)}}\lesssim1.
\end{equation*}
We consider the convex body valued function
\begin{equation*}
    \bar{H}(x):=\bar{h}(x)W(x)^{-1/r(q)}\overline{\mathbf{B}},\quad\text{ for a.e. }x\in\R^n.
\end{equation*}
It is clear that $\norm{H}_{L^{r(q)}_{\cK}(W)}\leq2$. Since $\bar{H}$ is ellipsoid-valued with respect to $W^{-1/r(q)}$, we deduce that $\cR_{W}\bar{H}$ is also ellipsoid-valued with respect to $W^{-1/r(q)}$, therefore we can write $\cR_{W}\bar{H}(x)=\cR_{W}\bar{h}(x)W(x)^{-1/r(q)}\overline{\mathbf{B}}$ for some nonnegative scalar-valued function $\cR_{W}\bar{h}$. Then, we have
\begin{equation*}
\bar{h}(x)W(x)^{-1/r(q)}\overline{\mathbf{B}}\subseteq\cR_{W}\bar{h}(x)W(x)^{-1/r(q)}\overline{\mathbf{B}},
\end{equation*}
which implies immediately that $\bar{h}(x)\leq\cR_{W}\bar{h}(x)$, for a.e.~$x\in\R^n$. We now define the nonnegative scalar-valued function
\begin{equation*}
    h:=(\cR_{W}\bar{h})^{r(q)/q}.
\end{equation*}
Since $\bar{H}$ is not a.e.~equal to $\{0\}$, we deduce that $h(x)\neq0$, for a.e.~$x\in\R^n$. We compute
\begin{align*}
    &\left(\int_{\R^n}|W(x)^{1/q}F(x)|^{q}\dd x\right)^{1/q}=
    \left(\int_{\R^n}|h(x)^{-1/(p/q)'}W(x)^{1/q}F(x)|^{q}\cdot h(x)^{q/(p/q)'}\dd x\right)^{1/q}\\
   &\leq\left(\int_{\R^n}|h(x)^{-1/(p/q)'}W(x)^{1/q}F(x)|^{p}\dd  x\right)^{1/p}\left(\int_{\R^n}h(x)^{q}\dd  x\right)^{1/(q(p/q)')}.
\end{align*}
It is of course the case that
\begin{equation*}
    \int_{\R^n}h(x)^{q}\dd x=\int_{\R^n}(\cR_{W}\bar{h}(x))^{r(q)}\dd  x=\int_{\R^n}|W(x)^{1/r(q)}\bar{H}(x)|^{r(q)}\dd x\lesssim1.
\end{equation*}
Now let us deal with the first integral. Set
\begin{equation*}
    T(x):=h(x)^{-p/(p/q)'}W(x)^{p/q},\quad\text{ for a.e. }x\in\R^n.
\end{equation*}
What we want to check is that
\begin{equation*}
    T^{s(p)'}\in A_{r(p)}.
\end{equation*}
By above we have
\begin{align*}
    \Sigma\cR_{W}\bar{H}\in A^1_{\cK},
\end{align*}
that means concretely
\begin{equation*}
    (\cR_{W}\bar{h})V^{-1/r(q)}\overline{\mathbf{B}}\in A^1_{\cK}.
\end{equation*}
Thus, by \Cref{thm:twoA1classes} we have $\cR_{W}\bar{h}V^{-1/r(q)}\in A_1$, therefore,
\begin{equation*}
(\cR_{W}\bar{h})^{-1}V^{1/r(q)}\in A_{\infty},
\end{equation*}
that is
\begin{equation*}
    h^{-q/r(q)}W^{s(q)'/r(q)}\in A_{\infty}.
\end{equation*}
Set
\begin{equation*}
    m(x):=h(x)^{-q/r(q)},\quad\text{ for a.e. }x\in\R^n.
\end{equation*}
Then
\begin{equation*}
    mW^{s(q)'/r(q)}\in A_{\infty}\quad\text{and}\quad W^{s(q)'}\in A_{r(q)}.
\end{equation*}
The fact that $p>q$ easily implies $r(p)>r(q)$. Thus, by \Cref{thm:reverseJonesfactorization} (1), we deduce
\begin{equation*}
    m^{r(p)-r(q)}(W^{s(q)'})^{r(p)/r(q)}\in A_{r(p)}.
\end{equation*}
Our goal now is to check that
\begin{equation*}
    m^{r(p)-r(q)}(W^{s(q)'})^{r(p)/r(q)}=T^{s(p)'},
\end{equation*}
that is
\begin{equation*}
    h^{-q(r(p)-r(q))/r(q)}W^{s(q)'r(p)/r(q)}=h^{-s(p)'p/(p/q)'}W^{s(p)'p/q}.
\end{equation*}
We first check that the exponents of $h$ agree:
\begin{align*}
    &-\frac{q(r(p)-r(q))}{r(q)}=-\frac{qr(p)}{r(q)}+q\overset{\eqref{second identity}}{=}-\frac{qps(p)'}{qs(q)'}+q=
    -\frac{ps(p)'}{s(q)'}+q=-\frac{ps(p)'-qs(q)'}{s(q)'}\\
    &\overset{\eqref{first identity}}{=}-s(p)'(p-q)=
    -\frac{s(p)'p}{\frac{p}{p-q}}=-\frac{s(p)'p}{(p/q)'}.
\end{align*}
We also check that the exponents of $W$ agree:
\begin{align*}
    \frac{s(q)'r(p)}{r(q)}\overset{\eqref{second identity}}{=}\frac{ps(p)'}{q}=\frac{s(p)'p}{q}.
\end{align*}

Thus, we can estimate
\begin{align*}
    \left(\int_{\R^n}|W(x)^{1/q}F(x)|^{q}\dd x\right)^{1/q}\leq c\left(\int_{\R^n}|T(x)^{1/p}F(x)|^{p}\right)^{1/p}.
\end{align*}

We now observe that
\begin{equation*}
    h(x)=(\cR_{W}\bar{h}(x))^{r/q}\geq\bar{h}(x)^{r/q}\geq\frac{|W(x)^{1/q}F(x)|}{\norm{F}_{L^{q}_{\cK}(W)}},\quad\text{ for a.e. }x\in\R^n,
\end{equation*}
thus
\begin{align*}
    &\int_{\R^n}|T(x)^{1/p}f(x)|^{p}\dd x=\int_{\R^n}h(x)^{-p/(p/q)'}|W(x)^{1/q}F(x)|^{p}\dd x=\int_{\R^n}h(x)^{q-p}|W(x)^{1/q}F(x)|^{p}\dd x\\
    &\leq\frac{1}{\norm{F}_{L^{q}_{\cK}(W)}^{q-p}}\int_{\R^n}|W(x)^{1/q}F(x)|^{q}\dd x=\Vert F\Vert_{L^{q}_{\cK}(W)}^{p}<\infty.
\end{align*}
Therefore, we are entitled to apply the extrapolation hypothesis, obtaining
\begin{align*}
    \left(\int_{\R^n}|W(x)^{1/q}F(x)|^{q}\dd x\right)^{1/q}	&	\leq c_1\left(\int_{\R^n}|T(x)^{1/p}F(x)|^{p}\right)^{1/p}	\\
    &	\leq c_1K_{p}([T^{s(p)'}]_{A_{r(p)}})\left(\int_{\R^n}|T(x)^{1/p}G(x)|^{p}\right)^{1/p}\\
    &\leq c_1K_{p}(c_2[V]_{A_{r(q)}}^{\alpha(p,q)})\left(\int_{\R^n}|T(x)^{1/p}G(x)|^{p}\right)^{1/p},
\end{align*}
since
\begin{equation*}
    [T^{s(p)'}]_{A_{r(p)}}\leq c[W^{s(q)'}]_{A_{r(q)}}[s(x)W^{s(q)'/r(q)}]_{A_{\infty}}^{r(p)-r(q)}
    \leq c[V]_{A_{r(q)}}[V]_{A_{r(q)}}^{(r(p)-r(q))/(r(q)-1)}=c[V]_{A_{r(q)}}^{\alpha(p,q)}.
\end{equation*}
Similarly to above, we obtain
\begin{align*}
    \left(\int_{\R^n}|T(x)^{1/p}G(x)|^{p}\right)^{1/p}\leq\norm{G}_{L_{\cK}^{q}(W)}.
\end{align*}
Thus
\begin{equation*}
    \left(\int_{\R^n}|W(x)^{1/q}F(x)|^{q}\dd x\right)^{1/q}\leq c_1K_{p}(c_2[V]_{A_{r(q)}}^{\alpha(p,q)})\norm{G}_{L_{\cK}^{q}(W)},
\end{equation*}
concluding the proof in this case.

\item[\bf Case 2.] $q<p=q_0<\infty$. Similarly to Case 1, we construct a nonnegative scalar-valued function $h$ with the following properties:
\begin{equation*}
    h(x)\geq\frac{|W(x)^{1/q}F(x)|}{\norm{F}_{L^{q}_{\cK}(W)}}+\frac{|W(x)^{1/q}G(x)|}{\norm{G}_{L^{q}_{\cK}(W)}},\quad\text{ for a.e. }x\in\R^n,
\end{equation*}
\begin{equation*}
    h(x)>0,\quad\text{for a.e. }x\in\R^n,
\end{equation*}
\begin{equation*}
    \int_{\R^n}h(x)^{q}\dd x\lesssim1,
\end{equation*}
and
\begin{equation*}
    h^{q/r(q)}W^{-s(q)'/r(q)}\in A_1.
\end{equation*}
As in case 1 we have
\begin{equation*}
    \norm{F}_{L^{q}_{\cK}(W)}\lesssim\norm{F}_{L^{p}_{\cK}(T)},\quad  \norm{F}_{L^{p}_{\cK}(T)}\leq\norm{F}_{L^{q}_{\cK}(W)}\quad\text{ and }\quad\norm{G}_{L^{p}_{\cK}(T)}\leq\norm{G}_{L^{q}_{\cK}(W)},
\end{equation*}
where
\begin{equation*}
    T(x):=h(x)^{-p/(p/q)'}W(x)^{p/q},\quad\text{ for a.e. }x\in\R^n.
\end{equation*}
Notice that
\begin{equation*}
    B:=h^{-q/r(q)}W^{s(q)'/r(q)}\in A_{\infty}.
\end{equation*}
We check now that
\begin{equation*}
    T=B^{s(p_0)-1},
\end{equation*}
that is
\begin{equation*}
    h^{-p/(p/q)'}W^{p/q}=h^{-q(s(p_0)-1)/r(q)}W^{s(q)'(s(p_0)-1)/r(q)}.
\end{equation*}
We first check that the exponents of $h$ agree. We need to show that
\begin{equation*}
    -q_0/(q_0/q)'=-\frac{q}{r(q)}\left(\frac{q_0}{p_0}-1\right),
\end{equation*}
that is that
\begin{equation*}
    q_0-q=\frac{q(q_0-p_0)}{p_0r(q)}.
\end{equation*}
We compute
\begin{align*}
    p_0r(q)=s(q)'(q-p_0)+p_0=\frac{q_0(q-p_0)}{q_0-q}+p_0=\frac{q_0(q-p_0)+p_0(q_0-q)}{q_0-q}=\frac{q(q_0-p_0)}{q_0-q},
\end{align*}
yielding the desired equality. Moreover, we check that the exponents of $W$ agree. We have
\begin{align*}
    \frac{s(q)'(s(p_0)-1)}{r(q)}=\frac{q_0(q_0-p_0)}{p_0r(q)}=\frac{q_0}{q}=\frac{p}{q}.
\end{align*}
Thus
\begin{equation*}
    T^{t(q_0)'/t(q_0)}=T^{s(p_0)'-1}=B\in A_{\infty}.
\end{equation*}
The rest of the proof proceeds exactly as in case 1.

\item[\bf Case 3.] $q<p=q_0=\infty$. Similarly to Case 1, we construct a nonnegative scalar-valued function $h$ with the following properties:
\begin{equation*}
    h(x)\geq\frac{|W(x)^{1/q}F(x)|}{\norm{F}_{L^{q}_{\cK}(W)}}+\frac{|W(x)^{1/q}G(x)|}{\norm{G}_{L^{q}_{\cK}(W)}},\quad\text{ for a.e. }x\in\R^n,
\end{equation*}
\begin{equation*}
    h(x)>0,\quad\text{for a.e. }x\in\R^n,
\end{equation*}
\begin{equation*}
    \int_{\R^n}h(x)^{q}\dd x\lesssim1,
\end{equation*}
and
\begin{equation*}
    h^{q/r(q)}W^{-s(q)'/r(q)}\in A_1.
\end{equation*}
We compute
\begin{align*}
    &\left(\int_{\R^n}|W(x)^{1/q}F(x)|^{q}\dd x\right)^{1/q}=
    \left(\int_{\R^n}|h(x)^{-1/(p/q)'}W(x)^{1/q}F(x)|^{q}\cdot h(x)^{q/(p/q)'}\dd x\right)^{1/q}\\
   &\leq\norm{F}_{L^{\infty}_{\cK}(T)}\left(\int_{\R^n}h(x)^{q}\dd  x\right)^{1/(q(p/q)')}\lesssim\norm{F}_{L^{\infty}_{\cK}(T)},
\end{align*}
where
\begin{equation*}
    T(x):=h(x)^{-1/(p/q)'}W(x)^{1/q},\quad\text{ for a.e. }x\in\R^n.
\end{equation*}
Notice that
\begin{equation*}
    B:=h^{-q/r(q)}W^{s(q)'/r(q)}\in A_{\infty}.
\end{equation*}
So, we only have to check that $T^{1/p_0}=B$. We check that the exponents of $h$ agree:
\begin{equation*}
    -q/(p_0r(q))=-q/(p_0t(q))=-q/q=-1,\quad -1/(p/q)'=-1/1=-1.
\end{equation*}
We also check that the exponents of $W$ agree:
\begin{equation*}
    s(q)'/r(q)=1/t(q)=p/q_0.
\end{equation*}
Thus $T^{1/p_0}=B\in A_{\infty}$. The rest of the proof proceeds exactly as in case 1.

\item $p_0\leq p<q$. We first need to define an associated Rubio de Francia operator. So consider first the following auxiliary operator acting on convex-body valued functions $H\in L^{r(q)'}(U)$:
\begin{equation*}
    N_{W}H(x):=|U(x)^{1/r(q)'}H(x)|W(x)^{-1/r(q)'}\overline{\mathbf{B}},\quad\text{ for a.e. }x\in\R^n,
\end{equation*}
where $U:=W^{-1/(t(q)-1)}$. For this operator, we may compute
\begin{align*}
    \norm{N_{W}H}_{L^{r(q)'}_{\cK}(W)}=\norm{H}_{L^{r(q)'}_{\cK}(U)}.
\end{align*}
It is easy to see that this map is sublinear and monotone and
\begin{equation*}
    H(x)\subseteq \Sigma(x)N_{W}H(x),\quad\text{ for a.e. }x\in\R^n,
\end{equation*}
where $\Sigma:=W^{1/r(q)'}\cdot U^{-1/r(q)'}=W^{t(q)'/r(q)'}$.

Now we define the operator acting on functions $H\in L^{r(q)'}(W)$
\begin{equation*}
    P_{W}H(x):=N_{W}(\cM^{\cK}(\Sigma H))(x)=|U(x)^{1/r(q)'}\cM^{\cK}(\Sigma H)(x)|W(x)^{-1/r(q)'}\overline{\mathbf{B}},\quad\text{ for a.e. }x\in\R^n.
\end{equation*}
Since $\cM^{\cK}$ is sublinear and monotone, one can easily observe that $P_{W}$ is sublinear and monotone. Moreover, using \Cref{thm:boundconvexsetmaximalfunction} we can compute
\begin{align*}
    \norm{P_{W}H}_{L^{r(q)'}_{\cK}(W)}=\norm{\cM^{\cK}(\Sigma H)}_{L^{r(q)'}_{\cK}(U)}\lesssim[U]_{A_{r(q)'}}^{1/(r(q)'-1)}\norm{\Sigma H}_{L^{r(q)'}_{\cK}(U)}=[U]_{A_{r(q)'}}^{1/(r(q)'-1)}\norm{H}_{L^{r(q)'}_{\cK}(W)}.
\end{align*}
(Although we do not need it, we note that it is easy to see that $H(x)\subseteq P_{W}H(x)$, for a.e.~$x\in\R^n$.) Note also that if $H$ is not a.e.~equal to $\{0\}$, then $P_{W}H(x)\neq\{0\}$, for a.e.~$x\in\R^n$.

Thus, all assumptions of \Cref{thm:RubiodeFranciaalgorithm} are fulfilled, therefore we can define the following Rubio de Francia type operator acting on functions $H\in L^{r(q)'}_{\cK}(W)$:
\begin{equation*}
    \cR_{W}H:=\sum_{k=0}^{\infty}\frac{1}{2^k\norm{P_{W}}_{L^{r(q)'}_{\cK}(W)}^{k}}(P_{W})^{k}H.
\end{equation*}
We record the following properties of $\cR_{W}$ that we obtain from \Cref{thm:RubiodeFranciaalgorithm}:
\begin{equation*}
    H(x)\subseteq\cR_{W}H(x),\quad\text{ for a.e. }x\in\R^n,
\end{equation*}
\begin{equation*}
    \norm{\cR_{W}H}_{L^{r(q)'}_{\cK}(W)}\leq 2\norm{H}_{L^{r(q)'}_{\cK}(W)}
\end{equation*}
and
\begin{equation*}
    P_{W}(\cR_{W}H)(x)\subseteq\norm{P_{W}}_{L^{r(q)'}_{\cK}(W)}\cR_{W}H(x)\subseteq c[U]_{A_{r(q)'}}^{1/(r(q)'-1)}\cR_{W}H(x),\quad\text{ for a.e. }x\in\R^n.
\end{equation*}
In particular, we have
\begin{equation*}
    \cM^{\cK}(\Sigma\cR_{W}H)(x)\subseteq\Sigma(x)P_{W}(\cR_{W}H)(x)\subseteq c[U]_{A_{r'}}^{1/(r'-1)}\Sigma(x)\cR_{W}H(x),\quad\text{ for a.e. }x\in\R^n.
\end{equation*}
This means that if $H$ is not a.e.~equal to $\{0\}$, then the convex body valued function $\Sigma\cR_{W}H$ belongs to the $A^{1}_{\cK}$ class with $[\Sigma\cR_{W}H]_{A^1_{\cK}}\leq c[U]_{A_{r(q)'}}^{1/(r(q)'-1)}$.

Finally, notice that $P_{W}H$ is always ellipsoid-valued with respect to $W^{-1/r(q)'}$, therefore by \Cref{thm:preservationofellipsoidvalued} we deduce that if $H$ happens to be ellipsoid-valued with respect to $W^{-1/r(q)'}$, then $\cR_{W}H$ is also ellipsoid-valued with respect to $W^{-1/r(q)'}$.

We now observe that the Riesz representation theorem yields that there exists a nonnegative scalar-valued function $h_1\in L^{(q/p)'}$ with $\norm{h_1}_{L^{(q/p)'}}=1$ such that
\begin{equation*}
    \int_{\R^n}|W(x)^{1/q}F(x)|^{q}\dd x=\int_{\R^n}(|W(x)^{1/q}F(x)|^{p})^{q/p}\dd x=\int_{\R^n}|W(x)^{1/q}F(x)|^{p}h_1(x)\dd x.
\end{equation*}
Now we define the nonnegative scalar-valued function
\begin{equation*}
    \bar{h}(x):=h_1(x)^{(q/p)'/r(q)'},\quad\text{ for a.e. }x\in\R^n,
\end{equation*}
and the convex body valued function
\begin{equation*}
    \bar{H}(x):=\bar{h}(x)W(x)^{-1/r(q)'}\overline{\mathbf{B}},\quad\text{ for a.e. }x\in\R^n.
\end{equation*}
It is clear that $\norm{H}_{L^{r(q)'}_{\cK}(W)}=1$. Since $\bar{H}$ is ellipsoid-valued with respect to $W^{-1/r(q)'}$, we deduce that $\cR_{W}\bar{H}$ is also ellipsoid-valued with respect to $W^{-1/r(q)'}$, therefore we can write $\cR_{W}\bar{H}(x)=\cR_{W}\bar{h}(x)W(x)^{-1/r(q)'}\overline{\mathbf{B}}$ for some nonnegative scalar-valued function $\cR_{W}\bar{h}$. Then, we have
\begin{equation*}
\bar{h}(x)W(x)^{-1/r(q)'}\overline{\mathbf{B}}\subseteq\cR_{W}\bar{h}(x)W(x)^{-1/r(q)'}\overline{\mathbf{B}},
\end{equation*}
which implies immediately that $\bar{h}(x)\leq\cR_{W}\bar{h}(x)$, for a.e.~$x\in\R^n$. We now define the nonnegative scalar-valued function
\begin{equation*}
    h:=(\cR_{W}\bar{h})^{r(q)'/(q/p)'}.
\end{equation*}
Since $\bar{H}$ is not a.e.~equal to $\{0\}$, we deduce that $h(x)\neq0$, for a.e.~$x\in\R^n$. Observe that $h\geq h_1$. Thus, we have
\begin{align*}
    \int_{\R^n}|W(x)^{1/q}F(x)|^{q}\dd x\leq\int_{\R^n}|W(x)^{1/q}F(x)|^{p}h(x)\dd x=\int_{\R^n}|h(x)^{1/p}W(x)^{1/q}F(x)|^{p}\dd x.
\end{align*}
Set
\begin{equation*}
    T(x):=h(x)W(x)^{p/q},\quad\text{ for a.e. }x\in\R^n.
\end{equation*}
Our goal is to show that
\begin{equation*}
    T^{s(p)'}\in A_{r(p)}.
\end{equation*}

By above, we have
\begin{align*}
    \Sigma\cR_{W}\bar{H}\in A^1_{\cK},
\end{align*}
that means concretely
\begin{equation*}
    (\cR_{W}\bar{h})U^{-1/r(q)'}\overline{\mathbf{B}}\in A^1_{\cK},
\end{equation*}
that is
\begin{equation*}
    (\cR_{W}\bar{h})W^{s(q)'/r(q)}\overline{\mathbf{B}}\in A^1_{\cK}.
\end{equation*}
Thus, by \Cref{thm:twoA1classes} we have $(\cR_{W}\bar{h})V^{1/r(q)}\in A_1$, that is
\begin{equation*}
    h^{(q/p)'/r(q)'}W^{s(q)'/r(q)}\in A_{1}.
\end{equation*}
Set
\begin{equation*}
    m(x):=h(x)^{(q/p)'/r(q)'},\quad\text{ for a.e. }x\in\R^n.
\end{equation*}
Then
\begin{equation*}
    mW^{s(q)'/r(q)}\in A_{1}\quad\text{and}\quad W^{s(q)'}\in A_{r(q)}.
\end{equation*}
The fact that $p<q$ easily implies $r(p)<r(q)$. Thus, by \Cref{thm:reverseJonesfactorization} (2), we deduce
\begin{equation*}
    m^{r(p)-r(q)'(r(p)-1)}(W^{s(q)'})^{r(p)/r(q)}\in A_{r(p)},
\end{equation*}
both for $p>p_0$ and $p=p_0$.

Our goal now is to check that
\begin{equation*}
    m^{r(p)-r(q)'(r(p)-1)}(W^{s(q)'})^{r(p)/r(q)}=T^{s(p)'},
\end{equation*}
that is
\begin{equation*}
    h^{(q/p)'[r(p)-r(q)'(r(p)-1)]/r(q)'}W^{s(q)'r(p)/r(q)}=h^{s(p)'}W^{s(p)'p/q}.
\end{equation*}
We first check that the exponents of $h$ agree:
\begin{align*}
    &(q/p)'\frac{r(p)-r(q)'(r(p)-1)}{r(q)'}=(q/p)'\left[\frac{r(p)}{r(q)'}-r(p)+1\right]
    =(q/p)'\left[r(p)\left(\frac{1}{r(q)'}-1\right)+1\right]\\
    &=(q/p)'\left(-\frac{r(p)}{r(q)}-1\right)
    \overset{\eqref{second identity}}{=}(q/p)'\left(-\frac{ps(p)'}{qs(q)'}-1\right)
    =(q/p)'\frac{qs(q)'-ps(p)'}{qs(q)'}\\
    &\overset{\eqref{second identity}}{=}(q/p)'\frac{s(p)'s(q)'(q-p)}{qs(q)'}
    =\frac{q}{q-p}\cdot\frac{s(p)'q-p}{q}=s(p)'.
\end{align*}
Moreover, we check that the exponents of $W$ agree:
\begin{align*}
    s(p)'\frac{r(p)}{r(q)}\overset{\eqref{second identity}}{=}s(p)'\frac{p}{q}.
\end{align*}

Therefore, we have
\begin{equation*}
    \int_{\R^n}|W(x)^{1/q}F(x)|^{q}\dd x\leq\int_{\R^n}|T(x)^{1/p}F(x)|^{p}\dd x.
\end{equation*}
We also observe that
\begin{align*}
    &\int_{\R^n}|T(x)^{1/p}F(x)|^{p}\dd x=\int_{\R^n}|W(x)^{1/q}F(x)|^{p}h(x)\dd x\\
    &\leq\left(\int_{\R^n}|W(x)^{1/q}F(x)|^{q}\dd x\right)^{p/q}\left(\int_{\R^n}h(x)^{(q/p)'}\dd x\right)^{1/(q/p)'}\\
    &=\left(\int_{\R^n}|W(x)^{1/q}F(x)|^{q}\dd x\right)^{p/q}\left(\int_{\R^n}\cR_{W}\bar{h}(x)^{r(q)'}\dd x\right)^{1/(q/p)'}.
\end{align*}
Notice that
\begin{align*}
    \int_{\R^n}\cR_{W}\bar{h}(x)^{r(q)'}\dd x=\int_{\R^n}|W(x)^{1/r(q)'}\cR_{W}\bar{H}(x)|^{r(q)'}\dd x\leq c.
\end{align*}
It follows that
\begin{align*}
    &\int_{\R^n}|T(x)^{1/p}F(x)|^{p}\dd x\leq c\left(\int_{\R^n}|W(x)^{1/q}F(x)|^{q}\dd x\right)^{p/q}<\infty.
\end{align*}
Thus, we are entitled to apply the extrapolation hypothesis, obtaining
\begin{align*}
    &\left(\int_{\R^n}|W(x)^{1/q}F(x)|^{q}\dd x\right)^{1/q}\leq c\left(\int_{\R^n}|T(x)^{1/p}F(x)|^{p}\dd x\right)^{1/p}\leq cK_{p}([T^{s(p)'}]_{A_{r(p)}})\\
    &\leq c_1K_{p}(c_2[V]_{A_{r(p)}}^{\alpha(p,q)})
    \left(\int_{\R^n}|T(x)^{1/p}G(x)|^{p}\dd x\right)^{1/p},
\end{align*}
because
\begin{align*}
    &[T^{s(p)'}]_{A_{r(p)}}\leq c[W^{s(q)'}]_{A_{r(q)}}^{(r(p)-1)/(r(q)-1)}[r(x)W(x)^{s(q)'/r(q)}]_{A_1}^{r(p)-r(q)'(r(p)-1)}\\
    &\leq c[V]_{A_{r(q)}}^{(r(p)-1)/(r(q)-1)}[U]_{A_{r(q)'}}^{[r(p)-r(q)'(r(p)-1)]/(r(q)'-1)}\\
    &\simeq c[V]_{A_{r(q)}}^{(r(p)-1)/(r(q)-1)}[V]_{A_{r(q)}}^{r(p)-r(q)'(r(p)-1)}\leq c_2[V]_{A_{r(q)}}^{\alpha(p,q)}.
\end{align*}
Similarly to the above, we obtain
\begin{equation*}
    \left(\int_{\R^n}|T(x)^{1/p}G(x)|^{p}\dd x\right)^{q}\leq c\left(\int_{\R^n}|W(x)^{1/q}G(x)|^{q}\dd x\right)^{1/q}.
\end{equation*}
Thus, finally, we deduce
\begin{align*}
    &\left(\int_{\R^n}|W(x)^{1/q}F(x)|^{q}\dd x\right)^{1/q}\leq c_1K_{p}(c_2[V]_{A_{r(p)}}^{\alpha(p,q)})
    \left(\int_{\R^n}|W(x)^{1/q}G(x)|^{q}\dd x\right)^{1/q},
\end{align*}
concluding the proof.
\end{proof}

\section{Appendix}
\label{section:appendix}

Here we prove \Cref{first lemma}:

\newtheorem*{first_lemma}{\Cref{first lemma}}
\begin{first_lemma}
Set
    \begin{equation*}
        C_n:=
        \begin{bmatrix}
            1&0\\
            0&n
        \end{bmatrix}
        ,\quad
        D_n:=
        \begin{bmatrix}
            1&\frac{1}{2n^{1/2}}\\
            \frac{1}{2n^{1/2}}&\frac{1}{n}
        \end{bmatrix}
        ,\quad n=1,2,\ldots.
    \end{equation*}
    Then, $C_n,D_n\in\mathrm{P}_{2}(\C)$ with
    \begin{equation*}
        \tr(C_nD_n),~\tr(C_n^{-1}D_n^{-1})\simeq 1,
    \end{equation*}
    for all $n=1,2,\ldots$. Moreover, for all $a>1$,
    \begin{equation*}
        \lim_{n\to\infty}\tr(C_n^{a}D_n^{a})=\infty.
    \end{equation*}
\end{first_lemma}

Before the proof, we note the identities
\begin{equation}
\label{matrix identity}
        \begin{bmatrix}
            x&z\\
            y&w
        \end{bmatrix}
        \begin{bmatrix}
            \lambda_1&0\\
            0&\lambda_2
        \end{bmatrix}
        \begin{bmatrix}
            x&y\\
            z&w
        \end{bmatrix}
    =
    \begin{bmatrix}
            \lambda_1x^2+\lambda_2z^2&\lambda_1xy+\lambda_2zw\\
            \lambda_1xy+\lambda_2zw&\lambda_1y^2+\lambda_2w^2
        \end{bmatrix}
\end{equation}
and
\begin{equation}
\label{identity}
    \tr\left(
    \begin{bmatrix}
        1&0\\
        0&\lambda
    \end{bmatrix}
    \begin{bmatrix}
            x&z\\
            y&w
        \end{bmatrix}
        \begin{bmatrix}
            \lambda_1&0\\
            0&\lambda_2
        \end{bmatrix}
        \begin{bmatrix}
            x&y\\
            z&w
        \end{bmatrix}
    \right)=\lambda_1x^2+\lambda_2z^2+\lambda\lambda_1y^2+\lambda\lambda_2w^2.
\end{equation}

\begin{proof}[Proof (of \Cref{first lemma})]
    Let $n$ be any positive integer. It is clear that $C_n,D_n\in\mathrm{P}_{2}(\C)$. We compute
    \begin{equation*}
        C_nD_n=
        \begin{bmatrix}
            1&\frac{1}{2n^{1/2}}\\
            \frac{n^{1/2}}{2}&1
        \end{bmatrix}
    \end{equation*}
    and
    \begin{equation*}
        C_n^{-1}D_n^{-1}=
        \begin{bmatrix}
            1&0\\
            0&\frac{1}{n}
        \end{bmatrix}
        \begin{bmatrix}
            \frac{4}{3}&-\frac{2n^{1/2}}{3}\\
            -\frac{2n^{1/2}}{3}&\frac{4n}{3}
        \end{bmatrix}
        =
        \begin{bmatrix}
            \frac{4}{3}&-\frac{2n^{1/2}}{3}\\
            -\frac{2}{3n^{1/2}}&\frac{4}{3}
        \end{bmatrix}
        ,
    \end{equation*}
    showing immediately that $\tr(C_nD_n),~\tr(C_n^{-1}D_n^{-1})\simeq1$.

    Next, we diagonalize $D_n$. Direct computation shows that it has the eigenvalues
    \begin{equation*}
        \lambda_1(n):=\frac{1}{2}\left(1+\frac{1}{n}+\sqrt{1-\frac{1}{n}+\frac{1}{n^2}}\right)\quad\text{and}\quad \lambda_2(n):=\frac{1}{2}\left(1+\frac{1}{n}-\sqrt{1-\frac{1}{n}+\frac{1}{n^2}}\right).
    \end{equation*}
    Notice that $\lambda_1(n)\simeq1$ and that
    \begin{equation*}
        1-\lambda_1(n)=\frac{1}{2}\left(1-\frac{1}{n}-\sqrt{1-\frac{1}{n}+\frac{1}{n^2}}\right)=
        \frac{1}{2}\cdot\frac{-\frac{1}{n}}{1-\frac{1}{n}+\sqrt{1-\frac{1}{n}+\frac{1}{n^2}}},
    \end{equation*}
    therefore
    \begin{equation*}
        |1-\lambda_1(n)|\simeq\frac{1}{n}.
    \end{equation*}
    A normalized eigenvector associated to $\lambda_1(n)$ is given by $e_1(n)=(x(n),y(n))$, where
    \begin{equation*}
        x(n):=\frac{\frac{1}{2n^{1/2}}}{\sqrt{\frac{1}{4n}+(\lambda_1(n)-1)^2}},\quad y(n):=\frac{\lambda_1(n)-1}{\sqrt{\frac{1}{4n}+(\lambda_1(n)-1)^2}}.
    \end{equation*}
    Observe that $|y(n)|\simeq\frac{1}{n^{1/2}}$. Let also $e_2(n)=(z(n),w(n))$ be a normalized eigenvector associated to $\lambda_2(n)$.
   
    Let now $a>1$ be arbitrary. We have
    \begin{equation*}
        D_n=
        \begin{bmatrix}
            x(n)&z(n)\\
            y(n)&w(n)
        \end{bmatrix}
        \begin{bmatrix}
            \lambda_1(n)&0\\
            0&\lambda_2(n)
        \end{bmatrix}
        \begin{bmatrix}
            x(n)&y(n)\\
            z(n)&w(n)
        \end{bmatrix}
        ,
    \end{equation*}
    therefore
    \begin{align*}
        D_n^{a}&=
        \begin{bmatrix}
            x(n)&z(n)\\
            y(n)&w(n)
        \end{bmatrix}
        \begin{bmatrix}
            \lambda_1(n)^{a}&0\\
            0&\lambda_2(n)^{a}
        \end{bmatrix}
        \begin{bmatrix}
            x(n)&y(n)\\
            z(n)&w(n)
        \end{bmatrix}
        .
    \end{align*}
    Thus, by \eqref{identity} it follows that
    \begin{equation*}
        \tr(C_n^{a}D_n^{a})=\lambda_1(n)^{a}x(n)^2+\lambda_2(n)^{a}z(n)^2+n^{a}\lambda_1(n)^{a}y(n)^2+n^{a}\lambda_2(n)^{a}w(n)^2\gtrsim_{a}n^{a-1}.
    \end{equation*}
    Thus, $\lim_{n\to\infty}\tr(C_n^{a}D_n^{a})=\infty$.
\end{proof}

Identities \eqref{matrix identity} and \eqref{identity} suggest yet another family of counterexamples to the converse of the Cordes inequality. Let namely
\begin{equation*}
    C_n:=
    \begin{bmatrix}
        1&0\\
        0&n(1+\log n)
    \end{bmatrix}
\end{equation*}
and
\begin{equation*}
        D_n:=
        \begin{bmatrix}
            \sqrt{1-\frac{1}{n}}&-\frac{1}{\sqrt{n}}\\
            \frac{1}{\sqrt{n}}&\sqrt{1-\frac{1}{n}}
        \end{bmatrix}
        \begin{bmatrix}
            1&0\\
            0&\frac{1}{n(1+\log n)}
        \end{bmatrix}
        \begin{bmatrix}
            \sqrt{1-\frac{1}{n}}&\frac{1}{\sqrt{n}}\\
            -\frac{1}{\sqrt{n}}&\sqrt{1-\frac{1}{n}}
        \end{bmatrix}
        ,
\end{equation*}
for all $n=1,2,\ldots$. Then
\begin{equation*}
    (D_n)_{11}\simeq 1,\quad (D_n)_{22}\simeq \frac{1}{n},
\end{equation*}
therefore
\begin{equation*}
    \tr(C_nD_n)\simeq1+\log n,
\end{equation*}
for all $n=1,2,\ldots$. Thus, $\lim_{n\to\infty}\tr(C_nD_n)=\infty$.

Let now $a\in(0,1)$ be arbitrary. Then, direct computation gives
\begin{equation*}
    (D_n^{a})_{11}\simeq_{a}1,\quad 0\leq(D_n^{a})_{12}=(D_n^{a})_{21}\lesssim_{a}\frac{1}{\sqrt{n}},\quad (D_n^{a})_{22}\simeq_{a}\frac{1}{[n(1+\log n)]^{a}},
\end{equation*}
and
\begin{equation*}
    (D_n^{-a})_{11}\simeq_{a}1,\quad |(D_n^{-a})_{12}|=|(D_n^{-a})_{21}|\lesssim_{a}\frac{[n(1+\log n)]^{a}}{\sqrt{n}},\quad (D_n^{-a})_{22}\simeq_{a}[n(1+\log n)]^{a},
\end{equation*}
for all $n=1,2,\ldots$. It follows that for all $m,n=1,2,\ldots$, we have
\begin{equation*}
    \tr(C_m^{-a}C_n^{a}),~\tr(C_m^{-a}D_n^{-a}),~\tr(D_m^{a}C_{n}^{a})\simeq_{a}\max\left(1,\frac{n(1+\log n)}{m(1+\log m)}\right)^{a}
\end{equation*}
and
\begin{align*}
    \tr(D_m^{a}D_n^{-a})&\lesssim_{a}\max\left(1,\frac{[n(1+\log n)]^{a}}{\sqrt{m}\cdot\sqrt{n}},\left(\frac{n(1+\log n)}{m(1+\log m)}\right)^{a}\right)\\
    &\lesssim_{a}\max\left(1,\left(\frac{n}{m}\right)^{1/2},\left(\frac{n(1+\log n)}{m(1+\log m)}\right)^{a}\right).
\end{align*}
Note that an elementary convexity argument gives
\begin{equation*}
    (1-\log 2)(x+y)(1+\log(x+y))\leq x(1+\log x)+y(1+\log y),
\end{equation*}
for all $x,y\in(0,\infty)$ with $x+y\geq1$.

\printbibliography																		

\end{document}